\numberwithin{equation}{section}
\theoremstyle{definition}
\newtheorem{define}{Definition}[section]
\newtheorem{example}[define]{Example}
\newtheorem{construction}[define]{Construction}
\newtheorem{notation}[define]{Notation}
\theoremstyle{remark}
\newtheorem{remark}[define]{Remark}
\theoremstyle{plain}
\newtheorem{theo}[define]{Theorem}
\newtheorem{lemma}[define]{Lemma}
\newtheorem{prop}[define]{Proposition}
\newtheorem{cor}[define]{Corollary}
\newcommand{\C}{\mathscr C}
\newcommand{\E}{\mathbb E}
\newcommand{\G}{\mathscr G}
\newcommand{\B}{\mathscr B}
\newcommand{\ct}[1]{\mathscr #1}
\newcommand{\F}{\mathscr F}
\newcommand{\D}{\mathbb D}
\newcommand{\K}{\mathfrak K}
\newcommand{\Rho}{\mathrm{P}}
\newcommand{\Tau}{\mathrm{T}}
\newcommand{\Iota}{\mathrm{I}}
\newcommand{\mfrk}{\mathfrak}
\newcommand{\mcal}{\mathcal}
\newcommand{\mrm}{\mathrm}
\newcommand{\mbf}{\mathbf}
\newcommand{\mscr}{\mathscr}
\newcommand{\cat}{\mathbf{Cat}}
\newcommand{\pscat}{\mathbf{PsCat}}
\newcommand{\psmon}{\mathbf{PsMon}}
\newcommand{\twocat}{\mathbf{2Cat}}
\newcommand{\Montwocat}{\mathbf{Mon2Cat}}
\newcommand{\Mon}{\mathbf{Mon}}
\newcommand{\MonFib}{\mathbf{MonFib}}
\newcommand{\Moncat}{\mathbf{MonCat}}
\newcommand{\set}{\mathbf{Set}}
\newcommand{\Span}{\mathbb S\mathbf{pan}}
\newcommand{\Rel}{\mathbb R\mathbf{el}}
\newcommand{\dfib}{\mathbf{DFib}}
\newcommand{\fib}{\mathbf{Fib}}
\newcommand{\cfib}{\mathbf{cFib}}
\newcommand{\lax}{\mathbf{Lax}}
\newcommand{\icat}{\mathbf{ICat}}
\newcommand{\iset}{\mathbf{ISet}}
\newcommand{\dblelt}{\mathbb E\mathbf{l}}
\newcommand{\dblfam}{\mathbb F\mathbf{am}}
\newcommand{\dblfib}{\mathbf{DblFib}}
\newcommand{\prof}{\mathbb P\mathbf{rof}}
\newcommand{\dbltwocat}{\mbf {Dbl2Cat}}
\newcommand{\dom}{\mathsf{dom}}
\newcommand{\cod}{\mathsf{cod}}
\newcommand{\src}{\mathsf{src}}
\newcommand{\tgt}{\mathsf{tgt}}
\newcommand{\apx}{\mathsf{apx}}
\newcommand{\im}{\mathsf{im}}
\newcommand{\hto}{\slashedrightarrow}
\newcommand{\Dblcatv}{{\mathbf{Dbl}}}
\newcommand{\Dblcatvs}{{\mathbf{Dbl}_s}}
\newcommand{\Dblcatvl}{{\mathbf{Dbl}_\ell}}
\newcommand{\ol}{\overline}
\newcommand\dol[1]{\overline{\overline{#1}}}
\newcommand{\Arrs}{\mbox{\sf Arr}^s}
\newcommand{\Arrp}{\mbox{\sf Arr}^p}
\newcommand{\Arrl}{\mbox{\sf Arr}^\ell}
\newcommand{\Hom}{\mbox{\sf Hom}}
\newcommand{\triplear}[4]
{
\ar@/^/[#1]^{#2}
\ar@{}[#1]|{#4}
\ar@/_/[#1]_{#3}
}
\def\slashedarrowfill@#1#2#3#4#5{%
  $\m@th\thickmuskip0mu\medmuskip\thickmuskip\thinmuskip\thickmuskip
  \relax#5#1\mkern-7mu%
  \cleaders\hbox{$#5\mkern-2mu#2\mkern-2mu$}\hfill
  \mathclap{#3}\mathclap{#2}%
  \cleaders\hbox{$#5\mkern-2mu#2\mkern-2mu$}\hfill
  \mkern-7mu#4$%
}
\def\rightslashedarrowfill@{%
  \slashedarrowfill@\relbar\relbar\mapstochar\rightarrow}
\newcommand\xslashedrightarrow[2][]{%
  \ext@arrow 0055{\rightslashedarrowfill@}{#1}{#2}}
\def\slashedrightarrow{\xslashedrightarrow{}}
\newcommand{\xdashrightarrow}[2][]{\ext@arrow 0359\rightarrowfill@@{#1}{#2}}
\newcommand{\xdashleftarrow}[2][]{\ext@arrow 3095\leftarrowfill@@{#1}{#2}}
\newcommand{\xdashleftrightarrow}[2][]{\ext@arrow 3359\leftrightarrowfill@@{#1}{#2}}
\def\rightarrowfill@@{\arrowfill@@\relax\relbar\rightarrow}
\def\leftarrowfill@@{\arrowfill@@\leftarrow\relbar\relax}
\def\leftrightarrowfill@@{\arrowfill@@\leftarrow\relbar\rightarrow}
\def\arrowfill@@#1#2#3#4{%
  $\m@th\thickmuskip0mu\medmuskip\thickmuskip\thinmuskip\thickmuskip
   \relax#4#1
   \xleaders\hbox{$#4#2$}\hfill
   #3$%
}
\newcommand{\bb}{\mathbb}
\newcommand{\cc}[1]{\mathcal{#1}}
\newcommand{\Cat}{\cc{C}at}
\newcommand{\mr}[1]{\stackrel{#1}{\to}}
\newcommand{\mrdash}[1]{\xdashrightarrow{{\quad}#1{\quad}} }
\newcommand{\Mr}[1]{\stackrel{#1}{\Rightarrow}}
\newcommand{\mrh}[1]{\stackrel{#1}{\slashedrightarrow}}
\newdimen\w@dth
\def\setw@dth#1#2{\setbox\z@\hbox{\scriptsize $#1$}\w@dth=\wd\z@
\setbox\@ne\hbox{\scriptsize $#2$}\ifnum\w@dth<\wd\@ne \w@dth=\wd\@ne \fi
\advance\w@dth by 1.2em}
\def\t@^#1_#2{\allowbreak\def\n@one{#1}\def\n@two{#2}\mathrel
{\setw@dth{#1}{#2}
\mathop{\hbox to \w@dth{\rightarrowfill}}\limits
\ifx\n@one\empty\else ^{\box\z@}\fi
\ifx\n@two\empty\else _{\box\@ne}\fi}}
\def\t@@^#1{\@ifnextchar_ {\t@^{#1}}{\t@^{#1}_{}}}
\def\t@left^#1_#2{\def\n@one{#1}\def\n@two{#2}\mathrel{\setw@dth{#1}{#2}
\mathop{\hbox to \w@dth{\leftarrowfill}}\limits
\ifx\n@one\empty\else ^{\box\z@}\fi
\ifx\n@two\empty\else _{\box\@ne}\fi}}
\def\t@@left^#1{\@ifnextchar_ {\t@left^{#1}}{\t@left^{#1}_{}}}
\def\two@^#1_#2{\def\n@one{#1}\def\n@two{#2}\mathrel{\setw@dth{#1}{#2}
\mathop{\vcenter{\hbox to \w@dth{\rightarrowfill}\kern-1.7ex
                 \hbox to \w@dth{\rightarrowfill}}%
       }\limits
\ifx\n@one\empty\else ^{\box\z@}\fi
\ifx\n@two\empty\else _{\box\@ne}\fi}}
\def\tw@@^#1{\@ifnextchar_ {\two@^{#1}}{\two@^{#1}_{}}}
\def\tofr@^#1_#2{\def\n@one{#1}\def\n@two{#2}\mathrel{\setw@dth{#1}{#2}
\mathop{\vcenter{\hbox to \w@dth{\rightarrowfill}\kern-1.7ex
                 \hbox to \w@dth{\leftarrowfill}}%
       }\limits
\ifx\n@one\empty\else ^{\box\z@}\fi
\ifx\n@two\empty\else _{\box\@ne}\fi}}
\def\t@fr@^#1{\@ifnextchar_ {\tofr@^{#1}}{\tofr@^{#1}_{}}}
\newdimen\W@dth
\def\setW@dth#1#2{\setbox\z@\hbox{$#1$}\W@dth=\wd\z@
\setbox\@ne\hbox{$#2$}\ifnum\W@dth<\wd\@ne \W@dth=\wd\@ne \fi
\advance\W@dth by 1.2em}
\def\T@^#1_#2{\allowbreak\def\N@one{#1}\def\N@two{#2}\mathrel
{\setW@dth{#1}{#2}
\mathop{\hbox to \W@dth{\rightarrowfill}}\limits
\ifx\N@one\empty\else ^{\box\z@}\fi
\ifx\N@two\empty\else _{\box\@ne}\fi}}
\def\T@@^#1{\@ifnextchar_ {\T@^{#1}}{\T@^{#1}_{}}}
\def\T@left^#1_#2{\def\N@one{#1}\def\N@two{#2}\mathrel{\setW@dth{#1}{#2}
\mathop{\hbox to \W@dth{\leftarrowfill}}\limits
\ifx\N@one\empty\else ^{\box\z@}\fi
\ifx\N@two\empty\else _{\box\@ne}\fi}}
\def\T@@left^#1{\@ifnextchar_ {\T@left^{#1}}{\T@left^{#1}_{}}}
\def\Tofr@^#1_#2{\def\N@one{#1}\def\N@two{#2}\mathrel{\setW@dth{#1}{#2}
\mathop{\vcenter{\hbox to \W@dth{\rightarrowfill}\kern-1.7ex
                 \hbox to \W@dth{\leftarrowfill}}%
       }\limits
\ifx\N@one\empty\else ^{\box\z@}\fi
\ifx\N@two\empty\else _{\box\@ne}\fi}}
\def\T@fr@^#1{\@ifnextchar_ {\Tofr@^{#1}}{\Tofr@^{#1}_{}}}
\def\Two@^#1_#2{\def\N@one{#1}\def\N@two{#2}\mathrel{\setW@dth{#1}{#2}
\mathop{\vcenter{\hbox to \W@dth{\rightarrowfill}\kern-1.7ex
                 \hbox to \W@dth{\rightarrowfill}}%
       }\limits
\ifx\N@one\empty\else ^{\box\z@}\fi
\ifx\N@two\empty\else _{\box\@ne}\fi}}
\def\Tw@@^#1{\@ifnextchar_ {\Two@^{#1}}{\Two@^{#1}_{}}}
\def\to{\@ifnextchar^ {\t@@}{\t@@^{}}}
\def\from{\@ifnextchar^ {\t@@left}{\t@@left^{}}}
\def\tofro{\@ifnextchar^ {\t@fr@}{\t@fr@^{}}}
\def\To{\@ifnextchar^ {\T@@}{\T@@^{}}}
\def\From{\@ifnextchar^ {\T@@left}{\T@@left^{}}}
\def\Two{\@ifnextchar^ {\Tw@@}{\Tw@@^{}}}
\def\Tofro{\@ifnextchar^ {\T@fr@}{\T@fr@^{}}}
\title{Double Fibrations}
\author{Geoffrey Cruttwell, Michael Lambert, Dorette Pronk, and Martin Szyld}
\begin{document}
\tdplotsetmaincoords{60}{40}
\maketitle

\begin{abstract}
    This paper defines double fibrations (fibrations of double categories) and describes their key examples and properties. In particular, it shows how double fibrations relate to existing fibrational notions such as monoidal fibrations and discrete double fibrations, proves a representation theorem for double fibrations, and shows how double fibrations are a type of internal fibration.  
\end{abstract}

\section{Introduction}

The goal of this paper is to define double fibrations (fibrations of double categories) and state and prove their essential properties.  However, it is useful to first put this structure in context: why are double fibrations an important structure to consider?  We will discuss three reasons for looking at double fibrations: (i) they form a common generalization for recent work on monoidal fibrations \cite{MV}, discrete double fibrations \cite{Lambert2021}, and 2-fibrations \cite{Buckley} (ii) recent work in applied category theory \cite{DJM} has demonstrated a need for double fibrations; (iii) to further the study of double-categorical logic, a precise theory of double fibrations is required.  Let us consider each of these in more detail.

In \cite{MV}, the authors define a notion of \emph{monoidal fibration}.  By definition, this is a pseudo monoid in a 2-category of fibrations.  The paper also defines the indexed version of monoidal fibrations, and proves the equivalence of monoidal fibrations to their indexed counterpart.  In \cite{Lambert2021}, the author defines a notion of \emph{discrete double fibration}.  By definition, this is a category object in a 2-category of \emph{discrete} fibrations.  The paper also gives an indexed version of such fibrations, and again proves the equivalence between the fibred notion and its indexed counterpart.  Thus, a natural generalization of both of these settings is to consider category objects in a 2-category of fibrations.  In fact, both papers refer to the possibility of such a generalization: see \cite[Remark 3.16]{MV} and \cite[Section 5.2]{Lambert2021}.  Similarly, various other works \cite{Gray_book, Hermida, Buckley} have developed fibration notions for 2-categories and bicategories. 

Coming from a different angle, \cite{DJM} has shown how double fibrations can be useful in investigating aspects of applied category theory.  In particular, the paper considers double categories associated to certain types of dynamical systems.  An important aspect of this work is the definition of an indexed double category and a double Grothendieck construction associated to it.  However, the paper does not give a general definition of double fibration, nor prove an equivalence between double fibrations and indexed double categories.

Finally, it is well-known that aspects of logic can be encoded as fibrations of categories (for example, see \cite{Jacobs}). Recent investigations, however, have suggested that for some aspects of logic, double categories might be even more useful than mere categories.  As such, to continue this development, a full theory of double fibrations is required.

This paper accomplishes the task.  In addition to defining double fibrations, the paper works out two key theoretical aspects of double fibrations: a representation theorem for double fibrations, showing how they are equivalent to \emph{indexed double categories} (Theorem \ref{theorem:RepresentationTheoremFinalForm}), and a result showing that double fibrations can be seen as a type of internal fibration (Corollary \ref{coro:double_fibration_iff_internal}).  

Moreover, we also show how aspects (i) and (ii) described above fit into the theory of double fibrations.  In particular, we show how monoidal fibrations, discrete double fibrations, and 2-fibrations (via the quintet construction) are particular examples of our general theory.  We also show that the double Grothendieck construction of \cite{DJM} is a particular example of our more general Grothendieck construction: see Example \ref{ex:jaz-myers}.   

We will not consider here how the theory of double fibrations can be used to further aspect (iii), namely double categorical logic, but anticipate this will occur in future work.

The paper is laid out as follows:
\begin{itemize}
    \item In Section \ref{sec:double_fibration}, we give the definition of a double fibration, present some basic examples, and then show how many existing generalizations of the fibration notion can be seen as double fibrations.  
    \item In Section \ref{sec:rep_theorem}, we state and prove a representation theorem for double fibrations.  In particular, in Theorem \ref{theorem:RepresentationTheoremFinalForm} we show that double fibrations correspond to ``span-valued lax double pseudo functors''.  
    \item Finally, in Section \ref{sec:internal}, we show how a double fibration can be seen as a type of internal fibration in the sense of Street \cite{internal_fibration}.
\end{itemize}

We believe this is only the start of the story of double fibrations.  As noted above, there are potential applications in both applied category theory and categorical logic, and we anticipate that developments in these areas will be greatly assisted by the results of this paper.  

\smallskip

\noindent{\bf Acknowledgements.}  
We acknowledge the support of the Natural Sciences and Engineering Research Council
of Canada (NSERC).
The last-named author also acknowledges the support of the Atlantic Association for Research in the Mathematical Sciences (AARMS).

We wish to thank Bob Par\'e, David Jaz Myers, Joe Moeller, and Christina Vasilakopoulou for several helpful conversations on the subject.

\section{Double Fibrations} \label{sec:double_fibration}

In this section we introduce double fibrations as pseudo category objects in the 2-category of fibrations, and we show how they correspond to double functors between double categories satisfying certain properties.

\subsection{Preliminaries on Fibrations}

We briefly recall here the classical notion of fibration of categories, mainly to fix the notation that we will use throughout the paper.

\begin{define} \label{def:plain_old_cartesian}
Let $P: \ct{E} \mr{} \B$ be a functor between categories. 
An arrow $f\colon X\to Y$ of $\ct{E}$ is \textbf{Cartesian} if
        whenever $g\colon Z\to Y$ is an arrow of $\ct{E}$ for which there is a morphism $h\colon PZ\to PX$ making a commutative triangle in $\B$ as on the right 
            $$\xymatrix{
                Z \ar@{-->}[d]_{\hat h} \ar@/^1.0pc/[dr]^g & & & PZ \ar[d]_h \ar@/^1.0pc/[dr]^{Pg} & \\
                X \ar[r]_f & Y & & PX\ar[r]_{Pf} & PY
            }$$
            there is a unique $\hat h\colon Z\to X$ in $\ct{E}$ over $h$ making a commutative triangle in $\ct{E}$ as on the left above.
\end{define}

\begin{define} \label{def:plain_old_fibration}
Let $P: \ct{E} \mr{} \B$ be a functor between categories. $P$ is a {\em fibration} when any arrow $u: B \mr{} PE$ in $\B$ has a Cartesian lift $u^*E: B^* \mr{} E$. A choice of a Cartesian lift for each arrow of $\B$ (and the family of so-chosen arrows) is called a {\em cleavage} of the fibration. 
Given two fibrations $P: \ct{E} \mr{} \B$ and $P': \ct{E}' \mr{} \B'$,
a {\em morphism} $f$ between them is given by a pair of functors $f^{\top}: \ct{E} \mr{} \ct{E}'$, $f^{\bot}: \ct{B} \mr{} \ct{B}'$, such that
$P' f^\top = f^\bot P$ and 
$f^{\top}$ preserves the Cartesian arrows. 
When $P$ and $P'$ are each equipped with a cleavage, 
$f$ is said to be {\em cleavage-preserving} when $f^{\top}$ maps the arrows of the cleavage of $P$ to arrows in the cleavage of $P'$. 
\end{define}

We fix throughout this section a 2-category $\mathfrak K$. We recall the construction of three {\em 2-categories of arrows} in $\mathfrak K$, or cylinder 2-categories  that can be defined (see for example \cite{Ben}), and we relate them to the 2-categories of fibrations.

\begin{define}
There is a 2-category $\Arrl(\K)$ whose objects are given by arbitrary arrows in $\K$. Arrows are given by squares filled by a 2-cell, and 2-cells are given by a pair of 2-cells satisfying an equation: for such a 2-cell $(\mu^\top,\mu^\bot)$ as on the left below, the two 2-cells $u_1^\bot f \Mr{} f' u_2^\top$ on the right obtained by pasting $\alpha_1$ with $\mu^\top$ and by pasting $\mu^\bot$ with $\alpha_2$ are required to be equal
$$
\vcenter{\xymatrix{
(C,D,f)
\ar@<2ex>[rr]^{(u_1^\top, u_1^\bot, \alpha_1)}
\ar@<-2ex>[rr]_{(u_2^\top, u_2^\bot, \alpha_2)}
\ar@{}[rr]|{\Downarrow (\mu^\top, \mu_\bot)} &&
(C',D',f')
}}
\qquad \qquad
\vcenter{\xymatrix{
&& C' \ar[dd]^{f'} \\
C \ar[dd]_{f} \ar@/^3ex/[rru]^{u_1^\top} \ar@/_3ex/[rru]_{u_2^\top}  \ar@{}[drr]|{\Uparrow \alpha_1 \;\; \Uparrow \alpha_2}  \ar@{}[rru]|{\Downarrow \mu^\top} \\
&& D' \\
D \ar@/^3ex/[rru]^{u_1^\bot} \ar@/_3ex/[rru]_{u_2^\bot} \ar@{}[rru]|{\Downarrow \mu^\bot}
}}
$$

$\Arrl(\K)$ comes equipped with two {\em top} and {\em bottom} 2-functors $\Arrl(\K) \rightarrow \K$. 
There are also two sub-2-categories $\Arrs(\K) \subseteq \Arrp(\K) \subseteq \Arrl(\K)$, with the same objects and 2-cells, where for the arrows the 2-cell filling the square is required to be either invertible or an identity. 

When $\K = \cat$, we also have a 2-category $\fib$ of fibrations with a chosen cleavage over arbitrary bases, whose arrows are given by squares such that the functor on top is Cartesian, that comes equipped with a forgetful 2-functor $\fib \mr{} \Arrs(\cat)$ (that is full on 2-cells).
Similarly, we have a 2-category $\mbf{Opf}$ of opfibrations. 
Finally, we have the sub-2-category 
$\cfib \subseteq \fib$, with the same objects and 2-cells, where for the arrows the functor on top is required to be cleavage-preserving. 
\end{define}

\begin{prop}\label{prop:pullbackspointwise}
\begin{enumerate}
    \item 2-pullbacks are computed pointwise in $\Arrs(\K)$ and preserved by the inclusion 2-functors $\Arrs(\K) \mr{} \Arrp(\K)$ and $\Arrs(\K) \mr{} \Arrl(\K)$.
    \item 2-pullbacks are computed pointwise in $\cfib$ and preserved by both the inclusion  2-functor $\cfib \mr{} \fib$ and the forgetful 2-functor $\cfib \mr{} \Arrs(\cat)$.
\end{enumerate}
\end{prop}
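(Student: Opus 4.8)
The plan is to handle both parts by first reducing to the (strict) 2-pullbacks of the top and bottom components computed in $\K$ (resp.\ in $\cat$), and then verifying the universal property by hand, paying particular attention to the non-identity fillers that appear once one passes to the larger ambient 2-categories. Throughout, ``2-pullback'' means the strict 2-pullback, i.e.\ the conical 2-limit, consistent with the identity-filler morphisms of $\Arrs$.

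For part (1), given a cospan of $\Arrs$-morphisms $(C_1,D_1,f_1) \to (C_0,D_0,f_0) \leftarrow (C_2,D_2,f_2)$ (so both legs are commuting squares), I would form the 2-pullbacks $P^\top := C_1 \times_{C_0} C_2$ and $P^\bot := D_1 \times_{D_0} D_2$ in $\K$, with projections $p_i,r_i$, and let $f_P\colon P^\top \to P^\bot$ be induced by the universal property of $P^\bot$ from $f_1 p_1$ and $f_2 p_2$ (these agree over $D_0$ because the legs are commuting squares and the top projections agree over $C_0$). Declaring the projections to be the evident commuting squares yields a candidate $(P^\top,P^\bot,f_P)$ in $\Arrs$. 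That this is the 2-pullback in $\Arrs$ is essentially the statement that strict 2-limits in the functor 2-category $[\mathbf{2},\K] \cong \Arrs(\K)$ are computed pointwise, i.e.\ at the two objects of $\mathbf{2}$, which are precisely top and bottom; concretely, the one- and two-dimensional universal properties follow by applying those of $P^\top$ and $P^\bot$ separately and checking that the resulting data assemble into $\Arrs$-morphisms and $\Arrs$-2-cells.

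The only real content of part (1) is preservation under the inclusions into $\Arrp$ and $\Arrl$, where a cone out of some $X=(X^\top,X^\bot,g)$ now carries non-identity fillers $\beta_i\colon a_i^\bot g \Mr{} f_i a_i^\top$. I would still produce $h^\top,h^\bot$ from the universal properties of $P^\top$ and $P^\bot$, and then build the required filler $\sigma\colon h^\bot g \Mr{} f_P h^\top$ from the two-dimensional universal property of the strict 2-pullback $P^\bot$: since $r_i h^\bot g = a_i^\bot g$ and $r_i f_P h^\top = f_i a_i^\top$, the pair $(\beta_1,\beta_2)$ is exactly a compatible pair of 2-cells into $P^\bot$, their compatibility over $D_0$ being the equality of composite fillers forced by the cone condition. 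Uniqueness of $\sigma$, hence of $h$, is the faithfulness half of the universal property of $P^\bot$. I expect this filler-reconstruction to be the main technical point of part (1).

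For part (2), with $\K=\cat$, I would take the ordinary pullbacks $\ct E := \ct E_1 \times_{\ct E_0} \ct E_2$ and $\B := \B_1 \times_{\B_0} \B_2$ and the induced $P\colon \ct E \to \B$. The crux is that $P$ is a fibration with a pointwise cleavage: given $u=(u_1,u_2)$ in $\B$ over $PE=(P_1E_1,P_2E_2)$, I take the chosen Cartesian lifts $u_i^*E_i$ in $\ct E_i$; because the two legs are \emph{cleavage-preserving}, their top-component images in $\ct E_0$ are the single chosen Cartesian lift of the common image of $u_1,u_2$ in $\B_0$, so $(u_1^*E_1,u_2^*E_2)$ is a well-defined arrow of $\ct E$ over $u$. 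The key lemma is that a pair $(\phi_1,\phi_2)$ is $P$-Cartesian as soon as each $\phi_i$ is $P_i$-Cartesian: a lifting problem in $\ct E$ projects to lifting problems in $\ct E_1,\ct E_2$ with unique solutions, and these agree over $\ct E_0$ because the legs preserve Cartesianness, so the two images solve one and the same lifting problem against a Cartesian arrow of $\ct E_0$. This simultaneously shows $P$ is a fibration, that the pointwise cleavage is well-defined, and that both projections are cleavage-preserving, hence arrows of $\cfib$. The universal property in $\cfib$ then follows from that of the $\cat$-pullbacks on top and bottom, a mediating functor being automatically cleavage-preserving since both the cleavage on $P$ and the given cone are pointwise. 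Preservation under $\cfib \mr{} \fib$ is handled as in part (1): a not-necessarily-cleavage-preserving cone still induces a mediating functor, which preserves Cartesian arrows by the componentwise characterization above and so lands in $\fib$. Preservation under the forgetful $\cfib \mr{} \Arrs(\cat)$ is then immediate from part (1), since the underlying object $(\ct E,\B,P)$ is by construction the pointwise 2-pullback, already identified there as the 2-pullback in $\Arrs(\cat)$. I expect the identification of $P$-Cartesian arrows with componentwise-Cartesian pairs, and the role of cleavage-preservation in making the pointwise lifts cohere, to be the main obstacle overall.
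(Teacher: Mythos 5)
Your proposal is correct and takes essentially the same approach as the paper: pointwise 2-pullbacks whose universal property against lax/pseudo cones is verified by reconstructing the filler from the two-dimensional universal property of the bottom pullback, and, for part (2), the key lemma that componentwise-Cartesian pairs are Cartesian in $\ct{E}_1\times_{\ct{E}_0}\ct{E}_2$ together with the pointwise cleavage, whose coherence rests exactly on the cleavage-preservation of $s^\top$ and $t^\top$. The only divergence is in how that lemma is proved: the paper uses the SGA1 hom-set-bijection formulation of Cartesianness (a cube diagram chase) and must separately check closure of Cartesian arrows under composition, whereas you argue directly with the strong lifting property of Definition \ref{def:plain_old_cartesian}, which makes that extra step unnecessary --- an inessential variation.
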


In order not to interrupt the exposition, we have deferred the proof of this proposition to Appendix \ref{sec:proof_in_Appendix}.

\subsection{Double Fibrations}

Next, we recall the definition of a pseudo category object in a 2-category $\mathfrak K$.

\begin{define}[\S 1, \cite{Nelson}]\label{def:pseudocategory} 
    Let $\mathfrak K$ denote a 2-category. A \textbf{pseudo category} in $\mathfrak K$, denoted by $\bb C$, consists of 
    \begin{enumerate}
        \item An object of objects $C_0$ and an object of arrows $C_1$, together with morphisms $\src,\tgt\colon C_1\rightrightarrows C_0$, standing for source and target respectively, such that the iterated 2-pullbacks of $C_1$ over $C_0$ (as appearing in the diagrams below) exist.
        \item two morphisms $y\colon C_0\to C_1$ and $\otimes \colon C_1\times_{C_0}C_1 \to C_1$ standing for identity and composition, respectively, where $C_1\times_{C_0}C_1$ denotes the 2-pullback
            \begin{equation} \label{eq:2pbinpseudocatobject}
            \xymatrix{
                C_1\times_{C_0}C_1 \ar[d]_{\pi_1} \ar[r]^-{\pi_2} & C_1 \ar[d]^{\src} \\
                C_1 \ar[r]_{\tgt} & C_0
            }
            \end{equation}
        \item and finally iso 2-cells
            $$\xymatrix{ 
                \ar@{}[drr]|{\substack{\mathfrak a\\\cong}} C_1\times_{C_0}\times C_1\times_{C_0}C_1 \ar[d]_{\otimes \times 1} \ar[rr]^{1\times \otimes} && C_1\times_{C_0}C_1 \ar[d]^\otimes \\
                C_1\times_{C_0}C_1 \ar[rr]_\otimes && C_1
            }$$
        and
            $$\xymatrix{  
                C_1 \ar[rr]^-{\langle y, 1\rangle} \ar@{=}@/_1.0pc/[drr] &  & \ar@{}[dl]|{\substack{\mathfrak l\\\cong}} C_1\times_{C_0}C_1 \ar[d]|\otimes \ar@{}[dr]|{\substack{\mathfrak r\\ \cong}}& & \ar[ll]_-{\langle 1,y\rangle} \ar@{=}@/^1.0pc/[dll] C_1 \\
                & & C_1 & & 
            }$$
    \end{enumerate}
    satisfying axioms as in the reference. A \textbf{category} in $\mathfrak K$ is a pseudo category in $\mathfrak K$ where $\mathfrak a$, $\mathfrak l$ and $\mathfrak r$ are identities. Given two arrows $\src,\tgt\colon C_1\rightrightarrows C_0$, we refer to the remaining data (satisfying the axioms), as a \textbf{pseudo-category structure for $\src$ and $\tgt$}.
\end{define}

\begin{remark}
    We will assume internal categories as above are \textbf{normalized} in the sense that $\mfrk l$ and $\mfrk r$ are identities. This follows the double-categorical conventions of \cite{GrandisPare1999}.
\end{remark}

\begin{example}
    A pseudo category in a 1-category $\C$ with finite limits regarded as a discrete 2-category is an internal category in the ordinary sense.
\end{example}

\begin{example} \label{ex:monoidascategory}
When $\mathfrak K$ has a terminal object $1$, and $C$ is an object of $\mathfrak K$, a pseudo-category structure for $C\rightrightarrows 1$ is a pseudo-monoid structure for $C$ (with the monoidal structure induced by the Cartesian product). In particular, for $\mathfrak K = \cat$, a pseudo-category structure for $\ct{C}\rightrightarrows 1$ is a monoidal-category structure for $\ct{C}$.
\end{example}

\begin{example} 
    A pseudo category in $\cat$ is a (pseudo) double category \cite[\S 7.1]{GrandisPare1999}. Thus, a double category $\D$ consists of objects $A, B, C, D \ldots$, arrows $f\colon A\to B$ (the objects and arrows of $\mathscr D_0$); proarrows $m\colon C\hto D$ and cells of the form
        $$\xymatrix{
            \ar@{}[dr]|{\theta} A\ar[d]_f\ar[r]^m|-@{|} & B \ar[d]^g \\
            C \ar[r]_n|-@{|} & D
        }$$
    (that is, the objects and arrows of $\mathscr D_1$). For such a cell, $m$ is the ``internal domain" of $\theta$, $n$ is its ``internal codomain," (the domain and codomain in $\mathscr D_1$) and $f$ and $g$ are the external source and target, respectively. External composition is denoted by $\otimes\colon \mathscr D_1\times_{\mathscr D_0}\mathscr D_1\to\mathscr D_1$ and the external unit is denoted by $y\colon \mathscr D_0\to \mathscr D_1$. Double categories will always be assumed to be pseudo in the sense that external composition is associative up to coherent isomorphism. They will also be assumed to be normalized in that the unit comparisons are strict identities.
\end{example}

The following is the main definition of this paper. 

\begin{define} \label{def:double_fibration}
    A \textbf {double fibration} $P\colon \bb E\to\bb B$ is given by a 
    pseudo-category structure in $\fib$, $(P_1,P_0,\src,\tgt,\iota,\phi,{\mathfrak a},{\mathfrak r},{\mathfrak l})$, such that $\src,\tgt: P_1 \rightrightarrows P_0$ preserve the chosen cleavages.
\end{define}

A {\bf cleavage} for a double fibration is a choice of a cleavage for $P_0$ and a cleavage for $P_1$ such that $\src,\tgt: P_1 \rightrightarrows P_0$ are cleavage-preserving. Note that, by definition, any double fibration is equipped with a cleavage, which we omit from the notation.

\begin{remark}
    Note that since $\src,\tgt: P_1 \rightrightarrows P_0$ are required to preserve the chosen cleavages, that is, are arrows in $\cfib$, by Proposition \ref{prop:pullbackspointwise} the pullbacks appearing in Definition \ref{def:pseudocategory} exist in $\fib$ as required by the definition.
\end{remark}

\begin{remark}
    Likewise, a \textbf{double opfibration} is a pseudo-category structure in $\mbf{Opf}$ such that the source and target preserve chosen ``opcleavages."
\end{remark}

We will show below how a double fibration can also be seen as a \emph{strict} double functor between pseudo double categories satisfying certain properties (for details, see Definition \ref{def:unwinded_double_fibration}).

\begin{define}[\S 2, \cite{Nelson}] \label{def:laxfunctor}
    A \textbf{lax functor} $F\colon \mathbb C\to\D$ between pseudo categories $\mathbb C$ and $\D$ in a 2-category $\K$ with 2-pullbacks consists of two arrows $F_0\colon C_0\to D_0$ and $F_1\colon C_1\to D_1$ and comparison morphisms
        $$\xymatrix{
            \ar@{}[dr]|{\substack{\phi\\\Rightarrow}} C_1\times_{C_0}C_1 \ar[d]_{F_1\times_{F_0}F_1} \ar[r]^-\otimes & C_1 \ar[d]^{F_1} & & \ar@{}[dr]|{\substack{\iota\\\Rightarrow}} C_0 \ar[d]_{F_0} \ar[r]^y & C_1 \ar[d]^{F_1} \\
            D_1\times_{D_0}D_1 \ar[r]_-\otimes & D_1 & & D_0 \ar[r]_y & D_1
        }$$
    strictly preserving source and target and satisfying three familiar equalities between 2-cells that can be found in \cite[(2.5),(2.6)]{Nelson} involving $\phi$, $\iota$, and the associators and unitors of $\bb C$ and $\bb D$. A lax functor is \textbf{pseudo} {(resp.~\textbf{strict})} if the cells $\phi$ and $\iota$ are invertible {(resp.~identities)}. A lax functor is \textbf{unitary} if $\iota$ is an identity.
\end{define}

We note that the three conditions in Definition \ref{def:laxfunctor} can be recovered by replacing in items 2 and 3 of Definition \ref{def:lax_double_pseudo_functor} the modifications $\Phi, \Lambda$, and $\Rho$ by identities.

\begin{example} \label{ex:laxfunctorbetweendoublecategories}
    A lax (resp. pseudo, resp. strict) functor in $\cat$ is an ordinary lax (resp. pseudo, resp. strict) functor between double categories. Lax functors between double categories are also sometimes called lax double functors.
\end{example}

\begin{example} \label{ex:monoidfunctoraslaxfunctor}
    Assume $\K$ has a terminal object $1$. When $C,D$ are pseudo monoids in $\K$, viewed as pseudo categories $\bb C, \bb D$ as shown in Example \ref{ex:monoidascategory}, a lax (resp. pseudo, resp. strict) functor $F\colon \mathbb C\to\D$ is the same as a lax (resp. strong, respt strict) monoidal functor $F\colon C\to D$.
\end{example}

Any pseudo-category structure in one of the three arrow 2-categories introduced above, for which the source and target are strict, is the same as a lax (resp. pseudo, resp. strict) functor:

\begin{lemma} \label{lem:laxfunctorascategoryobject}
Given a diagram  $\src: F_1 \rightarrow F_0 \leftarrow F_1:\tgt$  in $\Arrs(\K)$, a pseudo-category structure
$\bb{F} = (F_1,F_0,\src,\tgt,\iota,\phi,{\mathfrak a},{\mathfrak r},{\mathfrak l})$ 
in $\Arrl(\K)$ (resp. $\Arrp(\K)$, resp. $\Arrs(\K)$)
is the same as two pseudo-category structures $\bb{C}$ and $\bb{D}$ that are obtained by applying the top and bottom 2-functors respectively to $\bb{F}$, that we denote
$$\bb{C} = (C_1,C_0,\src^\top,\tgt^\top,\iota^\top,\phi^\top,{\mathfrak a}^\top,{\mathfrak r}^\top,{\mathfrak l}^\top), \qquad 
\bb{D} = (D_1,D_0,\src^\bot,\tgt^\bot,\iota^\bot,\phi^\bot,{\mathfrak a}^\bot,{\mathfrak r}^\bot,{\mathfrak l}^\bot);
$$
and a lax (resp. pseudo, resp. strict) functor 
$$
F = (F_1,F_0,\iota,\phi)
$$
\end{lemma}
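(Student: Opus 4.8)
The plan is to prove the statement by directly unwinding the definitions, exploiting that both sides are assembled from the same raw data in $\K$. The key structural fact is that an arrow of $\Arrl(\K)$ from one object (an arrow of $\K$) to another is precisely a pair of arrows of $\K$ together with a single filling $2$-cell, and that such an arrow lies in the subcategory $\Arrs(\K)$ exactly when that filling $2$-cell is an identity; dually, a $2$-cell of $\Arrl(\K)$ is a pair $(\mu^\top,\mu^\bot)$ of $2$-cells of $\K$ subject to the one compatibility equation recorded in the definition of $\Arrl(\K)$.

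First I would apply the top and bottom $2$-functors $\Arrl(\K)\to\K$ to every piece of $\bb{F}$. Because these are $2$-functors and, by Proposition \ref{prop:pullbackspointwise}(1), $2$-pullbacks in $\Arrs(\K)$ are computed pointwise and preserved by the inclusion into $\Arrl(\K)$, the object $F_1\times_{F_0}F_1$ has top $C_1\times_{C_0}C_1$ and bottom $D_1\times_{D_0}D_1$. Hence the top (resp.\ bottom) components of $\src,\tgt,\iota,\phi$ and the top (resp.\ bottom) halves of $\mathfrak a,\mathfrak l,\mathfrak r$ assemble into data of the shape required in Definition \ref{def:pseudocategory}, and the top (resp.\ bottom) projections of the associativity and unit equations of $\bb{F}$ are exactly the corresponding axioms. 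This yields the two pseudo-category structures $\bb{C}$ and $\bb{D}$.

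Next I would extract the lax functor $F=(F_1,F_0,\iota,\phi)$. The hypothesis that $\src$ and $\tgt$ lie in $\Arrs(\K)$ means their filling $2$-cells are identities, which is precisely the requirement that $F_0,F_1$ strictly preserve source and target. The filling $2$-cells of the identity-assigning arrow $\iota$ and of the composition arrow $\phi$ of $\bb{F}$ are then, by inspection of the directions, exactly the comparison $2$-cells of a lax functor as in Definition \ref{def:laxfunctor} --- this is why the same symbols $\iota,\phi$ are reused on both sides. It remains to match the coherence: every $2$-cell of $\Arrl(\K)$ carries, beyond its top and bottom components, the one compatibility equation from the definition, and writing this out for each of $\mathfrak a,\mathfrak l,\mathfrak r$ and cancelling the contributions already accounted for by $\bb{C}$ and $\bb{D}$ should leave exactly the three lax-functor equations of Definition \ref{def:laxfunctor} (Nelson's (2.5)--(2.6)), and nothing more.

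I expect this last step to be the main obstacle: it is a direct but fiddly pasting computation to verify that the compatibility equations attached to $\mathfrak a,\mathfrak l,\mathfrak r$ reduce verbatim to the lax-functor axioms, with no spurious extra conditions and none missing. Once this is done, the $\Arrp(\K)$ and $\Arrs(\K)$ variants are immediate: an arrow of $\Arrl(\K)$ lies in $\Arrp(\K)$ (resp.\ $\Arrs(\K)$) iff its filling $2$-cell is invertible (resp.\ an identity), so requiring $\iota$ and $\phi$ to be arrows of $\Arrp(\K)$ (resp.\ $\Arrs(\K)$) is exactly requiring $F$ to be pseudo (resp.\ strict). Finally, all of these constructions are manifestly reversible --- reassembling $\bb{C}$, $\bb{D}$ and $F$ into the pieces of a pseudo-category structure in $\Arrl(\K)$ --- which gives the asserted bijective correspondence.
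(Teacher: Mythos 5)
Your proposal is correct and follows essentially the same route as the paper's proof: invoke Proposition \ref{prop:pullbackspointwise} to compute the 2-pullbacks pointwise, then observe that the data translates directly, with the compatibility equations making ${\mathfrak a}$, ${\mathfrak l}$, ${\mathfrak r}$ into 2-cells of $\Arrl(\K)$ being precisely the three lax-functor axioms of Definition \ref{def:laxfunctor}. The only difference is one of emphasis: the paper compresses your ``fiddly pasting computation'' into the single remark that ``it's just translation.''
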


\begin{proof}
First recall from Proposition \ref{prop:pullbackspointwise} that 2-pullbacks of strict diagrams are computed pointwise in the three 2-categories. Then it's just translation: the `three familiar equalities' of Definition \ref{def:laxfunctor} are precisely those making ${\mathfrak a},{\mathfrak r}$, and ${\mathfrak l}$ into 2-cells of  $\Arrl(\K)$.
\end{proof}

When $\mathfrak K = \cat$, the same proof of Lemma \ref{lem:laxfunctorascategoryobject} (using item 2 of Proposition \ref{prop:pullbackspointwise} instead of item 1) yields:

\begin{cor}[of Lemma \ref{lem:laxfunctorascategoryobject}] \label{cor:3differentcatobjectsinFib}
Given a diagram $\src: P_1 \rightarrow P_0 \leftarrow P_1: \tgt$  in $\cfib$, a pseudo-category structure
$(P_1,P_0,\src,\tgt,\iota,\phi,{\mathfrak a},{\mathfrak r},{\mathfrak l})$ 
in  $\fib$ (resp. $\cfib$)
is the same as two pseudo-double-category structures obtained by applying the top and bottom 2-functors,
$$\bb{E} = (\ct{E}_1,\ct{E}_0,\src^\top,\tgt^\top,\iota^\top,\phi^\top,{\mathfrak a}^\top,{\mathfrak r}^\top,{\mathfrak l}^\top), \qquad 
\bb{B} = (\ct{B}_1,\ct{B}_0,\src^\bot,\tgt^\bot,\iota^\bot,\phi^\bot,{\mathfrak a}^\bot,{\mathfrak r}^\bot,{\mathfrak l}^\bot)
$$
and a strict functor 
$$
P = (P_1,P_0,\iota,\phi)
$$ 
such that $\iota$ and $\phi$ are arrows in $\fib$ (resp. $\cfib$). \qed
\end{cor}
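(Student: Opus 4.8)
The plan is to reproduce the proof of Lemma \ref{lem:laxfunctorascategoryobject}, with the pair of fibration 2-categories $\cfib \subseteq \fib$ playing the role of the arrow 2-categories $\Arrs(\K) \subseteq \Arrl(\K)$, and with item 2 of Proposition \ref{prop:pullbackspointwise} invoked wherever the Lemma used item 1.

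First I would settle the pullbacks. Since the diagram $\src: P_1 \to P_0 \leftarrow P_1: \tgt$ is given in $\cfib$, item 2 of Proposition \ref{prop:pullbackspointwise} guarantees that the iterated 2-pullbacks demanded by Definition \ref{def:pseudocategory} exist in $\cfib$, are computed pointwise, and are preserved by both $\cfib \to \fib$ and $\cfib \to \Arrs(\cat)$. Postcomposing the latter forgetful 2-functor with the top and bottom 2-functors $\Arrs(\cat) \to \cat$---which preserve such pullbacks by item 1 applied to $\K = \cat$---shows that applying ``top'' and ``bottom'' to the whole internal structure yields genuine pseudo-category structures in $\cat$, namely the pseudo double categories $\bb{E}$ and $\bb{B}$.

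The rest is translation, exactly as in the Lemma. Every arrow of $\fib$ (a fortiori of $\cfib$) decomposes into a top functor, a bottom functor, and a filling 2-cell; since the forgetful 2-functor $\fib \to \Arrs(\cat)$ lands among strictly commuting squares this filling cell is an identity, and since it is full on 2-cells a 2-cell of $\fib$ is precisely a compatible pair of natural transformations. Reading off the top and bottom components of $\src, \tgt, \iota, \phi$ produces the source, target, unit, and composition of $\bb{E}$ and of $\bb{B}$; the identity filling cells of $\iota$ and $\phi$ force the assembled double functor $P = (P_1, P_0, \iota, \phi)$ to be \emph{strict}; the $\Arrs(\cat)$-compatibility carried by $\mathfrak{a}, \mathfrak{r}, \mathfrak{l}$ becomes exactly the three equalities of Definition \ref{def:laxfunctor} with $\iota, \phi$ taken to be identities, i.e.\ strict preservation of associators and unitors; and the single remaining datum is the assertion that $\iota$ and $\phi$ are arrows of $\fib$ (resp.\ $\cfib$), i.e.\ that their top functors preserve Cartesian arrows (resp.\ preserve the cleavages). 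The reverse assignment reassembles any such triple $(\bb{E}, \bb{B}, P)$ into an internal structure, and the two passages are visibly mutually inverse.

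The one genuinely new point, and the step I would treat most carefully, is the interplay between $\cfib$ and $\fib$: the composition domain $P_1 \times_{P_0} P_1$ must be formed in $\cfib$, since only $\src$ and $\tgt$ are assumed cleavage-preserving, yet it must also serve as the domain of $\phi$ when the pseudo-category structure is only required to live in the larger $\fib$. This is legitimate precisely because item 2 of Proposition \ref{prop:pullbackspointwise} asserts that the $\cfib$-pullback is preserved by the inclusion $\cfib \to \fib$, so there is no discrepancy to reconcile. With that proposition in hand the corollary is immediate.
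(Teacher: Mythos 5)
Your proposal is correct and takes essentially the same approach as the paper: the paper's entire proof is that the Corollary follows by rerunning the proof of Lemma \ref{lem:laxfunctorascategoryobject} with $\K = \cat$, substituting item 2 of Proposition \ref{prop:pullbackspointwise} for item 1, and then translating---which is exactly your plan, with the translation step (identity filling cells forcing $P$ strict, the compatibility of $\mathfrak a, \mathfrak r, \mathfrak l$ as 2-cells giving the coherence equalities, and Cartesian-arrow/cleavage preservation of $\iota$ and $\phi$ as the residual condition) spelled out in more detail than the paper bothers to. Your closing point about forming $P_1 \times_{P_0} P_1$ in $\cfib$ and transporting it along the inclusion into $\fib$ is also the same observation the paper records in the remark following Definition \ref{def:double_fibration}.
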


We give now the remaining 2-categorical structure on pseudo categories in a 2-category $\K$. The 2-cells are (strictly natural) transformations as in \cite{Nelson}.

\begin{define}[\S 3, \cite{Nelson}] \label{define:TransformationOfInternalPseudoFunctors}
    Let $F,G\colon \bb C\rightrightarrows\bb D$ denote lax functors of pseudo categories in a 2-category $\K$ with 2-pullbacks. A \textbf{transformation} $\tau\colon F\Rightarrow G$ consists of a pair of 2-cells $\tau_0\colon F_0 \Rightarrow G_0$ and $\tau_1\colon F_1\Rightarrow G_1$ satisfying the conditions that the internal domains and codomains are well-typed and  
    two equalities involving also the comparison cells of $F$ and $G$, that can be found in \cite[(3.2)]{Nelson}. We denote by $\pscat(\K)$ the 2-category of (normalized) pseudo categories, unitary pseudo functors, and transformations in $\K$. There are then further 2-categories 
        \[
            \pscat_s(\K) \subseteq \pscat(\K) \subseteq \pscat_\ell(\K)
        \]
    whose arrows are respectively the strict and lax functors.
\end{define}

Again, we note that the reader can also recover the two equalities in Definition  \ref{define:TransformationOfInternalPseudoFunctors} by replacing the modifications $\Tau$ and $\Iota$ by identities in items 2 and 3 of Definition \ref{define:DoublePseudoNaturalTransformation}.

\begin{example} \label{ex:MoninsidePsCat}
    Let $\K$ be a 2-category with 2-pullbacks and a terminal object, seen as a Cartesian monoidal 2-category. The constructions in Example \ref{ex:monoidfunctoraslaxfunctor} extend to an inclusion 2-functor $\Mon_\ell(\K) \mr{} \pscat_\ell(\K)$, that is full-and-faithful on 1- and 2-cells, where $\Mon_\ell(\K)$ is the 2-category of pseudo monoids in $\K$, lax morphisms, and 2-cells, recalled for example in \cite[pp. 1169-1170]{MV}. This restricts of course to  $\Mon(\K) \mr{} \pscat(\K)$ and  $\Mon_s(\K) \mr{} \pscat_s(\K)$
\end{example}

The following is the {\em 1-dimension up version} of Lemma \ref{lem:laxfunctorascategoryobject}:

\begin{lemma} \label{lem:transformationaslaxfunctor}
Let $F: \bb{C} \mr{} \bb{D}$, $G: \bb{A} \mr{} \bb{B}$ be two lax functors between 
pseudo categories in $\K$, corresponding by Lemma \ref{lem:laxfunctorascategoryobject} to two pseudo-category structures $\bb{F},\bb{G}$ in $\Arrl(\K)$ (with strict source and targets).
Then a lax (resp. pseudo, resp. strict) functor $T = (T_1,T_0,\iota,\phi): \bb{F} \mr{} \bb{G}$, with 
$T_1 = (T_1^\top,T_1^\bot,\tau_1)$ and
$T_0 = (T_0^\top,T_0^\bot,\tau_0)$,
is the same as two lax (resp. pseudo, resp. strict) functors ${H}$ and ${K}$ that are obtained by applying the top and bottom 2-functors respectively to $T$, that we denote
$$
{H} = (T_1^\top,T_0^\top,\iota^\top,\phi^\top) \qquad 
{K} = (T_1^\bot,T_0^\bot,\iota^\bot,\phi^\bot)
$$
and a transformation $\tau = (\tau_1,\tau_0)$ filling the square
$$
\xymatrix{
        \ar@{}[dr]|{\Uparrow \, \tau} {\bb C}\ar[d]_F\ar[r]^H & {\bb A} \ar[d]^G \\
        {\bb D} \ar[r]_K & {\bb B}
    }
$$
When $\bb{F},\bb{G}$ are pseudo categories in $\Arrp(\K)$, resp. $\Arrs(\K)$, internal lax functors $T$ in these two 2-categories correspond as above to $(H,K,\tau)$ such that $\tau$ is invertible, resp. an identity (and as above, $H$ and $K$ are {\em as strict as} $T$).
\end{lemma}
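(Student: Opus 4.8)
The plan is to mirror the proof of Lemma \ref{lem:laxfunctorascategoryobject} one dimension up, treating the whole statement as an exercise in unwinding definitions once 2-pullbacks are known to be pointwise. First I would invoke Proposition \ref{prop:pullbackspointwise} to guarantee that all the 2-pullbacks used to express the lax-functor data of $T$ (and the pseudo-category structures of $\bb F$ and $\bb G$) are computed pointwise in $\Arrl(\K)$, so that applying the top and bottom 2-functors $\Arrl(\K) \to \K$ commutes with forming them. This is exactly what lets the decomposition go through, just as pointwise pullbacks were the single substantive input in Lemma \ref{lem:laxfunctorascategoryobject}.

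Next I would unwind the data of $T$. An arrow $T_0 \colon F_0 \to G_0$ in $\Arrl(\K)$ is by definition a cylinder $(T_0^\top, T_0^\bot, \tau_0)$, and likewise $T_1 = (T_1^\top, T_1^\bot, \tau_1)$; applying the top 2-functor to all of $T$ produces $H = (T_1^\top, T_0^\top, \iota^\top, \phi^\top)$ and applying the bottom 2-functor produces $K = (T_1^\bot, T_0^\bot, \iota^\bot, \phi^\bot)$. Since the top and bottom 2-functors preserve the pointwise 2-pullbacks, they carry $T$ to lax functors $H \colon \bb C \to \bb A$ and $K \colon \bb D \to \bb B$. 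The three equations of Definition \ref{def:laxfunctor} for $T$ are 2-cell equations in $\Arrl(\K)$, hence hold if and only if they hold after applying top and bottom separately, i.e.\ if and only if $H$ and $K$ each satisfy them; this settles the ``$H$ and $K$ are lax functors'' half.

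The heart of the argument is reading off the transformation $\tau = (\tau_1, \tau_0)$. The 2-cell components $\tau_0, \tau_1$ of the cylinders $T_0, T_1$ supply the two 2-cells required by Definition \ref{define:TransformationOfInternalPseudoFunctors}, and well-typedness of their internal source and target is precisely the statement that $T_0, T_1$ are cylinders compatible with $\src$ and $\tgt$. The two compatibility equations of Definition \ref{define:TransformationOfInternalPseudoFunctors}, relating $\tau$ to the comparison cells of $F, G, H, K$, should be exactly the defining equation of a 2-cell of $\Arrl(\K)$ applied to the comparison 2-cells $\iota$ and $\phi$ of $T$: the two pastings (with the top components and with the bottom components) are required to agree. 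Verifying that these two equations coincide is the one genuinely non-formal step, and the place where I expect the bookkeeping of pasting diagrams to need care; everything else is a direct transcription, and the decomposition and its reassembly are mutually inverse since both are literal re-readings of the same data, so the correspondence is a bijection.

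Finally, for the refined statement I would observe that when $\bb F, \bb G$ are pseudo categories in $\Arrp(\K)$ (resp.\ $\Arrs(\K)$) the arrows $T_0, T_1$ are arrows of $\Arrp(\K)$ (resp.\ $\Arrs(\K)$), so their filling 2-cells $\tau_0, \tau_1$ are invertible (resp.\ identities); hence $\tau$ is invertible (resp.\ an identity), with no other change. The laxness, pseudoness, or strictness of $H$ and $K$ tracks that of $T$ because $\iota^\top, \phi^\top$ and $\iota^\bot, \phi^\bot$ are invertible (resp.\ identities) exactly when $\iota, \phi$ are, the top and bottom maps being 2-functors.
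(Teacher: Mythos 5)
Your proposal is correct and takes essentially the same approach as the paper: the paper's proof likewise leans on Lemma \ref{lem:laxfunctorascategoryobject} (with its pointwise 2-pullbacks from Proposition \ref{prop:pullbackspointwise}) and then makes exactly your key observation, namely that the two equalities of Definition \ref{define:TransformationOfInternalPseudoFunctors} instantiated at $\tau$ are precisely the cylinder conditions making $\iota$ and $\phi$ into 2-cells of $\Arrl(\K)$. Your additional bookkeeping (componentwise equality of 2-cells giving the lax-functor axioms for $H$ and $K$, and the invertibility/identity tracking for the $\Arrp(\K)$ and $\Arrs(\K)$ cases) is just the "translation" the paper leaves implicit.
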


\begin{proof}
As in Lemma \ref{lem:laxfunctorascategoryobject}, this is just translation: the two equalities of Definition \ref{define:TransformationOfInternalPseudoFunctors} instantiated at $\tau$ are precisely those making $\iota$ and $\phi$ into 2-cells of $\Arrl(\K)$.
\end{proof}

\begin{define} \label{define:EquivalenceOfPseudoCats}
An \textbf{equivalence} of pseudo categories in a 2-category $\K$ with 2-pullbacks is an equivalence in the 2-category $\pscat(\K)$.
\end{define}

\begin{example} \label{ex:the2categoryDblcatv}
When $\K = \cat$, so that internal lax functors are lax functors between pseudo double categories, a transformation between them in the sense of Definition \ref{define:TransformationOfInternalPseudoFunctors} is the usual notion of transformation between lax functors. We have in this way a 2-category $\Dblcatv = \pscat(\cat)$ of pseudo double categories, pseudo double functors, and transformations; as well as its lax- and strict-functor versions $\Dblcatvl$, $\Dblcatvs$ that will be relevant in Section \ref{sec:internal}.

The construction in Example \ref{ex:MoninsidePsCat} yields in this case the usual way of seeing monoidal categories as double categories, as in Example \ref{ex:monoidascategory}, in the form of a 2-functor $\Moncat \mr{} \Dblcatv$, whose lax and strict versions we omit.
\end{example}

We consider now an arbitrary strict double functor $P$ between two pseudo double categories $\bb{E}$ and $\bb{B}$, given by the following diagram (where we omit the higher associativity and unitality invertible 2-cells of the pseudo double categories, which do not play an important role here)

\begin{equation} \label{eq:unwinded}
\xymatrix@R=3pc@C=3pc{
{\ct{E}}_1 \times_{{\ct{E}}_0} {\ct{E}}_1 \ar[d]_{P_1 \times_{P_0} P_1} \ar[r]^-{\otimes^\top} &
{\ct{E}}_1 \ar[d]^{P_1} \ar@<1.25ex>[r]^{\src^\top} \ar@<-1.25ex>[r]_{\tgt^\top} & {\ct{E}}_0 \ar[d]^{P_0} \ar[l]|{y^\top} \\ 
{\ct{B}}_1 \times_{{\ct{B}}_0} {\ct{B}}_1 \ar[r]^-{\otimes^\bot} & {\ct{B}}_1 \ar@<1.25ex>[r]^{\src^\bot}  \ar@<-1.25ex>[r]_{\tgt^\bot} & {\ct{B}}_0 \ar[l]|{y^\bot}}
\end{equation}

\begin{define}\label{def:unwinded_double_fibration}
A strict double functor $P$ \textbf{defines a double fibration} when $P_0$ and $P_1$ are fibrations, there exist a cleavage for $P_0$ and a cleavage for $P_1$ such that $\src^\top$ and $\tgt^\top$ are simultaneously cleavage-preserving, and $y^\top$ and $\otimes^\top$ preserve the Cartesian arrows.
\end{define}

Note that such a strict double functor $P\colon \bb E\to\bb B$, together with a choice of the cleavages of $P_0$ and $P_1$, defines indeed by Corollary \ref{cor:3differentcatobjectsinFib} a double fibration $P\colon \bb E\to\bb B$ as in Definition \ref{def:double_fibration}. 

In Section \ref{sec:internal}, we will show that the conditions in Definition \ref{def:unwinded_double_fibration} are equivalent to requiring $P$ to be an internal fibration in the 2-category $\Dblcatv$.

\subsection{Examples}

Our first example of a double fibration is a double-categorical analogue of the domain fibration $\dom\colon \C^{\mbf 2}\to\C$ for a category $\C$. 

\begin{example} \label{example:DomainDoubleFibration}
    Let $\bb D$ be an arbitrary double category.  Let $\bb D^{\mbf 2}$ denote the double category with category of objects $\bb D_0^{\mbf 2}$ and category of proarrows $\bb D_1^{\mbf 2}$. Its external operations are those induced from the external operations coming with $\bb D$. The projection $\dom\colon \bb D^{\mbf 2}\to \bb D$ taking an arrow to its domain and a cell to its (internal) domain is a double fibration. If $C$ is any object of $\bb D$, the double category $\bb D/C$ is formed by the comma construction of \S 1.7 of \cite{GrandisPare1999}. Equivalently, it is the sub-double category of $\bb D^{\mbf 2}$ consisting of those squares in $\bb D_0$ and in $\bb D_1$ that project to $C$ and the proarrow identity on $C$ when taking internal codomains. Again the domain projection $\dom\colon \bb D/X\to\bb D$ is a double fibration.
\end{example}
 
\begin{example} \label{example:ImageFromSpanstoRel}
    The double functor $\im\colon \Span \to\Rel$ taking a span $A \leftarrow S \to B$ to the image of $S\to A\times B$ is a double opfibration.
\end{example}

\begin{example}[Family Double Fibration] \label{example:FamilyDoubleFibration}
    Let $\C$ denote an ordinary small category. Define the pseudo double category $\dblfam(\C)$ in the following way:
        \begin{enumerate}
            \item objects: set-indexed families $\lbrace C_i\rbrace_{i\in I}$ of objects of $\C$, equivalently, functors $f\colon I\to \C$ with $I$ viewed as a discrete category;
            \item arrows $f\to g$: pairs $(h,\alpha)$ consisting of a set function $h\colon I\to J$ and a transformation $\alpha\colon f\Rightarrow gh$;
            \item proarrows: natural transformations of the form
                $$\xymatrix{
                    \ar@{}[dr]|{\mathclap{\substack{\theta \\ \Rightarrow }}} S \ar[d]_{d_0} \ar[r]^{d_1} & Y \ar[d]^g \\
                    X \ar[r]_f & \C
                }$$
            where $X\xleftarrow{d_0} S \xrightarrow{d_1} Y$ is a span of sets;
            \item cells: a cell from
                $$\xymatrix{ 
                    \ar@{}[dr]|{\mathclap{\substack{\theta \\ \Rightarrow }}} S \ar[d]_{d_0} \ar[r]^{d_1} & Y \ar[d]^g 
                        & \ar@{}[dr]|{\mbox{to}} && \ar@{}[dr]|{\mathclap{\substack{\delta \\ \Rightarrow }}} T \ar[d]_{d_0} \ar[r]^{d_1} & W \ar[d]^k \\
                    X \ar[r]_f & \C & && Z \ar[r]_h & \C
                }$$
            is a morphism of spans
                $$\xymatrix{ 
                    X\ar[d]_p & \ar[l]_{d_0} S \ar[r]^{d_1}  \ar[d]^q& Y \ar[d]^r \\
                    Z & \ar[l]^{d_0} T \ar[r]_{d_1} & W
                }$$
            together with cells $\alpha \colon f\Rightarrow hp$ and $\beta\colon g\Rightarrow kr$ providing the external source and target; this data should satisfy a cocycle condition, namely, that the equation $(\beta\ast d_1)\theta = (\delta\ast q)(\alpha\ast d_0)$ holds. This condition amounts to asking that each square 
                $$\xymatrix{
                    \cdot \ar[d]_{\alpha_{d_0s}} \ar[r]^{\theta_s} & \cdot \ar[d]^{\beta_{d_1s}} \\
                    \cdot \ar[r]_{\delta_{qs}} & \cdot
                 }$$
            commutes in $\C$. Internal composition of cells is just composing span morphisms and whiskering 2-cells. The square above makes it easy to check that this is well-defined, associative and unital. External composition of proarrows and cells is given by pulling back and using the composition coming with $\C$. External units are identity spans with identity transformations. This makes $\dblfam(\C)$ a pseudo double category.
        \end{enumerate}
    Notice that $\dblfam(\C)_0$ is the ordinary category of set-indexed families in $\C$. The projection to $\set$, denoted by $\Pi_0\colon \dblfam(\C)_0\to \set$ is a split fibration (see Definition 1.2.1 \cite{Jacobs}). The chosen Cartesian arrow above a given set function is given by reindexing the given family by that set function.  We then extend $\Pi_0$ to a strict double functor $\Pi\colon \dblfam(\C)\to \Span$ by taking a proarrow to its underlying span and a cell to its underlying span morphism.
\end{example}

\begin{prop} 
    The projection $\Pi\colon \dblfam(\C)\to \Span$ is a split double fibration.
\end{prop}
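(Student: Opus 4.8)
The cleanest route is to invoke Corollary \ref{cor:3differentcatobjectsinFib}, which reduces ``being a double fibration'' to checking the concrete conditions collected in Definition \ref{def:unwinded_double_fibration}: namely that $\Pi_0$ and $\Pi_1$ are fibrations, that there are cleavages for which $\src^\top,\tgt^\top$ are simultaneously cleavage-preserving, and that $y^\top$ and $\otimes^\top$ preserve Cartesian arrows. The word ``split'' indicates we must exhibit cleavages that are genuinely functorial (closed under composition and containing identities), not merely that Cartesian lifts exist. So the strategy is to write down an explicit cleavage on $\Pi_1\colon \dblfam(\C)_1 \to \Span_1$ by reindexing, observe that it restricts correctly on objects to recover the already-described split cleavage of $\Pi_0$, and then verify the three preservation conditions by direct inspection.

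\textbf{First step: the cleavage on proarrows.} Given a proarrow $\theta$ of $\dblfam(\C)$ lying over a span $X \xleftarrow{d_0} S \xrightarrow{d_1} Y$, and a morphism of spans into it (a cell in $\Span_1$) with lower span $Z \xleftarrow{} T \xrightarrow{} W$ mapping via $(p,q,r)$, the Cartesian lift should be obtained by reindexing along the legs. Concretely, from the target family data $\theta$ one reindexes the top families along $p$ and $r$ using the split cleavage of $\Pi_0$, and pulls back the transformation $\theta$ along $q$ to produce the lifted cell; the required external source/target transformations $\alpha,\beta$ are taken to be identities (the reindexed family is \emph{defined} so that $\theta$ pulls back on the nose). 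I expect this assignment to be strictly functorial precisely because the cleavage of $\Pi_0$ is split (reindexing composes strictly), which is what makes $\Pi_1$ a \emph{split} fibration rather than a merely cleaved one.

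\textbf{Second step: the simultaneous and preservation conditions.} Because the chosen Cartesian lift of a proarrow is built by reindexing its two boundary families exactly as in $\Pi_0$, taking $\src^\top$ or $\tgt^\top$ of the lifted cell returns the chosen Cartesian lift of the corresponding boundary family in $\dblfam(\C)_0$; this gives simultaneous cleavage-preservation of $\src^\top$ and $\tgt^\top$ essentially by construction. For $y^\top$: the external unit on a family is an identity span with identity transformation, and applying $y^\top$ to a reindexed family produces the unit on the reindexed family, which is again the chosen lift — so units are preserved. For $\otimes^\top$: external composition pulls back spans and composes in $\C$; since pullback of spans commutes (up to canonical iso, which is strict in $\set$) with reindexing along the legs, the composite of two chosen Cartesian lifts is the chosen Cartesian lift of the composite. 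The cocycle/commuting-square condition defining cells guarantees these reindexed composites are well-typed.

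\textbf{The main obstacle.} The genuinely delicate point is the compatibility of external composition $\otimes^\top$ with the cleavage, i.e.\ showing $\otimes^\top$ preserves Cartesian cells. This requires checking that forming the pullback span and then reindexing the boundary families agrees strictly with first reindexing and then forming the pullback, and that the whiskered transformations match under the cocycle condition. Everything reduces to the strict functoriality of pullback in $\set$ together with the splitness of reindexing, but one must lay out the two composite cells carefully and confirm that the span-morphism data and the attached $\C$-transformations coincide on the nose rather than merely up to isomorphism; this is where the normalization of $\dblfam(\C)$ and the strictness of the chosen set-theoretic pullbacks are what make the statement hold as a \emph{split} double fibration. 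The remaining verifications are routine diagram chases.
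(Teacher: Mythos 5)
Your proposal is correct and follows essentially the same route as the paper's proof: you construct the split cleavage on $\Pi_1$ by whiskering/reindexing the target cell along the given span morphism with identity boundary transformations, obtain cleavage-preservation of $\src^\top$ and $\tgt^\top$ by construction, and settle preservation under $\otimes^\top$ via the uniqueness clause of the universal property of pullbacks in $\set$. The only (harmless) difference is that you spell out the unit case $y^\top$ explicitly, which the paper leaves implicit.
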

    \begin{proof} 
        $\Pi_0$ is a split fibration since it is the ordinary family fibration for $\C$ if we disregard the double structure. For the $\Pi_1$-part, take a cell $\delta$ as in the display above and a span morphism $q$ as immediately above, but viewed as a cell in $\Span$. Define the domain of the lift to be the cell $\delta\ast q$ of the form
            $$\xymatrix{\ar@{}[dr]|{\mathclap{\substack{\delta\ast q \\ \Rightarrow }}} S \ar[d]_{\partial_0} \ar[r]^{\partial_1} & Y \ar[d] \\
                    X \ar[r] & \C
            }$$
        By construction this defines a cell in $\dblfam(\C)$ to $\delta$ satisfying the required condition. The external source and target functors are thus cleavage-preserving by construction. External composition is splitting-preserving by the uniqueness aspect of the universal property of pullbacks.
\end{proof}

\begin{example}[Codomain Fibration] \label{example:DoubleCodomainFibration}
    If $\C$ is a finitely-complete category, the codomain functor $\cod\colon \C^\mbf 2\to\C$ is an ordinary cloven fibration. A cleavage is given by a choice of pullbacks. Thus, if we assume that ``finite limits" always means ``equipped with a choice of finite limits" then any such fibration is always cloven. A double-categorical analogue is given in the following way. Let $\bb D$ denote a pseudo category in the 2-category of finitely-complete categories, limit-preserving functors and transformations. So, in particular $\D_0$ and $\D_1$ have finite limits. Assume that $\src$ and $\tgt$ preserve finite limits on the nose; $\otimes$ and $y$ then preserve finite limits up to isomorphism. Let $\bb D^{\mbf 2}$ denote the double category
        $$\xymatrix{
            \bb D_1^\mbf 2 \times_{\bb D_0^\mbf 2} \bb D_1^\mbf 2 \ar[r]^-{\otimes^\mbf 2} & \bb D_1^\mbf 2 \ar@<1.2ex>[r]^{\src^\mbf 2} \ar@<-1.2ex>[r]_{\tgt^\mbf 2} & \ar[l]|{y^\mbf 2} \bb D_0^\mbf 2
        }$$
    with associators and unitors induced from $\bb D$. This is indeed a pseudo category in $\cat$ since the hom-functor $(-)^\mbf 2 = [\mbf 2,-]$ is a right adjoint and thus preserves finite limits.
\end{example}

\begin{prop}
    The functors $\cod_0\colon \bb D_0^{\mbf 2} \to \bb D_0$ and $\cod_1\colon \bb D_1^{\mbf 2} \to \bb D_1$ are the underlying functors of a double fibration $\cod \colon \bb D^{\mbf 2}\to\bb D$.
\end{prop}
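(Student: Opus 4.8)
The plan is to verify the conditions of Definition \ref{def:unwinded_double_fibration} for the strict double functor $\cod\colon \bb D^{\mbf 2}\to\bb D$; the note following that definition (via Corollary \ref{cor:3differentcatobjectsinFib}) then assembles the verified data into a double fibration in the sense of Definition \ref{def:double_fibration}. The cleavages I will use are the evident ones: for $\cod_0\colon \bb D_0^{\mbf 2}\to\D_0$ and $\cod_1\colon \bb D_1^{\mbf 2}\to\D_1$, take the chosen-pullback cleavages exhibiting each as the ordinary codomain fibration of the finitely-complete categories $\D_0$ and $\D_1$, as recalled in Example \ref{example:DoubleCodomainFibration}. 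Throughout I will lean on the standard fact that a commutative square, viewed as an arrow of $\C^{\mbf 2}$, is Cartesian for $\cod\colon \C^{\mbf 2}\to\C$ precisely when it is a pullback square; consequently the chosen Cartesian lift of $u\colon Y'\to\cod(f)$ is the pullback of $f$ along $u$.

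The organizing device is a single auxiliary observation, applied three times. If $G\colon \C\to\C'$ is a functor between finitely-complete categories, then the induced functor $G^{\mbf 2}$, paired with $G$ itself, forms a morphism of codomain fibrations; moreover $G^{\mbf 2}$ preserves Cartesian arrows if and only if $G$ preserves pullbacks, and $G^{\mbf 2}$ is cleavage-preserving for the chosen-pullback cleavages as soon as $G$ preserves the \emph{chosen} pullbacks on the nose. Both claims are immediate from the identification of Cartesian arrows with pullback squares.

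I would then apply this observation twice. For source and target: since $\src,\tgt\colon \D_1\rightrightarrows\D_0$ preserve finite limits on the nose, the observation shows that $\src^{\mbf 2}$ and $\tgt^{\mbf 2}$ (playing the roles of $\src^\top$ and $\tgt^\top$) are \emph{simultaneously} cleavage-preserving for the two chosen-pullback cleavages above, which is the corresponding clause of Definition \ref{def:unwinded_double_fibration}; a single pair of cleavages serves both because both functors are induced by on-the-nose limit-preserving functors out of the same $\cod_1$. For unit and external composition: the functors $y$ and $\otimes$ preserve finite limits only up to isomorphism, but this suffices, since by the observation $y^{\mbf 2}$ and $\otimes^{\mbf 2}$ then preserve Cartesian arrows (pullback squares map to pullback squares) with no need to respect the chosen cleavages. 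For $\otimes^{\mbf 2}$ one first notes that, because $(-)^{\mbf 2}=[\mbf 2,-]$ is a right adjoint and hence preserves pullbacks, its domain $\bb D_1^{\mbf 2}\times_{\bb D_0^{\mbf 2}}\bb D_1^{\mbf 2}$ is canonically $(\D_1\times_{\D_0}\D_1)^{\mbf 2}$, whose Cartesian arrows over a given base arrow are the componentwise pullback squares; since $\otimes$ preserves pullbacks, these go to pullback squares, i.e.\ to Cartesian arrows. This completes the verification.

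The point deserving genuine care, and the main obstacle, is the asymmetry between the ``on the nose'' and the ``up to isomorphism'' hypotheses. Definition \ref{def:unwinded_double_fibration} demands strict (cleavage) preservation from $\src,\tgt$ but only Cartesian-arrow preservation from $y,\otimes$, and this matches exactly the two levels of limit-preservation assumed of the pseudo category $\bb D$: it would \emph{not} suffice for $\src,\tgt$ to preserve pullbacks merely up to isomorphism, since then the images of chosen Cartesian lifts would only be isomorphic to, rather than equal to, the chosen lifts downstairs, and cleavage-preservation would fail. Confirming that the hypotheses align precisely with the two clauses of Definition \ref{def:unwinded_double_fibration} is the substantive content; the remaining steps are the routine bookkeeping packaged into the auxiliary observation.
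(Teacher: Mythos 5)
Your proposal is correct and follows essentially the same route as the paper's proof: both identify Cartesian arrows for the codomain fibrations with pullback squares, use the on-the-nose limit preservation of $\src,\tgt$ to get that $\src^{\mbf 2},\tgt^{\mbf 2}$ are cleavage-preserving, and use the up-to-iso limit preservation of $\otimes$ and $y$ to get that $\otimes^{\mbf 2},y^{\mbf 2}$ preserve Cartesian arrows. Your auxiliary observation and the remark that $(-)^{\mbf 2}$ preserves the pullback $\bb D_1\times_{\bb D_0}\bb D_1$ merely spell out details the paper leaves implicit.
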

\begin{proof}
    In the first place, since $\otimes$ is a functor, $\cod$ with underlying functors $\cod_0$ and $\cod_1$ is a double-functor:
        $$\xymatrix@R=3pc@C=3pc{
            \bb D_1^\mbf 2 \times_{\bb D_0^\mbf 2} \bb D_1^\mbf 2 \ar[d]_{\cod_1 \times_{\cod_0} \cod_1} \ar[r]^-{\otimes^\mbf 2} &
            \bb D_1^\mbf 2 \ar[d]^{\cod_1} \ar@<1.25ex>[r]^{\src^\mbf 2} \ar@<-1.25ex>[r]_{\tgt^\mbf 2} & \bb D_0^\mbf 2 \ar[d]^{\cod_0} \ar[l]|{y^\mbf 2} \\ 
            \bb D_1 \times_{\bb D_0} \bb D_1 \ar[r]_-{\otimes} & \bb D_1 \ar@<1.25ex>[r]^{\src}  \ar@<-1.25ex>[r]_{\tgt} & \bb D_0 \ar[l]|{y}
        }$$
    Both functors are fibrations since the base categories are finitely complete. That $\src$ and $\tgt$ are finite-limit preserving implies that $\src^\mbf 2$ and $\tgt^\mbf 2$ are cleavage-preserving by the discussion above. On the other hand, $\otimes^\mbf 2$ and $y^\mbf 2$ are Cartesian-morphism preserving, since $\otimes$ and $y$ are finite-limit-preserving in the usual, up-to-iso, sense. 
\end{proof}

\subsection{Relations to Other 2-Dimensional Notions of Fibration}

In this section we consider how double fibrations subsume several other fibration notions.  

\begin{example}
    Since any discrete fibration is a cloven fibration, any discrete double fibration \cite{Lambert2021} is a (cloven) double fibration.
\end{example}

\begin{example} \label{ex:monoids_in_Fib_as_double_fib}
The result in \cite[Prop. 3.2]{MV}, describing pseudo monoids in the Cartesian-monoidal 2-category $\fib$, can be recovered from Corollary \ref{cor:3differentcatobjectsinFib} when $P_0 = 1$ (the terminal object $1 \mr{} 1$ in $\fib$), in which case $\src,\tgt: P \mr{} 1$ are unique, by noting that the structures above, in this case, correspond to the ones in op. cit. Indeed, for an arbitrary fibration $P: {\ct{E}} \mr{} {\ct{B}}$:

\begin{itemize} \label{ex:monoidalfibrationleftleg}
    \item As shown in Example \ref{ex:monoidascategory}, in this case a pseudo-category structure for $\src$ and $\tgt$ is the same as a pseudo-monoid structure for $P$ (in $\fib$ with its Cartesian product, i.e. that of a {\em monoidal fibration} as in [Shu08], see \cite[Prop. 3.2]{MV}).

    \item Similarly the two pseudo-double-category structures $\E$ and $\bb{B}$ above amount to monoidal-category structures for ${\ct{E}}$ and ${\ct{B}}$.

    \item As shown in Example \ref{ex:monoidfunctoraslaxfunctor}, in this case $P$ is a strict double functor between the (pseudo) double categories if and only if it is a strict monoidal functor between the monoidal categories.

    \item The condition that  $\otimes_{\ct{E}}$ is Cartesian-morphism preserving appears in \cite{MV} and [Shu08] as well ($y_{\ct{E}}$ is trivially Cartesian-morphism preserving when ${\ct{E}}_0 = 1$ in \eqref{eq:unwinded}).
\end{itemize}
\end{example}

The previous example is showing how any monoidal fibration can canonically be seen as a double fibration. The next one will show that a different monoidal object that is considered in the context of (op)fibrations can also be seen as a double fibration.

\begin{example} \label{ex:monoidalfibrationrightleg} \label{ex:monoids_in_Fib(B)_as_double_fib}
Recall that, for a fixed {\em base} category ${\ct{B}}$, there is a Cartesian monoidal structure ($\boxtimes$,$1_{\ct{B}}$) in the 2-category $\fib({\ct{B}})$, of fibrations with ${\ct{B}}$ as a base, that is different to the Cartesian monoidal structure of $\fib$ (details can be found in \cite[p. 74]{Jacobs}, see also \cite[p.1176]{MV}).
Somewhat surprisingly, the description in \cite[p.1176]{MV} of pseudo monoids in $\fib({\ct{B}})$ can also be recovered from Corollary \ref{cor:3differentcatobjectsinFib}, in the same sense as Example \ref{ex:monoidalfibrationleftleg}, as follows. 
Let $P: {\ct{E}} \mr{} {\ct{B}}$ be a fibration.  In the context of Corollary \ref{cor:3differentcatobjectsinFib}, set $P_1 = P$,
$P_0 = 1_{\ct{B}}$, and 
$\src = \tgt = (P,1_{\ct{B}}): P \mr{} 1_{\ct{B}}$. The reader should compare the following two diagrams respectively with \eqref{eq:cubeoffibrations} and \eqref{eq:unwinded}:

\begin{equation}
\begin{tikzpicture}[scale=2.5,tdplot_main_coords,baseline=(current bounding box.center)]
\draw[shorten >=0.3cm,shorten <=.3cm,<-] (0,0,0) node{${\ct{B}}$} -- node[left]{$P$} (0,0,1) node{${\ct{E}}$} ;
\draw[shorten >=0.3cm,shorten <=.3cm,<-] (1,1,0) node{${\ct{B}}$} -- node[right]{$P$} (1,1,1) node{${\ct{E}}$} ;
\draw[shorten >=0.3cm,shorten <=.3cm,<-] (1,0,0) node{${\ct{B}}$} -- node[left]{$1_{\ct{B}}$} (1,0,1) node{${\ct{B}}$} ;
%% ARROWS IN THE TOP
\draw[shorten >=0.3cm,shorten <=.3cm,->] (0,0,1) -- node[above] {$P$} (1,0,1);
\draw[shorten >=0.3cm,shorten <=.3cm,<-] (1,0,1) -- node[above] {$P$} (1,1,1);
%% ARROWS IN THE BOTTOM
\draw[shorten >=0.3cm,shorten <=.3cm,->](0,0,0) --  node[below] {$1_{\ct{B}}$}  (1,0,0) ;
\draw[shorten >=0.3cm,shorten <=.3cm,<-] (1,0,0) -- node[below] {$1_{\ct{B}}$} (1,1,0);
\end{tikzpicture}
\qquad \leadsto \qquad
\begin{tikzpicture}[scale=3.5,tdplot_main_coords,baseline=(current bounding box.center)]
\draw[shorten >=0.3cm,shorten <=.3cm,<-] (0,0,0) node{${\ct{B}}$} -- node[left]{$P$} (0,0,1) node{${\ct{E}}$} ;
\draw[shorten >=0.3cm,shorten <=.3cm,<-] (1,1,0) node{${\ct{B}}$} -- node[right]{$P$} (1,1,1) node{${\ct{E}}$} ;
\draw[dashed, shorten >=0.3cm,shorten <=.3cm,<-] (0,1,0) node{${\ct{B}}$} -- node[left]{$P \boxtimes P$} (0,1,1) node{${\ct{E}} \times_{\ct{B}} {\ct{E}}$};
\draw[shorten >=0.3cm,shorten <=.3cm,<-] (1,0,0) node{${\ct{B}}$} -- node[left]{$1_{\ct{B}}$} (1,0,1) node{${\ct{B}}$} ;
%% ARROWS IN THE TOP
\draw[shorten >=0.3cm,shorten <=.3cm,->] (0,0,1) -- node[below] {$P$} (1,0,1);
\draw[shorten >=0.3cm,shorten <=.3cm,<-] (0,0,1) -- node[above] {} (0,1,1);
\draw[shorten >=0.3cm,shorten <=.3cm,->] (0,1,1) -- node[above] {}(1,1,1);
\draw[shorten >=0.3cm,shorten <=.3cm,<-] (1,0,1) -- node[below] {$P$} (1,1,1);
%% ARROWS IN THE BOTTOM
\draw[shorten >=0.3cm,shorten <=.3cm,->](0,0,0) --  node[below] {$1_{\ct{B}}$}  (1,0,0) ;
\draw[dashed, shorten >=0.3cm,shorten <=.3cm,<-] (0,0,0) -- node[above] {} (0,1,0);
\draw[dashed, shorten >=0.3cm,shorten <=.3cm,->] (0,1,0) -- node[above] {} (1,1,0);
\draw[shorten >=0.3cm,shorten <=.3cm,<-] (1,0,0) -- node[below] {$1_{\ct{B}}$} (1,1,0);
\end{tikzpicture}
\end{equation}

\begin{equation} \label{eq:unwindedinFibB}
\vcenter{\xymatrix@R=3pc@C=3pc{
{\ct{E}} \times_{{\ct{B}}} {\ct{E}} \ar[d]_{P \boxtimes P} \ar[r]^-{\otimes} &
{\ct{E}} \ar[d]^{P} \ar@<1.25ex>[r]^{P} \ar@<-1.25ex>[r]_{P} & {\ct{B}} \ar[d]^{1_{\ct{B}}} \ar[l]|{y} \\ 
{\ct{B}} \ar[r]^-{1_{\ct{B}}} & {\ct{B}} \ar@<1.25ex>[r]^{1_{\ct{B}}} \ar@<-1.25ex>[r]_{1_{\ct{B}}} & {\ct{B}} \ar[l]|{1_{\ct{B}}}}}
\end{equation}
(Regarding the bottom line in \eqref{eq:unwindedinFibB}, note that a  pseudo-category structure whose structural arrows $\src$ and $\tgt$ are identities is necessarily trivial).
Note that the fibred functors $\otimes$, $y$ are exactly as those appearing in \cite[(29), p.1176]{MV} (labeled respectively $m$ and $j$ in op.cit.). 
In view of this, Corollary \ref{cor:3differentcatobjectsinFib}, when applied to this case, is showing that a pseudo-category structure (in $\fib$, for these $\src = \tgt = (P,1_{\ct{B}})$) is the same as a pseudo-monoid structure for $P$ in $\fib({\ct{B}})$.
\end{example}

Here we recall the notion of a 2-fibration, due to \cite{Hermida} and \cite{Buckley}. The definition of a 2-Cartesian arrow is a categorification of that of a Cartesian arrow, recalled above in Definition \ref{def:plain_old_cartesian}. Note that the references use the term ``Cartesian" where we use ``2-Cartesian." This is just to distinguish between the usual 1-dimensional notion for ordinary fibrations and this 2-dimensional analogue.

\begin{define}[\S 2 \cite{Hermida}, \S 2.1.1 \cite{Buckley}] \label{define:2Cartesian} 
     Let $P\colon \mfrk E\to\mfrk B$ denote a 2-functor. An arrow $f\colon X\to Y$ of $\mfrk E$ is \textbf{2-Cartesian} if
        \begin{enumerate}
            \item $f:X\to Y$ is Cartesian in the usual sense of Definition \ref{def:plain_old_cartesian};
            \item and additionally, whenever $\theta\colon g\Rightarrow k$ is a 2-cell of $\mfrk E$ for which there is a 2-cell $\gamma\colon h \Rightarrow l$ of $\mfrk B$ such that $P\theta = Pf\ast \gamma$ holds, there is a unique lift 2-cell $\hat \gamma\colon \hat h\Rightarrow \hat l$ in $\mfrk E$ over $\gamma$ such that $f\ast \hat \gamma = \theta$ holds.
        \end{enumerate}
\end{define}

\begin{define}[\S 2.1.6-2.1.7 \cite{Buckley}]
    A \textbf{2-fibration} is a 2-functor $P\colon \mfrk E\to\mfrk B$ such that 
        \begin{enumerate}
            \item every morphism $B\to PY$ in $\mfrk B$ has a 2-Cartesian arrow above it;
            \item each functor of hom-categories $P\colon \mfrk E(X,Y)\to \mfrk B (PX,PY)$ is an ordinary fibration;
            \item horizontal compositions of Cartesian 2-cells are again Cartesian.
        \end{enumerate}
    A 2-fibration is \textbf{cloven} if it is equipped with a choice of 2-Cartesian arrows and is locally a cloven fibration. All 2-fibrations are assumed to be cloven.
\end{define}

To understand how double fibrations relate to 2-fibrations, we need to recall the quintet construction \cite[pp. 272-273]{BastEhr}.  The \textbf{double category of quintets} $\bb Q(\K)$ associated to a 2-category $\K$ has as objects and arrows those of $\K$. Its proarrows are arrows of $\K$. Finally, cells are 2-cells of $\K$ of the form
    $$\xymatrix{
        \ar@{}[dr]|{\Downarrow} A \ar[d]_f \ar[r]^m|-@{|} & B \ar[d]^g &\ar@{}[dr]|{:=} & & \ar@{}[dr]|{\Downarrow} A \ar[d]_f \ar[r]^m & B \ar[d]^g \\
        C \ar[r]_n|-@{|} & D & & & C \ar[r]_n & D
    }$$
where all the arrows on the right are just arrows of $\K$. Notice that such cells point from the internal proarrow domain to the internal proarrow codomain. This ends up making them point from the external source to the external target. This departs from the convention of \cite[\S 1.3]{GrandisPare1999} which has cells pointing from source to target. Note that any 2-functor $F\colon \K \to \mfrk L$ induces a strict double functor $\bb Q(F)\colon \bb Q(\K) \to\bb Q(\mfrk L)$.  Via this construction, 2-fibrations are closely related to double fibrations:

\begin{prop}[Quintets] \label{ex:Quintets}
Let $P\colon \mfrk E\to\mfrk B$ be a 2-functor.  Then $P$ is a 2-fibration if and only if $\bb Q(P)\colon \bb Q(\mfrk E)\to\bb Q(\mfrk B)$ is a double fibration.  
\end{prop}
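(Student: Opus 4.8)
The plan is to unwind the statement ``$\bb Q(P)$ is a double fibration'' through Definition~\ref{def:unwinded_double_fibration} (legitimate because $\bb Q(P)$ is automatically a strict double functor, so by Corollary~\ref{cor:3differentcatobjectsinFib} defining a double fibration amounts to a list of fibration conditions), and then to match those conditions clause-by-clause with the three clauses of the definition of $2$-fibration. First I would record the components of $\bb Q(P)$. The category of objects $\bb Q(\mfrk E)_0$ is the underlying $1$-category of $\mfrk E$, so $\bb Q(P)_0$ is the underlying $1$-functor of $P$; the category of proarrows $\bb Q(\mfrk E)_1$ has the $1$-cells of $\mfrk E$ as objects and the quintet cells as morphisms, with $\src$ and $\tgt$ sending a cell to its left and right legs and a proarrow $m\colon A\to B$ to $A$ and $B$, and with $\bb Q(P)_1$ acting as $P$ on $1$-cells and $2$-cells. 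External composition $\otimes^\top$ is horizontal pasting of quintets, and $y^\top$ sends a $1$-cell $f$ to the unit cell $(f,f,\mathrm{id})$. Thus Definition~\ref{def:unwinded_double_fibration} asks exactly that $\bb Q(P)_0$ and $\bb Q(P)_1$ be fibrations, with cleavages making $\src,\tgt$ cleavage-preserving, and with $\otimes^\top$ and $y^\top$ preserving Cartesian arrows.

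The heart of the matching is a description of the Cartesian cells of $\bb Q(P)_1$. Because $\src,\tgt$ are arrows of $\cfib\subseteq\fib$ they preserve Cartesian arrows, so both legs of any Cartesian cell are Cartesian $1$-cells of $\mfrk E$; I would prove the converse refinement that a quintet cell $(f,g,\alpha)\colon m\to n$ is Cartesian for $\bb Q(P)_1$ precisely when $f$ and $g$ are Cartesian $1$-cells and the filling $\alpha\colon gm\Rightarrow nf$ is a Cartesian morphism for the local functor $P\colon \mfrk E(\dom m,\cod n)\to\mfrk B(P(\dom m),P(\cod n))$. Granting this characterization, three of the clauses fall out directly: the full subcategory of cells with identity legs identifies the corresponding Cartesian lifts with Cartesian $2$-cells in the hom-categories, so that the fibration condition on $\bb Q(P)_1$ restricts to the statement that each $P\colon \mfrk E(X,Y)\to\mfrk B(PX,PY)$ is an ordinary fibration (clause~2); $y^\top$ preserves Cartesian cells because identity $2$-cells are locally Cartesian; and, since horizontal pasting composes the outer legs and horizontally composes the fillings, $\otimes^\top$ preserves Cartesian cells exactly when horizontal composites of Cartesian $2$-cells remain Cartesian (clause~3).

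What remains, and what I expect to be the main obstacle, is to match the existence of Cartesian lifts in $\bb Q(P)_1$ against clause~1, the existence of $2$-Cartesian lifts of $1$-cells. For the forward implication, given a base cell $(\bar f,\bar g,\bar\beta)\colon \bar m\to Pn$, I would take $2$-Cartesian lifts $f,g$ of the legs $\bar f,\bar g$ (clause~1), lift $\bar\beta$ to a Cartesian $2$-cell $\alpha\colon s\Rightarrow nf$ in the relevant hom-category (clause~2), and then use the ordinary Cartesian universal property of $g$ to factor the $1$-cell $s$ as $g\circ m$, producing the domain proarrow $m$ over $\bar m$ and hence the cell $(f,g,\alpha)\colon m\to n$. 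Verifying that this cell is genuinely Cartesian in $\bb Q(P)_1$ is the technical core and is exactly where the \emph{two-dimensional} universal property of $2$-Cartesian arrows (Definition~\ref{define:2Cartesian}(2)) is indispensable: a competing cell that maps down compatibly must be factored uniquely, and that factorization is assembled from the Cartesian universal properties of the legs $f,g$ and the locally Cartesian $\alpha$, with the comparison $2$-cell supplied precisely by clause~(2) of $2$-Cartesianness.

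For the converse I would reverse this: a double-fibration structure already supplies $1$-Cartesian lifts of $1$-cells through $\bb Q(P)_0$, and I would upgrade each such lift to a $2$-Cartesian arrow by applying the Cartesian lifting property of $\bb Q(P)_1$ to the quintet cells in which that arrow occurs as a leg, which is what manufactures the unique lift $2$-cell $\hat\gamma$ required by Definition~\ref{define:2Cartesian}(2); clauses~2 and~3 are then read off as in the previous paragraph. The remaining difficulties are bookkeeping: pinning down the direction of the filling $2$-cell under the quintet convention, and checking that the one-dimensional and local universal properties cohere into a single two-dimensional one---but beyond the characterization of Cartesian cells no further construction is needed.
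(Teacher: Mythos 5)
Your proposal is correct and in substance it is the paper's own proof: your forward construction (chosen $2$-Cartesian lifts for the two legs, a hom-category Cartesian lift for the filling, then the one-dimensional universal property of the target leg to manufacture the top proarrow) is exactly the paper's construction of Cartesian cells in $\bb Q(P)_1$, and your converse (recovering the two-dimensional lifting property from Cartesian quintet cells in which the chosen arrow occurs as a leg, and reading off the local-fibration and horizontal-composition clauses from globular cells) is exactly the paper's use of horizontally and vertically globular cells. The only organizational difference is that you route everything through an explicit characterization lemma --- a quintet cell is $\bb Q(P)_1$-Cartesian iff its legs are Cartesian and its filling is locally Cartesian --- which the paper never states, although its direct verifications effectively prove it. One caution if you execute this plan: under the double-fibration hypothesis alone this lemma cannot be proved outright for arbitrary legs without circularity, because the local Cartesianness test on a filling does not translate into a quintet-Cartesianness test when the legs are non-identities; you must bootstrap, proving it first for identity-legged cells (which yields clause 2, and then clause 3 via $\otimes^\top$-preservation), and for clause 1 you must explicitly invoke the hypothesis that $y^\top$ preserves Cartesian arrows, so that the unit cell $y(\sigma(f,Y))$ is known to be Cartesian --- the raw cleavage of $\bb Q(P)_1$ applied to $y(f)$ yields a Cartesian cell that need not be this unit cell, and with it the factorization would not come out in the required form $\sigma(f,Y)\ast\hat\gamma=\theta$.
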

\begin{proof}
    The 2-Cartesian 1-cells $\sigma(f,X)\colon f^*X\to X$, considering just the 1-dimensional aspect of their lifting property, make the underlying 1-functor $\bb Q(P)=|P|\colon |\mfrk E|\to|\mfrk B|$ into an ordinary fibration. It remains to see that $\bb Q(P)_1$ is a fibration and that the external structure maps of $\bb Q(\mfrk E)$ are suitably cleavage-preserving or Cartesian morphism-preserving, as the case may be. So, starting with a cell in the base of the form as at left
        $$\xymatrix{
            \ar@{}[dr]|{\Downarrow\alpha} A \ar[d]_f \ar[r]^u & B \ar[d]^g & & & \ar@{}[dr]|{\Downarrow\hat\alpha}f^*X \ar[d]_{\sigma(f,X)} \ar@/^1pc/[dr]\ar@{-->}[r]^{\hat u} & g^*Y\ar[d]^{\sigma(g,Y)} \\
            PX \ar[r]_{Pv} & PY & & & X\ar[r]_v & Y
        }$$
    the cell in $\bb Q(\mfrk E)_1$ can be constructed as follows. First, owing to the fact that $P$ is locally a fibration, the Cartesian lift $\hat\alpha$ exists. Then, because $\sigma(g,Y)$ is Cartesian, the dashed arrow $\hat u$ exists making a commutative triangle.  The claim is that the right cell is Cartesian with respect to $\bb Q(P)_1$. In brief, the 2-cell is $\bb Q(P)_1$-Cartesian because the two vertical arrows are Cartesian and the 2-cell is Cartesian (both in the sense of Definition \ref{define:2Cartesian}). Before that, notice that the external source and target maps will be cleavage-preserving and that the external identity and composition maps will both be Cartesian-morphism-preserving. The latter is owing to the fact that horizontal compositions of Cartesian 2-cells are again Cartesian. Now, to prove that the cell is Cartesian, suppose there is another cell $\beta$ in $\bb Q(\mfrk E)_1$ such that in the base there is an equality
        $$\xymatrix{
            \ar@{}[dr]|{\Downarrow\theta} PZ \ar[d]_x\ar[r]^{Pw} & PW \ar[d]^y  & & \ar@{}[ddr]|{\Downarrow P\beta} PZ\ar[dd]_{Ph} \ar[r]^{Pw} & PW \ar[dd]^{Pk}\\
            \ar@{}[dr]|{\Downarrow\alpha} A \ar[d]_f \ar[r]_u & B \ar[d]^g & = & &  \\
            PX \ar[r]_{Pv} & PY & & PX \ar[r]_{Pv} & PY
        }$$
    arising from some cell $\theta$. Note that the unique lifts $\hat x$ and $\hat y$ exist making appropriate commutative triangles since $\sigma(f,X)$ and $\sigma(g,Y)$ are Cartesian. The goal is to produce a cell $\hat \theta$ over $\theta$ making a commutative triangle of cells in $\bb Q(\mfrk E)_1$. For this, use the 2-dimensional lifting property of $\sigma(g,Y)$. First note that identity 2-cells in $\bb Q(\mfrk E)_1$ are Cartesian and since horizontal composition preserves Cartesian 2-cells $\hat\alpha\ast\hat x$ is Cartesian over $\alpha\ast x$. Therefore, there exists a unique lift of $g\ast\theta$ in $\bb Q(\mfrk E)$ whose composite with $\hat\alpha\ast\hat x$ is precisely $\beta$ as in 
        $$\xymatrix{
            \ar@{}[dr]|{\Downarrow \widehat{g\ast\theta}} Z \ar[d]_x\ar[r]^{w} & W \ar[dd]^k  & & \ar@{}[ddr]|{\Downarrow \beta} Z\ar[dd]_{h} \ar[r]^{w} & W \ar[dd]^{k}\\
            \ar@{}[dr]|{\Downarrow\hat\alpha} A \ar[d]_f \ar@/^1pc/[dr] & & = & &  \\
            X \ar[r]_{v} & Y & & X \ar[r]_{v} & Y
        }$$
    Second, note that $k$ and $\dom(\hat \alpha)$ both factor through $\sigma(g,Y)$ via the unique lifts $\hat x$ and $\hat y$ above. So, the lifted cell is of the form
        \begin{equation}
            \widehat{g\ast\theta}\colon \sigma(g,Y)\hat y w \Rightarrow \sigma(g,Y)\hat u\hat x.
        \end{equation}
    using these factorizations. Now, by the 2-dimensional lifting property of $\sigma(g,Y)$, there is a unique 2-cell $\hat\theta$ such that 
        \begin{equation}
            \sigma(g,Y)\hat\theta = \widehat{g\ast\theta}
        \end{equation} 
    Therefore, by the definition of external composition in $\bb Q(\mfrk E)_1$, this cell $\hat\theta$ composes with $\alpha$ in $\bb Q(\mfrk E)_1$ to give $\beta$ as required. It is unique by construction.
        
    For the converse, since $\bb Q(P)_0$ is a fibration, so is $|P|$. It needs to be seen that the chosen Cartesian morphisms are 2-Cartesian and that locally $P$ is a fibration. The latter, however, is simply an application of the fact that $\bb Q(P)_1$ is a fibration and restricting to the vertically globular cells. Thus, we will prove that the chosen Cartesian morphisms are 2-Cartesian.  Again, we show this using the fact that $\bb Q(P)_1$ is a fibration, but using horizontally globular cells instead. Take as given a 2-cell $\alpha\colon h\Rightarrow k$ with $h,k\colon X\rightrightarrows Y$. Let $\sigma(f,Y)\colon f^*Y\to Y$ denote the chosen lift of $f\colon A\to PY$. Suppose now that there is a cell in the base $\theta\colon u\Rightarrow v$ with $f\ast\theta = P\alpha$. The goal is to produce a lift $\hat\theta$ satisfying $\sigma(f,Y)\ast\hat\theta = \alpha$. But one has only to view these cells as living in the total category of $P_1\colon \bb Q(\mfrk E)_1\to\bb Q(\mfrk B)_1$. Since this is a cloven fibration, there is a unique cell $\hat\theta$ as below making an equality of cells
        $$\xymatrix{
            \ar@{}[dr]|{\hat\theta}X\ar[r]|-@{|} \ar[d]_{\hat v} & X \ar[d]^{\hat u} & & \ar@{}[ddr]|{P\alpha} X\ar[dd]_k \ar[r]|-@{|} & X \ar[dd]^h \\
            \ar@{}[dr]|{1} f^*Y \ar[d]_{\sigma(f,Y)} \ar[r]|-@{|} & f^*Y \ar[d]^{\sigma(f,Y)} & = & & \\
            Y\ar[r]|-@{|} & Y & & Y \ar[r]|-@{|} & Y
        }$$
    where all the proarrows are identities; that is, actually identity arrows in $\mfrk E$. The identity cell on $\sigma(f,Y)$ is Cartesian because $y$ preserves Cartesian arrows. The arrows $\hat u$ and $\hat v$ are precisely those given by the fact that $\sigma(f,Y)$ is Cartesian, since $\src$ and $\tgt$ are cleavage-preserving functors. Thus, $\hat \theta$ as above is the required cell.
\end{proof}

Taking a different angle, one might like to know whether starting with a double functor, the fibration properties of either the corresponding ``vertical 2-functor" or ``horizontal pseudo functor" can be characterized in terms of those of the original double functor. For this purpose, recall that every double category $\bb D$ has (1) a \textbf{horizontal bicategory} $\mcal H(\bb D)$ obtained by discarding the ordinary arrows and taking the ``vertically globular" cells; and (2) a \textbf{vertical 2-category} $\mcal V(\bb D)$ obtained in a similar but dual manner by forgetting the proarrows and keeping only the ordinary arrows and ``horizontally globular cells." Each construction induces a suitable functor. That is, if $F\colon \bb D\to \bb E$ is a double functor, then there is a corresponding 2-functor $\mcal V(P)\colon \mcal V(\bb D)\to\mcal V(\bb E)$ and a corresponding pseudo functor $\mcal H(P)\colon \mcal H(\bb D)\to\mcal H(\bb E)$, in each case given, essentially, by restricting the original double functor. Likewise, each 2-category $\K$ has specialized double categories associated to it. There is the \textbf{vertical double category} $\bb V(\K)$ given as a sub-double category of $\bb Q(\K)$ by taking just the vertical structure with identity proarrows. There is also the \textbf{horizontal double category} $\bb H(\mfrk E)$ given by taking just the proarrow stucture and only identity ordinary arrows. Any 2-functor $F\colon \K\to \mfrk L$ has corresponding double functors $\bb V(P)$ and $\bb H(P)$ between vertical or horizontal double categories. Notice that $\mcal V(\bb V(F)) = F$ for any 2-functor $P$. Likewise $\mcal H(\bb H(F)) = F$ holds, but the operations are not genuine inverses since reversing their order does not in general produce the same double functor (either the vertical or horizontal structure is lost in the process).

\begin{prop} \label{prop:DoubleFibrationToVerticalFibrations}
    If $P\colon \bb E\to \bb B$ is a double fibration, then
        \begin{enumerate}
            \item $\mcal VP$ has enough 2-Cartesian arrows; and
            \item $\mcal HP$ is locally a fibration.
        \end{enumerate}
\end{prop}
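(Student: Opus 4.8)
The plan is to read off the two statements directly from the unwound description of a double fibration (Definition \ref{def:unwinded_double_fibration}) and observe that both $\mcal{V}P$ and $\mcal{H}P$ are governed by the fibration $P_1\colon \ct E_1 \to \ct B_1$ together with the behaviour of $\src,\tgt,y$. First I would pin down the relevant cells. A $2$-cell of $\mcal{V}(\bb E)$ from $g$ to $k$ (parallel arrows $Z\to Y$) is a horizontally globular cell, i.e.\ an arrow $\theta\colon y(Z)\to y(Y)$ of $\ct E_1$ with $\src\theta=g$ and $\tgt\theta=k$; whiskering such a cell by a vertical arrow $v$ is computed in $\ct E_1$ as composition with the unit cell $y(v)$, and $\mcal{V}P$ agrees with $P_0$ on objects and $1$-cells and with $P_1$ on $2$-cells. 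Dually, the hom-category $\mcal{H}(\bb E)(A,B)$ is exactly the fibre of $(\src,\tgt)\colon \ct E_1\to \ct E_0\times\ct E_0$ over $(A,B)$: its objects are proarrows $A\hto B$ and its morphisms are vertically globular cells, i.e.\ arrows of $\ct E_1$ sent by $\src,\tgt$ to $\mrm{id}_A,\mrm{id}_B$; here $\mcal{H}P$ is the corresponding restriction of $P_1$.

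For (1), let $u\colon B\to P_0Y$ be a $1$-cell of $\mcal{V}(\bb B)$ (an arrow of $\ct B_0$) and let $\sigma\colon u^*Y\to Y$ be its chosen $P_0$-Cartesian lift, so $P_0\sigma=u$; this is $1$-Cartesian for $\mcal{V}P$. I claim $\sigma$ is $2$-Cartesian. Since $y=y^\top$ preserves Cartesian arrows (Definition \ref{def:unwinded_double_fibration}), the unit cell $y(\sigma)\colon y(u^*Y)\to y(Y)$ is $P_1$-Cartesian in $\ct E_1$, with $P_1(y(\sigma))=y(P_0\sigma)=y(u)$. Given a globular $\theta\colon g\Rightarrow k$ and a globular $\gamma$ of $\mcal{V}(\bb B)$ with $P_1\theta = u\ast\gamma = y(u)\circ\gamma$, this equation displays $P_1\theta$ as a factorization through $P_1(y(\sigma))=y(u)$ via $\gamma$, so the universal property of $y(\sigma)$ yields a unique $\hat\gamma\colon y(Z)\to y(u^*Y)$ over $\gamma$ with $y(\sigma)\circ\hat\gamma=\theta$, that is $\sigma\ast\hat\gamma=\theta$. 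As the domain and codomain of $\hat\gamma$ in $\ct E_1$ are the units $y(Z),y(u^*Y)$, it is horizontally globular, hence a $2$-cell of $\mcal{V}(\bb E)$; applying $\src,\tgt$ to $\sigma\ast\hat\gamma=\theta$ identifies its boundary with the expected lifts $\hat h=\src\hat\gamma$, $\hat l=\tgt\hat\gamma$ (so $P_0\hat h=h$, $\sigma\hat h=g$, etc.). Uniqueness in $\ct E_1$ forces uniqueness among globular cells, so this is precisely condition (2) of Definition \ref{define:2Cartesian}, and $\mcal{V}P$ has enough $2$-Cartesian arrows.

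For (2), I would show that the chosen $P_1$-Cartesian lifts restrict to these fibres. Fix $A,B$ in $\bb E$, a proarrow $m\colon A\hto B$, and a vertically globular $\beta\colon q\to P_1m$ in $\mcal{H}(\bb B)(P_0A,P_0B)$; let $\hat\beta\colon\hat q\to m$ be its chosen $P_1$-Cartesian lift. Because $\src,\tgt$ are cleavage-preserving (Definition \ref{def:unwinded_double_fibration}), $\src\hat\beta$ and $\tgt\hat\beta$ are the chosen $P_0$-Cartesian lifts of $\src\beta=\mrm{id}_{P_0A}$ and $\tgt\beta=\mrm{id}_{P_0B}$; assuming, as we may, that the cleavages are normal, these are $\mrm{id}_A,\mrm{id}_B$, so $\hat\beta$ is vertically globular and $\hat q$ lies in $\mcal{H}(\bb E)(A,B)$. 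To see $\hat\beta$ is Cartesian for the restricted functor, take a vertically globular $\xi\colon p\to m$ and a vertically globular $w\colon P_1p\to q$ with $P_1\xi=\beta\circ w$; the $P_1$-Cartesian property gives a unique $\hat w\colon p\to\hat q$ over $w$ with $\hat\beta\circ\hat w=\xi$, and applying $\src,\tgt$ (using $\src\hat\beta=\mrm{id}_A$, $\src\xi=\mrm{id}_A$, and similarly with $\tgt$) forces $\hat w$ to be vertically globular, so the factorization lives in the fibre. Uniqueness in $\ct E_1$ gives uniqueness in the fibre, whence $\mcal{H}P$ is locally a (cloven) fibration.

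The identifications in the first paragraph and the two universal-property arguments are routine once set up; the single point needing care is in (2), namely that the $P_1$-Cartesian lift of a vertically globular cell is again vertically globular. This is exactly where cleavage-preservation of $\src,\tgt$ is used, and it forces working with \emph{normal} cleavages so that the chosen lifts of $\mrm{id}_{P_0A},\mrm{id}_{P_0B}$ are the identities $\mrm{id}_A,\mrm{id}_B$ rather than merely isomorphisms; without normality the lifted cell would be globular only up to isomorphism and $\hat q$ would land in a different hom-category. I expect this normalization bookkeeping, rather than any of the lifting arguments, to be the main thing to get right.
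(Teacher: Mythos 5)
Your part (1) is correct and is essentially the paper's own argument: take the chosen $P_0$-Cartesian lift $\sigma$, use the hypothesis that $y$ preserves Cartesian arrows to see that $y(\sigma)$ is $P_1$-Cartesian, and read condition (2) of Definition \ref{define:2Cartesian} off the universal property of $y(\sigma)$, noting that the lifted cell automatically has unit proarrows as domain and codomain and hence lives in $\mcal V(\bb E)$.

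The problem is in part (2), at the step ``assuming, as we may, that the cleavages are normal.'' This is not available bookkeeping; it is circular. A double fibration comes with a \emph{pair} of cleavages, for $P_0$ and $P_1$, whose only required property is that $\src,\tgt$ preserve them (Definitions \ref{def:double_fibration}, \ref{def:unwinded_double_fibration}); neither is assumed normal. You cannot normalize freely: if you replace the chosen $P_0$-lift of $\mrm{id}_{P_0A}$ at $A$ by $\mrm{id}_A$, then to restore cleavage-preservation of $\src,\tgt$ you must re-choose, for every vertically globular cell $\beta\colon q\Rightarrow P_1m$, a $P_1$-Cartesian lift whose external source and target are identities --- which is exactly the statement you invoked normality to prove. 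Without normality, the chosen lift $\hat\beta$ has boundary the Cartesian lifts $\epsilon_A\colon A'\to A$, $\epsilon_B\colon B'\to B$ of the identities (isomorphisms, but not identities), so its domain $\hat q$ is a proarrow $A'\hto B'$ lying in the wrong hom-category, and no double-categorical operation available from the axioms transports it to a proarrow $A\hto B$: horizontal pasting with cells such as $y(\epsilon_A^{\pm1})$ never repairs the defect, because composability of cells forces the mismatched boundaries to propagate. The claim that compatible \emph{normal} cleavages exist is true, but it is a consequence of the representation theorem (Theorem \ref{theorem:RepresentationTheoremFinalForm}: every double fibration is isomorphic in $\dblfib$ to an elements construction $\dblelt(F)\to\bb B$, whose canonical cleavage is normal), not something one may assume at this point of the paper. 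To be fair, the paper's own proof of (2) is a single sentence (``immediate because $P_1$ is a fibration, applied to the vertically globular cells'') which elides the same point; your write-up has the merit of making the issue visible, but as it stands it does not close it.
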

\begin{proof}
    The second statement is immediate because $P_1$ is a fibration. Applying this property to the vertically globular cells, it follows that $\mcal HP$ is a fibration locally. We prove the first statement in detail. Since $P_0$ is (cloven) fibration, given $f\colon A\to PX$ in $\bb B$, there is a chosen Cartesian arrow $\sigma(f,X)\colon f^*X\to X$ in $\bb E$ over $f$. The claim is that this is 2-Cartesian. As set-up take a cell with prorarrow identities
        $$\xymatrix{
            \ar@{}[dr]|{\beta} Z \ar[d] \ar[r]|-@{|} & Z \ar[d] \\
            X \ar[r]|-@{|} & X
        }$$
    and suppose there is a fill cell $\theta$ in $\bb B$ satisfying $f\ast\theta = P\beta$. Since $P$ is a double fibration, the external identity $y\colon \bb E_0\to\bb E_1$ preserves Cartesian arrows. So there is a unique $\hat\theta$ in $\bb E$ satisfying
        $$\xymatrix{
            \ar@{}[dr]|{\hat\theta} Z \ar[d] \ar[r]|-@{|} & Z \ar[d] & &  \ar@{}[ddr]|{\beta} Z \ar[dd] \ar[r]|-@{|} & Z \ar[dd] \\
            \ar@{}[dr]|{y_{\sigma}} f^*X \ar[d]_{\sigma(f,X)} \ar[r]|-@{|} & f^*X \ar[d]^{\sigma(f,X)} & = &  &  \\
            X \ar[r]|-@{|} & X & &  X \ar[r]|-@{|} & X 
        }$$
    where again all proarrows in sight are identities. Since these cells are thus all horizontally globular, this proves the required statement, namely, that $\sigma(f,X)$ is 2-Cartesian viewed as a 2-cell of $\mcal VP$.
\end{proof}

The converse is not true, however.

\begin{example}
    If $P$ is a 2-functor $P\colon \mfrk E\to\mfrk B$ with enough 2-Cartesian arrows, then the induced double functor $\bb VP\colon \bb V\mfrk E\to\bb V\mfrk B$ has enough 2-Cartesian arrows since $\mcal V(\bb V(P)) = P$. Since there are no non-identity proarrows, its horizontal bicategory is locally a fibration. In other words, for any such $P$, the corresponding $\bb V(P)$ satisfies the two conditions of the conclusion of the proposition. Thus, for a counterexample to the converse of the proposition, it suffices to exhibit a 2-functor $P$ with enough 2-Cartesian arrows for which $\bb V(P)_1$ is not a fibration. For this take the domain projection 2-functor $P = \dom\colon \cat/\C \to \cat$, sending a functor $F\colon \F\to\C$ to its domain category $\F$ and suitably extended to commutative triangles and fibered transformations. This has enough 2-Cartesian arrows, as can be easily checked. Now, a cell of $\bb V(\cat/\C))$ is just  an appropriately fibered transformation $\alpha\colon H\Rightarrow K\colon \F\to\G$, that is, with ``vertical" components (i.e. components over identity morphisms in $\C$). This is sent to $\alpha$ itself via $\dom$. But a given cell in $\cat$ does not necessarily have such a lift, since a given transformation does not necessarily have suitably vertical components. \qed
\end{example}

We can also show that the conclusion of the first statement in Proposition \ref{prop:DoubleFibrationToVerticalFibrations} is the best that can be given. That is, if $P$ is a double fibration, then $\mcal VP$ need not be a 2-fibration.

\begin{example}
    As a special case of Example \ref{example:DomainDoubleFibration}, for any object in a 2-category $C\in \K$, the projection $\dom\colon \bb Q(\K)/C \to\bb Q(\K)$ is a double fibration.  We claim that this is not vertically a 2-fibration. The reason is essentially the same as in the previous example. Namely, taking $\K=\cat$ and restricting to the horizontally globular cells, a lift of a given transformation would need to be suitably fibered along the functor whose domain is the target of the given transformation. But not all such transformations have such suitably vertical components.
\end{example}

Finally, the second statement in Proposition \ref{prop:DoubleFibrationToVerticalFibrations} cannot be strengthened to the conclusion that $\mcal HP$ is a fibration of bicategories \cite[\S 3.1.5]{Buckley}. For this to be the case, $\mcal HP$ would need to be equipped with enough Cartesian arrows, which would be proarrows from the original double-categorical structures. But the concept of a ``Cartesian proarrow'' is evidently one orthogonal to our developments.

\section{Representation Theorem}\label{sec:rep_theorem}

The goal of this section is to state and prove a representation theorem for double fibrations. This says that double fibrations correspond to contravariant span-valued lax ``double pseudo functors" on a double category. That is, for a fixed double category $\bb B$, there is an equivalence of categories 
    \begin{equation} \label{equation:IntroductionRepresentationEquivDisplayforBaseB}
        \dblfib(\bb B)\simeq \dbltwocat(\bb B^{op},\Span(\cat))
    \end{equation}
between double fibrations over $\bb B$ and lax double pseudo functors valued in the ``double 2-category" $\Span(\cat)$. More generally, there is an equivalence 
    \begin{equation} \label{equation:IntroductionRepresentationEquivDisplayforArbitraryBases}
        \dblfib \simeq \mbf{I}\Span(\cat)
    \end{equation}
between double fibrations over arbitrary base double categories and ``indexed spans", that is, contravariant lax double pseudo functors valued in $\Span(\cat)$. The former equivalences (\ref{equation:IntroductionRepresentationEquivDisplayforBaseB}), varying over $\bb B$, occur as the fibers of the latter equivalences (\ref{equation:IntroductionRepresentationEquivDisplayforArbitraryBases}) relative to the projection to double categories. These equivalences are proved in Theorem \ref{theorem:RepresentationTheoremFinalForm} below. 

The proof of the main theorem is accomplished through the device of pseudo monoids in a double 2-category. As a bit of background, recall \cite[Example 2.10]{GeneralizedFramework} that ordinary categories are monoids in the double category of spans in sets. Likewise homomorphisms of such monoids are ordinary functors between categories. This correspondence works for spans in any finitely-complete category $\C$, meaning that internal categories in $\C$ in the strict sense are strict monoids in spans in $\C$. In particular this works for strict double categories. However, most double categories are not strict, but rather are pseudo. Accordingly, one expects, or at least hopes, that a \textit{pseudo}-category is a \textit{pseudo}-monoid in some higher double-structure of spans. This is precisely what is provided by the notion of a ``double 2-category." These structures have the higher-dimensional cells for the coherent associators and unitors appearing in Definition \ref{def:pseudocategory}. The connection to double fibrations is that pseudo categories are the pseudo monoids in suitable double 2-categories. In particular, double fibrations are pseudo monoids in the double 2-category of spans $\Span_c(\fib)$ from Definition \ref{define:Double2CategoryofSpansInFibrations} (see Proposition \ref{prop:SpanValuedLaxFunctorsArePseudoMonoids} below). This approach allows us to balance the ``strict" source and target structure with the ``pseudo" coherence structure in the definition of a double fibration and define a category of double fibrations as a category of pseudo monoids.

The proof of the theorem has two parts. The first is the recognition that strict 2-pullbacks of arrows in $\icat$ for which the triangle is filled with a 2-natural transformation correspond under the elements equivalence $\icat\simeq\fib$ to strict 2-pullbacks of cleavage-preserving morphisms of fibrations. As a consequence, pseudo categories on cleavage-preserving source and target morphisms on one side of the equivalence correspond to pseudo categories on 2-natural source and target structure on the other side. Viewing this through the lens of pseudo monoids, the observation amounts to an equivalence of categories
    \begin{equation}
        \dblfib:=\psmon(\Span_c(\fib))\simeq \psmon(\Span_t(\icat)).
    \end{equation}
Therefore, the representation theorem for double fibrations reduces to the second part, namely, the question of what is a pseudo monoid on 2-natural source and target in $\icat$. The answer is that such a pseudo monoid is a $\Span(\cat)$-valued ``lax double pseudo functor" on a double category viewed as a locally discrete double 2-category. These lax double pseudo functors are an appropriate notion of weak morphism between double 2-categories.

This correspondence between pseudo monoids in $\icat$ and lax double pseudo functors could, with some patience, be proved directly. However, we have already seen that pseudo categories as in Definition \ref{def:pseudocategory} include higher-order associator and unitor 2-cells satisfying a number of coherence conditions. Showing that the associator and unitor 2-cells on the side of lax double pseudo functors induce corresponding associator and unitor cells on the side of pseudo categories in $\icat$ and vice versa is possible but requires considerable effort. This problem exhibits a second utility for pseudo monoids in that equivalent double 2-categories have equivalent categories of pseudo monoids. In more detail, we circumvent the coherence details by producing an equivalent double 2-category $\bb P(\Span(\icat))$ whose pseudo monoids are more clearly lax double pseudo functors. That is, the point of \S \ref{subsection:LaxFunctorsAsPseudoMonoids} is to construct a double 2-category $\bb P(\Span(\icat))$, which is equivalent to $\Span_t(\icat)$, but for which it is easier to derive the rightmost equivalence in the chain:
    \begin{equation}
        \psmon(\Span_t(\icat)) \simeq \psmon(\bb P(\Span(\icat))) \simeq \mbf{I}\Span(\cat)
    \end{equation}
The required equivalence of double 2-categories $\apx\colon \Span_t(\icat)\simeq \bb P(\Span(\icat))$ is the topic of \S \ref{subsection:Translation}. The rest of the proof is simply to pass to pseudo monoids and unpack the data on the right side as a lax double pseudo functor valued in $\Span(\cat)$.
We do this in \S \ref{subsection:LaxFunctorsAsPseudoMonoids}, letting for convenience an arbitrary double 2-category $\bb E$ play the role of $\Span(\cat)$.

\subsection{Double 2-Categories}\label{subsection:Double2Categories}

Here we introduce the notion of a ``double 2-category'' and the appropriately weak morphisms between them. Roughly speaking, double 2-categories are pseudo categories in the 2-category of 2-categories. The corresponding notion of weak morphism between them needs to be defined directly, since it is not obviously a kind of internal functor. As discussed above, the introduction of double 2-categories serves two purposes. First, it will be seen that double fibrations over a fixed double category correspond to certain lax functors valued in a double 2-category of spans. Secondly, this correspondence will be proved by working with pseudo monoids in certain double 2-categories.

\begin{define}
    A \textbf{double 2-category} is a pseudo category in $\twocat$, the 2-category of 2-categories, 2-functors and 2-natural transformations.
\end{define}

\begin{notation} \label{not:underlying_double_cat}
Applying the 2-functor $|-|\colon \twocat \to \cat$, mapping a 2-category to its underlying category, 
we have for each double 2-category $\bb D$ an underlying double category that we denote by $|\bb D|$, or just by $\bb D$ when there is no risk of confusion.
\end{notation}

Generally double 2-categories will be denoted using blackboard font as in $\bb E$. These consist of underlying 2-categories of objects and arrows $\bb E_0$ and $\bb E_1$ together with the usual structure morphisms $\src$, $\tgt$, $\otimes$ and $y$ as described in Definition \ref{def:pseudocategory}. Such an $\bb E$ is like a double category but has extra 2-cells between morphisms and cells. A double 2-category is thus a certain type of ``intercategory" \cite{intercategories}, which is a pseudo category in the 2-category of double categories, pseudo functors and vertical transformations, in which one of the three directions of morphism consists only of identities.

\begin{example} \label{ex:loc_discrete}
  Any ordinary double category $\bb D$ can be seen as a double 2-category $\bb D_d$ with no further 2-cells. Any such double 2-category is said to be \textbf{locally discrete}. When it is clear from the context, we abuse the notation and omit the `$d$'.
\end{example}

\begin{example}
  Any monoidal 2-category $\K$ (from \cite{DayStreet} but cf. \S 2.5 of \cite{MV}) is a double 2-category $\bb K$ with $\bb K_0 = 1$ and $\bb K_1=\K$. Since a monoidal category is a category in monoids, this is like viewing an ordinary monoid as a monoidal category on one object. 
\end{example}

One canonical example is central to subsequent constructions in the proof of our representation theorem. This is the double 2-category of spans in a suitably structured 2-category. Just as spans in sets is a ``primordial" double category (cf. \cite{PareYoneda}), we anticipate that spans in categories is a similarly fundamental example of a double 2-category. We will apply the following construction formalizing this idea to three cases of interest and require some flexibility in doing so. Thus, we ask for a 2-category $\K$ and a class of arrows $\Sigma_1$ of $\K$, generating a full-sub-2-category, that is closed under 2-pullbacks. Think of $\Sigma_1$ as being the class of 2-natural transformations in $\icat$ or the class of cleavage-preserving morphisms in $\fib$.

\begin{construction} \label{construction:SpanCatAsDouble2Category}
    Let $\K$ denote a 2-category. Let $\Sigma_1$ denote a class of morphisms of $\K$ that
        \begin{enumerate}
            \item is closed under composition and identities;
            \item is closed under 2-pullbacks in the sense that the full sub-2-category $\Sigma\subset \K$ consisting of the objects of $\K$, morphisms in $\Sigma_1$ and all 2-cells between them, has all 2-pullbacks.
        \end{enumerate}
    Let $\Span_{\Sigma}(\K)_0$ be $\K$ itself. Let $\Span(\K)_1$ be the following 2-category, namely, the lax limit of the identity cospan on $\Sigma$ which can be constructed as
        $$\xymatrix{
            \Span_{\Sigma}(\K)_1 \ar[d] \ar[r] & \ar@{}[dr]|{\Rightarrow} \Sigma^\mbf 2 \ar[d]_{d_0} \ar[r]^{d_1} & \Sigma \ar@{=}[d] \\
            \ar@{}[dr]|{\Downarrow}\Sigma^\mbf 2 \ar[d]_{d_1} \ar[r]^{d_0} & \Sigma \ar@{=}[d] \ar@{=}[r] & \Sigma \ar@{=}[d] \\
            \Sigma \ar@{=}[r] & \Sigma \ar@{=}[r] & \Sigma.
        }$$
    in $\twocat$, taking cotensors with $\mbf 2$ and a strict 3-pullback in the upper-left corner. Thus, the objects are spans $A \xleftarrow[]{l} S \xrightarrow[]{r} B$ with legs in $\Sigma$ and the arrows are span morphisms
        $$\xymatrix{
            A \ar[d]_f & \ar[l]_l S \ar[d]^v \ar[r]^r & B \ar[d]^g \\
            X & \ar[l]^l T \ar[r]_r & Y.
        }$$
    The 2-cells are triples $(\alpha, \sigma,\beta)$ of cells $\alpha\colon f\Rightarrow f'$, $\sigma\colon v\Rightarrow v'$, $\beta\colon g\Rightarrow g'$ of $\K$ satisfying the compatibility conditions expressed by the two commutative diagrams of composite cells:
        $$\xymatrix{  
            A\ar@/^1.0pc/[d]^{f'} \ar@/_1.0pc/[d]_f \ar@{}[d]|{\alpha} & \ar[l]_l S \ar@/^1.0pc/[d]^{v'}  & \ar@{}[d]|{=} &  A \ar@/_1.0pc/[d]_f  & \ar[l]_l S \ar@/^1.0pc/[d]^{v'} \ar@/_1.0pc/[d]_v \ar@{}[d]|{\sigma} & & & S\ar@/^1.0pc/[d]^{v'} \ar@/_1.0pc/[d]_v \ar@{}[d]|{\sigma} \ar[r]^r &  B \ar@/^1.0pc/[d]^{g'}  & \ar@{}[d]|{=}  & S \ar@/_1.0pc/[d]_v \ar[r]^r &  B \ar@/^1.0pc/[d]^{g'} \ar@/_1.0pc/[d]_g \ar@{}[d]|{\beta}\\
            X &  \ar[l]^l T &  & X &  \ar[l]^l T & & & T \ar[r]_r &   Y &  & T \ar[r]_r &  Y.
        }$$
    Think of these as ``cylinder conditions." Identities and compositions making $\Span(\K)_1$ into a 2-category are inherited from $\mathfrak K$. Now, the external structure giving the double 2-category comes about in the following way. The source and target 2-functors $\src,\tgt\colon \Span(\K)_1\rightrightarrows \K$ send a morphism of spans as above to $f$ and to $g$, respectively; similarly, they send a 2-cell as above to $\alpha$ and to $\beta$, respectively. The identity $i\colon \K\to\Span(\K)_1$ sends an object to the span consisting of identity arrows. External composition of spans is given by 2-pullback in $\Sigma$. By the universal property of 2-pullbacks, this extends to a genuine 2-functor 
        \[ 
            -\otimes -\colon\Span(\K)_1\times_{\K}\Span(\K)_1 \to \Span(\K)_1
        \]
    Up-to-iso associativity also follows from the universal property of 2-pullbacks. This makes a double 2-category $\Span_\Sigma(\K)$.  \qed
\end{construction}

As mentioned above, there are three specialized instances of this construction needed in subsequent developments. These are
        \begin{enumerate}
            \item $\K = \cat$ and $\Sigma = \cat$;
            \item $\K = \fib$ and $\Sigma_1$ is the class of morphisms in $\fib$ that are cleavage-preserving;
            \item $\K = \icat$ and $\Sigma_1$ is the class of morphisms in $\icat$ for which the pseudo natural transformation filling the triangle is a 2-natural transformation.
        \end{enumerate}
    The first results in the canonical example $\Span(\cat)$ of spans of functors between categories, which we leave undecorated since $\Sigma$ is just $\cat$. The other two we define explicitly and indicate their distinctive notation.

\begin{define} \label{define:SpansWRTaSubsetofArrows}
     Let $\Span_t(\icat)$ denote the double 2-category of Construction \ref{construction:SpanCatAsDouble2Category} whose underlying 2-category is $\icat$ and whose proarrows are spans in $\icat$
        $$\xymatrix{
            \ar@{}[dr]|{\substack{\lambda\\\Leftarrow}}\mscr A^{op} \ar[d]_F & \ar@{}[dr]|{\substack{\rho\\\Rightarrow}} \mscr B^{op} \ar[d]^{G} \ar[l]_{S^{op}} \ar[r]^{T^{op}} & \mscr C^{op} \ar[d]^H \\
            \cat & \ar@{=}[l] \cat \ar@{=}[r] & \cat,
        }$$
    whose cells are \emph{2-natural transformations}. That is, $\Sigma_1$ is the class of morphisms of $\icat$ whose cells are 2-natural transformations, 
    satisfying the conditions in Construction \ref{construction:SpanCatAsDouble2Category} by Remark \ref{remark:EquivalenceOfPullbacksICatFib}. The subscripted `$t$' (for transformation)  is in the place of $\Sigma$ as a reminder of this class of morphisms giving the proarrows of the double 2-category.
\end{define}

\begin{remark} \label{rem:SpansWRTaSubsetofArrows}   
It is worth looking more precisely at the structure of $\Span_t(\icat)$ since it will figure prominently in the calculation of \S \ref{subsection:Translation}. In particular, a morphism in $\Span_t(\icat)_1$, displayed as
        $$\xymatrix{
          \ar@{}[dr]|{\substack{\lambda'\\\Leftarrow}}\mscr X^{op} \ar[d]_{M} & \ar@{}[dr]|{\substack{\rho'\\\Rightarrow}} \mscr Y^{op} \ar[d]^{N} \ar[l]_{S^{op}} \ar[r]^{T^{op}} & \mscr Z^{op} \ar[d]^{L} &\ar@{}[dr]|{\substack{\langle \alpha,\beta,\gamma\rangle \\ \longrightarrow}} & &  \ar@{}[dr]|{\substack{\lambda\\\Leftarrow}}\mscr A^{op} \ar[d]_F & \ar@{}[dr]|{\substack{\rho\\\Rightarrow}} \mscr B^{op} \ar[d]^{G} \ar[l]_{S^{op}} \ar[r]^{T^{op}} & \mscr C^{op} \ar[d]^H \\
          \cat & \ar@{=}[l] \cat \ar@{=}[r] & \cat & & &  \cat & \ar@{=}[l] \cat \ar@{=}[r] & \cat,
        }$$
    consists of three pseudo natural transformations
        $$\xymatrix{
            \ar@{}[dr]|{\;\;\;\substack{\alpha\\\Rightarrow}}\mscr X^{op} \ar@/_1pc/[dr]_{M} \ar[r]^{U^{op}} & \mscr A^{op} \ar[d]^F & & \ar@{}[dr]|{\;\;\;\substack{\beta\\\Rightarrow}}\mscr Y^{op} \ar@/_1pc/[dr]_{N} \ar[r]^{V^{op}} & \mscr B^{op} \ar[d]^G & & \ar@{}[dr]|{\;\;\;\substack{\gamma\\\Rightarrow}}\mscr Z^{op} \ar@/_1pc/[dr]_{L} \ar[r]^{W^{op}} & \mscr C^{op} \ar[d]^H \\
            & \cat & & & \cat & & & \cat
        }$$
    satisfying the conditions:
        $$\xymatrix{
            \mscr A^{op} \ar@/_1pc/[dr]_F & \mscr X^{op}\ar@{}[dl]|{\;\;\;\substack{\alpha \\\Leftarrow}} \ar[l]_{U^{op}} \ar[d] & \ar[l] \mscr Y^{op} \ar@{}[dl]|{\substack{\lambda' \\\Leftarrow}} \ar[d]^{G'} &\ar@{}[dr]|{=} & & \mscr A^{op} \ar[d]_F & \mscr B^{op} \ar@{}[dl]|{\substack{\lambda \\\Leftarrow}} \ar[l] \ar[d] & \mscr Y^{op}\ar[l]_{V^{op}} \ar@{}[dl]|{\substack{\beta \\\Leftarrow}\;\;\;} \ar@/^1pc/[dl]^{N}\\
            & \cat & \ar@{=}[l] \cat & & & \cat & \ar@{=}[l]\cat &
        }$$
     and 
        $$\xymatrix{
            \mscr Y^{op} \ar@/_1pc/[dr]_{N} \ar[r]^{V^{op}} & \mscr B^{op}\ar@{}[dl]|{\;\;\substack{\beta \\\Rightarrow}}  \ar[d] \ar[r] & \mscr Z^{op} \ar@{}[dl]|{\substack{\rho \\\Rightarrow}} \ar[d]^{H} &\ar@{}[dr]|{=} & & \mscr Y^{op} \ar[d]_{N} \ar[r] & \mscr Z^{op} \ar@{}[dl]|{\substack{\rho' \\\Rightarrow}} \ar[r]^{W^{op}} \ar[d] & \mscr C^{op} \ar@{}[dl]|{\substack{\gamma \\\Rightarrow}\;\;} \ar@/^1pc/[dl]^{H}\\
            & \cat & \ar@{=}[l] \cat & & & \cat & \ar@{=}[l]\cat &
        }$$
    In other words, a morphism $\langle \alpha, \beta,\gamma\rangle$ forms two prisms with a shared triangular face $\beta$. In equations, these two prism conditions are
        \begin{equation} \label{equations:PrismEquationsCat}
            (\alpha\ast S^{op})\lambda' = (\lambda\ast V^{op})\beta \qquad\text{and}\qquad (\rho\ast V^{op})\beta = (\gamma\ast T^{op})\rho'
        \end{equation}
    Note that two such morphisms are equal if, and only if, their components are equal. \qed
\end{remark}

The final application of Construction \ref{construction:SpanCatAsDouble2Category} is the double 2-category of spans in fibrations with cleavage-preserving morphisms. Proposition \ref{prop:pullbackspointwise_in_App} shows it is well-defined, in the sense that the required pullbacks for composition exist. 

\begin{define} \label{define:Double2CategoryofSpansInFibrations}
    Let $\Span_c(\fib)$ denote the double 2-category as in Construction \ref{construction:SpanCatAsDouble2Category} given by $\Span_c(\fib)_0 = \fib$ and whose 2-category of spans is formed with respect to $\Sigma_1$, the class of cleavage-preserving morphisms of fibrations. Here `$c$' (for cleavage) serves as a reminder that the spans are formed by taking the class of cleavage-preserving morphisms of fibrations.
\end{define}

The next result shows that in fact the two foregoing applications of Construction \ref{construction:SpanCatAsDouble2Category} result in equivalent double 2-categories. Recall from Definition \ref{define:EquivalenceOfPseudoCats} that an equivalence of double 2-categories is an equivalence in the 2-category $\pscat(\twocat)$.

\begin{prop} \label{prop:EquivalentDbl2CatsOfSpans}
    The elements construction $\fib\simeq \icat$ induces an equivalence
        \begin{equation}
            \Span_c(\fib) \simeq \Span_t(\icat)
        \end{equation}
    between the double 2-categories of spans from Definitions \ref{define:Double2CategoryofSpansInFibrations} and \ref{define:SpansWRTaSubsetofArrows}.
\end{prop}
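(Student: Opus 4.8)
The plan is to exhibit the equivalence as the image, under Construction \ref{construction:SpanCatAsDouble2Category}, of the elements equivalence of underlying $2$-categories. Write $E\colon \fib \to \icat$ for the elements $2$-functor and $E'\colon \icat \to \fib$ for its pseudo-inverse, together with the invertible $2$-natural transformations $\eta\colon 1 \Rightarrow E'E$ and $\varepsilon\colon EE' \Rightarrow 1$ witnessing the equivalence $\fib \simeq \icat$. Since both $\Span_c(\fib)$ and $\Span_t(\icat)$ are produced by the \emph{same} Construction \ref{construction:SpanCatAsDouble2Category} applied to the respective pairs $(\fib, \Sigma_1^{c})$ and $(\icat, \Sigma_1^{t})$, it suffices to show that $E$, $E'$, $\eta$, $\varepsilon$ are compatible with the two ingredients of that construction --- the class $\Sigma_1$ and the $2$-pullbacks used to form external composition --- and then to transport the whole equivalence through the construction.

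Compatibility with the classes is exactly the content of the two definitions: by Remark \ref{remark:EquivalenceOfPullbacksICatFib}, $E$ carries a cleavage-preserving morphism of fibrations to a morphism of indexed categories whose filling triangle is $2$-natural, and conversely for $E'$; hence $E$ restricts to an equivalence of the full sub-$2$-categories $\Sigma^{c}\subset\fib$ and $\Sigma^{t}\subset\icat$ determined by these classes. The proarrow $2$-category $\Span(\K)_1$ is the lax limit of the identity cospan on $\Sigma$, built from cotensors with $\mbf 2$ and a strict $3$-pullback; equivalences of $2$-categories preserve such weighted limits up to equivalence, so the restricted equivalence $\Sigma^{c}\simeq\Sigma^{t}$ induces an equivalence $\Span_c(\fib)_1 \simeq \Span_t(\icat)_1$ on the level of proarrows, sending a span of fibrations with cleavage-preserving legs to the associated span of indexed categories with $2$-natural legs, and similarly on cells.

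It remains to check that this proarrow-level equivalence is compatible with the external structure maps. The source and target $2$-functors are the projections out of the lax limit, so they commute with the comparison up to the coherence isomorphisms of $E$; the external unit is the identity-span inclusion and is likewise respected. External composition is defined by $2$-pullback in $\Sigma$, and by Remark \ref{remark:EquivalenceOfPullbacksICatFib} these $2$-pullbacks correspond under the elements equivalence, so the comparison commutes with $\otimes$ up to a canonical invertible cell supplied by the universal property of the $2$-pullback. Thus the proarrow equivalence, together with $E$ on objects, assembles into a unitary internal pseudo functor $\Span_c(\fib) \to \Span_t(\icat)$ of pseudo categories in $\twocat$, and running the same argument for $E'$ produces an internal pseudo functor in the reverse direction.

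Finally, applying the construction to $\eta$ and $\varepsilon$ yields invertible transformations in $\pscat(\twocat)$ exhibiting these two internal pseudo functors as mutually pseudo-inverse; by Definition \ref{define:EquivalenceOfPseudoCats} this is precisely an equivalence of double $2$-categories. I expect the main obstacle to be the third step: verifying that the comparison respects external composition up to a \emph{coherent} invertible cell. This is where the pseudo, rather than strict, nature of the elements equivalence is felt --- $E$ preserves the composition $2$-pullbacks only up to equivalence --- and one must check that the comparison cells so produced satisfy the associativity and unit coherences required of an internal pseudo functor; the universal property of $2$-pullbacks and the coherence already built into Construction \ref{construction:SpanCatAsDouble2Category} make this routine but somewhat lengthy to spell out.
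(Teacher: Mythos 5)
Your proposal is correct and takes essentially the same approach as the paper's proof: restrict the elements equivalence $\fib\simeq\icat$ to the full sub-2-categories of cleavage-preserving morphisms and 2-natural transformations, lift that restricted equivalence through the limit construction defining the proarrow 2-categories $\Span_c(\fib)_1$ and $\Span_t(\icat)_1$, and invoke Remark \ref{remark:EquivalenceOfPullbacksICatFib} to conclude that external composition (given by 2-pullback) is respected. The paper states these same three steps more tersely, leaving the coherence bookkeeping you flag at the end implicit.
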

\begin{proof}
    Note, more precisely, that $\fib$ and $\icat$ are equivalent in $\twocat$. The equivalence restricts to one
        $$\xymatrix{
            \fib \ar[r]^\simeq & \icat \\
            \Sigma \ar[u] \ar[r]_\simeq & \Tau \ar[u]
        }$$
    between the full sub-2-category $\Sigma$ of cleavage-preserving morphisms of fibrations and the full sub-2-category $\Tau$ of 2-natural transformations between indexed categories. By construction of $\Span_c(\fib)$ and $\Span_t(\icat)$ as higher-order limits in $\twocat$, the equivalences above lift to equivalences between the arrow 2-categories $\Sigma^\mbf 2\simeq \Tau^\mbf 2$ and the span 2-categories
        \begin{equation}
            \Span_c(\fib)_1 \simeq \Span_t(\icat)_1
        \end{equation}
    both in $\twocat$. As observed in Remark \ref{remark:EquivalenceOfPullbacksICatFib}, the equivalence $\Sigma\simeq\Tau$ is 2-pullback-preserving. Thus, the displayed equivalence between 2-categories of spans immediately above commutes with external composition.
\end{proof}

The utility of this result for our purposes is that these double 2-categories have equivalent categories of pseudo monoids. This will be used in the proof of the representation theorem below.

\subsection{Lax Double Pseudo Functors}
\label{subsection:LaxFunctorsOfDouble2Categories}

Our notion of weak morphism between double 2-categories is that of a ``lax double pseudo functor". This is much like a lax functor between ordinary double categories, but suitably adapted for the 2-categorical structures involved in double 2-categories. However, this is not an internal functor of internal categories in $\twocat$ since the two components $F_0\colon \D_0\to \E_0$ and $F_1\colon \D_1\to\E_1$ are allowed to be pseudo. The notational conventions follow those of Definition \ref{def:laxfunctor} above.

\begin{define} \label{def:lax_double_pseudo_functor}
    A \textbf{lax double pseudo functor} $F\colon \D\to\E$ between double 2-categories $\mathbb D$ and $\mathbb E$ consists of two pseudo functors $F_0\colon \D_0\to \E_0$ and $F_1\colon \D_1\to\E_1$ together with 
        \begin{enumerate}
            \item comparison pseudo natural transformations for external composition and unit
                $$\xymatrix{ 
                    \ar@{}[dr]|-{\substack{\phi \\ \Rightarrow}}\D_1\times_{\D_0}\D_1 \ar[d]_{F_1\times F_1} \ar[r]^-{\otimes} & \D_1 \ar[d]^{F_1} & & & \ar@{}[dr]|{\substack{ \iota \\ \Rightarrow}} \D_0 \ar[r]^y \ar[d]_{F_0} & \D_1 \ar[d]^{F_1} \\
                    \E_1\times_{\E_0}\E_1 \ar[r]_-{\otimes} & \E_1 & & & \E_0 \ar[r]_y & \E_1
                }$$
            \item an invertible associativity modification
                $$\xymatrix{
                    &\D_1^{(3)} \ar[dl]_{F_1^{(3)}}\ar[dd]\ar[dr]^{\otimes \times 1} & & & & &  \D_1^{(3)} \ar@{}[dd]|{\substack{\phi\times 1 \\ \Rightarrow}} \ar[dl]_{F_1^{(3)}}\ar[dr]^{\otimes \times 1} & \\
                    \ar@{}[dr]|{\substack{1\times \phi \\ \Rightarrow}}\E_1^{(3)}\ar[dd]_{1\times \otimes} & \ar@{}[dr]|{\substack{\mathfrak a \\ \cong}} & \D_1^{(2)} \ar[dd]^{\otimes} & & &   \E_1^{(3)} \ar[dd]_{1\times \otimes} \ar[dr] & & \D_1^{(2)}  \ar[dl] \ar[dd]^{\otimes} \\
                    & \D_1^{(2)} \ar[dl]\ar[dr] \ar@{}[dd]|{\substack{\phi \\ \Rightarrow}} & & \ar@{}[r]|{\substack{\Phi \\ \cong}} & &   \ar@{}[dr]|{\substack{\mathfrak a \\ \cong}} &  \E_1^{(2)} \ar[dd] \ar@{}[dr]|{\substack{\phi \\ \Rightarrow}}&  \\
                    \E^{(2)} \ar[dr]_{\otimes} &  & \D_1 \ar[dl]^{F_1} & &  &\E_1^{(2)}\ar[dr]_{\otimes} & & \D_1\ar[dl]^{F_1} \\
                    & \E_1 & & & & &  \E_1 & \\
                }$$
            \item and invertible unitor modifications
                $$\xymatrix{  
                    \D_1 \ar[dd]_{F_1} \ar[dr]_{\langle y, 1\rangle} \ar@{=}@/^2.0pc/[drr] & \ar@{}[d]|(.3){\substack{\mathfrak l \\ \cong}} & & & & \D_1 \ar[dd]_{F_1} \ar@{=}@/^2.0pc/[drr] & & \\
                    \ar@{}[dr]|{\substack{\langle \iota, 1\rangle \\ \Rightarrow}} & \ar@{}[ddr]|{\substack{\phi\\\Rightarrow}} \D_1^{(2)}\ar[dd]\ar[r]_\otimes & \D_1 \ar[dd]^{F_1} &\ar@{}[dr]|{\substack{\Lambda \\ \cong}} & & &   & \D_1\ar[dd]^{F_1} \\
                    \E_1 \ar[dr]_{\langle y,1\rangle} & &  & & & \E_1\ar[dr]_{\langle y,1\rangle} \ar@{=}@/^2.0pc/[drr]  & \ar@{}[d]|(.3){\substack{\mathfrak l \\ \cong}} & \\
                    & \E_1^{(2)}\ar[r]_\otimes & \E_1 & & & & \E_1^{(2)}\ar[r]_\otimes & \E_1 
                }$$
            and
                 $$\xymatrix{  
                    \D_1 \ar[dd]_{F_1} \ar[dr]_{\langle 1,y\rangle} \ar@{=}@/^2.0pc/[drr] & \ar@{}[d]|(.3){\substack{\mathfrak r \\ \cong}} & & & & \D_1 \ar[dd]_{F_1} \ar@{=}@/^2.0pc/[drr] & & \\
                    \ar@{}[dr]|{\substack{\langle 1,\iota \rangle \\ \Rightarrow}} & \ar@{}[ddr]|{\substack{\phi\\\Rightarrow}} \D_1^{(2)}\ar[dd]\ar[r]_\otimes & \D_1 \ar[dd]^{F_1} &\ar@{}[dr]|{\substack{\Rho\\\cong}} & & &   & \D_1\ar[dd]^{F_1} \\
                    \E_1 \ar[dr]_{\langle 1,y\rangle} & &  & & & \E_1\ar[dr]_{\langle 1,y\rangle} \ar@{=}@/^2.0pc/[drr]  & \ar@{}[d]|(.3){\substack{\mathfrak r \\ \cong}} & \\
                    & \E_1^{(2)}\ar[r]_\otimes & \E_1 & & & & \E_1^{(2)}\ar[r]_\otimes & \E_1 
                }$$
        \end{enumerate}
    satisfying the conditions:
        \begin{enumerate}
            \item{}[Well-definition] $F$ preserves sources and targets in that the equations $\src\,F_1 = F_0\src$ and $\tgt\,F_1 = F_0\tgt$ both hold;
            \item{}[Globularity] The comparison cells have trivial external source and target in the sense that
                \begin{enumerate}
                    \item $\src\,\phi = F_1\pi_1$ and $\tgt\,\phi = F_1\pi_2$ 
                    \item $\src\,\iota = F_0$ and $\tgt\,\iota = F_0$
                \end{enumerate}
            all hold; 
            \item{}[Unit Coherence] the composite modifications are equal
                $$\xymatrix{ 
                    \ar@{}[drr]|{\substack{\Rho\times 1\\\cong}} F_1^{(2)} \ar@{=}@/_1.5pc/[drr] \ar@{=>}[rr]^-{\langle 1,\iota\rangle \times 1} &&\ar@{}[drr]|{\substack{\Phi\\\cong}} F_1^{(3)} \ar@{=>}[d]^{\phi \times 1} \ar@{=>}[rr]^-{1\times \phi} && F_1^{(2)} \ar@{=>}[d]^\phi \ar@{}[drr]|{=} & & \ar@{}[drr]|{\substack{1 \times \Lambda\\\cong}} F_1^{(2)} \ar@{=}@/_1.5pc/[drr]\ar@{=>}[rr]^-{1 \times \langle \iota,1\rangle} &&\ar@{}[drr]|{=} F_1^{(3)} \ar@{=>}[d]^{\phi \times 1} \ar@{=>}[rr]^-{\phi\times 1} && F_1^{(2)} \ar@{=>}[d]^\phi  \\
                    & & F_1^{(2)} \ar@{=>}[rr]_\phi && F_1 & & & & F_1^{(2)} \ar@{=>}[rr]_\phi && F_1
                }$$
                ignoring the tensors $\otimes$ for readability;
            \item{}[Composition Coherence] there is an equality
                $$\xymatrix{
                    &F_1^{(4)} \ar@{=>}[dl]_{\phi \times 1}\ar@{=>}[dd]\ar@{=>}[dr]^{1\times \phi} & & & & &  F_1^{(4)}  \ar@{=>}[dl]_{\phi \times 1}\ar@{=>}[dr]^{1\times \phi} & \\
                    \ar@{}[dr]|{\substack{\Phi\times 1 \\ \cong}}F_1^{(3)}\ar@{=>}[dd]_{\phi\times 1} & \ar@{}[dr]|{\substack{1\times \Phi \\ \cong}} & F_1^{(3)} \ar@{=>}[dd]^{1\times\phi} & & &   F_1^{(3)} \ar@{=>}[dd]_{\phi\times 1} \ar@{=>}[dr]^{1\times\phi} & & F_1^{(3)}  \ar@{=>}[dl]_{\phi\times 1} \ar@{=>}[dd]^{1\times \phi} \\
                     & F_1^{(3)} \ar@{=>}[dl]\ar@{=>}[dr] \ar@{}[dd]|{\substack{\Phi \\ \cong}} & & \ar@{}[r]|{=} & &   \ar@{}[dr]|{\substack{\Phi \\ \cong}} &  F_1^{(2)} \ar@{=>}[dd] \ar@{}[dr]|{\substack{\Phi \\ \cong}}&  \\
                    F_1^{(2)} \ar@{=>}[dr]_{\phi} &  & F_1^{(2)} \ar@{=>}[dl]^\phi & & &  F_1^{(2)}\ar@{=>}[dr]_{\phi} & & F_1^{(2)}\ar@{=>}[dl]^\phi \\
                     & F_1 & & & & &  F_1 & \\
                }$$
                again suppressing tensors $\otimes$ and the underlying associator isos.
        \end{enumerate}
    Such a functor $F\colon \D\to\E$ is \textbf{pseudo} if the comparison cells are invertible; and is \textbf{strict} if they are identities. A lax/pseudo/strict double pseudo functor is \textbf{normalized} if the unitor modifications $\Lambda$ and $\Rho$ are identities. A \textbf{lax (resp. pseudo or strict) double 2-functor} is a lax (resp. pseudo or strict) double pseudo functor such that $F_0$ and $F_1$ are 2-functors. A \textbf{unitary} lax double pseudo functor is one for which $\iota$ is an identity.
\end{define}

In general, we will not assume that lax double pseudo functors are unitary. The examples below include those whose composition and unit comparison cells are genuinely lax. We will, however, assume that given arbitrary lax double pseudo functors are normalized.

\begin{example} \label{ex:lax_double_pseudo_functor_Dd--Ed}
    If $\D$ and $\E$ are pseudo double categories seen as locally discrete double 2-categories (see Example \ref{ex:loc_discrete}), then a lax (resp. pseudo or strict) double pseudo functor $F\colon \D_d\to\E_d$ is just a lax (resp. pseudo or strict) functor $F\colon \D\to\E$ as in Definition \ref{def:laxfunctor}.
\end{example}

\begin{example}
Recalling Notation \ref{not:underlying_double_cat} and Example \ref{ex:loc_discrete}, for each double 2-category $\bb D$ we have an inclusion $|{\bb D}|_d \to \bb D$ that is a strict double 2-functor.
\end{example}

\begin{example} \label{ex:for_item_1_1}
Any monoidal 2-category $\mathfrak B$ is the same as a double 2-category $\bb B$ whose object 2-category is the terminal 2-category $1$. 
  A lax monoidal pseudo functor between monoidal 2-categories, as defined in \cite[\S 2.5]{MV} and previously in the references therein, is then the same as a lax double pseudo functor between the double 2-categories.
\end{example}

\begin{example}
   A double pseudo functor \cite{ShulmanLR2011} between strict double categories $F\colon \bb D\to \bb E$ is a specialization of the lax double pseudo functors introduced here. Roughly, double pseudo functors are double functors that are allowed to be pseudo functorial on both ordinary arrows and on proarrows. This means that $F$ restricts to a pseudo functor $\mcal VF\colon \cal V \bb D\to \mcal V \bb E$ of vertical 2-categories and an ordinary functor $F_1\colon \bb D_1\to\bb E_1$ (since $\bb D_1$ is just a category). The globular comparison isos for horizontal composition then provide the laxity cells for external composition in our definition. Of course in this case, they are invertible.
\end{example}

\begin{example}\label{example:SliceLaxDoublePseudoFunctor}
    Start with a double category $\mathbb D$ where both $\mathbb D_0$ and $\mathbb D_1$ have pullbacks, and these are preserved strictly by the external source and target of $\mathbb D$ but only up to isomorphism by the tensor and identity. We will exhibit an example of such a double category in Example \ref{example:SliceOfArrowLaxFunctor} below. In this case, the standard correspondence $\mathbb D_0^{op} \to\cat$ given by $D\mapsto \mathbb D_0/D$ extends to a lax double pseudo functor $\mathbb D^{op}\to\Span(\cat)$. The assignment on proarrows takes $m\mapsto \mathbb D_1/m$ which projects to the slices over the source and target of $m$ via the given source and target functors coming with $\mathbb D$. The transition morphisms associated to arrows $f\colon A\to B$ and to cells $\theta\colon m\Rightarrow n$ come from the existence of ordinary pullbacks in $\mathbb D_0$ and $\mathbb D_1$. That is, for an ordinary morphism $f\colon A\to B$, take $f^*\colon \mathbb D_0/B\to \mathbb D_0/A$ to be given by pulling back an arrow $X\to B$ along $f\colon A\to B$. Similarly, for a cell $\theta\colon m\Rightarrow n$, take $\theta^*\colon \mathbb D_1/n\to\mathbb D_1/m$ to be given by pulling back a cell $\delta\colon p\Rightarrow n$ along $\theta$. In the case of $\D_1$, this results in a well-defined morphism of spans
        $$\xymatrix{
            \mathbb D_0/C \ar[d]_{f^*} & \ar[l]_\src \mathbb D_1/n \ar[d]^{\theta^*} \ar[r]^\tgt &\mathbb D_0/D \ar[d]^{g^*} \\
            \mathbb D_0/A & \ar[l]^\src \mathbb D_1/m \ar[r]_\tgt &\mathbb D_0/B
        }$$ 
    as a result of the fact that $\src,\tgt\colon \mathbb D_1\rightrightarrows \mathbb D_0$ preserve pullbacks on the nose. The assignments are pseudo functorial. Comparison cells associated to composable proarrows $m\colon A\slashedrightarrow B$ and $n\colon B\slashedrightarrow C$ are given by external composition, as indicated by the dashed arrow
        $$\xymatrix{
            \mathbb D_0/A \ar@{=}[ddd] & \ar[l]_{\src} \mathbb D_1/m \ar[r]^{\tgt} & \mathbb D_0/B & \ar[l]_{\src} \mathbb D_1/n \ar[r]^{\tgt} & \mathbb D_0/C \ar@{=}[ddd] \\
            & & \ar[ul] \mathbb D_1/m\times_{\mathbb D_0/B}\mathbb D_1/n \ar@{-->}[dd]^{-\otimes -} \ar[ur] & &  \\
            & & & & \\
            \mathbb D_0/A & & \ar[ll]^{\src} \mathbb D_1/m\otimes n \ar[rr]_\tgt & & \mathbb D_0/C
        }$$
    from the composite of the spans associated to $m$ and $n$ to the span associated to the composite $m\otimes n$. This results in a pseudo natural transformation 
        $$\xymatrix{
            \ar@{}[drr]|{\Rightarrow} \mathbb D_1^{op}\times_{\mathbb D_0^{op}}\mathbb D_1^{op} \ar[d] \ar[rr]^-{\otimes^{op}} & & \mathbb D_1^{op} \ar[d]\\
            \Span(\cat)_1\times_{\cat}\Span(\cat)_1 \ar[rr]_-{\otimes} & & \Span(\cat)_1 
        }$$
    For an arrow $(\theta,\delta)$, the coherence iso
        $$\xymatrix{
            \ar@{}[drr]|{\cong}\mathbb D_1/p\times_{\mathbb D_0/Y}\mathbb D_1/q \ar[d]_{\theta^*\times \delta^*} \ar[rr]^-{\otimes} & & \mathbb D_1/p\otimes q \ar[d]^{(\theta\otimes\delta)^*}\\
            \mathbb D_1/m\times_{\mathbb D_0/B}\mathbb D_1/n \ar[rr]_-{\otimes} & & \mathbb D_1/m\otimes n 
        }$$ 
    arises from the fact that external composition preserves finite limits up to isomorphism. Similarly, the composition condition for pseudo naturality follows by the interchange law in $\mathbb D$. There is no cell condition, since $\D_0$ and $\D_1$ are locally discrete as 2-categories. The proarrow unit comparison cell is the morphism of spans
        $$\xymatrix{
            \mathbb D_0/D \ar@{=}[d] & \ar@{=}[l] \mathbb D_0/D \ar@{-->}[d]^{y_{(-)}} \ar@{=}[r] & \mathbb D_0/D \ar@{=}[d] & & f\colon C\to D \ar@{|->}[d] \\
            \mathbb D_0/D & \ar[l]^{\src} \mathbb D_1/y_D \ar[r]_{\tgt} & \mathbb D_0/D & & y_f\colon y_C\Rightarrow y_D
        }$$
    given by sending an arrow $f\colon C\to D$ to its external unit cell $u_f$ as indicated in the picture above. This results in the required transformation
        $$\xymatrix{
            \ar@{}[dr]|{\Rightarrow} \bb D_0^{op} \ar[d] \ar[r]^{y^{op}}  & \bb D_1^{op} \ar[d] \\
            \cat \ar[r]_-\Delta & \Span(\cat)
        }$$
    The unit and associativity laws as in the definition of a lax double pseudo functor follow from the double-category structure on $\mathbb D$. This is a genuinely lax double pseudo functor that is normalized but not unitary.
\end{example}

\begin{example} \label{example:SliceOfArrowLaxFunctor}
    As a special case of the previous example, given an ordinary category $\C$ with finite limits, the usual associated pseudo functor
        \[ 
        \C^{op}\to\cat \qquad X\mapsto \C/X
        \]
    extends to a lax double pseudo functor on $\C^\mbf 2$. Send a proarrow $m\colon X\slashedrightarrow Y$ (an ordinary arrow $m\colon X\to Y$ of $\C$) to the span
        \[ 
            \C/X \leftarrow \C^{\mathbf 2}/m \to\C/Y
        \]
    where $\C^{\mathbf 2}/f$ is the slice of the arrow category $\C^{\mathbf 2}$ over $f\colon X\to Y$. A cell in $\C^{\mathbf 2}$ is a commutative square and is sent to the span morphism 
        $$\xymatrix{
            \C/Z \ar[d]_{f^*} & \ar[l] \ar[d] \C^\mathbf 2/n \ar[r] & \C/W \ar[d]^{g^*} \\
            \C/X & \ar[l] \C^\mathbf 2/m \ar[r] & \C/Y
        }$$
    where every vertically displayed functor is given by appropriate pullbacks. Laxity cells are given by composition in $\C$. For example, take composable (pro)arrows $m\colon X\to Y$ and $n\colon Y\to Z$. The corresponding laxity cell is the dashed arrow in the diagram
        $$\xymatrix{
            \C/X \ar@{=}[ddd] & \ar[l] \C/m \ar[r] & \C/Y & \ar[l] \C/n \ar[r] & \C/Z \ar@{=}[ddd] \\
            & & \ar[ul] \C/m\times_{\C/Y}\C/n \ar@{-->}[dd]^{-\otimes -} \ar[ur] & &  \\
            & & & & \\
            \C/X & & \ar[ll]^{\src} \C/nm \ar[rr]_\tgt & & \C/Z
        }$$
    given by composition in $\C$. This illustrates how the assignments are genuinely lax functorial.  That is, a square over the composite $nm$ does not necessarily factor as a square over $m$ and one over $n$. Similarly for the unit comparison cell. Again this is normalized, but not unitary.
\end{example}

\begin{example} \label{example:FamilyLaxDoublePseudoFunctor}
    Let $\C$ denote a small category. The ordinary functor $\set^{op}\to \cat$ given by indexed families $I\mapsto [I,\C]$ and pulling back on arrows extends to a lax double pseudo functor on spans $[-,\C]\colon \Span^{op}\to\Span(\cat)$. Take a span of set functions $I \xleftarrow[]{d} S \xrightarrow[]{c} J$ to the span of functors formed by the projections from the comma category occurring as the apex of the diagram
        $$\xymatrix{
            \ar@{}[dr]|{\Rightarrow} d^*/c^* \ar[d] \ar[r] & [J,\C] \ar[d]^{c^*} \\
            [I,\C] \ar[r]_{d^*} & [S,\C].
        }$$
    Accordingly, a morphism of spans of sets is sent to a morphism of spans of functors induced by the universal property of the corresponding comma category. Laxity cells are induced using composition in $\C$. Again these are genuinely non-invertible cells.
\end{example}

\begin{define}[Cf. Definition \ref{define:TransformationOfInternalPseudoFunctors}] \label{define:DoublePseudoNaturalTransformation}
    A \textbf{lax double pseudo natural transformation} of lax double pseudo functors $\tau\colon F\Rightarrow G$ where $F,G\colon \D \rightrightarrows \E$ consists of a pair of pseudo natural transformations $\tau_0\colon F_0\Rightarrow G_0$ and $\tau_1\colon F_1\Rightarrow G_1$ and modifications
        $$\xymatrix{
            \ar@{}[dr]|{\substack{\tau_1\times_{\tau_0}\tau_1\\\Rightarrow}} \D_1\times_{\D_0}\D_1 \ar[d]_{F_1\times_{F_0}F_1}\ar@{=}[r] & \ar@{}[dr]|{\substack{\phi\\\Rightarrow}} \D_1\times_{\D_0}\D_1 \ar[d]\ar[r]^-\otimes & \D_1 \ar[d]^{G_1} &  \ar@{}[dr]|{\substack{\Tau\\\Rrightarrow}} & &  \ar@{}[dr]|{\substack{\phi\\\Rightarrow}} \D_1\times_{\D_0}\D_1 \ar[d]_{F_1\times_{F_0}F_1} \ar[r]^-\otimes & \D_1\ar[d] \ar@{}[dr]|{\substack{\tau_1\\\Rightarrow}} \ar@{=}[r] & \D_1 \ar[d]^{G_1}    \\
            \E_1\times_{\E_0}\E_1 \ar@{=}[r] & \E_1\times_{\E_0}\E_1 \ar[r]_-\otimes & \E_1 & & & \E_1\times_{\E_0}\E_1 \ar[r]_-\otimes & \E_1 \ar@{=}[r] & \E_1
        }$$
    and
        $$\xymatrix{
            \ar@{}[dr]|{\substack{\tau_0\\\Rightarrow}} \D_0 \ar[d]_{F_0} \ar@{=}[r] & \ar@{}[dr]|{\substack{\iota\\\Rightarrow}} \D_0 \ar[r]^y \ar[d] & \D_1 \ar[d]^{G_1} & \ar@{}[dr]|{\substack{\Iota\\\Rrightarrow}} & & \ar@{}[dr]|{\substack{\iota\\\Rightarrow}} \D_0 \ar[d]_{F_0} \ar[r]^y & \ar@{}[dr]|{\substack{\tau_1\\\Rightarrow}} \D_1 \ar@{=}[r] \ar[d] & \D_1 \ar[d]^{G_1} \\
            \E_0 \ar@{=}[r] & \E_0 \ar[r]_y & \E_1 & & & \E_0 \ar[r]_y & \E_1 \ar@{=}[r] & \E_1
        }$$
    satisfying the conditions
    (where we omit the tensors $\otimes$ for readability)
    \begin{enumerate}
        \item{}[Multiplicativity] there is an equality of cells
            $$\xymatrix{
                &F_1^{(3)} \ar@{=>}[dl]_{\tau_1^{(3)}}\ar@{=>}[dd]\ar@{=>}[dr]^{\phi \times 1} & & & & &  F_1^{(3)} \ar@{}[dd]|{\substack{\Tau \times 1 \\ \Rrightarrow}} \ar@{=>}[dl]_{\tau_1^{(3)}}\ar@{=>}[dr]^{\phi \times 1} & \\
                \ar@{}[dr]|{\substack{1\times \Tau \\ \Rrightarrow}} G_1^{(3)}\ar@{=>}[dd]_{1\times \gamma} & \ar@{}[dr]|{\substack{\Phi \\ \cong}} & F_1^{(2)} \ar@{=>}[dd]^{\phi} & & &   G_1^{(3)} \ar@{=>}[dd]_{1\times \gamma} \ar@{=>}[dr]^{\gamma\times 1} & & F_1^{(2)}  \ar@{=>}[dl]_{\tau_1^{(2)}} \ar@{=>}[dd]^{\phi} \\
                & F_1^{(2)} \ar@{=>}[dl]^{\tau_1^{(2)}} \ar@{=>}[dr]_\phi \ar@{}[dd]|{\substack{\Tau \\ \Rrightarrow}} & & \ar@{}[r]|{=} & &   \ar@{}[dr]|{\substack{\Gamma \\ \cong}} &  G_1^{(2)} \ar@{=>}[dd] \ar@{}[dr]|{\substack{\Tau \\ \Rrightarrow}} &  \\
                G^{(2)} \ar@{=>}[dr]_{\gamma} &  & F_1 \ar@{=>}[dl]^{\tau_1} & &  & G_1^{(2)}\ar@{=>}[dr]_{\gamma} & & F_1\ar@{=>}[dl]^{\tau_1} \\
                & G_1 & & & & &  G_1 & \\
            }$$
        \item{}[Unitality] there are equalities of cells
            $$\xymatrix{
                \ar@{}[dr]|{\substack{1\times \Iota\\\Rrightarrow}} F_1 \ar@{=>}[d]_{\tau_1} \ar@{=>}[r]^-{\langle 1,\iota\rangle} &  \ar@{}[dr]|{\substack{\Tau \\\Rrightarrow}} F_1^{(2)} \ar@{=>}[d] \ar@{=>}[r]^\phi & F_1 \ar@{=>}[d]^{\tau_1} \ar@{}[drr]|{=} & & F_1 \ar@{=>}[d]_{\tau_1} \ar@{}[drr]|{=} & & \ar@{}[dr]|{\substack{\Iota\times 1 \\\Rrightarrow}} F_1 \ar@{=>}[d]_{\tau_1} \ar@{=>}[r]^-{\langle \iota,1\rangle} & \ar@{}[dr]|{\substack{\Tau \\\Rrightarrow}} F_1^{(2)} \ar@{=>}[d] \ar@{=>}[r]^\phi & F_1 \ar@{=>}[d]^{\tau_1} \\
                G_1 \ar@{=>}[r]_-{\langle 1,\iota\rangle} & G_1^{(2)} \ar@{=>}[r]_\gamma & G_1 & & G_1 & & G_1 \ar@{=>}[r]_-{\langle \iota,1 \rangle} & G_1^{(2)} \ar@{=>}[r]_\gamma & G_1
            }$$
    \end{enumerate}
    A \textbf{lax double 2-natural transformation} is a lax double pseudo natural transformation in which the transformations $\tau_0$ and $\tau_1$ are both 2-natural.
\end{define}

When it is clear from the context, we refer to lax double pseudo natural transformations of lax double pseudo functors simply as transformations.

\begin{prop} 
    Lax double pseudo functors $\D\to\E$ and their transformations form a category, denoted by $\dbltwocat(\D,\E)$.
\end{prop}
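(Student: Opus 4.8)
The plan is to build both the identities and the composition of transformations componentwise, relying on the fact that the components $\tau_0,\tau_1$ of a transformation (Definition \ref{define:DoublePseudoNaturalTransformation}) are pseudo natural transformations in $\twocat$, while the comparison data $\Tau,\Iota$ are modifications. For each $i\in\{0,1\}$, the pseudo functors $\D_i\to\E_i$, the pseudo natural transformations between them, and the modifications between those assemble into a strict hom-$2$-category; I take as known from this structure the vertical composition of pseudo natural transformations and the (strictly associative, strictly unital) pasting of modifications.

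First I would record the identity transformation $1_F\colon F\Rightarrow F$ by setting $(1_F)_0=1_{F_0}$ and $(1_F)_1=1_{F_1}$ to be identity pseudo natural transformations and taking $\Tau$ and $\Iota$ to be the identity modifications on $\phi$ and $\iota$. With these choices the Multiplicativity and Unitality conditions of Definition \ref{define:DoublePseudoNaturalTransformation} degenerate: every whiskered copy of $\Tau$ or $\Iota$ becomes an identity, and each equation reduces to an instance of one of the coherence modifications $\Phi,\Lambda,\Rho$ of $F$ equated with itself. Hence $1_F$ is a well-defined transformation.

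Next, given $\tau\colon F\Rightarrow G$ and $\sigma\colon G\Rightarrow H$, I would define $\sigma\tau$ by $(\sigma\tau)_0=\sigma_0\tau_0$, $(\sigma\tau)_1=\sigma_1\tau_1$ (vertical composites of pseudo natural transformations) and define its comparison modifications by vertical pasting: $\Tau^{\sigma\tau}$ is obtained by stacking $\Tau^{\tau}$ and $\Tau^{\sigma}$, suitably whiskered by the tensors and by $\sigma_1,\tau_1$, and $\Iota^{\sigma\tau}$ likewise from $\Iota^{\tau}$ and $\Iota^{\sigma}$. The substantive step is to verify that $\sigma\tau$ again satisfies Multiplicativity and Unitality. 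For Multiplicativity I would paste the multiplicativity equality for $\tau$ (which couples $\Tau^{\tau}$ with the associativity modifications $\Phi$ of $F$ and $\Gamma$ of $G$) onto the one for $\sigma$ (coupling $\Tau^{\sigma}$ with $\Gamma$ of $G$ and the associator of $H$); the shared $G$-associator $\Gamma$ occurs on matching sides and drops out, and the interchange law for modifications in the hom-$2$-category re-associates the result into precisely the multiplicativity equation for $\sigma\tau$. Unitality is handled the same way, pasting the two unitality equalities and applying interchange once.

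Finally, associativity and the unit laws for this composition are inherited immediately: every constituent of $\sigma\tau$ is assembled componentwise from vertical composition of pseudo natural transformations and pasting of modifications, both of which are strictly associative and unital with $1_F$ as constructed above, so no further coherence is required. I expect the Multiplicativity verification in the third paragraph to be the main obstacle --- the bookkeeping of the whiskerings by $\otimes$ and the cancellation of the $\Gamma$-faces in a three-dimensional pasting is where the work lies, even though nothing beyond interchange and the coherences already present in $F$, $G$, $H$ is needed.
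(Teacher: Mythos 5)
Your proof is correct and takes essentially the same approach as the paper's: composition and identities are defined componentwise, with associativity and the unit laws inherited from the strictness of the underlying compositions. In fact the paper's own proof is terser than yours—it does not spell out the pasting that produces the comparison modifications $\Tau$ and $\Iota$ of a composite, nor the verification of the Multiplicativity and Unitality axioms, both of which you carry out explicitly.
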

\begin{proof} 
    Composition of transformations $\tau\colon F\Rightarrow G$ and $\sigma \colon G \Rightarrow H$ is defined component-wise. That is, $\sigma\tau\colon F\Rightarrow H$ is given by
        \[ 
            (\sigma\tau)_A := \sigma_A\tau_A \qquad\qquad (\sigma\tau)_m := \sigma_m\tau_m
        \]
    on objects $A$ and proarrows $m$ in $\D$ using the internal composition of arrows and cells in $\E$. Associativity follows since internal compositions are strictly associative. Unit transformations $1\colon F \Rightarrow F$ are given by internal identity arrows and cells in $\E$. 
\end{proof}

\begin{example} \label{ex:for_item_1_2}
    A lax double pseudo natural transformation between lax double pseudo functors on double 2-categories whose 2-categories of objects are trivial is a weakly monoidal pseudo natural transformation in the sense of \cite[\S 2.5]{MV}.
\end{example}

\begin{example}
\label{ex:both_cases_as_indexed}
    Let $\K$ be a 2-category with 2-pullbacks and a terminal object $1$, seen as a Cartesian monoidal 2-category.
    For any double category $\bb{D}$, we denote by $\dbltwocat(\bb{D}, \Span(\K))^{\triangle 1}$ the full category of $\dbltwocat(\bb{D}, \Span(\K))$ whose objects
    $F: \bb{D} \mr{} \Span(\K)$ satisfy that
    $F_0: \bb{D}_0 \mr{} \K$ is the pseudo functor constant at $1$.
        \begin{enumerate}
             \item 
                When $\ct{B}$ is a monoidal category seen as a double category $\bb{B}$ as in Example \ref{ex:monoidascategory} (with 
                $\ct{B}_1=\ct{B}$ and $\ct{B}_0=1$), or equivalently as a locally discrete double 2-category as in Example \ref{ex:for_item_1_1},
                then there is an isomorphism of categories
                    \begin{equation} \label{eq:item1}
                        \dbltwocat(\bb{B}, \Span(\K))^{\triangle 1} \cong \Montwocat_{ps} (\ct{B}, \K),
                    \end{equation}
where the category on the right is that of lax monoidal pseudo functors between monoidal 2-categories and their weakly monoidal pseudo natural transformations as in \cite[\S 2.5]{MV}.
            \item  
                When $\ct{B}$ is a category seen as a {\em discrete} double category $\bb{D}(\ct{B})$ with $\bb{D}(\ct{B})_0 = \bb{D}(\ct{B})_1 = \ct{B}$ and $\src = \tgt = \otimes = y = id_\ct{B}$, we have an isomorphism of categories
                    \begin{equation} \label{eq:item2} \dbltwocat(\bb{D}(\ct{B}), \Span(\K))^{\triangle 1} \cong \twocat_{ps}(\ct{B}, \Mon(\K)), 
                    \end{equation}
where the category on the right is that of  pseudo functors between 2-categories and their  pseudo natural transformations.
        \end{enumerate}  
\end{example}

\begin{proof}
Both results depend ultimately on the fact that for spans in $\K$ whose source and target are $1$, their composition in $\Span(\K)$ is given by the Cartesian product of $\K$. This can be stated by saying that the 2-functor $inc: \K \mr{} \Span(\K)_1$, mapping an object of $\K$ to the unique such span with that apex, is the ``1-part" of a strict double functor $\bb{K} \mr{} \Span(\K)$ (where $\K$ is seen as a double 2-category $\bb{K}$ as in Example \ref{ex:for_item_1_1}). 
Note that $inc$ is an isomorphism of 2-categories with its image (that is, the spans in $\K$ whose source and target are $1$).

To show item 1, we recall first that by Examples \ref{ex:for_item_1_1} and \ref{ex:for_item_1_2}
the category on the right of \eqref{eq:item1} is isomorphic to
$\dbltwocat(\bb{B},\bb{K})$, and then composing with $\bb{K} \mr{} \Span(\K)$ yields the desired isomorphism.

For item 2, we consider first an object of the category on the left of \eqref{eq:item2}.
It includes a pseudo functor $F_1: \ct{B} \mr{} \Span(\K)_1$ that, in view of the above, corresponds to a unique $G: \ct{B} \mr{} \K$ such that $inc \ G = F_1$. Furthermore, the components of the pseudo natural transformations $\phi$ and $\iota$ as in Definition \ref{def:lax_double_pseudo_functor} provide a monoidal structure $\phi_B: GB \times GB \mr{} GB$, $\iota_B: 1 \mr{} GB$ for each $GB$. 
Note that the fact that $\phi$ and $\iota$ are pseudo natural means that $G(f)$ is a (strong) morphism of monoids for each arrow $f$ of $\mscr{B}$, so that $G$ becomes a pseudo functor $\ct{B} \mr{} \Mon(\K)$.
What this is showing is that $inc$ provides a bijection between the objects of the categories as in \eqref{eq:item2}, 
$$\twocat_{ps}(\ct{B}, \Mon(\K)) \mr{} \dbltwocat(\bb{D}(\ct{B}), \Span(\K))^{\triangle 1},$$
mapping $G$ to the lax double pseudo functor $F$ defined to have $F_0$ constant at $1$ and $F_1 = inc \ G$. 
It can be checked that this bijection extends in fact to an isomorphism of categories.
\end{proof}

We will be interested in lax slice categories for various double 2-categories $\bb E$ and certain subcategories. For example, the representation theorem is achieved by a chain of equivalences and isomorphisms involving such a slice, but it is restricted to \textbf{locally discrete} double 2-categories, that is, \emph{bona fide} double categories, as the domain of the lax double pseudo functors. The slice category of contravariant lax double pseudo functors from double categories over a fixed double 2-category $\bb E$ will be denoted by $\mbf{I}\bb E$. This notation is meant to recall the notation $\iset$ or $\icat$ indicating indexed spans and indexed categories respectively. 
\begin{define} \label{define::LaxSliceofLaxDblPseudoFunctors}
    Let $\mbf{I}\bb E$ denote the category with
        \begin{enumerate}
            \item 
                objects the contravariant lax double pseudo functors $F: {\bb A}^{op} \to \bb E$, where $\bb A$ is an {\em indexing} pseudo double category, and
            \item 
                morphisms $F\to G$ those pairs $(H,\tau)$ consisting of a lax double pseudo functor $H$ between the indexing double categories and a lax double pseudo natural transformation $\tau\colon F\Rightarrow GH^{op}$.
        \end{enumerate}
    Refer to $\mbf I\bb E$ as the category of \textbf{indexed proarrows} of $\bb E$.
\end{define}

\begin{example}
    For $\bb E = \Span(\cat)$, the category of indexed spans of categories is $\bf I \Span(\cat)$. Our representation theorem will show that this is equivalent to the category of double fibrations.
\end{example}

\subsection{Lax Functors as Pseudo Monoids}
\label{subsection:LaxFunctorsAsPseudoMonoids}

Here we will give the definition of a pseudo monoid in a double 2-category. These are needed to describe pseudo categories as certain structures in double 2-categories of spans. It is known that  categories are strict monoids in spans in sets. Only the 1-category structure figures in this correspondence, since the composition for ordinary categories involves no further 2-dimensional coherent isos. However, pseudo categories as in Definition \ref{def:pseudocategory} include precisely this higher-dimensional coherence structure in the form of associators and unitors. Pseudo monoids are thus required to describe this structure, and double 2-categories of spans provide the forum in which to study them. That is, double categories, of spans in particular, on their own do not have enough structure to give the coherent iso 2-cells required for the associators and unitors. More precisely, in a double 2-category $\bb D$, there are cells
  $$\xymatrix{
    \ar@{}[dr]|{\alpha} A \ar[d]_f \ar[r]|-@{|}^m & B \ar[d]^g \\
    C\ar[r]|-@{|}_n & D 
  }$$
(as in an ordinary double category) which are morphisms in $\bb D_1$. But $\bb D_1$ also has 2-cells $\Theta\colon\alpha\Rrightarrow \beta$, which are of the form
  $$\xymatrix{
    \ar@{}[dr]|{\gamma} A \ar@{=}[r] \ar[d] & \ar@{}[dr]|{\alpha} A \ar[d] \ar[r]|-@{|} & B \ar[d] \ar@{}[drr]|{\Theta\atop\Rrightarrow}& & \ar@{}[dr]|{\beta} A \ar[d] \ar[r]|-@{|} & \ar@{}[dr]|{\delta} B \ar[d] \ar@{=}[r] & B\ar[d] \\
    C\ar@{=}[r] &C\ar[r]|-@{|} & D  & & C\ar[r]|-@{|} & D \ar@{=}[r] & D 
  }$$
and are absent in an ordinary double category. Invertible 2-cells in $\bb D_1$ will provide the associators and unitors for our notion of pseudo monoid. Here is the precise definition.

\begin{define} \label{define:PseudoMonoidInDouble2Cat}
  Let $\bb D$ denote a double 2-category. A \textbf{pseudo monoid} in $\bb D$ is an endo-proarrow $m\colon A\hto A$ equipped with multiplication and unit cells
    $$\xymatrix{
      \ar@{}[drr]|{\mu} A \ar@{=}[d] \ar[r]|-@{|}^m & A \ar[r]|-@{|}^m & A \ar@{=}[d] & & & A\ar@{}[dr]|{\iota} \ar@{=}[d] \ar[r]|-@{|}^y & A \ar@{=}[d] \\ 
      A \ar[rr]|-@{|}_m & & A & & & A \ar[r]|-@{|}_m & A 
    }$$
  and globular iso 3-cells
    \begin{enumerate}
      \item for associativity:
        $$\xymatrix{
          \ar@{}[drr]|{\mu} \cdot \ar@{=}[d] \ar[r]|-@{|}^m & \cdot \ar[r]|-@{|}^m & \ar@{}[dr]|{1} \cdot \ar@{=}[d] \ar[r]|-@{|}^m &  \cdot \ar@{=}[d] & & \ar@{}[dr]|{1}  \cdot \ar@{=}[d] \ar[r]|-@{|}^m &\ar@{}[drr]|{\mu} \cdot\ar@{=}[d] \ar[r]|-@{|}^m &  \cdot  \ar[r]|-@{|}^m &  \cdot \ar@{=}[d]  \\
          \ar@{}[drrr]|{\mu} \cdot \ar@{=}[d] \ar[rr]|-@{|}_m & & \cdot \ar[r]|-@{|}_m & \cdot \ar@{=}[d] &\mrm{A}\atop \cong & \ar@{}[drrr]|{\mu} \cdot \ar@{=}[d] \ar[r]|-@{|}_m &   \cdot \ar[rr]|-@{|}_m & & \cdot \ar@{=}[d] \\
          \cdot \ar[rrr]|-@{|}_m & & & \cdot & &   \cdot \ar[rrr]|-@{|}_m & & & \cdot
        }$$
      \item and for left and right units:
        $$\xymatrix{
          \ar@{}[dr]|{\iota} \cdot \ar@{=}[d] \ar[r]|-@{|}^y & \ar@{}[dr]|{1} \cdot \ar@{=}[d] \ar[r]|-@{|}^m &\cdot \ar@{=}[d] & & \ar@{}[ddr]|{1}\cdot \ar@{=}[dd] \ar[r]|-@{|}^m & \cdot\ar@{=}[dd] && \ar@{}[dr]|{1} \cdot \ar@{=}[d] \ar[r]|-@{|}^m & \ar@{}[dr]|{\iota}\cdot \ar@{=}[d]  \ar[r]|-@{|}^y &\cdot \ar@{=}[d] \\
          \ar@{}[drr]|{\mu}\cdot \ar@{=}[d] \ar[r]|-@{|}^m &\cdot \ar[r]|-@{|}^m &\cdot \ar@{=}[d] &\Lambda\atop\cong &&& \mrm{P}\atop \cong &\ar@{}[drr]|{\mu}\cdot \ar@{=}[d] \ar[r]|-@{|}^m & \cdot \ar[r]|-@{|}^m &\cdot \ar@{=}[d] \\
          \cdot \ar[rr]|-@{|}_m & &\cdot && \cdot \ar[r]|-@{|}_m &\cdot& &\cdot  \ar[r]|-@{|}_m & \cdot \ar[r]|-@{|}_m &\cdot  
        }$$
    \end{enumerate}
  satisfying the axioms:
    \begin{enumerate}
      \item pentagonal identity: $[\mu(1\otimes \mrm A)][\mrm A(1\otimes \mu\otimes 1)][\mu(\mrm A\otimes 1)] = [\mrm A (1\otimes 1\otimes \mu)][\mrm A(\mu\otimes 1\otimes 1)]$ holds;
      \item left and right unit compatibility: $\Lambda[\mu(1\otimes \mrm P)][\Lambda(\iota\otimes 1\otimes 1)] = \mrm P[\mu(\Lambda\otimes 1)]$ holds.
    \end{enumerate}
A pseudo monoid is \textbf{normalized} if $\Lambda$ and $\mrm P$ are identity cells. A normalized pseudo monoid is \textbf{strict} if $\mrm A$ is an identity cell too.
\end{define}

Pseudo monoids in a double 2-category generalize many other structures:

\begin{example} 
  Pseudo monoids in a monoidal 2-category \cite[\S 2.5]{MV} are the same as pseudo monoids as defined above in that same monoidal 2-category viewed as a double 2-category whose 2-category of objects is trivial.
\end{example}

\begin{example} \label{ex:pseudo_categories_as_monoids_in_spans}
  A pseudo monoid in $\Span(\mbf{Mon})$ is a (weak) monoidal category. A pseudo monoid in $\Span(\cat)$ is a pseudo double category. More generally, a pseudo monoid in $\Span(\K)$ is a pseudo category in $\K$ according to Definition \ref{def:pseudocategory}. 
\end{example}

Note that in this correspondence between pseudo monoids and pseudo categories the source and target of the pseudo category are given by the proarrow $m\colon A\hto A$ (the span of arrows of $\K$, i.e. the pro-arrow of $\Span(\K)$). 

\begin{example} \label{example:PsMonsInSpansReltoSigma}
    More generally, recall that if $\K$ is equipped with a distinguished family of arrows $\Sigma_1$ the double 2-category $\Span_\Sigma(\K)$ arises in Construction \ref{construction:SpanCatAsDouble2Category} by requiring the proarrows to be in $\Sigma_1$. Hence a pseudo monoid in $\Span_\Sigma(\K)$ is precisely a pseudo category in $\K$ such that its source and target are in $\Sigma_1$. So, in particular, double fibrations are equivalently pseudo monoids in $\Span_c(\fib)$ from Definition \ref{define:Double2CategoryofSpansInFibrations}.
\end{example}

Another class of examples is provided by taking pseudo categories in various arrow (2-)categories. These are analogues of the case where an internal functor between internal categories is equivalently described as an internal category in a category of arrows. The difference is that in taking ``arrow 2-categories of 2-categories" the formerly commuting squares can now be filled with identities, isos, or mere cells. These correspond, roughly speaking, to strict, pseudo  and lax-internal functors.

\begin{example} \label{ex:before_PE}
    For any 2-category $\K$, we consider the 2-category $\Arrl(\K)$ and 
    $\Sigma_1$ the class of morphisms in $\Arrs(\K)$, defining thus as in Construction \ref{construction:SpanCatAsDouble2Category} a double 2-category that we denote $\Span_s(\Arrl(\K))$. By Example \ref{ex:pseudo_categories_as_monoids_in_spans}, we get that a pseudo monoid in $\Span_s(\Arrl(\K))$ is then a pseudo category in $\Arrl(\K)$ whose source and target are in $\Arrs(\K)$. But by Lemma \ref{lem:laxfunctorascategoryobject}, such a  pseudo category is a lax functor between internal pseudo categories. As mentioned above, this example can be restricted to pseudo and strict internal functors by considering respectively $\Arrp$ and $\Arrs$ in place of $\Arrl$.

    If one writes explicitly the correspondence between pseudo monoids in $\Span_s(\Arrl(\K))$ and lax functors between arbitrary internal pseudo categories, it will be seen that, for any internal pseudo category $\bb E$ in $\K$, one can {\em restrict it} to those lax functors that have $\bb E$ as their codomain (of course, the same can be done for the domain instead). That is, one can take a full sub-double 2-category $\bb{P}_s(\bb E)$ of $\Span_s(\Arrl(\K))$ corresponding via the correspondence to the lax functors $\bb X \to \bb E$.
\end{example}

\begin{remark} \label{rem:PE_high_level}
The example above illustrates how internal lax functors can be seen as categories internal to certain arrow 2-categories. Equivalently, they are thus pseudo monoids in certain double 2-categories of spans. Inasmuch as our goal is to realize double fibrations as certain lax functors into $\Span(\cat)$ and this is achieved by ``taking pseudo categories," we need an ``arrow category-like structure" in which to describe these lax functors as pseudo categories, or as pseudo monoids in an appropriate double 2-category of spans. One might then expect to be able to consider a sub-double 2-category $\bb P_s(\bb E)$ of $\Span_s(\Arrl(\K)))$ as above, for some judicious choice of $\K$. However, recalling that lax double pseudo functors are not internal lax functors in $\twocat$, 
the issue is that generally there is no \emph{strict} 3-category on 2-categories with enough ``pseudoness" to accommodate our constructions. 
Rather than work with tricategories, which would extend the scope of this paper, we opt for a construction that, though slightly \emph{ad hoc}, serves our purpose.

The rest of the subsection is meant to provide the required structure in the form of a double 2-category $\bb P(\bb E)$ where $\bb E$ is a double 2-category given and fixed throughout. The special case for the representation theorem will be $\bb E = \Span(\cat)$. 
In essence, $\bb P(\bb E)$ is somewhat like a sub-double 2-category of $\Span_s(\Arrl(\K)))$ where $\K = \twocat$, and thinking about it in this way may help the reader understand Construction \ref{construction:ConstructionOfSliceOfSpan2Cat} and Proposition \ref{prop:SpanValuedLaxFunctorsArePseudoMonoids}, but the difference is that pseudo functors and pseudo natural transformations are allowed at certain levels of structure. The fixed double 2-category $\bb E$ appears as the receiving structure for our lax functors because the external structure of $\bb E$ is built into the definition of composition and identities of $\bb P(\bb E)$.
\end{remark}

\begin{construction} \label{construction:ConstructionOfSliceOfSpan2Cat}
    Fix a double 2-category $\bb E$. Let $\src,\tgt\colon \bb E_1 \rightrightarrows \bb E_0$ denote its source-target structure. Here we define the structure of a double 2-category $\bb P(\E)$ built upon this span. The 2-category $\bb P(\bb E)_0$ has
        \begin{enumerate}
            \item objects: pseudo functors $F\colon \mscr A^{op} \to \bb E_0$ where $\mscr A$ is a category;
            \item arrows: pseudo natural transformations
                $$\xymatrix{
                    \ar@{}[dr]|{\Downarrow\alpha} \mscr A^{op} \ar[d]_{H^{op}} \ar[r]^F & \bb E_0 \ar[d] \\
                    \mscr B^{op} \ar[r]_G &\bb E_0
                }$$
            \item 2-cells: pairs consisting of a pseudo natural transformation $\theta\colon H\Rightarrow K$ and a modification
                $$\xymatrix{
                    \ar@{}[dr]|{\Downarrow\alpha} \mscr A^{op} \ar[d]_{H^{op}} \ar[r]^F & \bb E_0 \ar[d] &\ar@{}[dr]|{\Theta\atop\Rrightarrow} & & \ar@{}[dr]|{\theta^{op}\Downarrow}\mscr A^{op} \ar[d]_{H^{op}} \ar@{=}[r] & \ar@{}[dr]|{\Downarrow\beta} \mscr A^{op} \ar[d] \ar[r]^F & \bb E_0 \ar[d] \\
                     \mscr B^{op} \ar[r]_G &\bb E_0 & & & \mscr B^{op} \ar@{=}[r] & \mscr B^{op} \ar[r]_G &\bb E_0
                }$$
            Note that these are $\bb E_0$-analogues of the ``indexed 2-cells" displayed in \cite[\S 2, Eqn (9)]{MV}.
        \end{enumerate}
    So defined, $\bb P(\bb E)_0$ is a 2-category since $\bb E_0$ is one. In the case that $\bb E_0$ is $\cat$, $\bb P(\bb E)_0$ is precisely $\icat$. 
    
    The idea for the proarrow part $\bb P(\bb E)_1$ is that it should be a 2-category of \emph{certain} spans over the source and target span coming with $\bb E$. These spans are the ones needed for forming monoids. In detail, take $\bb P(\bb E)_1$ to have as objects span morphisms over $(\src, \tgt)$ of the form 
      $$\xymatrix{
        \mscr A^{op} \ar[d]_F & \mscr B^{op} \ar[d]^G \ar[l]_{S^{op}} \ar[r]^{T^{op}} & \mscr C^{op} \ar[d]^H \\
        \bb E_0 & \ar[l]^-\src \bb E_1 \ar[r]_-\tgt & \bb E_0 
      }$$
    where $F$, $G$ and $H$ are allowed to be pseudo but the legs of the top span are strict 2-functors. Note that we ask both squares to commute strictly. A morphism in $\bb P(\bb E)_1$ is displayed as
        $$\xymatrix{
          \bar{\mscr A}^{op} \ar[d]_{\bar F} &  \bar{\mscr B}^{op} \ar[d]^{\bar G} \ar[l]_{\bar S^{op}} \ar[r]^{\bar T^{op}} & \bar{\mscr C}^{op} \ar[d]^{\bar H} &\ar@{}[dr]|{\substack{\langle \alpha,\beta,\gamma\rangle \\ \longrightarrow}} & &  \mscr A^{op} \ar[d]_F & \mscr B^{op} \ar[d]^{G} \ar[l]_{S^{op}} \ar[r]^{T^{op}} & \mscr C^{op} \ar[d]^H \\
          \bb E_0 & \ar[l]^-\src \bb E_1 \ar[r]_-\tgt & \bb E_0 & & &  \bb E_0 & \ar[l]^-\src \bb E_1 \ar[r]_-\tgt & \bb E_0,
        }$$
    and consists of three pseudo natural transformations between pseudo functors 
        $$\xymatrix{
          \ar@{}[dr]|{\;\;\;\substack{\alpha\\\Rightarrow}}\bar{\mscr A}^{op} \ar@/_1pc/[dr]_{\bar F} \ar[r]^{P^{op}} & \mscr A^{op} \ar[d]^F & & \ar@{}[dr]|{\;\;\;\substack{\beta\\\Rightarrow}}\bar{\mscr B}^{op} \ar@/_1pc/[dr]_{\bar G} \ar[r]^{Q^{op}} & \mscr B^{op} \ar[d]^G & & \ar@{}[dr]|{\;\;\;\substack{\gamma\\\Rightarrow}}\bar{\mscr C}^{op} \ar@/_1pc/[dr]_{\bar H} \ar[r]^{R^{op}} & \mscr C^{op} \ar[d]^H \\
          & \bb E_0 & & & \bb E_1 & & & \bb E_0
        }$$
    satisfying the conditions that
        \begin{equation} \label{equation:PrismEquationsSpanCat}
          \src\ast\beta = \alpha\ast S'\qquad \text{and} \qquad \tgt \ast \beta = \gamma \ast T'
        \end{equation}  
    both hold. So, the transformations $\alpha$, $\beta$ and $\gamma$ together with the faces of the span morphisms form two prisms with a shared triangular face $\beta$. This gives a good idea of what the 2-cells should be. That is, a 2-cell consists of three modifications $\Sigma\colon \alpha \Rightarrow \alpha'$, $\Xi\colon \beta\Rrightarrow\beta'$ and $\mrm T\colon \gamma\Rrightarrow\gamma'$ satisfying the same conditions, namely, that $\Sigma \ast S' = \src\ast\Xi$ and $\mrm T\ast T' = \tgt\ast\Xi$ both hold. The fact that $\bb P(\bb E)_1$ is a 2-category follows from interchange for whiskering of 2-cells both in 2-categories of pseudo functors and in the 2-categories $\bb E_0$ and $\bb E_1$.
    
    The 2-functors $\src,\tgt \colon \bb P(\bb E)_1 \rightrightarrows \bb P(\bb E)_0$ for source and target are now the natural ones. That is, all the data of $\bb P_1$ is summarized in a single 2-cell $\langle \Sigma, \Xi,\mrm T\rangle$ as discussed above. So, the source of such a cell is $\Sigma$ and the target is $\mrm T$. The assignments on arrows and objects are then the natural ones. For external composition, take a composable span
        $$\xymatrix{
          \mscr C^{op} \ar[d]_H & \mscr D^{op} \ar[d]^K \ar[l] \ar[r] & \mscr E^{op} \ar[d]^L \\
         \bb E_0 & \ar[l]^-\src \bb E_1 \ar[r]_-\tgt & \bb E_0.
        }$$
    The composite span is defined to be the span morphism
        $$\xymatrix{
          \mscr A^{op} \ar[d]_F && (\mscr B\times_{\mscr C}\mscr D)^{op} \ar[d]^{G\times_HK} \ar[ll] \ar[rr] && \mscr E^{op} \ar[d]^H \\
          \bb E_0 \ar@{=}[d] && \ar[ll] \bb E_1\times_{\bb E_0} \bb E_1 \ar[d]^\otimes \ar[rr] && \bb E_0 \ar@{=}[d] \\ 
          \bb E_0 && \ar[ll]^-\src \bb E_1 \ar[rr]_-\tgt && \bb E_0 
        }$$
    taking 2-pullbacks and using the given external composition $\otimes$ coming with $\bb E$. Note that $G\times_HK$ is well-defined since the span morphisms giving the objects of $\bb P(\bb E)_1$ are required to commute strictly. This extends to an honest 2-functor
        \[
          \otimes\colon \bb P(\bb E)_1\times_{\bb P(\bb E)_0} \bb P(\bb E)_1 \longrightarrow \bb P(\bb E)_1
        \]
    giving the external composition. There is some work behind this statement, but it is straightforward. The assignments on morphisms and cells are those suggested by object assignment above. Well-definition in each case depends on the prism equations. That $\otimes$ is a 2-functor is a result of componentwise composition in $\bb E_1\times_{\bb E_0}\bb E_1$ and the fact that $\otimes$ for $\bb E$ is a 2-functor. Finally, the external identity on a pseudo functor $F\colon \mscr A^{op}\to \bb E_0$ is the span
        $$\xymatrix{
          \mscr A^{op} \ar[d]_F & \mscr A^{op} \ar[d]^{\Delta_F} \ar[l] \ar[r] & \mscr A^{op} \ar[d]^F \\
          \bb E_0 & \ar[l]^-\src \bb E_1 \ar[r]_-\tgt & \bb E_0.
        }$$
    where $\Delta_F$ takes $A\in\mscr A$ to the external identity on $FA$. For any transformation $\alpha\colon F'\Rightarrow F$ or modification $\Theta\colon \alpha'\Rrightarrow \alpha$, there are corresponding cells and modifications between $\Delta_{F'}$ and $\Delta_F$ giving the required structures for an external identity 2-functor $y\colon \bb P(\bb E)_0\to \bb P(\bb E)_1$.
\end{construction}

\begin{lemma}
    The structure $\src,\tgt\colon \bb P(\bb E)_1 \rightrightarrows \bb P(\bb E)_0$ with $\otimes$ and $y$ as above defines a double 2-category denoted by $\bb P(\bb E)$.
\end{lemma}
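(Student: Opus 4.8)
The plan is to check that the data assembled in Construction \ref{construction:ConstructionOfSliceOfSpan2Cat} satisfies the axioms of a pseudo category in $\twocat$, i.e. Definition \ref{def:pseudocategory} instantiated at $\K = \twocat$. The construction already records that $\bb P(\bb E)_0$ and $\bb P(\bb E)_1$ are 2-categories, that $\src,\tgt\colon \bb P(\bb E)_1\rightrightarrows \bb P(\bb E)_0$, $y$, and $\otimes$ are genuine 2-functors, and that the pullbacks needed to form the domain of $\otimes$ exist—indeed these are precisely the 2-pullbacks of the 2-categories $\bb P(\bb E)_i$ used in the construction to define composition, and $\twocat$ has them. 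Thus the only remaining task is to produce the invertible associator $\mathfrak a$ and unitors $\mathfrak l,\mathfrak r$ and to verify the pentagon and triangle coherence axioms. Throughout I would follow the pattern of Construction \ref{construction:SpanCatAsDouble2Category}, where up-to-iso associativity of $\otimes$ came from the universal property of 2-pullbacks; the only new feature, flagged in Remark \ref{rem:PE_high_level}, is that the legs now additionally carry the coherence of the fixed double 2-category $\bb E$.

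First I would build the associator. A triple of composable objects of $\bb P(\bb E)_1$ composes, on the ``domain'' side, to an iterated pullback of categories $\mscr B\times_{\mscr C}(\mscr D\times_{\mscr E}\mscr F)$ versus $(\mscr B\times_{\mscr C}\mscr D)\times_{\mscr E}\mscr F$, and, on the ``$\bb E_1$'' side, to a threefold external product governed by the $\otimes$ of $\bb E$. I would define $\mathfrak a$ componentwise as the pair consisting of (i) the canonical associativity isomorphism for iterated pullbacks in $\cat$ and (ii) the associator of $\bb E$ applied in the middle leg. This is an invertible 2-cell of $\bb P(\bb E)_1$: its source and target components are identities, so the prism conditions \eqref{equation:PrismEquationsSpanCat} hold trivially (making $\mathfrak a$ globular), and 2-naturality in the triple argument follows from 2-naturality of the associator of $\bb E$ together with the functoriality of the pullback comparison. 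The unitors $\mathfrak l,\mathfrak r$ are obtained the same way from the external-identity 2-functor $y$ (built from $\Delta_F$) and the unitors of $\bb E$; since $\bb E$ is normalized under the paper's conventions and pullback along an identity leg is strictly unital, these come out as identities, so $\bb P(\bb E)$ is itself normalized.

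It then remains to verify the axioms. The pentagon for $\mathfrak a$ splits into two independent pentagons: the coherence of the canonical reassociation isomorphisms for fourfold pullbacks in $\cat$ (which holds since these are the associativity isomorphisms of the span bicategory) and the pentagon for the associator of $\bb E$ (which holds because $\bb E$ is a double 2-category). Since $\mathfrak l,\mathfrak r$ are identities, the triangle axiom is automatic. The main obstacle is bookkeeping rather than conceptual: one must confirm that the two ingredients of $\mathfrak a$ (the pullback comparison and the associator of $\bb E$) interact correctly with the definition of $\otimes$—in particular that $G\times_H K$ and its iterates are strictly compatible with reassociation, which uses that the span legs are \emph{strict} 2-functors and that the squares defining objects of $\bb P(\bb E)_1$ commute strictly—and that the composite 2-cells remain legitimate cells of $\bb P(\bb E)_1$ at every stage. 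Once this compatibility is in place, each coherence law reduces leg-by-leg to a law already known for $\cat$ or for $\bb E$.
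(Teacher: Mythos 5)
Your proposal is correct and follows essentially the same route as the paper's own (much terser) proof: the paper likewise derives up-to-iso associativity from the fact that external composition is built from the $\otimes$ of $\bb E$ together with 2-pullbacks of 2-functors, both already associative up to isomorphism, and dispatches the unit laws by normalization. Your version simply spells out the componentwise associator, its globularity via the prism conditions, and the leg-by-leg reduction of coherence, which the paper leaves implicit.
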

\begin{proof}
    Up-to-iso associativity for $\bb P(\bb E)$ holds by the fact that this composition morphism is defined using $\otimes$ for $\bb E$ and 2-pullbacks of 2-functors which are both already known to be associative up to iso. The unit laws hold by normalization.
\end{proof}

\begin{prop} \label{prop:SpanValuedLaxFunctorsArePseudoMonoids}
    Pseudo monoids in $\bb P(\bb E)$ are precisely contravariant lax double pseudo functors on double categories that are valued in $\bb E$.
\end{prop}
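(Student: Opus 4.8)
The plan is to unwind the definition of a pseudo monoid in $\bb P(\bb E)$ (Definition \ref{define:PseudoMonoidInDouble2Cat}) through the explicit description of $\bb P(\bb E)$ in Construction \ref{construction:ConstructionOfSliceOfSpan2Cat}, and to recognize the resulting data, term by term, as exactly a contravariant lax double pseudo functor $F\colon \bb D^{op}\to\bb E$ out of a pseudo double category $\bb D$ regarded as a locally discrete double 2-category (Definition \ref{def:lax_double_pseudo_functor} and Example \ref{ex:lax_double_pseudo_functor_Dd--Ed}). This is the ``one dimension up'' analogue of the identification of pseudo monoids in $\Span_s(\Arrl(\K))$ with internal lax functors from Example \ref{ex:before_PE}; indeed $\bb P(\bb E)$ was constructed precisely so that this unwinding goes through, the difference being that the apex data is now allowed to be pseudo. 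Throughout I would work at the level of the underlying data and then match axioms, treating the correspondence as a bijection between the two classes of structure.

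First I would read off the base data. A pseudo monoid is an endo-proarrow $m\colon A\hto A$. The object $A$ is a pseudo functor $F_0\colon \mscr A^{op}\to\bb E_0$, and I set $\bb D_0:=\mscr A$ and take $F_0$ as the object component. The proarrow $m$, being an object of $\bb P(\bb E)_1$ whose source and target both equal $A$, is a span $\mscr A\xleftarrow{S}\mscr M\xrightarrow{T}\mscr A$ together with a pseudo functor $F_1\colon \mscr M^{op}\to\bb E_1$ lying strictly over $(\src,\tgt)$; I set $\bb D_1:=\mscr M$, $\src_{\bb D}:=S$, $\tgt_{\bb D}:=T$, and take $F_1$ as the arrow component. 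The strict commutativity of the two squares defining an object of $\bb P(\bb E)_1$ is exactly the Well-definition condition $\src\,F_1=F_0\,\src$ and $\tgt\,F_1=F_0\,\tgt$ of Definition \ref{def:lax_double_pseudo_functor}.

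Next I would extract the comparison and coherence data from $\mu$, $\iota$ and the three iso $3$-cells. Since $\mu$ and $\iota$ are globular cells, their source- and target-leg transformations are identities, so by the prism equations \eqref{equation:PrismEquationsSpanCat} each reduces to an apex functor together with a globular middle transformation. For $\mu\colon m\otimes m\to m$ the apex functor $\mscr M\times_{\mscr A}\mscr M\to\mscr M$ is the external composition $\otimes_{\bb D}$, and the middle transformation, sitting over $\otimes_{\bb E}\circ(F_1\times_{F_0}F_1)$ and $F_1\circ\otimes_{\bb D}^{op}$, is exactly the composition comparison $\phi$; its globularity (forced by the prism equations) is the Globularity condition $\src\,\phi=F_1\pi_1$, $\tgt\,\phi=F_1\pi_2$. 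For $\iota\colon y\to m$ the apex functor $\mscr A\to\mscr M$ is the external unit $y_{\bb D}$ and its middle transformation is the unit comparison $\iota$ of $F$. The functor (apex) parts of the associator and unitor $3$-cells $\mrm A$, $\Lambda$, $\mrm P$ of the monoid then furnish the associator and unitors $\mathfrak a_{\bb D}$, $\mathfrak l_{\bb D}$, $\mathfrak r_{\bb D}$ of $\bb D$, valued in the category $\bb D_1=\mscr M$, making $\bb D$ a pseudo double category in the sense of Definition \ref{def:pseudocategory}, while their middle (transformation) parts provide the modifications $\Phi$, $\Lambda$, $\Rho$ of Definition \ref{def:lax_double_pseudo_functor}. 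Matching the two pseudo-monoid axioms completes the identification: the apex components of the pentagon and unit-compatibility identities reproduce the pentagon and triangle coherences of the pseudo double category $\bb D$, while their middle components reproduce the Composition Coherence and Unit Coherence of the lax double pseudo functor. Reversing each step builds a pseudo monoid from any lax double pseudo functor $\bb D^{op}\to\bb E$, and the two passages are visibly mutually inverse.

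The main obstacle is the bookkeeping forced by the two-layered nature of the $3$-cells $\mrm A$, $\Lambda$, $\mrm P$. Each must simultaneously carry the (non-strict) associativity and unit isomorphisms of the domain $\bb D$ at the apex level and the coherence modifications $\Phi$, $\Lambda$, $\Rho$ of $F$ at the $\bb E_1$ level, and one has to check that the ambient associativity and unit constraints of $\bb P(\bb E)$ --- which, as in the preceding lemma, arise from pullback reassociation together with the associator of $\otimes_{\bb E}$ --- interact with these so that the two sides being compared are genuinely composable and that the prism equations \eqref{equation:PrismEquationsSpanCat} are preserved at every stage. This is a matter of careful projection to apex and middle components rather than any conceptual difficulty, exactly as anticipated in Remark \ref{rem:PE_high_level}; the point of having built $\bb P(\bb E)$ is to make each such projection land in the expected piece of the lax double pseudo functor.
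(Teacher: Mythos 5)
Your proposal is correct and takes essentially the same approach as the paper's own proof: unwind the pseudo-monoid data in $\bb P(\bb E)$ --- the endo-proarrow giving $F_0$, $F_1$ and the underlying span, the globular multiplication and unit cells giving the domain's $\otimes$ and $y$ together with the comparisons $\phi$ and $\iota$, and the iso $3$-cells giving the associator/unitor structure --- then match the axioms and note the rearrangement is reversible, hence a bijection. If anything, your write-up is slightly more explicit than the paper's about splitting each $3$-cell into its apex-level part (the coherence isos of the domain pseudo double category) and its middle-level part (the modifications $\Phi$, $\Lambda$, $\Rho$ of $F$).
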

\begin{proof}
     A pseudo monoid in $\bb P(\bb E)$ starts with an underlying endo-proarrow, that is, a span
        $$\xymatrix{
          \mscr A^{op} \ar[d]_{F_0} & \mscr M^{op} \ar[d]^{F_1} \ar[l]_{S^{op}} \ar[r]^{T^{op}} & \mscr A^{op} \ar[d]^{F_0} \\
          \bb E_0 & \ar[l]^-\src \bb E_1 \ar[r]_-\tgt & \bb E_0 
        }$$
    The pseudo functors $F_0$ and $F_1$ are the $0$- and $1$- parts of a purported lax double pseudo functor. These come equipped with multiplication and unit cells, which are in fact 2-cells between span morphisms. The multiplication cell is 
        $$\xymatrix{
          \mscr A^{op} \ar[d]_{F_0} & (\mscr M\times_{\mscr A}\mscr M)^{op} \ar[l] \ar[r] \ar[d]^{F_1\times_{F_0}F_1} & \mscr A^{op} \ar[d]^{F_0} & & & \mscr A^{op} \ar@{=}[d] & (\mscr M\times_{\mscr A}\mscr M)^{op} \ar[l] \ar[r] \ar[d]^{\otimes^{op}} & \mscr A^{op} \ar@{=}[d] \\
          \bb E_0 \ar@{=}[d] & \ar[l] \bb E_1\times_{\bb E_0} \bb E_1 \ar[d]^\otimes \ar[r] & \bb E_0 \ar@{=}[d] & \ar@{}[r]|{\substack{\mu\\\Rightarrow}} & &\mscr A^{op}\ar[d]_{F_0} &\mscr M^{op} \ar[l]\ar[r]\ar[d]^{F_1} & \mscr A^{op}\ar[d]^{F_0} \\
          \bb E_0 & \ar[l]^-\src \bb E_1 \ar[r]_-\tgt & \bb E_0  & & & \bb E_0 & \ar[l]^-\src \bb E_1 \ar[r]_-\tgt & \bb E_0 .
        }$$
    which is globular by construction. Notice that the `$\otimes$' on the right is given as part of the pseudo monoid data. It will give the external composition of the domain double-2-category. Likewise the unit cell is one
        $$\xymatrix{
          \mscr A^{op} \ar[d]_{F_0} & \mscr A^{op} \ar@{=}[l] \ar@{=}[r] \ar[d]^{F_0} & \mscr A^{op} \ar[d]^{F_0} & & & \mscr A^{op} \ar@{=}[d] & \mscr A^{op} \ar@{=}[l] \ar@{=}[r] \ar[d]^{\Delta_\mscr A} & \mscr A^{op} \ar@{=}[d] \\
          \cat\ar@{=}[d] &\cat\ar@{=}[l]\ar@{=}[r]\ar[d]^{\Delta_{\cat}} &\cat\ar@{=}[d] &\ar@{}[r]|{\substack{\iota\\\Rightarrow}} & & \mscr A \ar[d]_{F_0} & \mscr M \ar[l]^S \ar[r]_T\ar[d]^{F_1} &\mscr A \ar[d]^{F_0}\\
          \bb E_0 & \ar[l]^-\src \bb E_1 \ar[r]_-\tgt & \bb E_0  & & & \bb E_0 & \ar[l]^-\src \bb E_1 \ar[r]_-\tgt & \bb E_0 .
        }$$
    Again $\Delta_\mscr A$ on the right is given from the structure of a pseudo monoid. Since these cells are externally globular, the important data is in each case the 2-cell between the apex morphisms. These are
        $$\xymatrix{
          \ar@{}[dr]|{\substack{\mu\\\Rightarrow}}(\mscr M\times_{\mscr A}\mscr M)^{op} \ar[d]_{F_1\times_{F_0}F_1} \ar[r]^-{\otimes^{op}} & \mscr M^{op} \ar[d]^{F_1} & & & \ar@{}[dr]|{\substack{\iota\\\Rightarrow}}\mscr A^{op}\ar[d]_{F_0} \ar[r]^{\Delta_\mscr A} & \mscr M^{op} \ar[d]^{F_1} \\
          \bb E_1\times_{\bb E_0}\bb E_1 \ar[r]_-\otimes & \bb E_1 & & & \bb E_0 \ar[r]_-{y} & \bb E_1
        }$$
    which is precisely the remaining data for a lax functor on a double-2-category valued in $\bb E$. The associator and unitor isos can similarly be extracted from the data for a pseudo monoid. The axioms in \ref{define:PseudoMonoidInDouble2Cat} are precisely the associativity and unit axioms for a lax functor. Denote this by $F\colon \bb A^{op}\to\bb E$ using $\bb A$ to correspond to the given category $\mscr A$. This process of extracting a lax functor from a pseudo monoid is completely reversible. Since it is just a matter of rearranging the data, the correspondence amounts to a bijection, as claimed.
\end{proof}

We now define homomorphisms of pseudo monoids. An example to have in mind is the case of strict monoids in an ordinary double category. When that double category is spans in sets, so that its monoids are categories, the homomorphisms correspond to functors. 
The following is the version appropriate for our pseudo monoids, corresponding to lax functors when the double 2-category is $\Span(\K)$, as indicated in the example below.

\begin{define} \label{define:LaxHomomsOfPseudoMonoids}
  A \textbf{lax homomorphism} of pseudo monoids $(m,\mu,\iota)$ and $(n,\nu,\upsilon)$ in a double 2-category $\bb D$ is a cell
    $$\xymatrix{
      \ar@{}[dr]|{\phi} A \ar[d]_f \ar[r]|-@{|}^m & A \ar[d]^f \\
      B \ar[r]|-@{|}_n & B
    }$$ 
  together with comparison 3-cells $\Phi$ and $\Upsilon$ as in
    $$\xymatrix{
      \ar@{}[dr]|{\phi} \cdot \ar[d]_f \ar[r]|-@{|}^m & \ar@{}[dr]|{\phi} \cdot \ar[d]_f \ar[r]|-@{|}^m &\cdot\ar[d]^f & & \ar@{}[drr]|{\mu} \cdot \ar@{=}[d] \ar[r]|-@{|}^m & \cdot \ar[r]|-@{|}^m & \cdot\ar@{=}[d] & & \cdot \ar@{}[dr]|{1_f} \ar[d]_f \ar[r]|-@{|}^y & \cdot \ar[d]^f & &\ar@{}[dr]|{\iota} \cdot \ar@{=}[d] \ar[r]|-@{|}^y &\cdot \ar@{=}[d]\\
      \ar@{}[drr]|{\nu}\cdot \ar@{=}[d] \ar[r]|-@{|} & \cdot \ar[r]|-@{|}& \cdot \ar@{=}[d] &\Phi\atop\Rrightarrow & \ar@{}[drr]|{\phi} \cdot \ar[d]_f \ar[rr]|-@{|} & & \cdot\ar[d]^f & & \ar@{}[dr]|{\upsilon}\cdot \ar@{=}[d] \ar[r]|-@{|}^y &\cdot \ar@{=}[d] &\Upsilon\atop\Rrightarrow &\ar@{}[dr]|{\phi} \cdot\ar[d]_f \ar[r]|-@{|}^m & \cdot \ar[d]^f  \\
      \cdot \ar[rr]|-@{|}_n & & \cdot  & &  \cdot \ar[rr]|-@{|}_n & & \cdot & & \cdot \ar[r]|-@{|}_n & \cdot & & \cdot \ar[r]|-@{|}_n & \cdot 
    }$$
  satisfying:
    \begin{enumerate}
        \item 
            The external source and target of $\Phi$ and $\Upsilon$ are identities;
        \item 
            associativity: $\mrm A[\Phi(1\otimes\mu)][\nu(1\otimes \Phi)] = [\Phi(\mu\otimes 1)][\nu(\Phi\otimes 1)]\mrm A$ holds;
        \item 
            unit: $\Lambda[\Phi(\iota\otimes 1)][\nu(\Upsilon\otimes 1)]\Lambda = 1$ and $\mrm P[\nu(1\otimes \Upsilon)][\Phi(1\otimes\iota)]\mrm P = 1$ both hold.
    \end{enumerate}
  Such a lax homomorphism is just a \textbf{homomorphism} if $\Phi$ and $\Upsilon$ are both invertible; it is a \textbf{strict homorphism} if they are both identities. A lax homormophism is \textbf{unital} if $\Upsilon$ is an identity. Let $\psmon(\bb D)$ denote the category of normalized pseudo monoids in $\bb D$ and their homomorphisms.
\end{define}

\begin{example}\label{example:correspondenceHomomsAndTransfFillingArbSquares}
    When $\bb D = \Span(\K)$ we have shown in Example \ref{ex:pseudo_categories_as_monoids_in_spans} that a pseudo monoid in $\bb D$ is an  internal pseudo category in $\K$. A lax homomorphism between pseudo monoids in this case is precisely a lax functor between the internal pseudo categories. When $\bb D = \Span_s(\Arrl(\K))$  we have shown in Example \ref{ex:before_PE} how a pseudo monoid in $\bb D$ is then an internal pseudo category in $\Arrl(\K)$ with strict source and target, corresponding thus to a lax functor between internal pseudo categories in $\K$. 
    A lax homomorphism between pseudo monoids in this case is then a lax functor between the internal pseudo categories, corresponding by Lemma \ref{lem:transformationaslaxfunctor} to a square of lax functors filled with a transformation.
    As before, this restricts to a correspondence between homomorphisms (resp. strict homomorphisms) morphisms between the pseudo monoids and squares whose top and bottom are pseudo (resp. strict) functors.
    
    We have also made explicit in Lemma \ref{lem:transformationaslaxfunctor} the case of pseudo categories in $\Arrs(\K)$ instead of $\Arrl(\K)$, which is the relevant one for morphisms of double fibrations: when $\bb D = \Span(\Arrs(\K))$, pseudo monoids in $\bb D$ amount to strict functors between two internal pseudo categories, and a homomorphism between two  pseudo monoids amounts to a commutative square which has the strict functors written vertically, whose top and bottom are internal pseudo functors in $\K$.
\end{example}

By Example \ref{example:PsMonsInSpansReltoSigma}, we know that double fibrations are pseudo monoids in the double 2-category $\Span_c(\fib)$. It now makes sense to define a category of double fibrations. The morphisms are homomorphisms of pseudo monoids as above in Definition \ref{define:LaxHomomsOfPseudoMonoids}.

\begin{define} \label{define:CategoryOfDoubleFibrations}
    The category of double fibrations $\dblfib$ is defined as 
        \begin{equation}
            \dblfib:=\psmon(\Span_c(\fib))
        \end{equation}
    that is, as the category of pseudo monoids and their homomorphisms in $\Span_c(\fib)$.
\end{define}

\begin{remark} \label{rem:describe_morph_of_dblfib}
Recall that there is a forgetful 2-functor $\fib \mr{} \Arrs(\cat)$, that forgets the chosen cleavages, and induces a forgetful strict double 2-functor $\Span_c(\fib) \mr{} \Span(\Arrs(\cat))$ (that is, internal to $\twocat$). When applied to a double fibration, that is an object of $\psmon(\Span_c(\fib))$, we have then an object of $\psmon(\Span(\Arrs(\cat)))$, that is a strict double functor $P: {\bb E} \mr{} {\bb B}$ between pseudo double categories.
Note that a strict double functor $P: {\bb E} \mr{} {\bb B}$ defines a double fibration (as in Definition \ref{def:unwinded_double_fibration}) precisely when it is in the image of this forgetful functor. 

Now, when  $\Span_c(\fib) \mr{} \Span(\Arrs(\cat))$ is applied to a morphism of double fibrations, we have then an arrow in $\psmon(\Span(\Arrs(\cat)))$, that is a homomorphism between the pseudo monoids, corresponding by Example \ref{example:correspondenceHomomsAndTransfFillingArbSquares} to a commutative square
$$
\xymatrix{
{\bb E} \ar[r]^H \ar[d]_P & {\bb E'} \ar[d]^{P'} \\
{\bb B} \ar[r]_K & {\bb B'} }
$$
where $H$ and $K$ are pseudo double functors. By definition of the arrows of $\fib$, we get that a morphism of double fibrations is then given by a pair of pseudo double functors $H$ and $K$, making the square commutative and such that the functors $H_0: \ct{E}_0 \mr{} \ct{E}'_0$ and $H_1: \ct{E}_1 \mr{} \ct{E}'_1$ are Cartesian-arrow preserving.
\end{remark}

To close the subsection, we note that the correspondence between homomorphisms and transformations filling arbitrary squares described in Example \ref{example:correspondenceHomomsAndTransfFillingArbSquares} can be  restricted, when one considers the double 2-category ${\bb P}_s(\bb E)$ appearing in Remark \ref{rem:PE_high_level}, to transformations filling triangles with ${\bb E}$ as a vertex. The following is the {\em non-strict} (that is, non-internal to $\twocat$) version of that result. This result will be used in the proof of the representation theorem where it will be applied to $\bb E= \Span(\cat)$.

\begin{prop} \label{prop:HomomsOfPsMonsAreDblePsNatTransfs}
    Homomorphisms of pseudo monoids in $\bb P(\bb E)$ are precisely the lax double pseudo natural transformations of the form
        $$\xymatrix{
            \ar@{}[dr]|{\Downarrow}\bb A^{op} \ar[r]^M \ar[d]_{F^{op}} & \bb E \ar@{=}[d] \\
            \bb B^{op} \ar[r]_N & \bb E
        }$$
    This correspondence is functorial, meaning that there is an isomorphism of categories
        \begin{equation}
            \psmon(\bb P(\bb E))\cong \mbf{I}\bb E    
        \end{equation}
    for any double 2-category $\bb E$. (Recall that the righthand side is the lax slice of lax double pseudo functors from pseudo double categories as in Definition \ref{define::LaxSliceofLaxDblPseudoFunctors}.)
\end{prop}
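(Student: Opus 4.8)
The plan is to promote the object-level bijection of Proposition \ref{prop:SpanValuedLaxFunctorsArePseudoMonoids} to a functor that is bijective on hom-sets, running the argument in parallel with the strict (internal to $\twocat$) analogue recorded in Example \ref{example:correspondenceHomomsAndTransfFillingArbSquares} and Lemma \ref{lem:transformationaslaxfunctor}. Fix two pseudo monoids $(m,\mu,\iota)$ and $(n,\nu,\upsilon)$ in $\bb P(\bb E)$, which Proposition \ref{prop:SpanValuedLaxFunctorsArePseudoMonoids} identifies with lax double pseudo functors $M\colon \bb A^{op}\to\bb E$ and $N\colon \bb B^{op}\to\bb E$. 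I would first define the assignment sending a lax homomorphism $\phi\colon m\to n$ of Definition \ref{define:LaxHomomsOfPseudoMonoids} to a morphism $(F,\tau)$ of $\mbf I\bb E$, and then show it is a reversible, composition-preserving bijection.

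For the extraction, recall that the underlying cell $\phi$ is an arrow of $\bb P(\bb E)_1$, hence a triple $\langle\alpha,\beta,\gamma\rangle$ of pseudo natural transformations together with functors between the indexing categories. Its external legs $\src\phi=\tgt\phi=f$ form a single arrow of $\bb P(\bb E)_0$, from which I read off the object part of the indexing lax functor $F\colon\bb A\to\bb B$ together with the object component $\tau_0=\alpha=\gamma\colon M_0\Rightarrow N_0 F_0^{op}$; the apex functor of $\phi$ supplies the proarrow part of $F$, and the apex transformation $\beta$ is the proarrow component $\tau_1\colon M_1\Rightarrow N_1 F_1^{op}$. The two prism equations \eqref{equation:PrismEquationsSpanCat}, $\src\ast\beta=\alpha\ast S'$ and $\tgt\ast\beta=\gamma\ast T'$, are precisely the well-typedness requirement that $\tau_1$ have the correct internal source and target, so that $(\tau_0,\tau_1)$ assemble into a transformation as in Definition \ref{define:DoublePseudoNaturalTransformation}. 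Because the comparison $3$-cells $\Phi$ and $\Upsilon$ are globular, they are modifications between the apex morphisms in $\bb E_1$; under the dictionary of Proposition \ref{prop:SpanValuedLaxFunctorsArePseudoMonoids}, in which $\mu,\nu$ play the role of the laxity cells of $M,N$ and $\iota,\upsilon$ their unit cells, these become exactly the multiplicativity modification $\Tau$ and the unitality modification $\Iota$ of $\tau$, while the comparison data of the indexing functor $F$ is recovered from the way $\phi$ interacts with the external composition $\otimes$ and unit $y$ of $\bb P(\bb E)$ built in Construction \ref{construction:ConstructionOfSliceOfSpan2Cat}.

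It then remains to match the axioms: the associativity axiom of Definition \ref{define:LaxHomomsOfPseudoMonoids}, a pasting equality of globular $3$-cells involving $\mu,\nu$ and the associators $\mrm A$ of the two monoids, translates term for term into the Multiplicativity equation for $\tau$, and the two unit axioms into the Unitality equations; under the standing normalization the unitor contributions $\Lambda,\mrm P$ are identities, leaving only the pentagon-type comparison. Since every step merely repackages the same data, the assignment is invertible, giving a bijection between homomorphisms $m\to n$ and morphisms $(F,\tau)\colon M\to N$ in $\mbf I\bb E$. Functoriality is then formal: composition of homomorphisms is inherited from composition of arrows in $\bb P(\bb E)_1$, which is componentwise in the three pseudo natural transformations and their modifications, and this is exactly the componentwise composite used to compose transformations in $\dbltwocat$ and to compose pairs in the lax slice $\mbf I\bb E$ of Definition \ref{define::LaxSliceofLaxDblPseudoFunctors}; identities correspond to identities by the description of $y$ in Construction \ref{construction:ConstructionOfSliceOfSpan2Cat}. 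This yields the asserted isomorphism $\psmon(\bb P(\bb E))\cong\mbf I\bb E$.

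I expect the coherence matching of the previous paragraph to be the main obstacle. Both the homomorphism axioms and the Multiplicativity/Unitality conditions are equalities of pasted globular $3$-cells built from the comparison cells of $M$ and $N$ and their coherence modifications, and the delicate point is to keep track of the shared triangular face $\beta$ of the two prisms attached to $\phi$ and to confirm, using interchange for whiskering in the $2$-categories $\bb E_0$ and $\bb E_1$, that the two pasting diagrams literally agree; one must also take care in disentangling the comparison data of the indexing functor $F$ from the modification data of $\tau$, both of which are carried by $\Phi$ and $\Upsilon$. Everything outside this verification is bookkeeping transcription of the sort already carried out for objects in Proposition \ref{prop:SpanValuedLaxFunctorsArePseudoMonoids}.
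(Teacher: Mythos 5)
Your proposal is correct and follows essentially the same route as the paper: the paper's own proof is precisely the "unraveling" you carry out, reading off the indexing functor and the components $\tau_0,\tau_1$ from the underlying cell $\phi$, extracting both the laxity data of the indexing functor and the modifications $\Tau,\Iota$ from the comparison $3$-cells $\Phi,\Upsilon$, and matching the homomorphism axioms with the Multiplicativity/Unitality conditions. In fact your write-up is more explicit than the paper's (which records only the one-sentence observation that the data and axioms coincide), and your identification of the prism equations as the well-typedness conditions and of $\Phi,\Upsilon$ as carrying the comparison cells of the indexing functor is exactly the intended dictionary.
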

\begin{proof}
    As in the proof of Proposition \ref{prop:SpanValuedLaxFunctorsArePseudoMonoids}, this is mostly a matter of unraveling the data of such a homomorphism of pseudo monoids and noticing that the axioms are precisely the required coherence conditions for a transformation of lax double pseudo functors.
\end{proof}

\subsection{Proof of the Representation Theorem}
\label{subsection:Translation}

This subsection is dedicated to giving a proof of one of the two main results of the paper, namely, that double fibrations correspond to lax double pseudo functors valued in $\Span(\cat)$. One could certainly start with the equivalence $\fib\simeq\icat$ and simply show by brute force that pseudo categories in $\icat$ are lax double pseudo functors valued in $\Span(\cat)$. However, the details involved in such a proof are considerable. In particular, in each direction, one has to induce the associators and unitors, show that they satisfy the required conditions, and finally show the required equivalence of categories. Clearly this ends up being quite involved. As an alternative, here we will use the construction $\bb P(\bb E)$ from the last subsection for a certain choice of $\bb E$ along with Proposition \ref{prop:SpanValuedLaxFunctorsArePseudoMonoids} to circumvent these details. In particular, we specialize Construction \ref{construction:ConstructionOfSliceOfSpan2Cat} to the case where $\bb E$ is the double 2-category $\Span(\cat)$ from Example \ref{construction:SpanCatAsDouble2Category}. We will give an isomorphism of double 2-categories
      \begin{equation} \label{eqn:AnnounceApxIsAnIsoDiscussion}
            \apx\colon \bb P(\Span(\cat)) \cong\Span_t(\icat) 
      \end{equation}
induced by a double 2-functor denoted by $\apx$. This will be given by pushing forward by the functor taking a span in $\icat$ to its apex. The target double 2-category $\Span_t(\icat)$ is formed according to the conventions of Definition \ref{define:SpansWRTaSubsetofArrows}. In particular the spans from $\icat$ forming the proarrows are given by 2-natural transformations and not pseudo natural transformations. The purpose of this result is to show that pseudo monoids in $\Span_t(\icat)$ are lax double pseudo functors valued in $\Span(\cat)$ using Proposition \ref{prop:SpanValuedLaxFunctorsArePseudoMonoids} above. Again the point is that it is easier to prove this isomorphism than prove directly that a pseudo category in $\Span_t(\icat)$ is a lax double pseudo functor.

\begin{construction}[Apex Functor]
  Here we construct the $\apx$ functor announced in Equation \eqref{eqn:AnnounceApxIsAnIsoDiscussion}. Since the 0-part of each double 2-category above is $\icat$, it suffices to take $\apx_0$ to be the identity and then define a 2-functor 
    \begin{equation} \label{equation:Apx2Functor}
      \apx_1\colon \bb P(\Span(\cat))_1 \to\Span_t(\icat)_1. 
    \end{equation}
  On objects, take a span morphism as at left to the cell composite as at right:
    $$\xymatrix{
      \mscr A^{op} \ar[d]_F & \mscr B^{op} \ar[d]^G \ar[l] \ar[r] & \mscr C^{op} \ar[d]^H &\ar@{}[ddr]|{\substack{\apx\\\mapsto}} & & \mscr A^{op} \ar[d]_F & \mscr B^{op} \ar[d]^G \ar[l] \ar[r] & \mscr C^{op} \ar[d]^H \\
      \cat & \ar[l]^-\src \Span(\cat)_1 \ar[r]_-\tgt & \cat & & & \ar@{}[dr]|{\substack{\sigma\\\Leftarrow}} \cat \ar@{=}[d] & \ar[l]^-\src \ar@{}[dr]|{\substack{\tau\\\Rightarrow}} \Span(\cat)_1 \ar[d]^\apx \ar[r]_-\tgt & \cat \ar@{=}[d]  \\
      & &  & & & \cat & \ar@{=}[l] \cat \ar@{=}[r] & \cat.
    }$$
  That is, the assignment is achieved by pushing forward the span on the left by the apex 2-functor and the accompanying cells $\sigma$ and $\tau$. The resulting span of cells on the right can be depicted as 
    $$\xymatrix{
      \ar@{}[drr]|{\substack{\sigma\ast G\\\Leftarrow}}\mscr A^{op} \ar[d]_F && \ar@{}[drr]|{\substack{\tau\ast G\\\Rightarrow}} \mscr B^{op} \ar[d]^{\apx G} \ar[ll] \ar[rr] && \mscr C^{op} \ar[d]^H \\
      \cat && \ar@{=}[ll] \cat \ar@{=}[rr] && \cat
    }$$
  using whiskering of 2-cells, denoted by `$\ast$'. Note that the assignment is well-defined since $\sigma$ and $\tau$ are both 2-natural and whiskering with $G$ results in a 2-natural transformation even if $G$ is pseudo. Now, from this description, the assignments on arrows and 2-cells follow naturally. For the arrow assignment, start with one of $\bb P(\bb E)_1$ as described in Construction \ref{construction:ConstructionOfSliceOfSpan2Cat} and denoted by the 3-tuple of its components $\langle\alpha,\beta,\gamma\rangle$. As an assignment, declare
    \begin{equation} \label{equation:apx1arrowAssignment}
      \apx_1\langle \alpha, \beta,\gamma\rangle := \langle \alpha, \apx\ast\beta,\gamma\rangle
    \end{equation}
  viewed as a morphism of induced spans $(\sigma\ast G,\tau\ast G) \to (\sigma\ast G',\tau\ast G')$ in $\bb P_1$. It just needs to be check that is is well-defined. For this, recall from Remark \ref{rem:SpansWRTaSubsetofArrows} that a morphism in $\Span_t(\icat)_1$ consists of three 2-cells satisfying the prism equations. However, these follow by interchange laws in the local 2-categories of $\twocat$. For example, for the left prism:
      \begin{align}
      (\alpha\ast L')(\sigma\ast G') &= (\src\ast\beta)(\sigma\ast G') \notag \qquad &\text{(eq. \ref{equation:PrismEquationsSpanCat})} \\
                                    &= (\sigma \ast G)(\apx\ast \beta)\notag \qquad &\text{(interchange)}
    \end{align}
  The right prism is analogous. Thus, the arrow assignment is well-defined. For the 2-cell assignment, recall that one consists of three modifications $\Sigma\colon \alpha \Rightarrow \alpha'$, $\Xi\colon \beta\Rrightarrow\beta'$ and $\mrm T\colon \gamma\Rrightarrow\gamma'$ satisfying the same conditions, namely, that $\Sigma \ast S' = \src\ast\Xi$ and $\mrm T\ast T' = \tgt\ast\Xi$ both hold. The assignment again is to push forward the middle components $\Xi$ by $\apx$ viewed as a trivial modification. Another prism-computation shows that this is well-defined. That $\apx_1$ so defined is a 2-functor is a bit tedious but straightforward to verify using the interchange laws holding in 2-categories of pseudo functors.
\end{construction}

The result is now that $\apx$ is an isomorphism, as announced in Equation \ref{eqn:AnnounceApxIsAnIsoDiscussion}. The proof is somewhat technically involved, but the idea is simple. It suffices to see that $\apx_1$ in Equation \ref{equation:Apx2Functor} induces an isomorphism of 2-categories since $\apx_0$ is the identity 2-functor. The proof below shows that the construction of $\Span(\cat)_1$ ensures that the assignments for $\apx_1$ are all bijections, giving both existence for surjectivity and uniqueness for injectivity. For the given structure in $\Span_t(\icat)_1$, the corresponding structure in $\bb P(\Span(\cat))_1$ can be constructed explicitly. The point is that the given structure ``lifts" through $\apx$. This lifted structure by its construction is sent to the original via $\apx_1$ and is the unique lifted structure with this property.

\begin{prop}
  $\apx_1$ is an isomorphism of 2-categories $\apx_1\colon \bb P(\Span(\cat))_1 \cong \Span_t(\icat)_1$ 
\end{prop}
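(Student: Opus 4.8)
The plan is to use that $\apx_1$ is already known to be a 2-functor and reduce the claim to three bijectivity statements: that $\apx_1$ is bijective on objects, on 1-cells, and on 2-cells. Since $\apx_0$ is the identity, establishing that $\apx_1$ is an isomorphism of 2-categories then yields the isomorphism of double 2-categories \eqref{eqn:AnnounceApxIsAnIsoDiscussion}. The single organizing principle behind all three bijections is that $\Span(\cat)_1$ was built in Construction \ref{construction:SpanCatAsDouble2Category} as a lax limit: its objects are spans, determined by an apex together with the two legs; its morphisms are span morphisms, determined by their apex-component together with the two boundary components; and its 2-cells are triples satisfying the cylinder conditions. The 2-functors $\src,\tgt$ and the 2-functor $\apx$, together with the universal cells $\sigma,\tau$, are precisely the projections recovering these components. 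Thus $\apx_1$ merely re-brackets the same data, and the content of the proof is to verify that the reconstruction of this data is well-defined and inverse to $\apx_1$.

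On objects, given a span in $\Span_t(\icat)_1$ consisting of an apex pseudo functor $\mscr B^{op}\to\cat$ and two 2-natural leg transformations into $F S^{op}$ and $H T^{op}$, I would reconstruct a pseudo functor $G\colon \mscr B^{op}\to\Span(\cat)_1$ by sending each $b$ to the span whose apex is the value of the apex functor at $b$ and whose legs are the components of the two leg transformations, and each arrow of $\mscr B^{op}$ to the evident span morphism. The decisive point is that the legs are \emph{2-natural} rather than merely pseudo natural: this is exactly what forces $\src\,G = F S^{op}$ and $\tgt\,G = H T^{op}$ to hold \emph{strictly}, so that the reconstructed $G$ is a legitimate object of $\bb P(\Span(\cat))_1$. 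Pseudo functoriality of $G$ follows from that of its apex, the coherence isos being inherited componentwise. One checks immediately that applying $\apx_1$ returns the original span (surjectivity) and that two objects of $\bb P(\Span(\cat))_1$ with equal $\apx_1$-image, and hence equal apex, legs, and boundary pseudo functors, coincide (injectivity).

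For 1-cells, recall from \eqref{equation:apx1arrowAssignment} that $\apx_1\langle\alpha,\beta,\gamma\rangle = \langle\alpha,\apx\ast\beta,\gamma\rangle$. The middle datum $\beta$ is a pseudo natural transformation valued in $\Span(\cat)_1$, so by the same lax-limit principle it is completely determined by its three whiskered components: its apex-whiskering $\apx\ast\beta$ and its source- and target-whiskerings $\src\ast\beta$ and $\tgt\ast\beta$. But the prism equations \eqref{equation:PrismEquationsSpanCat} identify the latter two with the data already recorded by $\alpha$ and $\gamma$. Hence $\beta$ is recoverable from $(\alpha,\apx\ast\beta,\gamma)$, giving an explicit inverse to the arrow-assignment; the prism equations \eqref{equations:PrismEquationsCat} on the target side match those of \eqref{equation:PrismEquationsSpanCat} under this correspondence, so the assignment is a bijection onto the morphisms of $\Span_t(\icat)_1$. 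The 2-cell case is entirely analogous: a 2-cell of $\bb P(\Span(\cat))_1$ is a triple of modifications $\Sigma,\Xi,\mrm T$, and the middle modification $\Xi$ into $\Span(\cat)_1$ is determined by its apex-component together with $\Sigma$ and $\mrm T$ via the compatibility conditions $\Sigma\ast S' = \src\ast\Xi$ and $\mrm T\ast T' = \tgt\ast\Xi$, so pushing $\Xi$ forward by $\apx$ is again a bijection.

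The main obstacle I anticipate is the well-definedness of the reconstruction maps, that is, checking that the tuples assembled in the inverse direction genuinely define a pseudo functor, a pseudo natural transformation, and a modification \emph{landing in} $\Span(\cat)_1$ and satisfying the required strict-commutativity constraints. This is precisely where the choice of $\Span_t(\icat)$ — whose legs are 2-natural — is essential, since 2-naturality is what upgrades the outer squares from commuting up to isomorphism to commuting on the nose, as demanded by the objects of $\bb P(\Span(\cat))_1$. All remaining naturality and coherence verifications follow from the interchange laws in the local 2-categories of $\twocat$ and in $\cat$, exactly as in the verification that $\apx_1$ is a 2-functor; no new coherence conditions arise because the construction only redistributes data already present on both sides.
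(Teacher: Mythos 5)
Your proposal is correct and follows essentially the same route as the paper's own proof: both reduce the claim to bijectivity of $\apx_1$ on objects, arrows, and 2-cells, and both establish this by explicitly reconstructing the missing apex-level data (a pseudo functor $\tilde G$ valued in $\Span(\cat)_1$, a pseudo natural transformation $\tilde\beta$, and the corresponding modifications), using the strict 2-naturality of the legs and the prism equations \eqref{equations:PrismEquationsCat}/\eqref{equation:PrismEquationsSpanCat} to get the on-the-nose commutativity required of objects and morphisms of $\bb P(\Span(\cat))_1$, with uniqueness following because any lift must make the same componentwise assignments. Your "lax limit" framing is just a cleaner way of packaging the paper's observation that data in $\Span(\cat)_1$ is determined by its source, apex, and target projections; the substance of the argument is the same.
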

\begin{proof} 
    The claim is that the 2-functor is a bijection on objects, arrows and 2-cells. Given an object of $\Span_t(\icat)_1$
        $$\xymatrix{
          \ar@{}[dr]|{\substack{\lambda \\\Leftarrow}}\mscr A^{op} \ar[d]_F & \ar@{}[dr]|{\substack{\rho\\\Rightarrow}} \mscr B^{op} \ar[d]^{G} \ar[l]_{S^{op}} \ar[r]^{T^{op}} & \mscr C^{op} \ar[d]^H \\
          \cat & \ar@{=}[l] \cat \ar@{=}[r] & \cat
        }$$
    construct a pseudo functor $\tilde G\colon \mscr B^{op}\to\Span(\cat)_1$ via the assignments summarized by
        $$\xymatrix{
            B \ar@/_1pc/[dd]_f \ar@{}[dd]|{\substack{\alpha\\\Rightarrow}} \ar@/^1pc/[dd]^g & & & & FSB \ar@/_1pc/[dd]_{FSf} \ar@{}[dd]|{\substack{FS\alpha\\\Rightarrow}} \ar@/^1pc/[dd]^{FSg} && \ar[ll]_{\lambda_B} GB \ar@/_1pc/[dd]_{Gf}  \ar@{}[dd]|{\substack{G\alpha\\\Rightarrow}}\ar@/^1pc/[dd]^{Gg} \ar[rr]^{\rho_B} && HTB \ar@/_1pc/[dd]_{HTf}  \ar@{}[dd]|{\substack{HT\alpha\\\Rightarrow}}\ar@/^1pc/[dd]^{HTg} \\
            & & \mapsto & & & &  \\
            C & & & & FSC && \ar[ll]^{\lambda_C} GC \ar[rr]_{\rho_C} && HTC
        }$$
    The diagram on the right is two cylinders sharing a common face $G\alpha$ and whose sides are commutative squares. Notice that this is well-defined since $\lambda$ and $\rho$ are strictly 2-natural so that the squares commute and thus define the correct span morphisms giving the arrows in $\Span(\cat)$. Pseudo naturality follows from the fact that $F$, $G$ and $H$ are pseudo functorial, meaning that this has constructed an object of $\bb P(\bb E)_1$, that is, a span-morphism
        $$\xymatrix{
          \mscr A^{op} \ar[d]_F & \mscr B^{op} \ar[d]^{\tilde G} \ar[l]_{S^{op}} \ar[r]^{T^{op}} & \mscr C^{op} \ar[d]^H \\
          \cat & \ar[l]^-\src \Span(\cat)_1 \ar[r]_-\tgt & \cat
        }$$
    whose squares commute strictly. Now, by construction $\apx$ takes this span morphism to the given span $(\lambda,\rho)$ in $\Span_t(\icat)$. Any other pseudo functor in place of $\tilde G$ doing this would have to make the same assignments. Thus, $\apx$ is a bijection on objects. The process for morphisms and 2-cells of $\Span_t(\icat)_1$ is the same. For an arrow $\langle \alpha,\beta,\gamma\rangle$, with the notation as in Remark \ref{rem:SpansWRTaSubsetofArrows}, construct a pseudo natural transformation $\tilde\beta\colon \tilde N\Rightarrow \tilde G V^{op}$ forming the vertex part of a morphism in $\bb P(\bb E)_1$ whose composition with $\apx$ is the original morphism. Given $Y\in \mscr Y$, take the component to be the span morphism
        $$\xymatrix{
            MSY\ar[d]_{\alpha_{SY}} & \ar[l]_-{\lambda'_Y} NY \ar[d]^{\beta_Y} \ar[r]^-{\rho'_{Y}} & LTY \ar[d]^{\gamma_B} \\
            FUSY & \ar[l]^-{\lambda_{VY}} GVY \ar[r]_-{\rho_{VY}} & HWTY
        }$$
    The prism condition in Equation \eqref{equations:PrismEquationsCat} on morphisms in $\Span_t(\icat)$ means that these squares commute strictly, hence that this assignment is a well-defined morphism of $\Span(\cat)_1$. Pseudo naturality follows from the fact that $\alpha$, $\beta$ and $\gamma$ are pseudo natural. That $\apx_1$ pushes this morphism forward to the original morphism in $\Span_t(\icat)_1$ now follows by construction. Any other such morphism would have the same components, making it the unique one doing so.
\end{proof}

\begin{cor} \label{cor:ApxIsAnIso}
    The map $\apx\colon \bb P(\Span(\cat)) \to \Span_t(\icat)$ is an isomorphism of double 2-categories. As a result, 
        \begin{equation} \label{equation:SpansCongPseudoMonoids}
            \psmon(\bb P(\Span(\cat))) \cong \psmon(\Span_t(\icat))
        \end{equation}
    holds.
\end{cor}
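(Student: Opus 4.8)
The plan is to deduce the corollary from the preceding proposition together with the (essentially formal) fact that the pseudo-monoid construction $\psmon(-)$ carries isomorphisms of double 2-categories to isomorphisms of categories. First I would assemble $\apx_0$ and $\apx_1$ into a genuine \emph{strict} double 2-functor, i.e.\ a strict morphism of pseudo categories in $\twocat$. Since $\apx_0 = \mathrm{id}_{\icat}$ and, by the preceding proposition, $\apx_1$ is an isomorphism of 2-categories, it remains only to verify that these two 2-functors are compatible with the structural data $\src$, $\tgt$, $\otimes$ and $y$ of the two double 2-categories, and that they identify the associator and unitor 2-cells.

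Compatibility with source and target is immediate from the construction of $\apx_1$: pushing a span forward along the apex 2-functor leaves the outer vertical functors $F$ and $H$ untouched, altering only the middle component, so $\src\circ\apx_1 = \apx_0\circ\src$ and $\tgt\circ\apx_1 = \apx_0\circ\tgt$ hold on the nose on objects, morphisms and 2-cells. For external composition and units I would observe that both $\otimes$ and $y$ in $\Span_t(\icat)$ and in $\bb P(\Span(\cat))$ are defined by the same recipe --- 2-pullback in the relevant 2-category, and the identity span, respectively --- and that whiskering by the apex functor commutes with forming these 2-pullbacks. Hence the comparison cells witnessing preservation of $\otimes$ and $y$ are identities, so $\apx$ is strict, and the associator/unitor 2-cells, being themselves induced from the universal property of 2-pullbacks, are carried to one another. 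Being levelwise an isomorphism of 2-categories, $\apx$ is then invertible, i.e.\ an isomorphism in $\pscat(\twocat)$, which is what it means to be an isomorphism of double 2-categories.

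For the displayed consequence \eqref{equation:SpansCongPseudoMonoids}, I would invoke that a (normalized) pseudo monoid and a homomorphism of pseudo monoids in a double 2-category $\bb D$ are defined purely in terms of the proarrows, cells, external composition, external unit, and the associators and unitors of $\bb D$ (Definitions \ref{define:PseudoMonoidInDouble2Cat} and \ref{define:LaxHomomsOfPseudoMonoids}). A strict isomorphism of double 2-categories therefore transports all of this data bijectively, in both directions, so $\psmon(\apx)$ is an isomorphism of categories, proving \eqref{equation:SpansCongPseudoMonoids}.

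The main obstacle I anticipate is the middle paragraph: checking that $\apx$ respects external composition and the coherence cells \emph{strictly}, rather than merely up to isomorphism. The subtlety is that $\otimes$ in $\bb P(\Span(\cat))$ is built from the external composition of the ambient double 2-category $\Span(\cat)$ (itself a pullback of spans of categories), while $\otimes$ in $\Span_t(\icat)$ is a pullback in $\icat$; one must verify that the apex 2-functor intertwines these two pullback constructions and carries the resulting associators to each other, which is where the bulk of the routine but fiddly diagram-checking lives.
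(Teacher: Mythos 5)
Your proposal is correct and follows essentially the same route as the paper's proof: both arguments combine the fact that $\apx_0$ and $\apx_1$ are isomorphisms of 2-categories with the observation that $\apx$ strictly respects the external structure (the apex of a composite of spans being the composite of the apexes), and then deduce \eqref{equation:SpansCongPseudoMonoids} because pseudo monoids and their homomorphisms are defined purely in terms of that external structure. The paper states these steps more tersely, while you spell out the strictness verification that it leaves implicit, but there is no substantive difference in approach.
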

\begin{proof}
    By its construction $\apx_1$ commutes with external composition and units. At least for external composition, this is essentially the statement that the apex of composed spans is the same as the composition of apexes of composable spans. Since both $\apx_1$ and $\apx_0$ are isomorphisms of 2-categories, $\apx$ is thus an isomorphism of double 2-categories. This means that $\bb P(\Span(\cat))$ and $\Span_t(\icat)$ have the same pseudo monoids since these are defined using external composition in the double 2-categories.
\end{proof}

This corollary provides a link in the chain of equivalences leading to the first main theorem, characterizing double fibrations as span-valued lax double pseudo functors.

\begin{theo}[Representation Theorem] \label{theorem:RepresentationTheoremFinalForm}
    There is an equivalence of categories 
        \begin{equation} \label{equation:RepresentationTheoremStatementEquivalence1}
            \dblfib:=\psmon(\Span_c(\fib))\simeq \mbf{I}\Span(\cat)
        \end{equation}
    and in particular there is one
        \begin{equation} \label{equation:RepresentationTheoremStatementEquivalence2}
            \dblfib(\bb B)\simeq \dbltwocat(\bb B^{op},\Span(\cat))
        \end{equation}
    for any fixed double category $\bb B$.
\end{theo}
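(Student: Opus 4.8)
The plan is to obtain the global equivalence (\ref{equation:RepresentationTheoremStatementEquivalence1}) by concatenating the equivalences and isomorphisms already established, and then to read off the fibrewise statement (\ref{equation:RepresentationTheoremStatementEquivalence2}) by restricting along the projection to base double categories. For the first equivalence, I start from the definition $\dblfib = \psmon(\Span_c(\fib))$ (Definition \ref{define:CategoryOfDoubleFibrations}). Proposition \ref{prop:EquivalentDbl2CatsOfSpans} gives an equivalence $\Span_c(\fib) \simeq \Span_t(\icat)$ in $\pscat(\twocat)$, and, since passing to pseudo monoids carries equivalences of double 2-categories to equivalences of their pseudo-monoid categories (the principle highlighted after that proposition), this induces $\psmon(\Span_c(\fib)) \simeq \psmon(\Span_t(\icat))$. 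Corollary \ref{cor:ApxIsAnIso} supplies the isomorphism $\psmon(\Span_t(\icat)) \cong \psmon(\bb P(\Span(\cat)))$, and Proposition \ref{prop:HomomsOfPsMonsAreDblePsNatTransfs} applied to $\bb E = \Span(\cat)$ supplies $\psmon(\bb P(\Span(\cat))) \cong \mbf{I}\Span(\cat)$. Splicing these together yields
$$\dblfib = \psmon(\Span_c(\fib)) \simeq \psmon(\Span_t(\icat)) \cong \psmon(\bb P(\Span(\cat))) \cong \mbf{I}\Span(\cat),$$
which is precisely (\ref{equation:RepresentationTheoremStatementEquivalence1}).

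For the fibrewise statement I exhibit compatible projections to the 2-category $\Dblcatv$ of pseudo double categories on each side. On the right, Definition \ref{define::LaxSliceofLaxDblPseudoFunctors} sends a contravariant lax double pseudo functor $F\colon \bb A^{op} \to \Span(\cat)$ to its indexing double category $\bb A$ and a morphism $(H,\tau)$ to $H$; the fibre over a fixed $\bb B$ therefore consists of the lax double pseudo functors $\bb B^{op} \to \Span(\cat)$ together with the transformations $\tau$ (the pairs $(H,\tau)$ with $H = \mathrm{id}_{\bb B}$), i.e.\ exactly $\dbltwocat(\bb B^{op},\Span(\cat))$. On the left, a pseudo monoid in $\Span_c(\fib)$ has object of objects a fibration $P_0\colon \ct{E}_0 \to \ct{B}_0$ and underlying endo-proarrow a span whose apex is a fibration $P_1\colon \ct{E}_1 \to \ct{B}_1$; the base data $\ct{B}_0, \ct{B}_1$ with the induced source, target, unit and composition assemble into the base pseudo double category $\bb B$, and projecting onto $\bb B$ and passing to the fibre over a fixed $\bb B$ recovers $\dblfib(\bb B)$.

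It remains to check that the equivalence of the first paragraph intertwines these two projections, so that it restricts to the fibres. This is where I track the object-of-objects through the chain: the elements equivalence $\fib \simeq \icat$ identifies a fibration over $\ct{B}_0$ with an indexed category on $\ct{B}_0$, so the base of a double fibration corresponds to the indexing of the associated lax double pseudo functor; the isomorphism $\apx$ of Corollary \ref{cor:ApxIsAnIso} is the identity on $0$-parts, $\apx_0 = \mathrm{id}_{\icat}$; and in Proposition \ref{prop:SpanValuedLaxFunctorsArePseudoMonoids}/\ref{prop:HomomsOfPsMonsAreDblePsNatTransfs} the indexing double category $\bb A$ is read off directly from the object-of-objects data $F_0\colon \mscr A^{op} \to \Span(\cat)$ of the pseudo monoid. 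Consequently the base-projection on the left and the indexing-projection on the right are identified under (\ref{equation:RepresentationTheoremStatementEquivalence1}), and restricting to the fibre over any fixed $\bb B$ produces the equivalence (\ref{equation:RepresentationTheoremStatementEquivalence2}).

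I expect the main obstacle to be exactly this compatibility bookkeeping: one must verify that the four links in the chain intertwine the base and indexing projections strictly enough that \emph{strict} fibres (objects lying over $\bb B$ and morphisms lying over $\mathrm{id}_{\bb B}$) are carried to strict fibres, rather than merely pseudo-fibres. A subsidiary point deserving care is the standing principle that $\psmon(-)$ preserves equivalences of double 2-categories, which, if argued in full, requires transporting the multiplication, unit, associator and unitor cells of Definition \ref{define:PseudoMonoidInDouble2Cat} across the equivalence and confirming that the pentagon and unit axioms — and the homomorphism axioms of Definition \ref{define:LaxHomomsOfPseudoMonoids} — are preserved.
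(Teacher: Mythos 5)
Your proposal is correct and takes essentially the same approach as the paper's own proof: the identical chain $\dblfib=\psmon(\Span_c(\fib))\simeq\psmon(\Span_t(\icat))\cong\psmon(\bb P(\Span(\cat)))\cong\mbf{I}\Span(\cat)$ via Proposition \ref{prop:EquivalentDbl2CatsOfSpans}, Corollary \ref{cor:ApxIsAnIso}, and Propositions \ref{prop:SpanValuedLaxFunctorsArePseudoMonoids} and \ref{prop:HomomsOfPsMonsAreDblePsNatTransfs}, followed by restriction along the projections to base double categories for the fibrewise statement. Your third and fourth paragraphs simply make explicit the bookkeeping that the paper compresses into the remark that the over-categories are equivalent and hence have equivalent fibers.
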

\begin{proof}
    Starting with the standard equivalence $\fib\simeq \icat$, which is 2-pullback-preserving, take pseudo monoids on each side:
        \[
            \psmon(\Span_c(\fib))\simeq\psmon(\Span_t(\icat)).
        \]
    using the equivalence of Proposition \ref{prop:EquivalentDbl2CatsOfSpans}. Note that this uses the fact that equivalent double 2-categories have equivalent categories of pseudo monoids. But the right side of the above equivalence fits into one
        \[
            \psmon(\Span_t(\icat))\simeq \psmon(\bb P(\Span(\cat)))
        \]
    as in Equation \ref{equation:SpansCongPseudoMonoids} by Corollary \ref{cor:ApxIsAnIso} above. Now, pseudo monoids in $\Span(\bb P(\Span(\cat)))$ and their homomorphisms are precisely lax functors into $\Span(\cat)$ and their homomorphisms
        \[
            \psmon(\bb P(\Span(\cat))) \simeq \mbf{I}\Span(\cat)
        \]
    by Propositions \ref{prop:SpanValuedLaxFunctorsArePseudoMonoids} and \ref{prop:HomomsOfPsMonsAreDblePsNatTransfs}. This proves the first equivalence \ref{equation:RepresentationTheoremStatementEquivalence1}. The second equivalence \ref{equation:RepresentationTheoremStatementEquivalence2} is obtained using the codomain projections to double categories. Since the over-categories are equivalent, they have equivalent fibers.
\end{proof}

We can now directly relate our work to what has been done for monoidal fibrations in \cite{MV}.  In particular, consider the equivalence in \eqref{equation:RepresentationTheoremStatementEquivalence2} for the two cases in Example \ref{ex:both_cases_as_indexed} (taking $\K = \cat$). The result in this case recovers the two legs of the diagram \cite[(24)]{MV} which we reproduce here for the reader's convenience:
    $$\xymatrix@C=-.3in{
         & \fib \simeq\icat \ar[dl]_{\psmon(-)} \ar[dr]^{\text{fix } \mscr B} &\\
        \MonFib\simeq \mbf{MonICat} \ar[d]_{\text{fix } \mscr B} & & \fib(\mscr B)\simeq \twocat_{ps}(\mscr B^{op},\cat) \ar[d]^{\psmon(-)} \\
        \MonFib(\mscr B)\simeq \mbf{Mon2Cat}_{ps}(\mscr B^{op},\cat) & & \psmon(\fib(\mscr B))\simeq \twocat_{ps}(\mscr B^{op},\mbf{MonCat})
    }$$
The two paths to the feet of the diagram are achieved by exchanging the order of taking pseudo monoids and fixing a certain base category. Since the two resulting equivalences coincide only under certain special circumstances, these operations do not generally commute. However, our result shows that the two paths to these two equivalences in the bottom row can be recovered by taking specific choices of double categories built from $\mscr B$.

\begin{cor}
    For any double category $\bb{B}$, we denote by 
    $\dblfib(\bb{B})^{id}$ the full subcategory of $\dblfib(\bb{B})$ whose objects 
    $P: \bb{E} \mr{} \bb{B}$ satisfy that $P_0 = id_{\bb{B}}$. 
    The equivalence in \eqref{equation:RepresentationTheoremStatementEquivalence2} restricts to $$\dblfib(\bb{B})^{id}  \simeq  \dbltwocat(\bb{B}^{op}, \Span(\cat))^{\triangle 1},$$
    where the notation $\dbltwocat(\bb{B}^{op}, \Span(\cat))^{\triangle 1}$ is introduced in Example \ref{ex:both_cases_as_indexed}.
     As a result, for any monoidal category $\ct{B}$, we have two equivalences of categories

\medskip
\noindent 1.
$ \MonFib(\ct{B}) \simeq \Montwocat_{ps} (\ct{B}^{op}, \cat), \qquad$
2. 
    $ \Mon(\fib(\ct{B})) \simeq \twocat_{ps}(\ct{B}^{op}, \Moncat)$.
\end{cor}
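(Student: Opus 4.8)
The plan is to obtain the restricted equivalence by tracking, through the chain of equivalences and isomorphisms that make up Theorem \ref{theorem:RepresentationTheoremFinalForm}, the single defining condition that distinguishes the two full subcategories. On the fibration side this condition is $P_0 = id$; on the indexed side it is that $F_0$ be the pseudo functor constant at $1$. First I would recall that under the elements equivalence $\fib \simeq \icat$ the identity fibration $id_{\ct{B}_0}$ corresponds precisely to the indexed category constant at the terminal category $1$ (its fibres are all terminal), and conversely. Since the $0$-part of every functor in the representation chain is untouched by the passage to spans and by the apex isomorphism $\apx$ (whose $0$-component is the identity 2-functor), this correspondence propagates verbatim: the equivalence \eqref{equation:RepresentationTheoremStatementEquivalence2} carries an object $P$ with $P_0 = id$ to a lax double pseudo functor $F$ with $F_0$ constant at $1$, and conversely reflects the condition. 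As both subcategories are full and the ambient equivalence is fully faithful, the restriction is automatically fully faithful, while essential surjectivity follows because the defining condition is reflected; this yields $\dblfib(\bb B)^{id} \simeq \dbltwocat(\bb B^{op}, \Span(\cat))^{\triangle 1}$.

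For the two concrete equivalences I would first identify each $\Mon$-category with an appropriate $\dblfib(-)^{id}$. For equivalence 1, view the monoidal category $\ct{B}$ as the double category $\bb B$ with $\bb B_0 = 1$ and $\bb B_1 = \ct{B}$ (Example \ref{ex:monoidascategory}). By Example \ref{ex:monoids_in_Fib_as_double_fib} (via Corollary \ref{cor:3differentcatobjectsinFib} with $P_0 = 1$), monoidal fibrations over $\ct{B}$ are exactly the double fibrations $\bb E \to \bb B$ with $\bb E_0 = 1$, that is, with $P_0 = id_1$; matching morphisms via Remark \ref{rem:describe_morph_of_dblfib}, this gives $\MonFib(\ct{B}) \cong \dblfib(\bb B)^{id}$. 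Feeding this into the restricted equivalence and then applying item 1 of Example \ref{ex:both_cases_as_indexed} to the double category $\bb B^{op}$ (which presents $\ct{B}^{op}$ as a monoidal category) produces $\MonFib(\ct{B}) \simeq \dbltwocat(\bb B^{op}, \Span(\cat))^{\triangle 1} \cong \Montwocat_{ps}(\ct{B}^{op}, \cat)$.

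For equivalence 2, view $\ct{B}$ as the discrete double category $\bb D(\ct{B})$ of item 2 of Example \ref{ex:both_cases_as_indexed}. By Example \ref{ex:monoidalfibrationrightleg}, pseudo monoids in $\fib(\ct{B})$ correspond to double fibrations over $\bb D(\ct{B})$ with $P_1 = P$ and $P_0 = 1_{\ct{B}} = id_{\ct{B}}$, so that $\Mon(\fib(\ct{B})) \cong \dblfib(\bb D(\ct{B}))^{id}$. Applying the restricted equivalence and then item 2 of Example \ref{ex:both_cases_as_indexed} to $\bb D(\ct{B})^{op} = \bb D(\ct{B}^{op})$, together with the identification $\Mon(\cat) = \Moncat$, yields $\Mon(\fib(\ct{B})) \simeq \twocat_{ps}(\ct{B}^{op}, \Moncat)$.

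The main obstacle I anticipate is the careful matching of the defining conditions and of the various ``opposite'' constructions rather than any deep new argument. Concretely, one must check (i) that the equivalence $\fib \simeq \icat$ can be normalized so that $P_0 = id$ matches $F_0$ constant at $1$ on the nose (up to the repleteness needed to make the full subcategories behave), both on objects and on the homomorphisms between them; and (ii) that the double-categorical opposite behaves as claimed, namely that $\bb B^{op}$ presents the opposite monoidal category $\ct{B}^{op}$ in the one-object case and that $\bb D(\ct{B})^{op} = \bb D(\ct{B}^{op})$ in the discrete case. Both are routine but must be done with attention to conventions; once they are in place, the corollary is simply the composite of the restricted equivalence with Example \ref{ex:both_cases_as_indexed}.
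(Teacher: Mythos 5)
Your proposal is correct and follows essentially the same route as the paper: its proof consists precisely of the two chains $\MonFib(\ct{B}) \cong \dblfib(\bb{B})^{id} \simeq \dbltwocat(\bb{B}^{op},\Span(\cat))^{\triangle 1} \cong \Montwocat_{ps}(\ct{B}^{op},\cat)$ and $\Mon(\fib(\ct{B})) \cong \dblfib(\bb{D}(\ct{B}))^{id} \simeq \dbltwocat(\bb{D}(\ct{B})^{op},\Span(\cat))^{\triangle 1} \cong \twocat_{ps}(\ct{B}^{op},\Moncat)$, justified by Examples \ref{ex:monoids_in_Fib_as_double_fib}, \ref{ex:monoids_in_Fib(B)_as_double_fib}, \ref{ex:both_cases_as_indexed} and Remark \ref{rem:describe_morph_of_dblfib}, exactly as you argue. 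Your condition-tracking argument for the restricted equivalence (with the repleteness and $(-)^{op}$-convention checks you flag) supplies detail that the paper leaves implicit.
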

\begin{proof}
    To show the first equivalence, we use the following chain of equivalences
        \begin{align} 
            \MonFib(\ct{B}) &\cong  \dblfib(\bb{B})^{id} \notag \\
                            &\simeq \dbltwocat(\bb{B}^{op}, \Span(\cat))^{\triangle 1} \notag \\
                            &\cong \Montwocat_{ps} (\ct{B}^{op}, \cat). \notag
        \end{align}
    The first isomorphism was shown in Example \ref{ex:monoids_in_Fib_as_double_fib} for the objects of the categories, and can be seen to extend to the arrows by  comparing their respective descriptions in Remark \ref{rem:describe_morph_of_dblfib} and in \cite[p.1174]{MV}.  The last isomorphism appeared in Example \ref{ex:both_cases_as_indexed}. For the second statement, we use instead
        \begin{align} 
            \Mon(\fib(\ct{B})) &\cong \dblfib(\bb{D}(\ct{B}))^{id} \notag \\  
                               &\simeq \dbltwocat(\bb{D}(\ct{B})^{op}, \Span(\cat))^{\triangle 1} \notag \\
                               &\cong \twocat_{ps}(\ct{B}^{op}, \Moncat) \notag
        \end{align}
    with the two isomorphisms given by Examples \ref{ex:monoids_in_Fib(B)_as_double_fib} and \ref{ex:both_cases_as_indexed} respectively.
\end{proof}

We close the subsection with an improvement on the main result of \cite{Lambert2021}, which proved a correspondence between \emph{discrete} double fibrations and span-valued lax double functors on \emph{strict} double categories. The techniques developed in this paper allow us to handle the more general case of pseudo double categories.  Recall that a \textbf{discrete fibration} is an ordinary functor $F\colon \mscr F\to \mscr C$ such that the square
    $$\xymatrix{
        \mscr F_1\ar[d]_{F_1} \ar[r]^{d_1} & \mscr F_0 \ar[d]^{F_0} \\
        \mscr C_1 \ar[r]_{d_1} & \mscr C_0
    }$$
is a pullback square of sets. Every discrete fibration is a fibration. Let $\dfib$ denote the full sub-2-category of $\fib$ consisting of the discrete fibrations over arbitrary bases. 

\begin{define}
    A \textbf{discrete double fibration} is a pseudo category in $\dfib$. 
\end{define}

It is well-known that discrete fibrations correspond to contravariant set-valued functors in the sense that there is an equivalence of 2-categories
    \begin{equation}
        \dfib \simeq \iset
    \end{equation}
achieved by an elements construction with a pseudo inverse fibers construction. Here $\iset$ denotes the 2-category of ``indexed sets," that is, presheaves $\mscr C^{op}\to \set$, viewed as a full-sub-2-category of $\icat$. All transformations between indexed sets are 2-natural, so no question about the existence of 2-pullbacks arises. Moreover, this equivalence is pullback-preserving, so we can take pseudo monoids on each side, arriving at an equivalence
    \begin{equation}
        \mathbf{DDblFib} := \psmon(\Span(\dfib))\simeq \psmon(\Span(\iset))
    \end{equation}
between a category of discrete double fibrations and one of pseudo categories in indexed sets. Note that the double functors on the left side of this equivalence need not be between strict double categories since we are considering pseudo categories. An analysis similar to that above can be applied to the project of unpacking the data on the right side of the equivalence.

\begin{lemma}
    Pseudo categories in indexed sets are in one-to-one correspondence with span-valued lax functors $\bb D^{op}\to \Span$ on pseudo double categories $\bb D$. That is, there is an equivalence
        \begin{equation}
            \psmon(\Span(\iset))\simeq \mbf I\Span
        \end{equation}
    between pseudo categories in indexed sets and indexed spans of sets.
\end{lemma}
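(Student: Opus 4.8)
The plan is to run, in the discrete setting, the same chain of identifications used to prove the Representation Theorem (Theorem \ref{theorem:RepresentationTheoremFinalForm}). The key leverage is that Propositions \ref{prop:SpanValuedLaxFunctorsArePseudoMonoids} and \ref{prop:HomomsOfPsMonsAreDblePsNatTransfs} are stated for an \emph{arbitrary} double 2-category $\bb E$. Applying them with $\bb E = \Span = \Span(\set)$, regarded as a locally discrete double 2-category, gives at once an isomorphism $\psmon(\bb P(\Span)) \cong \mbf{I}\Span$. Thus the lemma reduces to proving the set-level analogue of Corollary \ref{cor:ApxIsAnIso}, namely an isomorphism of double 2-categories $\apx\colon \bb P(\Span) \cong \Span(\iset)$: composing it with the previous isomorphism, and using that $\psmon(-)$ carries isomorphisms of double 2-categories to isomorphisms of categories, yields $\psmon(\Span(\iset)) \cong \mbf{I}\Span$, which is more than the asserted equivalence.

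First I would confirm that the apex construction of \S \ref{subsection:Translation} transfers verbatim upon replacing $\cat$ by $\set$ and $\icat$ by $\iset$. Since $\Span(\set)_0 = \set$, Construction \ref{construction:ConstructionOfSliceOfSpan2Cat} produces $\bb P(\Span)_0 = \iset$, which matches $\Span(\iset)_0 = \iset$, so $\apx_0$ is the identity. The 2-functor $\apx_1\colon \bb P(\Span)_1 \to \Span(\iset)_1$ is defined exactly as before, by pushing a span morphism over $(\src,\tgt)$ forward along the apex functor $\Span(\set)_1 \to \set$ and whiskering the two structural cells to obtain the legs. Here a genuine simplification occurs: in forming $\Span(\iset)$ via Construction \ref{construction:SpanCatAsDouble2Category} the distinguished class $\Sigma_1$ is all of $\iset$, because every transformation of indexed sets is automatically 2-natural; hence $\Span(\iset)$ carries no $t$-decoration and already coincides with the $\iset$-analogue of the target in Definition \ref{define:SpansWRTaSubsetofArrows}. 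For the same reason the whiskered legs lie in $\Sigma_1$ without any extra hypothesis, the pushforward lands in $\Span(\iset)_1$, and the 2-pullbacks required to make $\Span(\iset)$ a double 2-category exist, as noted above for indexed sets.

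Next I would verify that $\apx_1$ is an isomorphism of 2-categories, which is the only substantive step. The argument is a transcription of the proof of the Proposition preceding Corollary \ref{cor:ApxIsAnIso}, and is in fact lighter since $\set$ has no nontrivial 2-cells. Given an object of $\Span(\iset)_1$ — a span of indexed sets with 2-natural legs $\lambda$, $\rho$ — one lifts it to an object of $\bb P(\Span)_1$ by the explicit cylinder assignment: for each object and arrow of the apex base one reads off the induced morphism of apex spans, and strict 2-naturality of $\lambda$ and $\rho$ forces the resulting squares to commute on the nose, so the data assemble into a pseudo functor into $\Span(\set)_1$. The same recipe lifts morphisms and 2-cells, and in each case the lift is the unique preimage under $\apx_1$; this gives bijections on objects, arrows, and 2-cells. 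Since $\apx_0$ is the identity and $\apx_1$ is bijective on all cells, $\apx_1$ is an isomorphism of 2-categories.

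Finally, $\apx$ commutes with external composition and units because the apex of a composite (pullback) span is the composite of the apexes, precisely the computation behind Corollary \ref{cor:ApxIsAnIso}; thus $\apx\colon \bb P(\Span)\cong\Span(\iset)$ is an isomorphism of double 2-categories, and passing to pseudo monoids completes the proof. The main obstacle is essentially bookkeeping rather than mathematics: one must check carefully that the $t$-decoration truly becomes vacuous for $\iset$ and that the requisite 2-pullbacks are available, after which the lifting property is a direct translation of the already-established $\cat$-level argument.
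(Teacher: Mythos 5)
Your proposal is correct and is essentially the paper's own argument: the paper's proof sketches exactly this route as its option (2), namely transporting the problem through the lower-dimensional apex isomorphism $\bb P(\Span)\cong\Span(\iset)$ (the $\set$-level analogue of Corollary \ref{cor:ApxIsAnIso}, simplified because all transformations of indexed sets are 2-natural) and then identifying pseudo monoids and their homomorphisms via Propositions \ref{prop:SpanValuedLaxFunctorsArePseudoMonoids} and \ref{prop:HomomsOfPsMonsAreDblePsNatTransfs} applied at $\bb E=\Span$. Your write-up merely fills in the details the paper leaves as a sketch, and correctly observes that this route in fact yields an isomorphism $\psmon(\Span(\iset))\cong\mbf I\Span$, refining the stated equivalence.
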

\begin{proof}
    Pseudo categories in $\iset$ are pseudo monoids in spans in $\iset$. By either (1) unpacking the latter data by hand, or (2) working through an isomorphism using the lower-dimensional apex functor $\apx\colon \Span\to\set$ as in the proof of Proposition \ref{prop:SpanValuedLaxFunctorsArePseudoMonoids}, one can see that pseudo monoids in spans in $\iset$ are indeed precisely contravariant span-valued lax functors on double categories.
\end{proof}

\begin{theo}
    There is an equivalence of categories
        \begin{equation}
            \mathbf{DDblFib}\simeq \mathbf I\Span
        \end{equation}
    restricting to an equivalence
        \begin{equation} \label{eq:equiv_Pare_Lambert}
            \mathbf{DDblFib}(\bb D) \simeq \lax(\bb D^{op},\Span)
        \end{equation}
    for any pseudo double category $\bb D$.
\end{theo}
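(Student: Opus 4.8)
The global equivalence is obtained by simply composing two equivalences that are already in place. The discussion immediately preceding the statement records that, since the elements construction gives a $2$-pullback-preserving equivalence $\dfib \simeq \iset$ in $\twocat$ and equivalent double $2$-categories have equivalent categories of pseudo monoids, there is an equivalence
\[
    \mathbf{DDblFib} := \psmon(\Span(\dfib)) \simeq \psmon(\Span(\iset)).
\]
The preceding Lemma supplies $\psmon(\Span(\iset)) \simeq \mbf{I}\Span$. Composing these two equivalences yields $\mathbf{DDblFib} \simeq \mbf{I}\Span$ at once. (Here no side condition $\Sigma_1$ intervenes because every transformation between indexed sets is $2$-natural, so $\Span(\iset)$ is formed without restriction.)

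For the fixed-base statement \eqref{eq:equiv_Pare_Lambert}, I would argue exactly as in the final line of the proof of Theorem~\ref{theorem:RepresentationTheoremFinalForm}, by passing to fibers of a common projection to pseudo double categories. On the fibrational side this projection sends a discrete double fibration $P\colon \bb E\to\bb D$ to its base $\bb D$ (the pseudo double category obtained by applying the bottom $2$-functor), and on the indexed side it sends an indexed span $F\colon \bb A^{op}\to\Span$ to its indexing double category $\bb A$. The equivalence built above is compatible with these projections, so restricting to the fiber over a fixed $\bb D$ gives equivalent fibers. It remains only to identify the fiber of $\mbf{I}\Span$ over $\bb D$: its objects are lax double pseudo functors $\bb D^{op}\to\Span$ with $\bb D$ held fixed, and its morphisms are the transformations $\tau\colon F\Rightarrow G$ (those morphisms lying over $\mathrm{id}_{\bb D}$). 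Since $\Span$ (spans of sets) is a locally discrete double $2$-category, a lax double pseudo functor into it is just an ordinary lax double functor and a lax double pseudo natural transformation is just an ordinary transformation; hence this fiber is precisely $\lax(\bb D^{op},\Span)$, giving \eqref{eq:equiv_Pare_Lambert}.

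The only point that genuinely needs care is the compatibility of the whole chain with the base projections, so that fibers may legitimately be taken; this is the same subtlety encountered in Theorem~\ref{theorem:RepresentationTheoremFinalForm}, and I expect it to be the main (though not severe) obstacle. Concretely, one checks that each step --- the pullback-preserving elements equivalence $\dfib\simeq\iset$, the functor $\psmon(-)$, and the lower-dimensional apex argument of the preceding Lemma --- acts as the identity on the underlying $0$-component, so that a pseudo monoid and its image have the ``same'' base. Everything else is the observation that spans of sets form a locally discrete double $2$-category, which collapses the machinery of lax double pseudo functors to ordinary lax double functors; in particular this recovers and strengthens the correspondence of \cite{Lambert2021} from strict to \emph{pseudo} double categories, as promised.
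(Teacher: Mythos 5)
Your proposal is correct and follows essentially the same route as the paper: compose the pseudo-monoid equivalence $\psmon(\Span(\dfib))\simeq\psmon(\Span(\iset))$ (coming from the 2-pullback-preserving elements equivalence $\dfib\simeq\iset$) with the preceding lemma's equivalence $\psmon(\Span(\iset))\simeq\mbf{I}\Span$, then restrict to fibers of the projection to base double categories for the fixed-base statement. The extra details you supply — compatibility of each step with the base projection and the identification of the fiber of $\mbf{I}\Span$ over $\bb D$ with $\lax(\bb D^{op},\Span)$ via local discreteness of $\Span$ — are exactly the points the paper leaves implicit in its brief proof.
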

\begin{proof}
    This is just a matter of coordinating the equivalence between double fibrations and pseudo monoids in spans in indexed sets and that between the latter and indexed spans of sets as in the lemma. Restricting to the fibers of the appropriate projection morphism to double categories gives the equivalence over a fixed base double category.
\end{proof}

\subsection{Elements Correspondence and Examples}

In this subsection, we will unpack the correspondence from Theorem \ref{theorem:RepresentationTheoremFinalForm} and show how several of the examples encountered so far have their analogues on one or the other side of it.  Specifically, the elements construction of the correspondence is the object assignment of the composite functor $\mbf I\Span(\cat) \to \dblfib$ passing through the various categories of pseudo monoids described in the theorem. In particular, we view a lax functor $F: \mathbb{D}^{op} \to \Span(\cat)$ as consisting of two parts, namely, pseudo functors
    \[ 
        F_0: \mathbb{D}_0^{op} \to \Span(\cat)_0 = \cat 
    \]
and
    \[ 
        F_1: \mathbb{D}_1^{op} \to \Span(\cat)_1 
    \]
where $\Span(\cat)_1$ has objects spans of categories and morphisms of such spans. The apex functor $\apx: \Span(\cat)_1 \to \cat$, forgetting the legs of a given span (morphism), composes with $F_1$ to yield a category-valued pseudo functor
    \[ 
        \mathbb{D}_1^{op} \to^{F_1} \Span(\cat)_1 \to^{\apx} \cat. 
    \]
Thus, $F_0$ and $\apx F_1$ are both pseudo functors into $\cat$, and as such, we can apply the ordinary elements construction to each separately. Thus we get cloven fibrations
    \[ 
        \dblelt(F)_0 \to \mathbb{D}_0 \quad \mbox{  and  } \quad \dblelt(F)_1 \to \mathbb{D}_1. 
    \]
These will be, respectively, the category of objects and the category of arrows of our double category $\dblelt(F)$. The next result gives a complete description of how these fibrations are underlying a double fibration $\Pi\colon \dblelt(F)\to\D$. First, a bit of notation: given a proarrow $m: A \slashedrightarrow B$ of $\mathbb{D}$, we will write the span $Fm$ as 
    \begin{equation} \label{eq:L_mR_mnotation}
        \vcenter{\xymatrix{& Fm \ar[dl]_{L_m} \ar[dr]^{R_m} & \\ 
                    FA & & FB
    }}
    \end{equation}
The next result includes a complete description of the rest of the structure. For any further proarrow $n\colon B\hto C$, denote the laxity comparison cell for composition as the span morphism
    $$\xymatrix{
        FA \ar@{=}[d] & \ar[l] Fm\times_{FB}Fn \ar[r] \ar[d]^{\phi_{m,n}} & FC \ar@{=}[d] \\
        FA & \ar[l] F(m\otimes n) \ar[r] & FC
    }$$
The important part is of course the apex of the span morphism given by the functor $\phi_{m,n}$.

\begin{theo} \label{th:elements_explicit}
    The double elements construction associated to a lax double pseudo functor $F\colon \mathbb D^{op}\to\Span(\cat)$ is the projection from the pseudo double category $\dblelt(F)$ in which the object category $\dblelt(F)_0$ is given by 
        \begin{enumerate}
            \item objects: pairs $(C,X)$ with $X\in FC$;
            \item arrows: pairs $(f, \bar f)\colon (C,X)\to (D,Y)$ with $f\colon C\to D$ and $\bar f\colon X\to f^*Y$ in $FC$;
        \end{enumerate}
    and the proarrow category $\dblelt(F)_1$ is given by
        \begin{enumerate}
            \item[3.] proarrows: pairs $(m,\bar m)\colon (C,X)\hto (D,Y)$ with $\bar m \in Fm$, $L_m\,\bar m = X$, and $R_m\,\bar m = Y$ (see \eqref{eq:L_mR_mnotation});
            \item[4.] cells: pairs $(\theta,\bar\theta)$ displayed
                $$\xymatrix{
                    \ar@{}[dr]|{(\theta,\bar\theta)} (A,X) \ar[d]_{(f,\bar f)}\ar[r]^{(m,\bar m)} |@-{|} & (B,Y) \ar[d]^{(g,\bar g)} \\
                    (C,Z) \ar[r]_{(n,\bar n)}|-@{|} & (D,W)
                }$$
            with
                $$\xymatrix{
                    \ar@{}[dr]|{\theta} A \ar[d]_f \ar[r]^m|-@{|} & B \ar[d]^g \\
                    C \ar[r]_{n}|-@{|} & D
                }$$
            a cell of $\mathbb D$ and $\bar\theta\colon \bar m \to \theta^*\bar n$ an arrow of $Fm$ such that $L_m\,\bar\theta = \bar f$ and $R_m\,\bar\theta =\bar g$ both hold
        \end{enumerate}
    with internal composition and units given by
         \begin{enumerate}
            \item[5.] 
                For ordinary arrows $(f,\bar f)\colon (A,X)\to (B,Y)$ and $(g,\bar g)\colon (B,Y)\to (C,Z)$, the composite is $(gf, \phi_{f,g}f^*(\bar g)\bar f)\colon (A,X)\to (C,Z)$.
            \item[6.]  
                For composable cells $(\theta,\bar\theta)\colon (m,\bar m)\Rightarrow (n,\bar n)$ and $(\delta,\bar\delta)\colon (n,\bar n)\Rightarrow (p,\bar p)$, the composite is given by $(\delta\theta,\phi_{\theta,\delta}\theta^*(\bar \delta)\bar\theta)\colon (m,\bar m)\Rightarrow (p,\bar p)$.
            \item[7.]  
                Internal units are $(1, 1)\colon (C,X)\to (C,X)$ and $(1, 1)\colon (m,\bar m)\Rightarrow (m,\bar m)$.
        \end{enumerate}
    The external source and target functors $\src,\tgt\colon \dblelt(F)_1\rightrightarrows\dblelt(F)_0$ then take a cell $(\theta,\bar\theta)$ above to $f$ and $g$, respectively. External composition and units are as follows.
        \begin{enumerate}
            \item[8.]  
                For proarrows $(m,\bar m)\colon (A,X) \hto (B,Y)$ and $(n,\bar n)\colon (B,Y)\hto (C,Z)$, the composite is given by $(m\otimes n,\phi_{m,n}(\bar m,\bar n))\colon (A,X)\hto (C,Z)$ where $\phi_{m,n}$ denotes the apex morphism of the laxity comparison cell associated to the pair of proarrows $(m,n)$ via $F$.
            \item[9.]   
                For cells
                        $$\xymatrix{
                            \ar@{}[dr]|{(\theta,\bar\theta)} (A,X) \ar[d] \ar[r]^{(m,\bar m)}|-@{|} & \ar@{}[dr]|{(\delta,\bar\delta)} (B,Y) \ar[d] \ar[r]^{(p,\bar p)}|-@{|} & (C,Z) \ar[d] \\
                            (D,U) \ar[r]_{(n,\bar n)}|-@{|} & (M,V)  \ar[r]_{(q,\bar q)}|-@{|} & (N,W)
                        }$$
                    the external composite is 
                        $$\xymatrix{
                            \ar@{}[drr]|{(\theta\otimes\delta,\bar{\theta\otimes\delta})} (A,X) \ar[d]_{(f,\bar f)}\ar[rr]^{(m\otimes p,\phi(\bar m,\bar p))} |@-{|} && (B,Y) \ar[d]^{(g,\bar g)} \\
                            (C,Z) \ar[rr]_{(n\otimes q,\phi(\bar n,\bar q))}|-@{|} && (D,W)
                        }$$
                    where $\bar{\theta\otimes\delta}$ is the morphism
                        $$\xymatrix{
                            \phi_{m,p}(\bar m,\bar p) \ar[r]^-{\phi(\bar\theta,\bar\delta)} & \phi_{m,p}(\theta^*(\bar n),\delta^*(\bar q)) \ar[r]^\cong & (\theta\otimes\delta)^*\phi_{n,q}(\bar n, \bar q)
                        }$$
                    in $F(n\otimes q)$ where `$\cong$' is the appropriate component of the structure iso
                        $$\xymatrix{
                        \ar@{}[drr]|{\cong}Fn\times_{FM}Fq \ar[d]_{\phi_{n,q}} \ar[rr]^{\theta^*\times_{g^*}\delta^*} & & Fm\times_{FB}Fp \ar[d]^{\phi_{m,p}} \\
                        F(n\otimes q) \ar[rr]_{(\theta\otimes \delta)^*} & & F(m\otimes p)
                        }$$
                    coming with $F$.
            \item[10.] 
                The external unit $y\colon \dblelt(F)_0\to \dblelt(F)_1$ is the functor taking $(C,X)$ to the proarrow $(y_C,\iota_C(X))$ where $\iota_C$ is the $C$-component of the identity transformation. On arrows $(f,\bar f)\colon (C,X)\to (D,Y)$, take the image to be the cell $(y_f,\bar{y_f})$ where $\bar{y_f}$ is the morphism
                    $$\xymatrix{
                        \iota_C(X) \ar[r]^{\iota_C\bar f} & \iota_C(f^*Y) \ar[r]^{\cong} & y_f^*\iota_D(Y)
                    }$$
                with the iso coming from the pseudo naturality iso $\iota_f$.
        \end{enumerate}
    The projection double functor $\Pi\colon \dblelt(F)\to \bb D$ is a double fibration.\qed
\end{theo}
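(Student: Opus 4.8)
The plan is to verify the unwound characterization of Definition \ref{def:unwinded_double_fibration} directly against the displayed data, rather than re-derive the statement abstractly. At the abstract level the conclusion is in fact immediate: by the discussion preceding the theorem, $\dblelt(F)$ is the value at $F$ of the object-assignment of the equivalence $\mbf{I}\Span(\cat)\to\dblfib$ of Theorem \ref{theorem:RepresentationTheoremFinalForm}, so it lands in $\dblfib=\psmon(\Span_c(\fib))$, which by Example \ref{example:PsMonsInSpansReltoSigma} is exactly the category of double fibrations. The real content here is that the explicit description agrees with that abstract construction, so the remaining task is to confirm that the displayed data satisfy Definition \ref{def:unwinded_double_fibration}.

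First I would record the underlying fibrations. By construction $\dblelt(F)_0\to\mathbb D_0$ and $\dblelt(F)_1\to\mathbb D_1$ are the ordinary cloven Grothendieck fibrations obtained by applying the elements construction to the pseudo functors $F_0$ and $\apx F_1$, with the canonical cleavages whose Cartesian lifts are given by reindexing. Explicitly, an arrow $(f,\bar f)$ is Cartesian in $\dblelt(F)_0$ iff $\bar f$ is invertible, and a cell $(\theta,\bar\theta)$ is Cartesian in $\dblelt(F)_1$ iff $\bar\theta$ is invertible. That $\Pi$ is a \emph{strict} double functor is then immediate from items 3--10, since every structure map of $\dblelt(F)$ projects on its first coordinate to the corresponding structure map of $\mathbb D$.

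Next I would check that $\src$ and $\tgt$ are simultaneously cleavage-preserving. The chosen Cartesian lift over a cell $\theta\colon m\Rightarrow n$ of $\mathbb D$ with codomain $(n,\bar n)$ is $(\theta,1_{\theta^*\bar n})$. Applying $\src$ gives $(f,\,L_m(1_{\theta^*\bar n}))=(f,1_{L_m\theta^*\bar n})$, and the span-morphism square associated to $\theta$ (appearing in item 4, with legs $L,R$ as in \eqref{eq:L_mR_mnotation}) commutes strictly, so $L_m\theta^*=f^*L_n$ and hence $L_m\theta^*\bar n=f^*(L_n\bar n)$. Thus $\src(\theta,1)$ is precisely the chosen Cartesian lift of $f$ in $\dblelt(F)_0$; the identical argument with $R$ and $g$ handles $\tgt$. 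Because both use the one canonical pair of cleavages, $\src$ and $\tgt$ preserve them at once.

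Finally I would verify that $y$ and $\otimes$ send Cartesian arrows to Cartesian arrows. For $y$, an arrow $(f,\bar f)$ with $\bar f$ invertible is sent to $(y_f,\bar{y_f})$ whose apex component is $\iota_C(\bar f)$ followed by the pseudo-naturality iso $\iota_f$ (item 10), both invertible. For $\otimes$, a pair of Cartesian cells (invertible $\bar\theta,\bar\delta$) is sent to the external composite whose apex component $\bar{\theta\otimes\delta}$ is $\phi_{m,p}(\bar\theta,\bar\delta)$ followed by the coherence iso of $F$ (item 9); since $\phi_{m,p}$ is a functor it preserves invertibility and the coherence cell is an iso, so $\bar{\theta\otimes\delta}$ is invertible. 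With the cleavages chosen above, Corollary \ref{cor:3differentcatobjectsinFib} then yields that $\Pi$ is a double fibration. The main obstacle is the cleavage bookkeeping: one must confirm that the \emph{single} canonical choice of elements-cleavages works for both $\src$ and $\tgt$, which rests on the strict commutativity of the leg-squares $L_m\theta^*=f^*L_n$ and $R_m\theta^*=g^*R_n$ expressing that $F_1$ is a genuine span-valued pseudo functor, and that the laxity comparison $\phi_{m,n}$ together with its structure isos interacts correctly with reindexing so that $\otimes$ preserves Cartesian cells; the rest is routine checking that the displayed data defines a pseudo double category, inherited from the coherence axioms of $F$.
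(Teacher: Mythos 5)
Your checks are sound, but this is not the paper's argument; in fact the paper gives no direct verification at all. There, $\dblelt(F)$ is \emph{defined} as the value at $F$ of the object assignment of the composite
$\mbf I\Span(\cat) \cong \psmon(\bb P(\Span(\cat))) \cong \psmon(\Span_t(\icat)) \simeq \psmon(\Span_c(\fib)) = \dblfib$
coming from Theorem \ref{theorem:RepresentationTheoremFinalForm} (via Propositions \ref{prop:SpanValuedLaxFunctorsArePseudoMonoids} and \ref{prop:HomomsOfPsMonsAreDblePsNatTransfs} and Corollary \ref{cor:ApxIsAnIso}), and items 1--10 are what falls out of tracing $F$ backwards through that chain; both the pseudo-double-category coherence of $\dblelt(F)$ and the double-fibration property of $\Pi$ are then automatic, which is why the statement carries no separate written proof. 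You instead take the displayed data at face value and verify Definition \ref{def:unwinded_double_fibration} directly, and each step is correct: the two legs are the ordinary Grothendieck fibrations of $F_0$ and $\apx F_1$, with Cartesian arrows exactly those with invertible apex component; $\src$ and $\tgt$ preserve the canonical cleavages simultaneously because the leg squares of the span morphism $F_1(\theta)$ commute strictly, giving $L_m\theta^* = f^*L_n$ and $R_m\theta^* = g^*R_n$ (here the well-definition condition $\src\,F_1 = F_0\,\src$ is what makes the vertical legs equal $f^*$ and $g^*$); and $y$, $\otimes$ send Cartesian arrows to Cartesian arrows since their apex components are functorial images of isomorphisms composed with coherence isomorphisms. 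This buys a self-contained, concretely usable proof, independent of the $\bb P(-)$ and $\apx$ machinery, and it exhibits the cleavage explicitly, which is what the later examples actually use.

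Two caveats. First, your route must separately establish that items 1--10 define a pseudo double category; you defer this as routine, and it is, but it is precisely the coherence bookkeeping that the paper's chain of equivalences absorbs. Second, there is a logical slip in your framing: the theorem asserts that the displayed data \emph{is} the double elements construction, i.e., agrees with the object assignment of the equivalence, and you call this ``the real content'' --- but then what you actually verify is only that the displayed data is \emph{a} double fibration, which is not the same as showing it is \emph{the} one produced by the equivalence. As it stands, your argument proves a slightly weaker (though for all subsequent uses in the paper, including Corollary \ref{cor:locally_discrete_elements_construction}, sufficient) statement: the explicit $\dblelt(F)$ is a pseudo double category and $\Pi\colon \dblelt(F)\to\bb D$ is a double fibration. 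To prove the theorem as literally stated, the unpacking of the equivalence chain cannot be bypassed.
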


\begin{example}
  The family double fibration $\Pi\colon \dblfam(\C)\to \Span$ of Example \ref{example:FamilyDoubleFibration} corresponds to the family lax double pseudo functor $[-,\C]\colon \Span^{op}\to\Span(\cat)$ of Example \ref{example:FamilyLaxDoublePseudoFunctor}.
\end{example}

\begin{example}
  The codomain projection $\cod \colon \bb D^{\mbf 2}\to\bb D$ associated to a double category with suitable finite limits, as described in Example \ref{example:DoubleCodomainFibration}, corresponds to the lax double pseudo functor $\bb D^{op}\to \Span(\cat)$ associating to each object $D$ the slice $\bb D_0/D$ as described in Example \ref{example:SliceLaxDoublePseudoFunctor}.
\end{example}

We observe now that Theorem \ref{th:elements_explicit} applies in particular to a lax functor
$F\colon \mathbb D^{op}\to |\Span(\cat)|$, that in view of Example \ref{ex:lax_double_pseudo_functor_Dd--Ed}
can be seen as a lax double pseudo functor 
    \[
        F\colon \mathbb D^{op}\to |\Span(\cat)|_d \mr{} \Span(\cat). 
    \]
Note that if we think of the lax functor $F$ with the notation as in Definition \ref{def:laxfunctor}, then the so-obtained lax double pseudo functor (with the notation as in Definition \ref{def:lax_double_pseudo_functor}), has $F_0$ and $F_1$ functors instead of pseudo functors, $\phi$ and $\iota$ natural instead of pseudo natural transformations, and identities in place of the structural isomorphisms $\Phi, \Lambda$, and $\Rho$ in Definition \ref{def:lax_double_pseudo_functor}. With this in mind, we can apply the formulas in Theorem \ref{th:elements_explicit} to this case, and we get:

\begin{cor} \label{cor:locally_discrete_elements_construction}
The elements construction associated to a lax functor $F\colon \mathbb D^{op}\to|\Span(\cat)|$ is the projection from the pseudo double category $\dblelt(F)$ as described in Theorem \ref{th:elements_explicit}, with the following simplifications:

\begin{itemize}
    \item In item 6, $\phi_{\theta,\delta}$ is an identity, and
    \item In items 9 and 10, the structure isomorphisms denoted by $\cong$ are respectively identities.
\end{itemize}
    The natural projection double functor $\Pi\colon \dblelt(F)\to \bb D$ is a double fibration.\qed
\end{cor}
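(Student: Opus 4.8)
The plan is to deduce this as a direct specialization of Theorem \ref{th:elements_explicit}. As noted in the paragraph preceding the corollary, a lax functor $F\colon \bb D^{op}\to|\Span(\cat)|$ becomes a lax double pseudo functor $\bb D^{op}\to\Span(\cat)$ once composed with the locally discrete inclusion $|\Span(\cat)|_d\to\Span(\cat)$ of Example \ref{ex:lax_double_pseudo_functor_Dd--Ed}. Since the double elements construction $\dblelt(-)$ of Theorem \ref{th:elements_explicit} is defined for \emph{any} lax double pseudo functor, the output $\dblelt(F)$ is immediately a pseudo double category and the projection $\Pi\colon\dblelt(F)\to\bb D$ is immediately a double fibration; no new argument is needed for these assertions. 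The only remaining work is to read off how items 1--10 of Theorem \ref{th:elements_explicit} simplify for this particular input.

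First I would record precisely which pieces of pseudo structure degenerate. Viewing $F$ as a lax functor in the sense of Definition \ref{def:laxfunctor}, its components $F_0$ and $F_1$ are genuine functors rather than pseudo functors, the comparison cells $\phi$ and $\iota$ are (strictly) natural rather than pseudo natural, and the modifications $\Phi,\Lambda,\Rho$ of Definition \ref{def:lax_double_pseudo_functor} are identities. I would then match these identities to the three structural isomorphisms appearing in the statement of the theorem. In item 6, the factor $\phi_{\theta,\delta}$ is the functoriality comparison of $\apx\,F_1$ on the composable pair of cells $\theta,\delta$ of $\bb D_1$; strict functoriality of $F_1$ makes it an identity. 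In item 9 the iso labelled $\cong$ is the pseudo-naturality iso of the external-composition laxity $\phi$ evaluated at $(\theta,\delta)$, and in item 10 it is the pseudo-naturality iso $\iota_f$ of the unit transformation; strict naturality of $\phi$ and $\iota$ makes both identities. This yields exactly the two bullet points claimed, and the vanishing of $\Phi,\Lambda,\Rho$ confirms that no residual associator or unitor correction survives in the composition formulas.

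There is essentially no substantive obstacle here, since all the content lives in Theorem \ref{th:elements_explicit} and the corollary is a bookkeeping specialization. The one point requiring care---the main ``obstacle,'' such as it is---is to verify that each occurrence of a structural iso in the general formulas is correctly identified with one of the pieces of pseudo structure that actually trivializes, so that no genuinely nontrivial comparison cell is discarded. In particular I would check that the laxity cells $\phi_{m,n}$ for \emph{external} composition of proarrows (item 8) are \textbf{not} among the data that trivialize: $F$ is only lax, not unitary or pseudo, so these $\phi_{m,n}$ remain genuinely non-invertible and must be retained in the construction of $\dblelt(F)$.
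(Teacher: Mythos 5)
Your proposal is correct and follows essentially the same route as the paper: the paper likewise treats $F$ as the composite lax double pseudo functor $\mathbb D^{op}\to|\Span(\cat)|_d\to\Span(\cat)$ via Example \ref{ex:lax_double_pseudo_functor_Dd--Ed}, observes that $F_0,F_1$ are then genuine functors, $\phi,\iota$ strictly natural transformations, and $\Phi,\Lambda,\Rho$ identities, and simply reads off the simplified formulas from Theorem \ref{th:elements_explicit}. Your matching of the trivialized data to the isos in items 6, 9 and 10 (and your check that the external laxity cells $\phi_{m,n}$ of item 8 remain genuinely non-invertible) is exactly the bookkeeping the paper performs in the paragraph preceding the corollary.
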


We refer to the double fibrations associated to these lax functors as {\bf locally discrete}.

\begin{example}
    There is a functor $\Span \mr{} |\Span(\cat)|$, mapping each set to its associated discrete category. Composing thus a lax functor $F\colon \mathbb D^{op}\to \Span$ with it, and applying Corollary \ref{cor:locally_discrete_elements_construction}, we recover the double category $\dblelt(F)$ as originally constructed in \cite{PareYoneda}. This is of course the value on objects of the equivalence in \eqref{eq:equiv_Pare_Lambert}.
\end{example}

\begin{example}\label{ex:jaz-myers}
A double category $\int\int F$ is constructed in \cite[Def. 5.3]{DJM}, for a lax functor 
$F: \bb{D}^{op} \mr{} \prof$
taking its values in the double category of categories and profunctors, and referred to as a double Grothendieck construction (the covariant case is the one considered in \cite[Def. 5.3]{DJM}, but it is clear how to adapt it to the contravariant case). We can compose $F$ with the lax functor 
$\prof \mr{} |\Span(\cat)|$, associating to each profunctor 
the span given by 
its category of elements. For details on this lax functor see \cite[\S 10]{GP_Span_Cospan}. It can then be checked that the formulas in Corollary \ref{cor:locally_discrete_elements_construction} yield precisely those in \cite[Def. 5.3]{DJM}, showing then that the projection 
$\int\int F \mr{} \bb{D}$ is a 
(locally discrete) double fibration. It could be interesting in the future to interpret the double fibration properties when this construction is applied to open dynamical systems as in \cite{DJM}.
\end{example}

\section{Double Fibrations as Internal Fibrations} \label{sec:internal}

The notion of internal fibration in a 2-category was originally defined by Street \cite{internal_fibration}, and admits an equivalent formulation in terms of representables
(see for example \cite[Thm. 3.1.3]{notions_of_fibration}). 
The main result of this section, Corollary \ref{coro:double_fibration_iff_internal}, is that a strict double functor $P$ between pseudo double categories defines a double fibration (as in Definition \ref{def:double_fibration}) precisely when it is an internal fibration in the 2-category $\Dblcatv$ of pseudo double categories, pseudo double functors, and transformations. 

While proving this result, we noticed that the {\em natural} context in which to show it is to generalize $P$ to be pseudo, and to consider what it would mean for it to be an internal fibration in the three 2-categories $\Dblcatvs \subseteq \Dblcatv \subseteq   \Dblcatvl$ whose arrows are respectively the strict, pseudo, and lax double functors. 
Theorem \ref{th:charact_internal_fibrations}, whose proof occupies most of this section, and from which Corollary \ref{coro:double_fibration_iff_internal} follows as one of six possible cases (see Remark \ref{rem:splits_in_six}), provides a characterization of internal fibrations in these three 2-categories.

Recall that we write the arrows of a pseudo double category in the vertical direction, and the pro-arrows in the horizontal one. By this notational choice, transformations as in Definition \ref{define:TransformationOfInternalPseudoFunctors} amount to vertical transformations that we spell out below:

\begin{define} \label{def:vert_nat_and_3_2-cat}
    A \textbf{(vertical) transformation} $\alpha: F \Mr{} G$ between lax double functors $F,G: \bb{D} \mr{} \bb{E}$ is given by: 
        \begin{enumerate}
            \item for each object $X$ of $\bb{D}$,  a vertical arrow (that is an arrow $\alpha_X: F(X) \to G(X)$ of $\ct{E}_0$), and 
            \item for each horizontal arrow $M$ of $\bb{D}$ (that is an object of $\ct{E}_1$) a double cell (that is an arrow $\alpha_M: FM \mr{} GM$ of $\ct{E}_1$)
        \end{enumerate}
    These families of arrows are required to be natural with respect to the arrows of $\ct{E}_0$, resp. $\ct{E}_1$, and to satisfy the two axioms below:
    for each $X$, and {resp. for each $\stackrel{M}{\hto} \stackrel{N}{\hto}$}, the diagrams
    
    \begin{equation} \label{eq:vert_nat_satisfies}
    \vcenter{\xymatrix{
    y{F(X)} \ar[r]^{y({\alpha_X})} 
    \ar[d]_{\iota^F_X} 
    & y{G(X)} \ar[d]^{\iota^G_X} 
    \\
    Fy(X) \ar[r]_{\alpha_{y(X)}} & Gy(X) \ 
    }}
    \quad
    {\vcenter{\xymatrix@C=4pc{
    F(M) \otimes F(N) \ar[r]^-{\alpha_M \otimes \alpha_N} \ar[d]_{\phi_{M,N}^F} &  G(M) \otimes G(N)
    \ar[d]^{\phi^G_{M,N}}
    \\
    F(M \otimes N) \ar[r]_-{\alpha_{M \otimes N}} & G(M \otimes N)
    }}}
    \end{equation}
    both commute in $\mscr E_1$. We say that {\em  \eqref{eq:vert_nat_satisfies} holds for $\alpha$} to mean that the families $\alpha_X$, $\alpha_M$ satisfy these axioms. Vertical transformations are the 2-cells of three 2-categories $\Dblcatvs \subseteq \Dblcatv \subseteq   \Dblcatvl$  whose arrows are respectively strict, pseudo, and lax double functors between pseudo double categories.
\end{define}

\begin{remark} \label{rem:Fstrict_then_alpha_determined}
If follows from the diagram on the left in \eqref{eq:vert_nat_satisfies} that if $F$ is a pseudo double functor, then $\alpha_{y(X)}$ can be uniquely defined such that the diagram commutes, and is determined by $\alpha_X$.
\end{remark}

\begin{remark} \label{rem:intfibration_meaning}
The definition of what it means for an arrow in an arbitrary 2-category to be a fibration is due to Street \cite{internal_fibration}, for a nice exposition see \cite{notions_of_fibration}. By {\em instantiating} this general definition (as can be found for example in \cite[Definition 3.1]{notions_of_fibration}) to the 2-category $\Dblcatvl$, we observe that a lax double functor $P: \mathbb{E} \to \mathbb{B}$ is a fibration in $\Dblcatvl$ when for each vertical transformation
\begin{equation} \label{eq:betainfibrationindblcatv}
 \vcenter{\xymatrix{\mathbb{X} \ar[r]^{E} \ar[dr]_{B}^{\Uparrow \, \beta}  & \mathbb{E} \ar[d]^{P} \\ & \mathbb{B}} }
 \end{equation}
there is a lax double functor $E': \mathbb{X} \to \mathbb{E}$ and a vertical transformation $\alpha: E' \Rightarrow E$, such that $P \alpha = \beta$, satisfying:
for any lax double functors $X: \mathbb{Y} \to \mathbb{X}$, 
$E'': \mathbb{Y} \to \mathbb{E}$, 
and for vertical transformations $\xi$, $\gamma$
\begin{equation} \label{eq:alpha_satisfies_1}
\vcenter{\xymatrix{\mathbb{Y} \ar[rr]^{E''} \ar[rd]_{X} & \ar@{}[d]|{\Downarrow \, \xi} & \mathbb{E} \\
& \mathbb{X}  \ar[ur]_{E}
}}
\qquad , \qquad
\vcenter{\xymatrix{\mathbb{Y} \ar[rr]^{E''} \ar[d]_{X} \ar@{}[rrd]|{\Downarrow \, \gamma} && \mathbb{E} \ar[d]^{P} \\
\mathbb{X} \ar[r]_{E'} & \mathbb{E} \ar[r]_{P} & \mathbb{B}
}}
\quad
\mbox{ such that }
\quad
P\xi = \vcenter{\xymatrix{\mathbb{Y} \ar[rr]^{E''} \ar[d]_{X} \ar@{}[rrd]|{\Downarrow \, \gamma} && \mathbb{E} \ar[d]^{P} \\
\mathbb{X} \ar@<-1ex>[r]_{E} \ar@{}[r]|{\Downarrow \, \alpha} \ar@<1ex>[r]^{E'} & \mathbb{E} \ar[r]_{P} & \mathbb{B} }}
\end{equation}
there is a unique vertical transformation $\zeta: E'' \Mr{} E' X$ such that
\begin{equation} \label{eq:alpha_satisfies_2}
\vcenter{\xymatrix{
\mathbb{Y} \ar[rr]^{E''} \ar[rd]_{X} & \ar@{}[d]|{\Downarrow \, \xi} & \mathbb{E} \\
& \mathbb{X}  \ar[ur]_{E}
}}
=
\vcenter{\xymatrix{
\mathbb{Y} \ar[rr]^{E''} \ar[rd]_{X} & \ar@{}[d]|{\Downarrow \, \zeta} & \mathbb{E} \\
& \mathbb{X}  \ar@<1ex>[ur]^{E'} \ar@<-1ex>[ur]_{E} \ar@{}[ur]|{\stackrel{\Rightarrow}{\alpha}}
}}
\mbox{ and }
\vcenter{\xymatrix{
\mathbb{Y} \ar[rr]^{E''} \ar[rd]_{X} & \ar@{}[d]|{\Downarrow \, \zeta} & \mathbb{E'} \ar[r]^{P} & \mathbb{E} \\
& \mathbb{X}  \ar[ur]_{E}
}}
\!\!\!\!\!\!
= 
\vcenter{\xymatrix{\mathbb{Y} \ar[rr]^{E''} \ar[d]_{X} \ar@{}[rrd]|{\Downarrow \, \gamma} && \mathbb{E} \ar[d]^{P} \\
\mathbb{X} \ar[r]_{E'} & \mathbb{E} \ar[r]_{P} & \mathbb{B}
}}
\end{equation}
The same statements above, replacing all appearances of lax double functors by pseudo, resp. strict double functors, describe fibrations in $\Dblcatv$, resp. $\Dblcatvs$.
\end{remark}

We state now Theorem \ref{th:charact_internal_fibrations}, from which the main result of this section, Corollary \ref{coro:double_fibration_iff_internal}, follows. 
This theorem consists of three statements that we denote by $\mathbf{(L)}$, $\mathbf{(P)}$, and $\mathbf{(S)}$, characterizing 
when a pseudo double functor is an 
internal fibration in each of the three 2-categories in Definition \ref{def:vert_nat_and_3_2-cat} respectively.
Its proof is given, as is usual in 2-dimensional category theory, by showing the lax case $\mathbf{(L)}$ and {\em restricting} it to the pseudo and strict case.

\begin{theo} \label{th:charact_internal_fibrations}
Let $P: \mathbb{E} \to \mathbb{B}$ be a pseudo double functor between pseudo double categories, given by the data
\begin{equation}
\xymatrix@R=3pc@C=3pc{
{\ct{E}}_1 \times_{{\ct{E}}_0} {\ct{E}}_1
\ar@{}[dr]|{\phi^P \ \stackrel{\cong}{\Rightarrow}} \ar[d]_{P_1 \times_{P_0} P_1} \ar[r]^-{\otimes_{\bb{E}}} &
{\ct{E}}_1 
\ar@{}[dr]|{\iota^P \ \stackrel{\cong}{\Leftarrow}}
\ar[d]^{P_1} \ar@<1.25ex>[r]^{\src_{\bb{E}}} \ar@<-1.25ex>[r]_{\tgt_{\bb{E}}} & {\ct{E}}_0 \ar[d]^{P_0} \ar[l]|{y_{\bb{E}}} \\ 
{\ct{B}}_1 \times_{{\ct{B}}_0} {\ct{B}}_1 \ar[r]^-{\otimes_{\bb{B}}} & {\ct{B}}_1 \ar@<1.25ex>[r]^{\src_{\bb{B}}} \ar@<-1.25ex>[r]_{\tgt_{\bb{B}}} & {\ct{B}}_0 \ar[l]|{y_{\bb{B}}}}
\end{equation}
\smallskip
\noindent $\mathbf{(L)} \,$ 
$P$ is a fibration in the 2-category $\Dblcatvl$ if and only if it satisfies simultaneously:
\begin{enumerate}
    \item $P_0$ and $P_1$ are fibrations,
    \item we can choose cleavages for $P_0$ and $P_1$ such that $\src_\bb{E}$ and $\tgt_\bb{E}$ are cleavage-preserving.
\end{enumerate}    

Recall that in this case $P_1 \times_{P_0} P_1$ is a fibration and we can choose its cleavage pointwise (see Proposition \ref{prop:pullbackspointwise}).
    
\smallskip
\noindent $\mathbf{(P)}  \,$ $P$ is a fibration in the 2-category $\Dblcatv$ if and only if it satisfies $1$, $2$, and:    
    
\begin{enumerate}
    \item[3.]  $y_\bb{E}$ and $\otimes_{\bb{E}}$ are Cartesian-morphism preserving.
\end{enumerate}

\smallskip
\noindent $\mathbf{(S)}  \,$ If $P$ is a strict double functor, then 
$P$ is a fibration in the 2-category $\Dblcatvs$ if and only if it satisfies $1$, $2$, and:    
    
\begin{enumerate}
    \item[3s.]  $y_\bb{E}$ and $\otimes_{\bb{E}}$ are cleavage-preserving.
\end{enumerate}
\end{theo}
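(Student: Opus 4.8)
The plan is to prove the lax statement $\mathbf{(L)}$ in full and then obtain $\mathbf{(P)}$ and $\mathbf{(S)}$ by restriction, as anticipated before the theorem, working throughout from the unwound definition of an internal fibration recorded in Remark~\ref{rem:intfibration_meaning}. \textbf{Sufficiency in $\mathbf{(L)}$.} Assuming $1$ and $2$, fix chosen cleavages for $P_0$ and $P_1$ with $\src_\bb{E},\tgt_\bb{E}$ cleavage-preserving and start from data as in \eqref{eq:betainfibrationindblcatv}, i.e.\ a vertical transformation $\beta\colon B\Rightarrow PE$. I would build $E'$ and $\alpha\colon E'\Rightarrow E$ componentwise: for an object $x$ of $\mathbb{X}$ set $E'(x):=\beta_x^*E(x)$ with $\alpha_x$ the chosen Cartesian arrow of $P_0$ over $\beta_x$, and for a proarrow $M$ set $E'(M):=\beta_M^*E(M)$ with $\alpha_M$ the chosen Cartesian arrow of $P_1$ over $\beta_M$. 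The crucial point is that condition $2$ forces $\src_\bb{E}(\alpha_M)=\alpha_{\src M}$ and $\tgt_\bb{E}(\alpha_M)=\alpha_{\tgt M}$, since a cleavage-preserving functor sends chosen Cartesian lifts to chosen Cartesian lifts; this is exactly what makes the families $\alpha_x,\alpha_M$ cohere into one vertical transformation in the sense of Definition~\ref{def:vert_nat_and_3_2-cat}. The action of $E'$ on vertical arrows and cells, and the laxity comparisons $\iota^{E'},\phi^{E'}$, are induced from those of $E$ by the $1$-dimensional universal property of the $\alpha$; in the lax case one needs these comparison cells only to \emph{exist}, which is automatic, so no hypothesis on $y_\bb{E}$ or $\otimes_\bb{E}$ enters. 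By construction $P\alpha=\beta$, and the universal property \eqref{eq:alpha_satisfies_1}--\eqref{eq:alpha_satisfies_2} is verified componentwise: at each object and proarrow of $\mathbb{Y}$ the required $\zeta$ is precisely the $1$-dimensional Cartesian factorization through $\alpha$ in $P_0$, respectively $P_1$, with uniqueness and cleavage-preservation of $\src_\bb{E},\tgt_\bb{E}$ guaranteeing that the components assemble into a transformation.

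\textbf{Necessity in $\mathbf{(L)}$.} For the converse I would instantiate the property of Remark~\ref{rem:intfibration_meaning} at test data $E,B,\beta$ whose domain $\mathbb{X}$ is a single object, taking $E,B$ strict so that $\beta$ reduces to one vertical arrow $u\colon b\to P_0 e$ of $\mathbb{B}_0$; the object-component $\alpha_*$ of the produced lift is then Cartesian over $u$ by the object-part of the universal property, so $P_0$ is a fibration. Running the same argument with $\mathbb{X}$ a single proarrow detects a Cartesian lift in $\mathbb{B}_1$ of any $v\colon n\to P_1 m$, so $P_1$ is a fibration; this gives condition $1$. Declaring the chosen cleavages of $P_0$ and $P_1$ to be those produced by these lifts, the fact that $\alpha$ is a single vertical transformation when $\mathbb{X}$ carries both an object and a proarrow yields $\src_\bb{E}(\alpha_M)=\alpha_{\src M}$ and $\tgt_\bb{E}(\alpha_M)=\alpha_{\tgt M}$, which is exactly cleavage-preservation of $\src_\bb{E},\tgt_\bb{E}$, i.e.\ condition $2$. (That $P_1\times_{P_0}P_1$ is then a fibration with pointwise cleavage is Proposition~\ref{prop:pullbackspointwise}.)

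\textbf{Restriction to $\mathbf{(P)}$ and $\mathbf{(S)}$.} The only new feature is the strictness demanded of the lift $E'$, and the notion of vertical transformation is common to all three $2$-categories. Re-examining the sufficiency construction, $\iota^{E'}_x$ is the factorization of $\iota^E_x\circ y_\bb{E}(\alpha_x)$ through the Cartesian arrow $\alpha_{y_x}$, and $\phi^{E'}_{M,N}$ the analogous factorization through $\alpha_{M\otimes N}$. Hence $\iota^{E'},\phi^{E'}$ are \emph{invertible} exactly when $y_\bb{E}(\alpha_x)$ and $\otimes_\bb{E}(\alpha_M,\alpha_N)$ are again Cartesian, so that both sides are Cartesian lifts of the same arrow — this is condition $3$; and they are \emph{identities} exactly when these are the \emph{chosen} lifts — this is condition $3s$. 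This gives sufficiency of $1,2,3$ (resp.\ $1,2,3s$ with $P$ strict) for a fibration in $\Dblcatv$ (resp.\ $\Dblcatvs$). For the converses, conditions $1,2$ are extracted exactly as in $\mathbf{(L)}$ using pseudo (resp.\ strict) test functors, while condition $3$ (resp.\ $3s$) is read off by testing against the walking composable pair of proarrows (for $\otimes_\bb{E}$) and the interaction of the lift with external units (for $y_\bb{E}$): the forced pseudoness (resp.\ strictness) of $E'$ translates precisely into $\otimes_\bb{E},y_\bb{E}$ preserving Cartesian arrows (resp.\ the cleavage).

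\textbf{Main obstacle.} I expect the delicate work to lie in the necessity arguments — choosing exactly the right small double categories as test objects and checking that the comparison data of the produced lift extracts conditions $2$ and $3/3s$ with no gap. The conceptual crux, on which both sufficiency and the restriction hinge, is the biconditional ``$\iota^{E'},\phi^{E'}$ invertible $\Leftrightarrow$ $y_\bb{E},\otimes_\bb{E}$ preserve Cartesian arrows'' together with its strict analogue; getting this exactly right while carrying along the pseudo-functor comparison cells $\iota^P,\phi^P$ of $P$ itself is where the genuine bookkeeping occurs.
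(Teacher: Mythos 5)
Your sufficiency argument for $\mathbf{(L)}$, and the way you derive $\mathbf{(P)}$/$\mathbf{(S)}$ from it, coincides in substance with the paper's: one builds $E'$ and $\alpha$ componentwise from the chosen cleavages, uses cleavage-preservation of $\src_\bb{E},\tgt_\bb{E}$ to make $\alpha$ a single vertical transformation, induces $\iota^{E'},\phi^{E'}$ through the Cartesian arrows $\alpha_{y(X)}$ and $\alpha_{M\otimes N}$, and observes that invertibility (resp.\ strictness) of these comparisons is exactly condition $3$ (resp.\ $3s$). That part is sound.

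The genuine gap is in your necessity argument. You extract condition $1$ by testing against a single object and a single proarrow, and then \emph{declare} the chosen cleavages of $P_0$ and $P_1$ to be the lifts so produced, claiming condition $2$ follows because ``$\alpha$ is a single vertical transformation when $\mathbb{X}$ carries both an object and a proarrow.'' But the lifts for $P_0$ and the lifts for $P_1$ come from \emph{separate} applications of the internal fibration property, and nothing coordinates them: when you lift a cell $\gamma\colon M\mr{}P_1N$ over the walking proarrow, the object-components of the resulting transformation are \emph{some} Cartesian lifts of $\src\gamma$ and $\tgt\gamma$, not the ones you previously chose for $P_0$ from the single-object test. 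So $\src_\bb{E},\tgt_\bb{E}$ send your $P_1$-cleavage to Cartesian arrows that need only be \emph{isomorphic} to, not equal to, your $P_0$-cleavage; condition $2$, which is an equality of chosen lifts, does not follow. (The same incoherence defeats your proposed extraction of $3s$; condition $3$ alone could be salvaged this way, since Cartesianness is invariant under composition with vertical isomorphisms.) The paper flags exactly this failure in the remark preceding its proof and resolves it with a different device: it builds \emph{one} auxiliary double category $\bb X$ whose objects are \emph{all} triples $(B,E,u)$ with $u\colon B\mr{}PE$, whose proarrows are all triples $(M,N,\gamma)$, with only identity vertical arrows, together with strict projections $B,E$ and a tautological transformation $\beta\colon B\Mr{}PE$. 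A \emph{single} application of the fibration property over this $\bb X$ then produces one lift $(E',\alpha)$ whose components simultaneously define the cleavages of $P_0$ and $P_1$, and cleavage-preservation of $\src_\bb{E},\tgt_\bb{E}$ (and, reading off $\iota^{E'},\phi^{E'}$, conditions $3$/$3s$) holds \emph{by construction}, because all the lifting data are bound together in one vertical transformation. Without this globalization step --- or some substitute argument showing the scattered lifts can be re-chosen coherently --- your converse direction does not go through.
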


\begin{remark}\label{rem:abouse notation}
Note that, even if $P$ is a lax double functor, it still makes sense to consider the statements obtained in Remark \ref{rem:intfibration_meaning} by replacing all other appearances of lax double functors by pseudo, resp. strict double functors. We say, slightly abusing the language, that this defines what it means for a lax double functor $P$ to be a fibration in $\Dblcatv$, resp. $\Dblcatvs$. 

We note that with this abuse of language we could also consider $P$ to be a pseudo double functor in item $\mathbf{(S)}$ of Theorem \ref{th:charact_internal_fibrations} (and the statement in item $\mathbf{(S)}$ still holds, with the same proof that we give for $P$ strict).
\end{remark}

Before proving Theorem \ref{th:charact_internal_fibrations}, we observe some of its consequences.

\begin{cor}
For a pseudo double functor $P$ we have the implication
$$
\mbox{ fibration in } \Dblcatv \qquad \Rightarrow \qquad 
\mbox{ fibration in } \Dblcatvl,
$$
and for a 
strict double functor $P$ (or pseudo if we abuse the language) we have the implications
$$
\mbox{ fibration in } \Dblcatvs \qquad \Rightarrow \qquad 
\mbox{ fibration in } \Dblcatv \qquad \Rightarrow \qquad 
\mbox{ fibration in } \Dblcatvl.
$$
These implications do not follow directly from the descriptions of these notions in Remark \ref{rem:intfibration_meaning}.
\end{cor}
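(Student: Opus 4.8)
The plan is to avoid the naive route suggested by Remark \ref{rem:intfibration_meaning} and instead read the three implications off the explicit characterizations supplied by Theorem \ref{th:charact_internal_fibrations}. Why the naive route fails is worth recording, since it is exactly the content of the last sentence of the statement: passing from $\Dblcatvs$ to $\Dblcatv$ to $\Dblcatvl$ simultaneously enlarges \emph{two} classes of test data in the instantiated fibration condition. On the one hand there are more vertical transformations $\beta$ as in \eqref{eq:betainfibrationindblcatv} to be lifted (since $E$ and $B$ may now be merely pseudo, or lax); on the other hand the universal property must then be verified against more test functors $X, E''$ and more competing transformations $\xi, \gamma$ as in \eqref{eq:alpha_satisfies_1}. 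Enlarging both the hypotheses and the conclusion of a lifting-and-uniqueness statement yields no \emph{a priori} monotonicity, so none of the implications can be obtained by directly comparing the definitions.

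First I would invoke Theorem \ref{th:charact_internal_fibrations} to replace each notion by its pointwise characterization: $P$ is a fibration in $\Dblcatvl$ exactly when conditions $1$ and $2$ hold; in $\Dblcatv$ exactly when $1$, $2$, and $3$ hold; and in $\Dblcatvs$ exactly when $1$, $2$, and $3s$ hold. With these in hand, the implication ``fibration in $\Dblcatv \Rightarrow$ fibration in $\Dblcatvl$'' for a pseudo double functor $P$ is immediate, since $(1,2,3)$ trivially entails $(1,2)$; this settles the first displayed implication. For the strict (or pseudo, in the abused sense of Remark \ref{rem:abouse notation}) chain, the second arrow is this same observation, so the whole statement reduces to the first arrow there, ``fibration in $\Dblcatvs \Rightarrow$ fibration in $\Dblcatv$'', i.e. to the entailment $(1,2,3s) \Rightarrow (1,2,3)$.

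The only genuine content is therefore that condition $3s$ implies condition $3$: if $y_\bb{E}$ and $\otimes_\bb{E}$ are cleavage-preserving, then they are Cartesian-morphism preserving. I would argue this by the standard fact that a functor lying over the base which sends \emph{chosen} Cartesian lifts to Cartesian arrows automatically sends \emph{all} Cartesian arrows to Cartesian arrows. Concretely, every Cartesian arrow over an arrow $u$ of the base factors as the chosen Cartesian lift of $u$ precomposed with a vertical isomorphism at its domain; a functor over the base preserves vertical isomorphisms (being fiber-respecting and iso-preserving) and preserves composition, and the composite of a Cartesian arrow with a vertical iso at the domain is again Cartesian. Hence preservation on the cleavage propagates to preservation of all Cartesian arrows, and applying this to the functors underlying $y_\bb{E}$ and $\otimes_\bb{E}$ yields condition $3$.

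Finally I would assemble the chain: $(1,2,3s)$ gives $(1,2,3)$ by the previous paragraph, which gives $(1,2)$ by dropping condition $3$; translating back through Theorem \ref{th:charact_internal_fibrations} produces both implications for strict (resp. pseudo) $P$ as well as the single implication for pseudo $P$. I do not expect a substantial obstacle: the one conceptual point is recognizing that the result must be proved through the characterizations rather than through the 2-categorical definitions, and the sole computation, $3s \Rightarrow 3$, is a routine property of morphisms of fibrations.
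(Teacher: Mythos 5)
Your proof is correct and is essentially the paper's own (implicit) argument: the corollary is presented as an immediate consequence of Theorem \ref{th:charact_internal_fibrations}, the only substantive step being $3s \Rightarrow 3$, i.e.\ that a cleavage-preserving functor over the base preserves all Cartesian arrows via the factorization (chosen lift)$\,\circ\,$(vertical iso), which is the same standard fact the paper itself invokes in part $\mathbf{(P)}$ of the theorem's proof (``it suffices to check that the chosen Cartesian arrows are preserved''). One small caveat for the abused-language pseudo case: there the images of vertical isomorphisms under $y_{\bb E}$ or $\otimes_{\bb E}$ need not be vertical, but your argument survives because the composite of a Cartesian arrow with an \emph{arbitrary} isomorphism at its domain is still Cartesian.
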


Recalling Definition \ref{def:unwinded_double_fibration}, we have:

\begin{cor} \label{coro:double_fibration_iff_internal}
A strict double functor $P$ defines a double fibration if and only if it is a fibration in $\Dblcatv$.
\end{cor}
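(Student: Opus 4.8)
The plan is to deduce the corollary directly from statement $\mathbf{(P)}$ of Theorem \ref{th:charact_internal_fibrations}, which we may assume. First I would note that a strict double functor is in particular a pseudo double functor, so statement $\mathbf{(P)}$ applies to our $P$ verbatim: it asserts that $P$ is a fibration in $\Dblcatv$ if and only if $P$ satisfies conditions $1$, $2$, and $3$ listed there, namely that $P_0$ and $P_1$ are fibrations, that one may choose cleavages for $P_0$ and $P_1$ so that $\src_{\bb E}$ and $\tgt_{\bb E}$ are cleavage-preserving, and that $y_{\bb E}$ and $\otimes_{\bb E}$ are Cartesian-morphism preserving.

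Next I would compare this list of conditions with Definition \ref{def:unwinded_double_fibration}. Under the identification of the top-row structure maps of the diagram \eqref{eq:unwinded}, that is $\src^\top = \src_{\bb E}$, $\tgt^\top = \tgt_{\bb E}$, $y^\top = y_{\bb E}$, and $\otimes^\top = \otimes_{\bb E}$, the three conditions defining ``$P$ defines a double fibration'' are precisely conditions $1$, $2$, and $3$ of statement $\mathbf{(P)}$, word for word. The biconditional of the corollary then follows immediately by transitivity of ``if and only if''.

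The corollary therefore carries no new mathematical content beyond matching these two lists of hypotheses; all of the substance is absorbed into Theorem \ref{th:charact_internal_fibrations}, whose proof (the lax case $\mathbf{(L)}$ together with its restriction to the pseudo case) is where the genuine difficulty resides. Consequently I do not expect a real obstacle at the level of the corollary itself, provided one has carefully verified the bookkeeping that the structure maps $\src^\top,\tgt^\top,y^\top,\otimes^\top$ appearing in Definition \ref{def:unwinded_double_fibration} are exactly the maps $\src_{\bb E},\tgt_{\bb E},y_{\bb E},\otimes_{\bb E}$ of the theorem, so that the two characterizations coincide on the nose.
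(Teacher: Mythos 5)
Your proposal is correct and matches the paper's own reasoning: the corollary is stated there as an immediate consequence of statement $\mathbf{(P)}$ of Theorem \ref{th:charact_internal_fibrations} (applied to the strict, hence pseudo, double functor $P$), whose conditions $1$, $2$, $3$ are exactly those of Definition \ref{def:unwinded_double_fibration} under the identification $\src^\top = \src_{\bb E}$, $\tgt^\top = \tgt_{\bb E}$, $y^\top = y_{\bb E}$, $\otimes^\top = \otimes_{\bb E}$. Your remark that all the substance lives in the proof of the theorem, not in the corollary, is also exactly how the paper frames it.
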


\begin{remark} \label{rem:splits_in_six}
In fact, recalling our abuse of language, that of Corollary \ref{coro:double_fibration_iff_internal} is one of six possible different situations for a pseudo double functor $P$, depending on if it is a fibration in each of the 2-categories $\Dblcatvl$, $\Dblcatv$, and $\Dblcatvs$, and whether or not $P$ is strict. Given a diagram  $\src: P_1 \rightarrow P_0 \leftarrow P_1: \tgt$  in $\cfib$, Theorem \ref{th:charact_internal_fibrations}, Lemma \ref{lem:laxfunctorascategoryobject}, and Corollary 
\ref{cor:3differentcatobjectsinFib} can be used to show that having a pseudo-category structure
$(P_1,P_0,\src,\tgt,\iota,\phi,{\mathfrak a},{\mathfrak r},{\mathfrak l})$ in the following six 2-categories is equivalent (by choosing the cleavages of $P_0$ and $P_1$) to having a $P$ fitting each of these six cases. It is convenient to organize these 2-categories in a table according to the properties of $P$ in each case: 

\begin{center}
\begin{tabular}{|c|c|c|}
 \hline 
 $P$ fibration in: & $P$ strict double functor & $P$ pseudo double functor  \\ \hline 
    $\Dblcatvs$ & $\cfib$ & $\cfib^p$ \\
    $\Dblcatv$ & $\fib$ & $\fib^p$ \\
    $\Dblcatvl$ & $\Arrs(\cat)$ &  $\Arrp(\cat)$ \\ \hline 
\end{tabular}
\end{center}

The 2-categories $\fib^p$ (resp. $\cfib^p$) are defined as the sub-2-categories of $\Arrp(\cat)$, full on 2-cells, whose objects are fibrations with a chosen cleavage and whose arrows satisfy that the functor on top of the square is Cartesian, resp. cleavage, preserving.

The statement of Corollary \ref{coro:double_fibration_iff_internal} corresponds to the 2-category $\fib$ in the table. Considering instead $\cfib$, we have that a strict double functor $P$ defines a pseudo category in $\cfib$ if and only if it is a fibration in $\Dblcatv_s$.
We omit to write explicitly the remaining four statements.
\end{remark}

We begin now working towards the proof of Theorem \ref{th:charact_internal_fibrations}. We will use
the following lemma.

\begin{lemma} \label{lem:alpha_X_and_alpha_M_are_Cartesian}
Let $P: \mathbb{E} \to \mathbb{B}$ be a  pseudo double functor that is a fibration in any of the three 2-categories in the remark above\footnote{For fibrations in $\Dblcatvs$, either assume $P$ to be strict or abuse the language as in the remark above.}. Let $\beta$ as in \eqref{eq:betainfibrationindblcatv}, and let $E'$ and $\alpha$ satisfy the conditions in \eqref{eq:alpha_satisfies_1}, \eqref{eq:alpha_satisfies_2}.  Then:

\begin{enumerate}
    \item For each object $X$ of $\mathbb{X}$, $\alpha_{X}\colon E'X \to EX$ is Cartesian with respect to $P_0\colon \ct{E}_0 \to \ct{B}_0$.
    
    \item For each horizontal arrow $M$ of $\mathbb{X}$,  $\alpha_{M}\colon E'M \to EM$ is Cartesian with respect to $P_1\colon \ct{E}_1 \to \ct{B}_1$.
\end{enumerate}
\end{lemma}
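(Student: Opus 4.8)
The plan is to extract the one-dimensional Cartesian conditions of Definition \ref{def:plain_old_cartesian} from the two-dimensional universal property that $\alpha$ enjoys as the chosen lift of $\beta$, namely the conditions \eqref{eq:alpha_satisfies_1} and \eqref{eq:alpha_satisfies_2} spelled out in Remark \ref{rem:intfibration_meaning}. The technique is to \emph{probe} that universal property by placing suitably small double categories in the role of $\mathbb{Y}$. For the object components I would probe with the terminal double category $\mathbf 1$ (one object and only its identities), and for the proarrow components with the ``walking proarrow'' double category $\mathbb{H}$ (two objects, a single non-trivial proarrow between them, and only identity vertical arrows and cells). The key preliminary observation is that, since $P$ is pseudo and the probe functors into $\mathbb{E}$ may be taken \emph{strict}, the comparison cells $\iota$ in the left-hand axiom of \eqref{eq:vert_nat_satisfies} are invertible on the relevant side; hence by Remark \ref{rem:Fstrict_then_alpha_determined} the cell component of a vertical transformation on a unit proarrow is uniquely determined by its object components. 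This makes the passage between vertical transformations of probe functors and honest arrows of $\ct{E}_0$, resp.\ $\ct{E}_1$, a genuine bijection, so that no data is lost or added in the translation.

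For statement (1), I would fix an object $X$ of $\mathbb{X}$ and suppose given the data testing $\alpha_X\colon E'X\to EX$ for Cartesianness over $P_0$: an arrow $g\colon Z\to EX$ of $\ct{E}_0$ together with $h\colon P_0Z\to P_0E'X$ of $\ct{B}_0$ satisfying $P_0\alpha_X\circ h=P_0g$. Take $\mathbb{Y}=\mathbf 1$ and let $X\colon \mathbf 1\to\mathbb{X}$ denote the strict functor naming the object $X$ (reusing the letter as in Remark \ref{rem:intfibration_meaning}), and let $E''\colon \mathbf 1\to\mathbb{E}$ be the strict functor naming $Z$. Under the bijection above, $g$ is exactly a vertical transformation $\xi\colon E''\Rightarrow EX$ and $h$ is exactly a vertical transformation $\gamma\colon PE''\Rightarrow PE'X$; moreover the compatibility required in \eqref{eq:alpha_satisfies_1} is, on object components, precisely $P_0g=P_0\alpha_X\circ h$. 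The universal property \eqref{eq:alpha_satisfies_2} then produces a unique $\zeta\colon E''\Rightarrow E'X$ with $(\alpha\ast X)\zeta=\xi$ and $P\zeta=\gamma$; reading off object components, $\zeta$ is exactly a unique $\hat h\colon Z\to E'X$ with $\alpha_X\circ\hat h=g$ and $P_0\hat h=h$. This is exactly the statement that $\alpha_X$ is $P_0$-Cartesian.

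Statement (2) would be proved identically, now probing with $\mathbb{H}$: a strict functor $\mathbb{H}\to\mathbb{X}$ names the proarrow $M$, a strict functor $E''\colon \mathbb{H}\to\mathbb{E}$ names a test proarrow, and a vertical transformation between two such functors is, by the determinacy of its unit-cell components, exactly a morphism of $\ct{E}_1$ (a cell together with its vertical boundary). The same translation turns the hypotheses and conclusion of \eqref{eq:alpha_satisfies_1}--\eqref{eq:alpha_satisfies_2} into the defining Cartesian property of $\alpha_M$ over $P_1$. Since in every case the probe functors $E''$ and the naming functors into $\mathbb{X}$ can be taken strict, hence are legitimate $1$-cells of each of $\Dblcatvs$, $\Dblcatv$, $\Dblcatvl$, the argument is uniform across the three situations (for $\Dblcatvs$ with the footnoted convention). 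I expect the main obstacle to be precisely the bookkeeping behind the bijections asserted in the first paragraph: one must verify that the naturality clauses and the $\otimes$-axiom of \eqref{eq:vert_nat_satisfies} impose no constraint beyond the object and boundary data (automatic, since $\mathbf 1$ and $\mathbb{H}$ carry only identity arrows and cells and trivial non-unit composites), and that the forced unit-cell components really do assemble into genuine vertical transformations. Granting this, the rest is a direct transcription.
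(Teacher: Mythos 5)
Your proposal is correct and follows essentially the same route as the paper: probe the lifting property with the terminal double category (for objects) and the walking-proarrow double category $\{\bullet_1 \hto \bullet_2\}$ (for proarrows), use Remark \ref{rem:Fstrict_then_alpha_determined} to identify vertical transformations out of these probes with arrows of $\ct{E}_0$, resp.\ $\ct{E}_1$, and then read off the Cartesian universal property from \eqref{eq:alpha_satisfies_1}--\eqref{eq:alpha_satisfies_2}. You also correctly isolate the subtle point the paper flags in Remark \ref{rem:Pseudo_is_needed}, namely that pseudo-ness of $P$ is what makes the determinacy argument apply to $\gamma$ (whose domain is the composite of a strict probe with $P$).
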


\begin{proof}
Though part of the method in this proof could be considered standard for the theory of internal fibrations, there is a subtle point that doesn't allow $P$ to be taken lax, so we prefer to include it in full. 
For item 1, let $X$ be an object of $\bb{X}$. 
Recall that for $\alpha_{X}$ to be $P_0$-Cartesian, it has to satisfy:

\begin{equation} \label{eq:P0cartesian}
\begin{tabular}{lc}
$\vcenter{\xymatrix{
E'' \ar@{.>}[d]_{\exists ! \ \zeta} \ar[dr]^{\xi} \\
E'X \ar[r]_{\alpha_{X}} & EX  }}$     
&   
$\qquad \mbox{ in } \ct{E}_0$ \\
$\vcenter{\xymatrix{
P_0 E'' \ar[d]_{\gamma} \ar[dr]^{P\xi} 
\\
P_0 E'X \ar[r]_{P\alpha_{X}} & P_0 EX 
}}$
 & 
$\qquad
\mbox{ in } \ct{B}_0$
\end{tabular}
\end{equation}
(It is convenient here to refer to the value of $P$ at an object $E$ of $\bb{E}$ as $P_0 E$ instead of $PE$).

Let $E''$, $\xi$, and $\gamma$ as above. We take $\mathbb{Y}$ to be the terminal double category $\{\bullet\}$, and define, abusing the notation, 

\begin{itemize}
    \item $X: \mathbb{Y} \to \mathbb{X}$ and  
$E'': \mathbb{Y} \to \mathbb{E}$ as the unique strict double functors mapping $\bullet$ to $X$, resp. $E''$,

 and, using Remark \ref{rem:Fstrict_then_alpha_determined},

\item $\xi: E'' \Mr{} EX$ and $\gamma: P_0 E'' \Mr{} P_0 E'X$ as the unique vertical transformations such that $\xi_{\bullet} = \xi$ and $\gamma_{\bullet} = \gamma$.
\end{itemize}

Again by Remark \ref{rem:Fstrict_then_alpha_determined}, $\zeta$ corresponds bijectively to a vertical transformation $\zeta: E'' \Mr{} E' X$ such that $\zeta_{\bullet} = \zeta$, and the conditions in \eqref{eq:P0cartesian} are immediately seen to match those in \eqref{eq:alpha_satisfies_1} and \eqref{eq:alpha_satisfies_2}. This shows that $\alpha_{X}$ is $P_0$-Cartesian.

The proof of item 2 is completely analogous and is obtained from the proof of item 1 by ``replacing $0$ by $1$", that is replacing objects by horizontal arrows and vertical arrows by double cells. The terminal double category is replaced by $\mathbb{Y}=\{\bullet_1 \hto \bullet_2\}$, so that horizontal arrows $M$ of a double category induce strict double functors $M$ with domain $\mathbb{Y}$, and 
by Remark \ref{rem:Fstrict_then_alpha_determined} 
double cells $M \mr{} N$ correspond to 
vertical transformations $M \Mr{} N$ between the induced double functors.
\end{proof}

\begin{remark}
For any pseudo double functor $P$ that is a fibration in $\Dblcatvl$  
there is a {\em natural} way to show item 1 in Theorem \ref{th:charact_internal_fibrations} using Lemma \ref{lem:alpha_X_and_alpha_M_are_Cartesian}: 

\begin{itemize}
\item Take $\bb{X}$ to be the terminal double category for showing that $P_0$ is a fibration, as shown below; 
$$
\mbox{Show} \qquad
\vcenter{\xymatrix{
B^* \ar@{.>}[r]^{u^*E} & E \\
B \ar[r]^u & P E
}} 
\qquad \mbox{ as follows: }  \qquad
\vcenter{\xymatrix{\{\bullet \} \ar[r]^{E} \ar[dr]_{B}^{\Uparrow \, u}  & \mathbb{E} \ar[d]^{P} \\ & \mathbb{B}} }
\qquad \stackrel{Rem. \ref{rem:intfibration_meaning}}{\leadsto} \qquad
\vcenter{\xymatrix{\{\bullet \} \ar@<1.5ex>[r]^{E} \ar@{}[r]|{u^*E \ \Uparrow} \ar@<-1.5ex>[r]_{B^*} & \bb{E} }}
$$
\item Take $\bb{X} = \{\bullet_1 \hto \bullet_2\}$ for showing that $P_1$ is a fibration, which we omit to show explicitly.
\end{itemize}

In this way, $P_0$ and $P_1$ can be seen to be fibrations, but when doing so each Cartesian lifting is chosen separately and so the cleavages will not  necessarilly be preserved by $\src$ and $\tgt$ as required in item 2. That is why this isn't helpful for proving Theorem \ref{th:charact_internal_fibrations}, and instead the following trick is used. 

We refer to the data $(B,E,u)$ as ``lifting data''. The {trick} for showing item 2, and thus the $\Rightarrow$ implication in the theorem, is to construct instead a larger double category $\bb{X}$, which contains simultaneously the information of all the possible {lifting data} (for $P_0$ as above and also for $P_1$). We do this now:
\end{remark}

\begin{proof}[Proof of Theorem \ref{th:charact_internal_fibrations}]
$(\Rightarrow)$ Let $P: \mathbb{E} \to \mathbb{B}$ be a pseudo double functor.  We construct the following double category $\bb{X}$:

\begin{itemize}
    \item The objects of $\bb{X}$ are the triples $(B,E,u)$, where $u\colon B \to PE$ is a vertical arrow of $\bb{B}$. Vertical arrows of $\bb{X}$ are only identities.

    \item The horizontal arrows are the  triples $(M,N,\gamma)\colon (B,E,u) \hto (C,D,v)$, where $M\colon B \hto C$ is a horizontal arrow of $\bb{B}$, $N\colon E \hto D$ is a horizontal arrow of $\bb{E}$, and $\gamma$ is a double cell of $\bb{B}$,
    $$\xymatrix{\ar@{}[dr]|{\gamma } B \ar[d]_{u}\ar[r]^{M} |@-{|} & C \ar[d]^{v} \\
            PE \ar[r]_{PN}|-@{|} & PD }$$

    \item The double cells of $\bb{X}$ are given by pairs of double cells of $\bb{B}$ and $\bb{E}$, satisfying an equation  {\bf ($*$)} as follows: 
    $$\vcenter{\xymatrix{
    (B,E,u) \ar@{=}[d] \ar[r]^{(M,N,\gamma)}|@-{|} 
    \ar@{}[rd]|{(\eta,\theta)}
    &  (C,D,v) \ar@{=}[d]  \\
    (B,E,u) \ar@<-.5ex>[r]_{(M',N',\gamma')}|@-{|}  &  (C,D,v) 
    }}
    \quad
    \vcenter{\xymatrix{
    B \ar@{=}[d] \ar[r]^{M}|@-{|} 
    \ar@{}[rd]|{\eta}
    &  C \ar@{=}[d]  \\
    B \ar@<-.5ex>[r]_{M'}|@-{|}  &  C
    }}
    \quad
    \vcenter{\xymatrix{E \ar@{=}[d] \ar[r]^{N}|@-{|} 
    \ar@{}[rd]|{\theta}
    &  D \ar@{=}[d]  \\
    E \ar@<-.5ex>[r]_{N'}|@-{|}  &  D
    }}
    \quad
    \vcenter{\xymatrix{
    B \ar@{=}[d] \ar[r]^{M}|@-{|} 
    \ar@{}[rd]|{\eta}
    &  C \ar@{=}[d]  \\
    \ar@{}[dr]|{\gamma'} B
    \ar[d]_{u}\ar[r]^{M'} |@-{|} & C \ar[d]^{v} \\
                PE \ar[r]_{PN'}|-@{|} & PD
    }}
    \stackrel{{\bf (*)}}{=}
    \vcenter{\xymatrix{\ar@{}[dr]|{\gamma } B \ar[d]_{u}\ar[r]^{M} |@-{|} & C \ar[d]^{v} \\
    PE \ar@{=}[d] \ar[r]^{PN}|@-{|} 
    \ar@{}[rd]|{P\theta}
    &  PD \ar@{=}[d]  \\
    PE \ar@<-.5ex>[r]_{PN'}|@-{|}  &  PD            
    }}$$

    \item The remaining double-category structure, that is the horizontal identities, the horizontal composition of arrows and double cells, the unitors and associators, are given in a natural way by those of $\bb{B}$, $\bb{E}$, {and $P$}. More explicitly:
    $$
    y^{\bb{X}}(B,E,u) := (y^{\bb{B}}(B),y^{\bb{E}}(E),\widetilde{y^{\bb{B}}(u)}), \quad
    (M,N,\gamma) \otimes_{\bb{X}} (\ol{M},\ol{N},\ol{\gamma}) := 
    (M \otimes_{\bb{B}} \ol{M},
    N \otimes_{\bb{E}} \ol{N},
    \widetilde{ \gamma \otimes_{\bb{B}} \ol{\gamma} }
    ),
    $$
    where the notation $\widetilde{ \quad }$ means {\em compose with a structural double cell of $P$}, explicitly as follows:
    $$
    \widetilde{y^{\bb{B}}(u)}
    :=
    \vcenter{\xymatrix{ \ar@{}[dr]|{y^{\bb{B}}(u) }
    B \ar[d]_{u} \ar[r]^{y^{\bb{B}}(B)}|@-{|} &  B \ar[d]^{u} \\
    \ar@{}[dr]|{\iota^P_E}
    PE \ar@{=}[d] \ar[r]^{y^{\bb{B}}(PE)}|@-{|} & PE \ar@{=}[d] \\
    PE \ar[r]_{Py^{\bb{E}}(E)}|@-{|} & PE
    }},
    \qquad
    \widetilde{ \gamma \otimes_{\bb{B}} \ol{\gamma} }
    :=
    \vcenter{\xymatrix{ 
      \ar@{}[dr]|{\gamma } B \ar[d]_{u} \ar[r]^{M} |@-{|} & C \ar[d]^{v} \ar@{}[dr]|{\ol{\gamma}} \ar[r]^{\ol{M}} |@-{|} & A \ar[d]^{w} 
      \\
        PE \ar@{=}[d] \ar@{}[drr]|{\phi^P_{N,\ol{N}}} \ar[r]_{PN}|-@{|} & PD \ar[r]_{P\ol{N}}|-@{|} & PF \ar@{=}[d] \\
        PE \ar[rr]_{P(N \otimes_{\bb{E}} \ol{N})}|@-{|} && PF
    }}
    $$
    
    Horizontal composition of double cells is defined as 
    $(\eta,\theta) \otimes_{\bb{X}} (\ol{\eta},\ol{\theta}) = 
    (\eta \otimes_{\bb{B}} \ol{\eta},\theta \otimes_{\bb{E}} \ol{\theta})$. We verify the {\bf (*)} equation for the composition by using first {\bf (*)} for $(\eta,\theta)$ and $(\ol{\eta},\ol{\theta})$, and then naturality of $\phi^P$, as follows:
    $$
    \vcenter{\xymatrix{ 
      \ar@{}[dr]|{\eta } B \ar@{=}[d] \ar[r]^{M} |@-{|} & C \ar@{=}[d] \ar@{}[dr]|{\ol{\eta}} \ar[r]^{\ol{M}} |@-{|} & A \ar@{=}[d] \\
      \ar@{}[dr]|{\gamma' } B \ar[d]_{u} \ar[r]^{M'} |@-{|} & C \ar[d]^{v} \ar@{}[dr]|{\ol{\gamma}'} \ar[r]^{\ol{M}'} |@-{|} & A \ar[d]^{w} 
      \\
        PE \ar@{=}[d] \ar@{}[drr]|{\phi^P_{N',\ol{N}'}} \ar[r]_{PN'}|-@{|} & PD \ar[r]_{P\ol{N}'}|-@{|} & PF \ar@{=}[d] \\
        PE \ar[rr]_{P(N' \otimes_{\bb{E}} \ol{N}')}|@-{|} && PF
    }}
    =
    \vcenter{\xymatrix{ 
    \ar@{}[dr]|{\gamma } B \ar[d]_{u}\ar[r]^{M} |@-{|} & C
    \ar@{}[dr]|{\ol{\gamma} }
    \ar[d]^{v} \ar[r]^{\ol{M}} |@-{|} & A \ar[d]^{w}
    \\
    PE \ar@{=}[d] \ar[r]^{PN}|@-{|} 
    \ar@{}[rd]|{P\theta}
    &  PD \ar@{=}[d]  
    \ar[r]^{P\ol{N}}|@-{|} 
    \ar@{}[rd]|{P\ol{\theta}}
    &  PF \ar@{=}[d]  
    \\
    PE \ar@{=}[d] \ar@{}[drr]|{\phi^P_{N',\ol{N}'}} \ar[r]_{PN'}|-@{|} & PD \ar[r]_{P\ol{N}'}|-@{|} & PF \ar@{=}[d] \\
        PE \ar[rr]_{P(N' \otimes_{\bb{E}} \ol{N}')}|@-{|} && PF
    }}
    =
    \vcenter{\xymatrix{ 
    \ar@{}[dr]|{\gamma } B \ar[d]_{u}\ar[r]^{M} |@-{|} & C
    \ar@{}[dr]|{\ol{\gamma} }
    \ar[d]^{v} \ar[r]^{\ol{M}} |@-{|} & A \ar[d]^{w}
    \\
    PE \ar@{=}[d] \ar[r]^{PN}|@-{|} 
    \ar@{}[rrd]|{\phi^P_{N,\ol{N}}}
    &  PD \ar[r]^{P\ol{N}}|@-{|} 
    & PF \ar@{=}[d]  
    \\
    PE \ar@{=}[d] \ar@{}[drr]|{P(\theta \otimes_{\bb{E}} \ol{\theta}) }
    \ar[rr]_{P(N \otimes_{\bb{E}} \ol{N})}|@-{|} && PF  \ar@{=}[d]
    \\
        PE \ar[rr]_{P(N' \otimes_{\bb{E}} \ol{N}')}|@-{|} && PF
    }}
    $$
    
    Finally, the unitors and associators of $\bb{X}$ are given by those of $\bb{B}$ and $\bb{E}$. More explicitly, the left unitor is given, for each horizontal arrow $(M,N,\gamma)$ as above, by a double cell
    $$
    \vcenter{\xymatrix{
    (B,E,u) \ar@{=}[d] \ar[r]|@-{|}
    \ar@<.75ex>@{}[r]^{(y^{\bb{B}}(B),y^{\bb{E}}(E),\widetilde{y^{\bb{B}}(u)})} &
    (B,E,u) \ar[r]^{(M,N,\gamma)}|@-{|} 
    \ar@{}[d]|{(\ell^{\bb{B}}_{M},\ell^{\bb{E}}_{N})}
    &  (C,D,v) \ar@{=}[d]  \\
    (B,E,u) \ar@<-.5ex>[rr]_{(M,N,\gamma)}|@-{|}  && (C,D,v) 
    }}
    =
    \vcenter{\xymatrix{
    (B,E,u) \ar@{=}[d] \ar[rrr]|@-{|} 
    \ar@<.85ex>@{}[rrr]^-{(y^{\bb{B}}(B) \otimes_{\bb{B}}  M, y^{\bb{E}}(E) \otimes_{\bb{E}} N, \widetilde{\widetilde{y^{\bb{B}}(u)} \otimes_{\bb{B}} \gamma}
    )}
    \ar@{}[rrrd]|{(\ell^{\bb{B}}_{M},\ell^{\bb{E}}_{N})}
    &&&  (C,D,v) \ar@{=}[d]  \\
    (B,E,u) \ar@<-.5ex>[rrr]_{(M,N,\gamma)}|@-{|}  &&&  (C,D,v) 
    }}
    $$
    We verify the {\bf (*)} equation for this double cell below. Note that the double cell $\widetilde{\widetilde{y^{\bb{B}}(u)} \otimes_{\bb{B}} \gamma}$ corresponds by definition to the composition on the right if we suppress $P(\ell^{\bb{E}}_{N})$.
    $$
    \vcenter{\xymatrix{
    B \ar@{=}[d] \ar[r]^{y^{\bb{B}}(B)}|@-{|} 
    \ar@{}[rrd]|{\ell^{\bb{B}}_{M}}
    & B \ar[r]^{M}|@-{|} 
    &  C \ar@{=}[d]  \\
    \ar@{}[drr]|{\gamma} B
    \ar[d]_{u}\ar[rr]^{M} |@-{|} && C \ar[d]^{v} \\
                PE \ar[rr]_{PN}|-@{|} && PD
    }}
    =
    \vcenter{\xymatrix{
    \ar@{}[dr]|{y^{\bb{B}}(u) }
    B \ar[d]_{u} \ar[r]^{y^{\bb{B}}(B)}|@-{|} &  B \ar[d]^{u} \ar@{}[dr]|{\gamma }\ar[r]^{M} |@-{|} & C \ar[d]^{v} \\
    PE \ar@{}[rrd]|{\ell^{\bb{B}}_{M}}
    \ar@{=}[d] \ar[r]_{y^{\bb{B}}(PE)}|@-{|} & PE 
    \ar[r]_{PN}|-@{|} & PD  \ar@{=}[d]  \\
    PE \ar[rr]_{PN}|-@{|} && PD
    }}
    =
    \vcenter{\xymatrix{
    \ar@{}[dr]|{y^{\bb{B}}(u) }
    B \ar[d]_{u} \ar[r]^{y^{\bb{B}}(B)}|@-{|} &  B \ar[d]^{u} \ar@{}[dr]|{\gamma }\ar[r]^{M} |@-{|} & C \ar[d]^{v} \\
    \ar@{}[dr]|{\iota^P_E}
    PE \ar@{=}[d] \ar[r]^{y^{\bb{B}}(PE)}|@-{|} & PE \ar@{=}[d] \ar[r]_{PN}|-@{|} & PD  \ar@{=}[d]  \\
    PE \ar@{}[rrd]|{\phi^P_{y^{\bb{E}}(E),N}}
    \ar@{=}[d] \ar[r]_{Py^{\bb{E}}(E)}|@-{|} & PE  \ar[r]_{PN}|-@{|} & PD  \ar@{=}[d]  \\
    PE \ar@{}[rrd]|{P (\ell^{\bb{E}}_{N}) }
    \ar@{=}[d] \ar[rr]_{P(y^{\bb{E}}_E \otimes_{\bb{E}} N)} && PD \ar@{=}[d] \\
    PE \ar[rr]_{PN}|-@{|} && PD
    }}
    $$
    The first equation is the naturality of $\ell^{\bb{B}}$ (at $\gamma$). The second one follows from the unit coherence for the  pseudofunctor $P$ (item 2 in Definition \ref{def:laxfunctor}). The right unitor is dual and omitted. For the associator, given composable horizontal arrows
    $$(B,E,u) \mrh{(M,N,\gamma)}  (C,D,v)
    \mrh{(\ol{M},\ol{N},\ol{\gamma})}  (A,F,w)
    \mrh{(\dol{M},\dol{N},\dol{\gamma})}  (Z,G,x)
    $$
    we have the double cell
    $$
    \vcenter{\xymatrix{
    (B,E,u) \ar@{=}[d] \ar[rr]^{(M \otimes_{\bb{B}} \ol{M},
    N \otimes_{\bb{E}} \ol{N},
    \widetilde{ \gamma \otimes_{\bb{B}} \ol{\gamma} }
    )}|@-{|} 
    \ar@{}[drrr]|{({\mathfrak a}^{\bb{B}}_{M,\ol{M},\dol{M}},{\mathfrak a}^{\bb{E}}_{N,\ol{N},\dol{N}})}
    &&   (A,F,w)
    \ar[r]^{(\dol{M},\dol{N},\dol{\gamma})}|@-{|} &  (Z,G,x) \ar@{=}[d]  \\
    (B,E,u) \ar[r]_{(M,N,\gamma)}|@-{|} 
    &  (C,D,v) \ar[rr]_{(\ol{M} \otimes_{\bb{B}} \dol{M},
    \ol{N} \otimes_{\bb{E}} \dol{N},
    \widetilde{ \ol{\gamma} \otimes_{\bb{B}} \dol{\gamma} }
    )}|@-{|} &&
     (Z,G,x)
    }}
    $$
    The {\bf (*)} equation follows, similarly to the case of the unitor, by naturality of ${\mathfrak a}^{\bb{B}}$ and coherence for $P$ (item 1 in Definition \ref{def:laxfunctor}). This finishes the definition of the (pseudo) double category $\bb{X}$.
\end{itemize}

The following formulas define two strict double functors $E\colon \bb{X} \mr{} \bb{E}$, $B\colon \bb{X} \mr{} \bb{B}$ and a vertical transformation $\beta\colon B \Mr{} PE$ as in \eqref{eq:betainfibrationindblcatv}:

$$
E(B,E,u) = E, \qquad 
B(B,E,u) = B, \qquad 
\beta_{(B,E,u)} = u
$$

$$
E(M,N,\gamma) = N, \qquad 
B(M,N,\gamma) = M, \qquad 
\beta_{(M,N,\gamma)} = \gamma
$$

Note that the axioms in \eqref{eq:vert_nat_satisfies} for $\beta$ being a vertical transformation are (for each $(B,E,u)$, and for each $(B,E,u) \mrh{(M,N,\gamma)}  (C,D,v)
\mrh{(\ol{M},\ol{N},\ol{\gamma})}  (A,F,w)$)

\begin{equation}
\vcenter{\xymatrix{
y^{\bb{B}}(B) \ar[r]^{y^{\bb{B}}(u)} 
\ar[d]_{id} 
& y^{\bb{B}}(PE) \ar[d]^{\iota^P_E} 
\\
y^{\bb{B}}(B) \ar[r]_{\widetilde{y^{\bb{B}}(u)}} & Py^{\bb{E}}(E) \ 
}}
\quad
{\vcenter{\xymatrix@C=4pc{
M \otimes_{\bb{B}} \ol{M} \ar[r]^-{\gamma \otimes_{\bb{B}} \ol{\gamma}} \ar[d]_{id} &  PN \otimes_{bb{B}} P\ol{N}
\ar[d]^{\phi^P_{N,\ol{N}}}
\\
M \otimes_{\bb{B}} \ol{M} \ar[r]_-{\widetilde{ \gamma \otimes_{\bb{B}} \ol{\gamma} }} & P(N \otimes_{\bb{E}} \ol{N})
}}}
\mbox{ commute in } \ct{B}_1
\end{equation}

This follows immediately from the definitions of $\widetilde{y^{\bb{B}}(u)}$ and $\widetilde{ \gamma \otimes_{\bb{B}} \ol{\gamma}}$ above.
\smallskip

If $P$ is a fibration  in $\Dblcatvl$, then there exist a lax double functor $E': \mathbb{X} \to \mathbb{E}$ and a vertical transformation $\alpha: E' \Rightarrow E$.
In $\mathbf{(L)}$ below we use part of these data to show items 1 and 2 in the statement of the theorem.
Note that if $P$ is a  fibration 
 in $\Dblcatv$, resp. $\Dblcatvs$, then in addition $E'$ can be taken to be a pseudo, resp. strict double functor. In $\mathbf{(P)}$, resp. $\mathbf{(S)}$ below, we use the structural cells of $E'$ to show that in this case item 3, resp. 3$s$  also holds.

\smallskip
\noindent $\mathbf{(L)}$ We use the values of $E'$ and $\alpha$ on objects and arrows of $\bb{X}$ (on the left in the equations below) to define liftings of arrows $u\colon B \mr{} PE$ and $\gamma\colon M \mr{} PN$ as follows:

\begin{equation} \label{eq:XX1}
E'(B,E,u) =: B^* , \qquad \alpha_{(B,E,u)} =: ( u^*E: B^* \to E)
\end{equation}

\begin{equation} \label{eq:XX2}
E'(M,N,\gamma) =: M^*, \qquad \alpha_{(M,N,\gamma)} =: (\gamma^*N: M^* \to N)
\end{equation}

The definitions in \eqref{eq:XX1}, \eqref{eq:XX2} can also be written as the diagram

$$
\vcenter{\xymatrix@C=3pc{
         \ar@{}[dr]|{\alpha_{(M,N,\gamma)}} E'(B,E,u) \ar[d]_{\alpha_{(B,E,u)}} \ar[r]^{E'(M,N,\gamma)} |@-{|} & E'(C,D,v) \ar[d]^{\alpha_{(C,D,v)}} \\
            E(B,E,u) \ar[r]_{E(M,N,\gamma)} |@-{|} & E(C,D,v)
}}
\qquad \qquad =: \qquad \qquad
\vcenter{\xymatrix{
         \ar@{}[dr]|{\gamma^*N } B^* \ar[d]_{u^*E}\ar[r]^{M^*} |@-{|} & C^* \ar[d]^{v^*D} \\
            E \ar[r]_{N}|-@{|} & D 
}}
$$

Noting that (by item 1 in Lemma \ref{lem:alpha_X_and_alpha_M_are_Cartesian}) $u^*E$ is a $P_0$-cartesian arrow, the choice $(B,E,u) \leadsto (B^*,u^*E)$ constitutes a cleavage for $P_0$. Analogously, by item 2 in  Lemma \ref{lem:alpha_X_and_alpha_M_are_Cartesian}, the choice $(M,N,\gamma) \leadsto (M^*,\gamma^*N)$ constitutes a cleavage for $P_1$, and by construction $\src_\bb{E},\tgt_\bb{E}: \ct{E}_1 \to \ct{E}_0$ are cleavage preserving.

\smallskip
\noindent $\mathbf{(P)}$ In this case, we have the structural double cells in $\bb{E}$ of the pseudo double functor $E'$, that we write as arrows of $\ct{E}_1$: $\iota^{E'}_-$, one for each ${(B,E,u)}$ in $\bb{X}$, {(resp. 
$\phi^{E'}_{-,-}$ one for each $\stackrel{(M,N,\gamma)}{\hto} \stackrel{(\ol{M},\ol{N},\ol{\gamma})}{\hto}$)}, where the subindices are omitted for readability, 
and the vertical natural transformation axioms in \eqref{eq:vert_nat_satisfies} are satisfied:
$$
\vcenter{\xymatrix@C=4pc{
y({B^*}) \ar[r]^{y({u^*E})} 
\ar[d]_{\iota^{E'}_{(B,E,u)}} 
& y(E) \ar[d]^{id} \\
E'(y{(B,E,u)}) \ar[r]_-{\alpha_{y{(B,E,u)}}} & y(E)
}}
\qquad
{\vcenter{\xymatrix@C=4pc{
M^* \otimes_{\bb{E}} \ol{M}^* \ar[r]^-{\gamma^* N  \otimes_{\bb{E}} \ol{\gamma}^* \ol{N} } \ar[d]_{\phi^{E'}_{(M,N,\gamma),(\ol{M},\ol{N},\ol{\gamma})}} &  N \otimes_{\bb{E}} \ol{N}
\ar[d]^{id}
\\
E'((M,N,\gamma) \otimes_{\bb{X}} (\ol{M},\ol{N},\ol{\gamma})) \ar[r]
\ar@{}@<-.5ex>[r]_-{\alpha_{(M,N,\gamma) \otimes_{\bb{X}} (\ol{M},\ol{N},\ol{\gamma})}} & N \otimes_{\bb{E}} \ol{N}
}}}
\mbox{ commute in } \ct{E}_1
$$

This shows that $\ct{E}_0 \mr{y} \ct{E}_1$ {(resp. $\ct{E}_1 \times_{\ct{E}_0} \ct{E}_1 \mr{\otimes} \ct{E}_1$)} is Cartesian-morphism preserving: since it suffices to check that the chosen Cartesian arrows are preserved, let $(B,E,u)$ as above, then the diagram on the left is showing that $y({u^* E})$ is Cartesian, since it is the composition of an isomorphism with a (chosen) Cartesian arrow. The case of $\otimes$ is similar, using the diagram on the right and recalling that a cleavage can be chosen pointwise (see Proposition  \ref{prop:pullbackspointwise}).

\smallskip
\noindent $\mathbf{(S)}$
Note that if $E'$ is strict, then the same diagrams show that $y$ and $\otimes$ preserve the cleavage.

\medskip 
\noindent
$\Leftarrow)$ We consider the case $\mathbf{(L)}$ in which all double functors  are taken lax, and we show, in the paragraphs denoted  $\mathbf{(P)}$ and $\mathbf{(S)}$, how the same computations show those cases too. 
Let $P: \mathbb{E} \to \mathbb{B}$ satisfy items 1 and 2 in the theorem, and choose cleavages 
$$\{u^*E: B^* \to E \}_{ B \mr{u} PE } \mbox{ of } P_0, \qquad    \{\gamma^*N: M^* \to N \}_{ M \mr{\gamma} PN } \mbox{ of } P_1 $$
such that $\src_\bb{E}$ and $\tgt_\bb{E}$ are cleavage preserving. Let $\beta: B \Mr{} PE$ as in \eqref{eq:betainfibrationindblcatv}, we need to construct $E'$ and $\alpha$ satisfying the conditions below \eqref{eq:betainfibrationindblcatv}. We use for this the chosen cleavages, and define (for each object $X$, resp. each horizontal arrow $M: X \hto Y$ of $\bb{X}$)
$E'(X) := B(X)^*$,
$\alpha_X := \beta_X^* E(X)$,
$E'(M) := B(M)^*$,
$\alpha_M := \beta_M^* E(M)$:

\[\vcenter{\xymatrix{B(X) \ar[d]_{\beta_X} \\ PE(X)}} 
\quad \leadsto \quad
\vcenter{\xymatrix{E'(X) \ar[d]_{\alpha_X} \\ E(X)}},
\qquad\qquad
\vcenter{\xymatrix{
            \ar@{}[dr]|{\beta_M} B(X) \ar[d]_{\beta_X}\ar[r]^{B(M)} |@-{|} & B(Y) \ar[d]^{\beta_Y} \\
            PE(X) \ar[r]_{PE(M)}|-@{|} & PE(Y)
        }}
\quad \leadsto \quad
\vcenter{
\xymatrix{
            \ar@{}[dr]|{\alpha_M} E'(X) \ar[d]_{\alpha_X}\ar[r]^{E'(M)} |@-{|} & E'(Y) \ar[d]^{\alpha_Y} \\
            E(X) \ar[r]_{E(M)}|-@{|} & E(Y)
        }
}
\]

The values of $E'$ at the arrows $X \mr{} X'$ and the double cells $M \mr{} M'$ are uniquely defined, using the Cartesian properties of the arrows in the cleavage, in such a way that the families $\alpha_X$ and $\alpha_M$ are natural.

The lax double functor structural cells for $E'$ (such that \eqref{eq:vert_nat_satisfies} holds, and thus making $\alpha$ a vertical transformation), can be obtained using the facts that $\alpha_{y(X)}$ (resp. ${\alpha_{M \otimes N}}$) is $P_1$-Cartesian (it is by construction a chosen lift), 
and that \eqref{eq:vert_nat_satisfies} holds in $\ct{B}_1$:

$$
\vcenter{\xymatrix{
y{E'(X)} \ar[r]^{y({\alpha_X})} 
\ar@{.>}[d]_{\iota^{E'}_X} 
& y{PE(X)} \ar[d]^{\iota^E_X} \\
E'y(X) \ar[r]_{\alpha_{y(X)}} & Ey(X) \
}} 
\quad
{\vcenter{\xymatrix@C=4pc{
E'(M) \otimes E'(N) \ar[r]^-{\alpha_M \otimes \alpha_N} \ar@{.>}[d]_{\phi^{E'}_{M,N}} &  E(M) \otimes E(N)
\ar[d]^{\phi^E_{M,N}} \\
E'(M \otimes N) \ar[r]_-{\alpha_{M \otimes N}} & E(M \otimes N) 
}}} 
\quad
\mbox{ in } \ct{E}_1
$$

$$
\vcenter{\xymatrix{
y{B(X)} \ar[r]^{y({\beta_X})} 
\ar[d]_{\iota^B_X} 
& y_{PE(X)} \ar[d]^{P(\iota^E_X)} 
\\
By(X) \ar[r]_{\beta_{y(X)}} & PEy(X) \ 
}}
\quad
{\vcenter{\xymatrix@C=4pc{
B(M) \otimes B(N) \ar[r]^-{\beta_M \otimes \beta_N} \ar[d]_{\phi^B_{M,N}} &  PE(M) \otimes PE(N)
\ar[d]^{P(\phi^E_{M,N})}
\\
B(M \otimes N) \ar[r]_-{\beta_{M \otimes N}} & PE(M \otimes N)
}}}
\mbox{ in } \ct{B}_1
$$

Coherence for $\iota$ and $\otimes$ is left to the reader.

\smallskip
\noindent $\mathbf{(P)}$ We assume item 3 in the statement of the theorem, and we show that if $B$ and $E$ are pseudo double functors, then so is $E'$. Indeed, since in this case $\iota^B_X$ and $\iota^E_X$ are invertible and $y({\alpha_X})$ is Cartesian, then the diagrams on the left above imply as usual that $\iota^{E'}_X$ is invertible {(and similarly so is $\phi^{E'}_{M,N}$)}. 

\smallskip
\noindent $\mathbf{(S)}$ If now we assume item 3$s$ in the statement of the theorem and $B$ and $E$ to be strict, then the composition $\iota^E_X \circ y({\alpha_X})$ equals $\alpha_{y(X)}$, and the diagrams on the left above imply as usual that $\iota^{E'}_X$ is an identity {(and similarly so is $\phi^{E'}_{M,N}$)}. 

\smallskip

We can then finish the proof by showing that $E'$ and $\alpha$ satisfy the conditions below \eqref{eq:betainfibrationindblcatv}. Let $X$, $E''$, $\xi$, and $\gamma$ be lax double functors and vertical transformations as in \eqref{eq:alpha_satisfies_1}. The components $\zeta_Y$, resp. $\zeta_N$ (indexed by objects, resp. horizontal arrows of $\bb{Y}$) of the unique vertical transformation $\zeta$ such that the equations in \eqref{eq:alpha_satisfies_2} hold are constructed using the fact that each $\alpha_X$ (resp. $\alpha_M$) is Cartesian (for the fibration $P_0$, resp. $P_1$):

$$
\vcenter{\xymatrix{
E''(Y) \ar@{.>}[d]_{\zeta_Y} \ar[dr]^{\xi_Y} &&& E''(N) \ar@{.>}[d]_{\zeta_N} \ar[dr]^{\xi_N} \\
E'X(Y) \ar[r]_{\alpha_{X(Y)}} & EX(Y) &&  E'X(N) \ar[r]_{\alpha_{X(N)}} & EX(N) \\
PE''(Y) \ar[d]_{\gamma_Y} \ar[dr]^{P\xi_Y} &&& 
PE''(N) \ar[d]_{\gamma_N} \ar[dr]^{P\xi_N}
\\
PE'X(Y) \ar[r]_{P\alpha_{X(Y)}} & PEX(Y) && PE'X(N) \ar[r]_{P\alpha_{X(N)}} & PEX(N) \\
}}
$$

The two equations expressing the facts that $\zeta_Y$ and $\zeta_N$ are natural follow by unicity of the liftings in each of the two diagrams above, using the naturalities of the other vertical transformations involved in these diagrams. It only remains to check the vertical transformation axioms for $\zeta$, as in \eqref{eq:vert_nat_satisfies}:
\begin{equation} \label{eq:XXX}
\vcenter{\xymatrix{
y{E''(Y)} \ar[r]^{y({\zeta_Y})} 
\ar[d]_{\iota^{E''}_Y} 
& y{E'X(Y)} \ar[d]^{\iota^{E'X}_Y} 
\\
E''y(Y) \ar[r]_{\zeta_{y(Y)}} & {E'Xy(Y)} \ 
}}
\quad
{\vcenter{\xymatrix@C=4pc{
E''(N) \otimes E''(N') \ar[r]^-{\zeta_N \otimes \zeta_{N'}} \ar[d]_{\phi_{M,N}^{E''}} &  E'X(N) \otimes E'X(N')
\ar[d]^{\phi^{E'X}_{M,N}}
\\
E''(N \otimes N') \ar[r]_-{\zeta_{N \otimes N'}} & E'X(N \otimes N')
}}}
\mbox{ commutes in } \ct{E}_1
\end{equation}

As usual, to check that two arrows are equal in the top category of a fibration $P$, it suffices to check that they are equal both {\bf (i)} after applying $P$ and {\bf (ii)} after composing with a Cartesian arrow. 
Noting that the family $\zeta$ is defined to be over $\gamma$, applying $P$ to the diagrams in \eqref{eq:XXX} leads to diagrams that, 
since 
$P$ is pseudo, are commutative if and only if  \eqref{eq:vert_nat_satisfies} holds for $\gamma$. 
This shows {\bf (i)}, and the  following commutative diagrams show {\bf (ii)}:
$$
\vcenter{\xymatrix{y{E''(Y)} \ar[rd]^{\iota^{E''}_Y} \ar[d]_{y({\zeta_Y})}   &&& E''(N) \otimes E''(N') \ar[d]_-{\zeta_N \otimes \zeta_{N'}}  \ar[dr]^{\phi_{M,N}^{E''}} \\
y{E'X(Y)} \ar[d]_{\iota^{E'X}_Y}  &   
E''y(Y) \ar[dl]_{\zeta_{y(Y)}}  \ar[d]^{\xi_{y(Y)}}
&& E'X(N) \otimes E'X(N') \ar[d]_{\phi^{E'X}_{M,N}}  & E''(N \otimes N') \ar[d]^{\xi_{N\otimes N'}} \ar[dl]_{\zeta_{N \otimes N'}} \\
E'Xy(Y) \ar[r]_{\alpha_{Xy(Y)}} & EXy(Y) &&  E''(N \otimes N') \ar[r]_{\alpha_{X(N\otimes N')}} & E'X(N \otimes N')
}}
$$
\end{proof}

We have finished the proof of Theorem \ref{th:charact_internal_fibrations}. As mentioned in Corollary \ref{coro:double_fibration_iff_internal}, if follows that a strict double functor $P$ defines a double fibration, that is a pseudo category in the 2-category $\fib$ of fibrations, if and only if it is an internal fibration in the 2-category $\Dblcatv$ of double categories and pseudo double functors.

\begin{remark} \label{rem:Pseudo_is_needed}
Note that for Remark \ref{rem:Fstrict_then_alpha_determined} to be applied to $\gamma$ in the proof of Lemma \ref{lem:alpha_X_and_alpha_M_are_Cartesian} above, we use the condition that $P$ is a pseudo double functor. This is one of the reasons why we cannot consider $P$ to be lax in Theorem \ref{th:charact_internal_fibrations}: Lemma \ref{lem:alpha_X_and_alpha_M_are_Cartesian} is applied to prove the $\Rightarrow$ implication in the theorem. Note that the condition that $P$ is a pseudo double functor is also used for proving the $\Leftarrow$ implication (more precisely in the final paragraph of the proof). 
\end{remark}

\begin{appendix}

\section{Pullbacks in 2-categories of Fibrations} \label{sec:proof_in_Appendix}

In this Section we will prove Proposition \ref{prop:pullbackspointwise}, that we recall here for convenience. 
Though the first item of this proposition is straightforward and could be considered {\em folklore}, we include a sketch of its proof because we believe it can help to understand the proof of the second item, which follows a similar {pattern}.

\begin{prop} \label{prop:pullbackspointwise_in_App}
\begin{enumerate}
    \item 2-pullbacks are computed pointwise in $\Arrs(\K)$ and preserved by the inclusion 2-functors $\Arrs(\K) \mr{} \Arrp(\K)$ and $\Arrs(\K) \mr{} \Arrl(\K)$.

    \item 2-pullbacks are computed pointwise in $\cfib$ and preserved by both the inclusion  2-functor $\cfib \mr{} \fib$ and the forgetful 2-functor $\cfib \mr{} \Arrs(\Cat)$.
\end{enumerate}
\end{prop}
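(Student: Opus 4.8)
The plan for item (1) is to identify $\Arrs(\K)$ with the 2-category $[\mathbf 2,\K]$ of 2-functors $\mathbf 2\to\K$, 2-natural transformations, and modifications, where $\mathbf 2$ is the arrow category $\{0\to 1\}$. Under this identification an object $f\colon C\to D$ is a 2-functor out of $\mathbf 2$, a strictly commuting square is a 2-natural transformation, and a compatible pair $(\mu^\top,\mu^\bot)$ is a modification. Since conical 2-limits in a 2-category of 2-functors, 2-natural transformations, and modifications are computed pointwise, it follows at once that 2-pullbacks in $\Arrs(\K)$ are obtained by forming, whenever they exist in $\K$, the 2-pullbacks of the top and bottom cospans. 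Concretely, given a cospan in $\Arrs(\K)$, I would form the 2-pullbacks $P^\top$ and $P^\bot$ of its images under the top and bottom 2-functors; the strictly commuting squares induce, via the universal property of $P^\bot$, a unique arrow $f_P\colon P^\top\to P^\bot$, and the 1- and 2-dimensional universal properties of the resulting object are checked by applying the top and bottom functors and using that a 2-cell of $\Arrs(\K)$ is exactly a compatible pair of 2-cells in $\K$.

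For preservation under $\Arrs(\K)\to\Arrp(\K)$ and $\Arrs(\K)\to\Arrl(\K)$, I would verify directly that the same object $f_P$, with its strict projection squares, satisfies the universal property against the more general cones whose squares are filled by isomorphisms, respectively by arbitrary 2-cells. The point is that such a cone over a cospan whose legs are strict reduces, under top and bottom, to a pair of ordinary $\K$-cones, while the 2-cells filling the cone's squares are precisely the data of 2-cells into $P^\top$ and $P^\bot$; the universal properties in $\K$ then yield the unique induced morphism together with its uniquely determined filling, so the larger ambient 2-categories produce no new factorizations.

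Item (2) follows the same pattern with fibrational content added. Given a cospan in $\cfib$, say fibrations $P_a\colon\ct E_a\to\ct B_a$, $P_b\colon\ct E_b\to\ct B_b$, $P_c\colon\ct E_c\to\ct B_c$ with cleavage-preserving legs into $c$, I would first forget to $\Arrs(\cat)$ and form the pointwise 2-pullback of item (1): the strict pullbacks $\ct E_\times=\ct E_a\times_{\ct E_c}\ct E_b$ and $\ct B_\times=\ct B_a\times_{\ct B_c}\ct B_b$ and the induced functor $P_\times\colon\ct E_\times\to\ct B_\times$. The crux is to promote $P_\times$ to an object of $\cfib$. I would prove the gluing lemma that a pair $(\xi_a,\xi_b)$ of arrows of $\ct E_\times$ with both projections Cartesian (hence with common image in $\ct E_c$ Cartesian, the legs being Cartesian-preserving) assembles to a $P_\times$-Cartesian arrow; the proof is the standard uniqueness-of-lifts argument, where the common image being Cartesian forces the two componentwise lifts to agree in $\ct E_c$. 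A cleavage for $P_\times$ is then defined by gluing the chosen Cartesian lifts of $P_a$ and $P_b$: here \emph{cleavage}-preservation of the legs (not merely Cartesian-preservation) is what guarantees that the two chosen lifts have the \emph{same} image, namely the chosen lift, in $\ct E_c$, so that they glue to a well-defined arrow of $\ct E_\times$; by construction the projections are cleavage-preserving.

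Finally I would verify the universal property in $\cfib$ and the two preservation statements. A cone in $\cfib$ consists of cleavage-preserving Cartesian squares, so the induced functor into $\ct E_\times$ exists and is unique by item (1), and it preserves the chosen cleavage by uniqueness of Cartesian lifts together with the gluing description above. Preservation by $\cfib\to\Arrs(\cat)$ is immediate, since forgetting the cleavage returns exactly the pointwise pullback of item (1). Preservation by $\cfib\to\fib$ requires the extra step: a cone in $\fib$ has merely Cartesian-preserving squares, and I would apply the gluing lemma to conclude that the induced functor into $\ct E_\times$ preserves Cartesian arrows (since its composites with the two projections do), with uniqueness again descending from $\Arrs(\cat)$. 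The main obstacle is precisely the fibrational step: establishing the gluing lemma and the well-definedness of the glued cleavage, because this is where cleavage-preservation of the legs is indispensable and where the argument genuinely goes beyond the formal functor-2-category reasoning that suffices for item (1).
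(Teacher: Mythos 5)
Your proof is correct, and its overall architecture coincides with the paper's: form the pointwise 2-pullbacks, prove a gluing lemma saying that a pair of arrows with both projections Cartesian (and hence common Cartesian image in the shared fibration) assembles to a Cartesian arrow of the pullback, define the cleavage of the pullback pointwise using cleavage-preservation of the legs, and then verify the universal property against cones in each ambient 2-category. Your gluing lemma is exactly the paper's Lemma \ref{lem:forpullbacksinfib}, used for exactly the same purposes (promoting $P_\times$ to $\cfib$, and showing the induced functor from a $\fib$-cone is Cartesian-preserving). Two local choices differ, both legitimate. First, for item (1) you obtain pointwiseness of 2-pullbacks in $\Arrs(\K)$ by identifying it with the enriched functor 2-category $[\mathbf 2,\K]$ and citing that enriched limits there are computed pointwise; the paper instead verifies the universal property directly against cones in $\Arrl(\K)$, which establishes the statement for all three arrow 2-categories in a single pass. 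Note that your appeal to $[\mathbf 2,\K]$ does not actually spare you that direct verification, since you still need it for the two preservation claims, and that is where the substance of the paper's item-(1) argument lies. Second, you prove the gluing lemma by the uniqueness-of-lifts argument with the strong notion of Cartesian arrow (Definition \ref{def:plain_old_cartesian}), whereas the paper runs a diagram chase on a cube of Hom-sets using the weaker SGA1 notion of Cartesian arrow, at the price of having to check additionally that composites of Cartesian arrows are Cartesian; your version is the more elementary and self-contained of the two, since with the strong notion the existence of glued lifts already makes $P_\times$ a fibration.
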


\begin{proof}
To show item 1, one considers a diagram  $s: F_1 \rightarrow F_0 \leftarrow F_2:t$  in $Arr^s(\mathfrak K)$, given by the commutative diagram on the left:

\begin{equation} \label{eq:cubeofarrows}
\begin{tikzpicture}[scale=2.5,tdplot_main_coords,baseline=(current bounding box.center)]

\draw[shorten >=0.3cm,shorten <=.3cm,<-] (0,0,0) node{${D}_1$} -- node[left]{$F_1$} (0,0,1) node{${C}_1$} ;
\draw[shorten >=0.3cm,shorten <=.3cm,<-] (1,1,0) node{${D}_2$} -- node[right]{$F_2$} (1,1,1) node{${C}_2$} ;
\draw[shorten >=0.3cm,shorten <=.3cm,<-] (1,0,0) node{${D}_0$} -- node[left]{$F_0$} (1,0,1) node{${C}_0$} ;
%% ARROWS IN THE TOP
\draw[shorten >=0.3cm,shorten <=.3cm,->] (0,0,1) -- node[above] {$t^\top$} (1,0,1);
\draw[shorten >=0.3cm,shorten <=.3cm,<-] (1,0,1) -- node[above] {$s^\top$} (1,1,1);
%% ARROWS IN THE BOTTOM
\draw[shorten >=0.3cm,shorten <=.3cm,->](0,0,0) --  node[below] {$t^\bot$}  (1,0,0) ;
\draw[shorten >=0.3cm,shorten <=.3cm,<-] (1,0,0) -- node[below] {$s^\bot$} (1,1,0);
\end{tikzpicture}
\qquad \leadsto \qquad
\begin{tikzpicture}[scale=3.5,tdplot_main_coords,baseline=(current bounding box.center)]
\draw[shorten >=0.3cm,shorten <=.3cm,<-] (0,0,0) node{${D}_1$} -- node[left]{$F_1$} (0,0,1) node{${C}_1$} ;
\draw[shorten >=0.3cm,shorten <=.3cm,<-] (1,1,0) node{${D}_2$} -- node[right]{$F_2$} (1,1,1) node{${C}_2$} ;
\draw[dashed, shorten >=0.3cm,shorten <=.3cm,<-] (0,1,0) node{${D}_1 \times_{{D}_0} {D}_2$} -- node{$F_1 \times_{F_0} F_2$} (0,1,1) node{${C}_1 \times_{{C}_0} {C}_2$};
\draw[shorten >=0.3cm,shorten <=.3cm,<-] (1,0,0) node{${D}_0$} -- node[left]{$F_0$} (1,0,1) node{${C}_0$} ;
%% ARROWS IN THE TOP
\draw[shorten >=0.3cm,shorten <=.3cm,->] (0,0,1) -- node[below] {$t^\top$} (1,0,1);
\draw[shorten >=0.3cm,shorten <=.3cm,<-] (0,0,1) -- node[above] {} (0,1,1);
\draw[shorten >=0.3cm,shorten <=.3cm,->] (0,1,1) -- node[above] {}(1,1,1);
\draw[shorten >=0.3cm,shorten <=.3cm,<-] (1,0,1) -- node[below] {$s^\top$} (1,1,1);
%% ARROWS IN THE BOTTOM
\draw[shorten >=0.3cm,shorten <=.3cm,->](0,0,0) --  node[below] {$t^\bot$}  (1,0,0) ;
\draw[shorten >=0.3cm,shorten <=.3cm,<-] (0,0,0) -- node[above] {} (0,1,0);
\draw[shorten >=0.3cm,shorten <=.3cm,->] (0,1,0) -- node[above] {} (1,1,0);
\draw[shorten >=0.3cm,shorten <=.3cm,<-] (1,0,0) -- node[below] {$s^\bot$} (1,1,0);
\end{tikzpicture}
\end{equation}

In the diagram on the right we consider the 2-pullbacks in $\mathfrak K$, and $F_1 \times_{F_0} F_2$ is the functor induced by the universal property of ${D}_1 \times_{{D}_0} {D}_2$. 
Item 1 of the lemma then states that $F_1 \times_{F_0} F_2$ (together with the pullback projections) is the pullback of the diagram $F_1 \rightarrow F_0 \leftarrow F_2$ in the three 2-categories $\Arrs(\K)$,  $\Arrp(\K)$, and $\Arrl(\K)$.
We verify this using the diagram on the left below as follows:

\begin{equation} \label{eq:twouglydiagrams}
\begin{tikzpicture}[scale=3,tdplot_main_coords,baseline=(current bounding box.center)]]
\draw[shorten >=0.3cm,shorten <=.3cm,<-] (0,.5,2) node{${D_3}$} -- node[left]{$F_3$} (0,.5,3) node{$C_3$} ;
\draw[shorten >=0.3cm,shorten <=.3cm,->] (0,.5,3) to [bend right] node[left]{$u_1^\top$} (0,0,1);
\draw[shorten >=0.3cm,shorten <=.3cm,->] (0,.5,2) to [bend right] node[left]{$u_1^\bot$} (0,0,0);
\draw[shorten >=0.3cm,shorten <=.3cm,->] (0,.5,3) to [bend left] node[right]{$u_2^\top$} (1,1,1);
\draw[shorten >=0.3cm,shorten <=.3cm,->] (0,.5,2) to [bend left] node[right]{$u_2^\bot$} (1,1,0);
\draw[dashed, shorten >=0.3cm,shorten <=.3cm,->] (0,.5,3) to node[right]{$u^\top$} (0,1,1);
\draw[dashed, shorten >=0.3cm,shorten <=.3cm,->] (0,.5,2) to (0,1,0);
\draw[shorten >=0.3cm,shorten <=.3cm,<-] (0,0,0) node{${D}_1$} -- node[left]{$F_1$} (0,0,1) node{${C}_1$} ;
\draw[shorten >=0.3cm,shorten <=.3cm,<-] (1,1,0) node{${D}_2$} -- node[right]{$F_2$} (1,1,1) node{${C}_2$} ;
\draw[shorten >=0.3cm,shorten <=.3cm,<-] (0,1,0) node{${D}_1 \times_{{D}_0} {D}_2$} -- node{$F_1 \times_{F_0} F_2$} (0,1,1) node{${C}_1 \times_{{C}_0} {C}_2$};
\draw[shorten >=0.3cm,shorten <=.3cm,<-] (1,0,0) node{${D}_0$} -- node[left]{$F_0$} (1,0,1) node{${C}_0$} ;
%% ARROWS IN THE TOP
\draw[shorten >=0.3cm,shorten <=.3cm,->] (0,0,1) -- node[below] {$t^\top$} (1,0,1);
\draw[shorten >=0.3cm,shorten <=.3cm,<-] (0,0,1) -- node[above] {} (0,1,1);
\draw[shorten >=0.3cm,shorten <=.3cm,->] (0,1,1) -- node[above] {}(1,1,1);
\draw[shorten >=0.3cm,shorten <=.3cm,<-] (1,0,1) -- node[below] {$s^\top$} (1,1,1);
%% ARROWS IN THE BOTTOM
\draw[shorten >=0.3cm,shorten <=.3cm,->](0,0,0) --  node[below] {$t^\bot$}  (1,0,0) ;
\draw[shorten >=0.3cm,shorten <=.3cm,<-] (0,0,0) -- node[above] {} (0,1,0);
\draw[shorten >=0.3cm,shorten <=.3cm,->] (0,1,0) -- node[above] {} (1,1,0);
\draw[shorten >=0.3cm,shorten <=.3cm,<-] (1,0,0) -- node[below] {$s^\bot$} (1,1,0);
\end{tikzpicture}    
\qquad\qquad
\begin{tikzpicture}[scale=3,tdplot_main_coords,baseline=(current bounding box.center)]]
\draw[shorten >=0.3cm,shorten <=.3cm,<-] (0,.5,2) node{$\ct{B}_3$} -- node[left]{$P_3$} (0,.5,3) node{$\ct{E}_3$} ;
\draw[shorten >=0.3cm,shorten <=.3cm,->] (0,.5,3) to [bend right] node[left]{$u_1^{{\top}}$} (0,0,1);
\draw[shorten >=0.3cm,shorten <=.3cm,->] (0,.5,2) to [bend right] node[left]{$u_1^{{\bot}}$} (0,0,0);
\draw[shorten >=0.3cm,shorten <=.3cm,->] (0,.5,3) to [bend left] node[right]{$u_2^{{\top}}$} (1,1,1);
\draw[shorten >=0.3cm,shorten <=.3cm,->] (0,.5,2) to [bend left] node[right]{$u_2^{{\bot}}$} (1,1,0);
\draw[dashed, shorten >=0.3cm,shorten <=.3cm,->] (0,.5,3) to node[right]{$u^{{\top}}$} (0,1,1);
\draw[dashed, shorten >=0.3cm,shorten <=.3cm,->] (0,.5,2) to (0,1,0);
\draw[shorten >=0.3cm,shorten <=.3cm,<-] (0,0,0) node{${\ct{B}}_1$} -- node[left]{$P_1$} (0,0,1) node{${\ct{E}}_1$} ;
\draw[shorten >=0.3cm,shorten <=.3cm,<-] (1,1,0) node{${\ct{B}}_2$} -- node[right]{$P_2$} (1,1,1) node{${\ct{E}}_2$} ;
\draw[dashed, shorten >=0.3cm,shorten <=.3cm,<-] (0,1,0) node{${\ct{B}}_1 \times_{{\ct{B}}_0} {\ct{B}}_2$} -- node{$P_1 \times_{P_0} P_2$} (0,1,1) node{${\ct{E}}_1 \times_{{\ct{E}}_0} {\ct{E}}_2$};
\draw[shorten >=0.3cm,shorten <=.3cm,<-] (1,0,0) node{${\ct{B}}_0$} -- node[left]{$P_0$} (1,0,1) node{${\ct{E}}_0$} ;
%% ARROWS IN THE TOP
\draw[shorten >=0.3cm,shorten <=.3cm,->] (0,0,1) -- node[below] {$t^{{\top}}$} (1,0,1);
\draw[shorten >=0.3cm,shorten <=.3cm,<-] (0,0,1) -- node[above] {} (0,1,1);
\draw[shorten >=0.3cm,shorten <=.3cm,->] (0,1,1) -- node[above] {}(1,1,1);
\draw[shorten >=0.3cm,shorten <=.3cm,<-] (1,0,1) -- node[below] {$s^{{\top}}$} (1,1,1);
%% ARROWS IN THE BOTTOM
\draw[shorten >=0.3cm,shorten <=.3cm,->](0,0,0) --  node[below] {$t^{{\bot}}$}  (1,0,0) ;
\draw[shorten >=0.3cm,shorten <=.3cm,<-] (0,0,0) -- node[above] {} (0,1,0);
\draw[shorten >=0.3cm,shorten <=.3cm,->] (0,1,0) -- node[above] {} (1,1,0);
\draw[shorten >=0.3cm,shorten <=.3cm,<-] (1,0,0) -- node[below] {$s^{{\bot}}$} (1,1,0);
\end{tikzpicture}    
\end{equation}    

We let a pair of arrows $u_1 = (u_1^\top,u_1^\bot,\alpha_1) : F_3 \mr{} F_1 $, $u_2 = (u_2^\top,u_2^\bot,\alpha_2) : F_3 \mr{} F_2$ of $\Arrl(\K)$ such that $t u_1 = s u_2$. Note that there are 2-cells filling the two squares on the left below, that we don't draw in \eqref{eq:twouglydiagrams} for the sake of clarity.

\begin{equation} \label{eq:twovarphisleadtoone}
\vcenter{\xymatrix{ \ar@{}[dr]|{\Uparrow \ \alpha_1}
C_3 \ar[d]_{u_1^\top} \ar[r]^{F_3} & D_3 \ar[d]^{u_1^\bot} \\
C_1 \ar[r]_{F_1} & D_1
}}
\qquad\qquad
\vcenter{\xymatrix{ \ar@{}[dr]|{\Uparrow \ \alpha_2}
C_3 \ar[d]_{u_2^\top} \ar[r]^{F_3} & D_3 \ar[d]^{u_2^\bot} \\
C_2 \ar[r]_{F_2} & D_2
}}
\qquad \leadsto \qquad
\vcenter{\xymatrix{ \ar@{}[dr]|{\Uparrow \ \alpha}
C_3 \ar[d]_{u^\top} \ar[r]^{F_3} & D_3 \ar[d]^{u^\bot} \\
C \ar[r]_{F} & D
}}
\end{equation}

The arrows $u^\top$ and $u^\bot$ exist and are unique such that the {\em top} and {\em bottom} triangles commute by the 1-dimensional universal property of each of the two 2-pullbacks. The 2-cell $\alpha$ in \eqref{eq:twovarphisleadtoone}, defining uniquely an arrow  $u = (u^\top,u^\bot,\alpha)$ of $\Arrl(\K)$ that composed with the pullback projections equals $u_1$ and $u_2$ respectively, is given by the 2-dimensional universal property of the 2-pullback ${D}_1 \times_{{D}_0} {D}_2$. Since when $\alpha_1$ and $\alpha_2$ are invertible, resp. identities, then so is $\alpha$, this same diagram is also the 2-pullback in $\Arrp(\K)$, resp. $\Arrs(\K)$. 
Further details are left to the reader.

We consider now item 2 in the lemma. We let thus a diagram $s: P_1 \rightarrow P_0 \leftarrow P_2:t$  in $\cfib$, given by the commutative diagram in the left and a choice of cleavages for the three fibrations such that $s^{{\top}}$ and $t^{{\top}}$ are cleavage-preserving.

\begin{equation} \label{eq:cubeoffibrations}
\begin{tikzpicture}[scale=2.5,tdplot_main_coords,baseline=(current bounding box.center)]
\draw[shorten >=0.3cm,shorten <=.3cm,<-] (0,0,0) node{${\ct{B}}_1$} -- node[left]{$P_1$} (0,0,1) node{${\ct{E}}_1$} ;
\draw[shorten >=0.3cm,shorten <=.3cm,<-] (1,1,0) node{${\ct{B}}_2$} -- node[right]{$P_2$} (1,1,1) node{${\ct{E}}_2$} ;
\draw[shorten >=0.3cm,shorten <=.3cm,<-] (1,0,0) node{${\ct{B}}_0$} -- node[left]{$P_0$} (1,0,1) node{${\ct{E}}_0$} ;
%% ARROWS IN THE TOP
\draw[shorten >=0.3cm,shorten <=.3cm,->] (0,0,1) -- node[above] {$t^{{\top}}$} (1,0,1);
\draw[shorten >=0.3cm,shorten <=.3cm,<-] (1,0,1) -- node[above] {$s^{{\top}}$} (1,1,1);
%% ARROWS IN THE BOTTOM
\draw[shorten >=0.3cm,shorten <=.3cm,->](0,0,0) --  node[below] {$t^{{\bot}}$}  (1,0,0) ;
\draw[shorten >=0.3cm,shorten <=.3cm,<-] (1,0,0) -- node[below] {$s^{{\bot}}$} (1,1,0);
\end{tikzpicture}
\qquad \leadsto \qquad
\begin{tikzpicture}[scale=3.5,tdplot_main_coords,baseline=(current bounding box.center)]
\draw[shorten >=0.3cm,shorten <=.3cm,<-] (0,0,0) node{${\ct{B}}_1$} -- node[left]{$P_1$} (0,0,1) node{${\ct{E}}_1$} ;
\draw[shorten >=0.3cm,shorten <=.3cm,<-] (1,1,0) node{${\ct{B}}_2$} -- node[right]{$P_2$} (1,1,1) node{${\ct{E}}_2$} ;
\draw[dashed, shorten >=0.3cm,shorten <=.3cm,<-] (0,1,0) node{${\ct{B}}_1 \times_{{\ct{B}}_0} {\ct{B}}_2$} -- node{$P_1 \times_{P_0} P_2$} (0,1,1) node{${\ct{E}}_1 \times_{{\ct{E}}_0} {\ct{E}}_2$};
\draw[shorten >=0.3cm,shorten <=.3cm,<-] (1,0,0) node{${\ct{B}}_0$} -- node[left]{$P_0$} (1,0,1) node{${\ct{E}}_0$} ;
%% ARROWS IN THE TOP
\draw[shorten >=0.3cm,shorten <=.3cm,->] (0,0,1) -- node[below] {$t^{{\top}}$} (1,0,1);
\draw[shorten >=0.3cm,shorten <=.3cm,<-] (0,0,1) -- node[above] {} (0,1,1);
\draw[shorten >=0.3cm,shorten <=.3cm,->] (0,1,1) -- node[above] {}(1,1,1);
\draw[shorten >=0.3cm,shorten <=.3cm,<-] (1,0,1) -- node[below] {$s^{{\top}}$} (1,1,1);
%% ARROWS IN THE BOTTOM
\draw[shorten >=0.3cm,shorten <=.3cm,->](0,0,0) --  node[below] {$t^{{\bot}}$}  (1,0,0) ;
\draw[shorten >=0.3cm,shorten <=.3cm,<-] (0,0,0) -- node[above] {} (0,1,0);
\draw[shorten >=0.3cm,shorten <=.3cm,->] (0,1,0) -- node[above] {} (1,1,0);
\draw[shorten >=0.3cm,shorten <=.3cm,<-] (1,0,0) -- node[below] {$s^{{\bot}}$} (1,1,0);
\end{tikzpicture}
\end{equation}

In the diagram on the right, we consider the 2-pullbacks of categories, constructed as usual, and the functor $P_1 \times_{P_0} P_2$ induced by the universal property of ${\ct{B}}_1 \times_{{\ct{B}}_0} {\ct{B}}_2$. 
We prefer to show the following result separately from this proof:

\begin{lemma} \label{lem:forpullbacksinfib}
In the situation of diagram \eqref{eq:cubeoffibrations}:
\begin{enumerate}
\item If two arrows $f_1$ and $f_2$ of ${\ct{E}}_1$ and ${\ct{E}}_2$ respectively are Cartesian, and $t^{{\top}}(f_1) = s^{{\top}}(f_2)$, 
then so is the arrow $(f_1,f_2)$ of ${\ct{E}}_1 \times_{{\ct{E}}_0} {\ct{E}}_2$. {The converse implication also holds.}
\item $P_1 \times_{P_0} P_2$ is a fibration, and its cleavage can be chosen {\em pointwise}: that is, the chosen cleavages of $P_1$ and $P_2$ provide a cleavage for $P_1 \times_{P_0} P_2$.
\end{enumerate}
\end{lemma}

Using this lemma, the proof finishes using the diagram on the right of \eqref{eq:twouglydiagrams}.
We have already shown when proving item 1, for a general $\K$ instead of $\Cat$, that \eqref{eq:cubeoffibrations} computes the 2-pullback in $\Arrs(\Cat)$. Now, item 1 in Lemma \ref{lem:forpullbacksinfib} is showing that $u^{{\top}}$ preserves Cartesian arrows when so do $u_1^{{\top}}$ and $u_2^{{\top}}$. Since $P_1 \times_{P_0} P_2$ is a fibration by item 2 in the lemma, this shows that \eqref{eq:cubeoffibrations} also computes the 2-pullback in $\fib$. Finally, item 2 in Lemma \ref{lem:forpullbacksinfib} is also showing that, when the cleavage of $P_1 \times_{P_0} P_2$ is chosen pointwise, then $u^{{\top}}$ is cleavage-preserving when so are $u_1^{{\top}}$ and $u_2^{{\top}}$. This shows that \eqref{eq:cubeoffibrations} (with this choice of a cleavage) is also computing the 2-pullback in $\cfib$.
\end{proof}

We will now prove Lemma  \ref{lem:forpullbacksinfib}. The case in which $P_0$, $s^\top$, and $t^\top$ are identities is considered in \cite[exp. VI, Prop. 6.3]{SGA1} with a similar proof that inspired a part of ours.
We will consider here Cartesian arrows and fibrations as originally defined in \cite[Exp. VI]{SGA1}: for an arbitrary functor $P: \ct{E} \mr{} \B$, any arrow $f: X \mr{} Y$ defines by post-composition a function
$$
\Hom_{PX} (Z,X)  \mr{f_*} \Hom_{Pf}(Z,Y)
$$
between the set of arrows $Z \mr{} X$ {over} $id_X$ and the one of arrows $Z \mr{} Y$ over $f$. $f$ is {\bf $P$-Cartesian} (or {\bf Cartesian} for short) if this function is a bijection.

$P$ is a {\bf fibration} when both any arrow $u: B \mr{} PE$ in $\B$ has a Cartesian lift $u^*E: B^* \mr{} E$, and the composition of Cartesian arrows is Cartesian.
A choice of Cartesian lifts is called a {\bf cleavage}. 
It is well-known (since at least \cite[\S 2]{Gray}) that  $P$ is a fibration in this sense if and only if it is a fibration as defined in Definition \ref{def:plain_old_fibration}, and that in this case  both notions of Cartesian arrow are equivalent.

\begin{proof}[Proof of Lemma  \ref{lem:forpullbacksinfib}]
We begin by showing the first statement in item 1.
We denote the arrow $(f_1,f_2)$ by $f$, and the arrow $t^{{\top}}(f_1) = s^{{\top}}(f_2)$ by $f_0$. We use that same notation for arbitrary objects, arrows, or 2-cells in the 2-pull-backs of categories, noting that they are all given by pairs $x = (x_1,x_2)$ which are mapped to the same $x_0$. 
Similarly we denote $P_1 \times_{P_0} P_2: {\ct{E}}_1 \times_{{\ct{E}}_0} {\ct{E}}_2 \mr{} {\ct{B}}_1 \times_{{\ct{B}}_0} {\ct{B}}_2$ by $P: \ct{E} \mr{} \B$. % the arrow  $(f_1,f_2)$ is denoted $f: X \mr{} Y$, and 
In particular, recalling Definition \ref{def:plain_old_cartesian} we consider the commutative diagram

\begin{equation*}
\xymatrix@C=4pc{
\Hom_{PX}(Z,X) \ar[r]^{f_*} \ar[d] & \Hom_{Pf}(Z,Y) \ar[d] \\
\Hom_{PZ_1}(Z_1,X_1) \!\!\!\! \stackrel[{\Hom_{PZ_0}(Z_0,X_0)}]{}{\times} \!\!\!\!
\Hom_{PZ_2}(Z_2,X_2) \ar[r]_{((f_1)_*,(f_2)_*)} &
\Hom_{Pf_1}(Z_1,Y_1) \!\!\!\! \stackrel[{\Hom_{Pf_0}(Z_0,Y_0)}]{}{\times} \!\!\!\!
\Hom_{Pf_2}(Z_2,Y_2)  }
\end{equation*}

Note that we can omit to write the parentheses in $PZ_1$, $Pf_1$, etc., as we have equalities $P(Z_1) = (PZ)_1$, $P(f_1) = (Pf)_1$, etc. The vertical arrows are bijections of sets by the construction of the pullback of categories. 
In other words, the top and bottom of the following cube are pull-backs of sets, and $f_*$ is the unique function induced by the universal property of the pull-back in the bottom. 

\begin{center}
 \begin{tikzpicture}[scale=4.5,tdplot_main_coords]
\draw[shorten >=0.6cm,shorten <=.6cm,<-] (0,0,0) node{$\Hom_{Pf_1}(Z_1,Y_1)$} -- node[left]{$(f_1)_*$} (0,0,1) node{$\Hom_{PZ_1}(Z_1,X_1)$} ;
\draw[shorten >=0.6cm,shorten <=.6cm,<-] (1,1,0) node{$\Hom_{Pf_2}(Z_2,Y_2)$} -- node[right]{$(f_2)_*$} (1,1,1) node{$\Hom_{PZ_2}(Z_2,X_2)$} ;
\draw[dashed, shorten >=0.6cm,shorten <=.4cm,<-] (0,1,0) node{$\Hom_{Pf}(Z,Y)$} -- node[left]{$f_*$} (0,1,1) node{$\Hom_{PX}(Z,X)$};
\draw[shorten >=.4cm,shorten <=.6cm,<-] (1,0,0) node{$\Hom_{Pf_0}(Z_0,Y_0)$} -- node[left]{$(f_0)_*$} (1,0,1) node{$\Hom_{PZ_0}(Z_0,X_0)$} ;
%% ARROWS IN THE TOP
\draw[shorten >=0.6cm,shorten <=.6cm,->] (0,0,1) -- node[above] {$t^\top$} (1,0,1);
\draw[shorten >=0.6cm,shorten <=.6cm,<-] (0,0,1) -- node[above] {} (0,1,1);
\draw[shorten >=0.6cm,shorten <=.6cm,->] (0,1,1) -- node[above] {}(1,1,1);
\draw[shorten >=0.6cm,shorten <=.6cm,<-] (1,0,1) -- node[above] {$s^\top$} (1,1,1);
%% ARROWS IN THE BOTTOM
\draw[shorten >=0.6cm,shorten <=.6cm,->](0,0,0) --  node[below] {$t^\bot$}  (1,0,0) ;
\draw[dashed, shorten >=0.6cm,shorten <=.6cm,<-] (0,0,0) -- node[above] {} (0,1,0);
\draw[dashed, shorten >=0.6cm,shorten <=.6cm,->] (0,1,0) -- node[above] {} (1,1,0);
\draw[shorten >=0.6cm,shorten <=.6cm,<-] (1,0,0) -- node[below] {$s^\bot$} (1,1,0);
\end{tikzpicture}
\end{center}

Now, a simple diagram chase shows that for an arbitrary commutative cube of sets and functions (whose top and bottom are still pull-backs), if the three vertical arrows in the front are bijections then so is the one in the back. We conclude that $f_*$ is a bijection as desired.

\medskip

\noindent
To show item 2, we use the same notation as in item 1. To show that $P$ is a fibration, let $e: X \mr{} PY$ be an arrow of $\ct{B}$; we will show there exists a Cartesian morphism $f: Z \mr{} Y$ over it. 
The following diagram can be helpful to understand our reasoning:

\begin{equation*} 
\begin{tikzpicture}[scale=3.5,tdplot_main_coords,baseline=(current bounding box.center)]
\draw[shorten >=0.9cm,shorten <=.9cm,<-] (0,0,0) node{$X_1 \mr{e_1} P_1 Y_1$} --node[left]{$P_1$} (0,0,1) node{$Z_1 \mrdash{f_1} Y_1$} ;
\draw[shorten >=0.9cm,shorten <=.9cm,<-] (1,1,0) node{$X_2 \mr{e_2} P_2 Y_2$} -- node[right]{$P_2$} (1,1,1) node{$Z_2 \mrdash{f_2} Y_2$} ;
\draw[shorten >=0.9cm,shorten <=.9cm,<-] (1,0,0) node{$X_0 \mr{e_0} P_0 Y_0$} -- node[left]{$P_0$} (1,0,1) node{$\qquad Y_0$} ;
%% ARROWS IN THE TOP
\draw[shorten >=0.9cm,shorten <=.9cm,|->] (0,0,1) -- node[below] {$t^{{\top}}$} (1,0,1);
%\draw[shorten >=0.9cm,shorten <=.9cm,<-] (0,0,1) -- node[above] {} (0,1,1);
%\draw[shorten >=0.9cm,shorten <=.9cm,->] (0,1,1) -- node[above] {}(1,1,1);
\draw[shorten >=0.9cm,shorten <=.9cm,<-|] (1,0,1) -- node[below] {$s^{{\top}}$} (1,1,1);
%% ARROWS IN THE BOTTOM
\draw[shorten >=0.9cm,shorten <=.9cm,|->](0,0,0) --  node[below] {$t^{{\bot}}$}  (1,0,0) ;
\draw[shorten >=0.9cm,shorten <=.9cm,<-|] (1,0,0) -- node[below] {$s^{{\bot}}$} (1,1,0);
\end{tikzpicture}
\quad \stackrel{}{\leadsto} \quad
\begin{tikzpicture}[scale=4.5,tdplot_main_coords,baseline=(current bounding box.center)]
\draw[] (0,0,1) node{$Z_1 \mr{f_1} Y_1$} ;
\draw[] (1,1,1) node{$Z_2 \mr{f_2} Y_2$} ;
\draw[dashed, shorten >=0.9cm,shorten <=.3cm,<-|] (0,1,-.2) node{$(X_1 \mr{e_1} P_1 Y_1,X_2 \mr{e_2} P_2 Y_2)$} -- node{$P_1 \times_{P_0} P_2$} (0,1,1) node{$(Z_1 \mr{f_1} Y_1,Z_2 \mr{f_2} Y_2)$};
\draw[] (1,0,1) node{$Z_0 \mr{f_0} Y_0$} ;
%% ARROWS IN THE TOP
\draw[shorten >=0.9cm,shorten <=.9cm,|->] (0,0,1) -- node[below] {$t^{{\top}}$} (1,0,1);
\draw[shorten >=0.9cm,shorten <=.9cm,<-|] (0,0,1) -- node[above] {} (0,1,1);
\draw[shorten >=0.9cm,shorten <=.9cm,|->] (0,1,1) -- node[above] {}(1,1,1);
\draw[shorten >=0.9cm,shorten <=.9cm,<-|] (1,0,1) -- node[below] {$s^{{\top}}$} (1,1,1);
\end{tikzpicture}
\end{equation*}

In the diagram on the left, we consider the {\bf chosen} Cartesian lifts $f_1$ and $f_2$ of $e_1$ and $e_2$ respectively. 
Since $t^{{\top}}$ and $s^{{\top}}$ are {cleavage preserving}, then they are mapped to the same arrow $f_0$ of $\ct{E}_0$. 
This shows that $f = (f_1,f_2)$ is indeed an arrow of $\ct{E}$, which by item 1 is Cartesian. 

Now, since we have a cleavage for $P_1 \times_{P_0} P_2$, any Cartesian arrow $(f_1,f_2)$ of ${\ct{E}}_1 \times_{{\ct{E}}_0} {\ct{E}}_2$ can be expressed as usual as the composition of an isomorphism (which is given of course by two isomorphisms in ${\ct{E}}_1$ and ${\ct{E}}_2$) with an arrow in the cleavage. It follows immediately that $f_1$ and $f_2$ are both Cartesian, so the second statement in item 1 also holds, and then clearly the composition of Cartesian arrows in ${\ct{E}}_1 \times_{{\ct{E}}_0} {\ct{E}}_2$ is Cartesian, which finishes the proof of item 2.
\end{proof}

\begin{remark}
The condition that  $t^{{\top}}$ and $s^{{\top}}$ are {cleavage preserving} was used to show item 2 in Lemma \ref{lem:forpullbacksinfib}. This item was used in the proof of Proposition \ref{prop:pullbackspointwise_in_App} {\bf both} for computing 2-pullbacks in $\fib$ and in $\cfib$. We do not know if 2-pullbacks in $\fib$ can be computed pointwise without assuming this condition. 
\end{remark}

\begin{remark} \label{remark:EquivalenceOfPullbacksICatFib}
    The 2-category $\icat$ of indexed categories also has strict 2-pullbacks of morphisms given by 2-natural transformations. These are computed pointwise and preserved by the equivalence $\fib\simeq \icat$ induced by the elements construction.
\end{remark}

\end{appendix}

\bibliographystyle{alpha}
\bibliography{bibliography}
\end{document}